\documentclass[11pt]{amsart}

\usepackage{graphicx}        
\usepackage{multicol}        
\usepackage[bottom]{footmisc}
\usepackage{amscd,amsmath,amssymb,amsfonts}
\usepackage[cmtip, all]{xy}
\usepackage{enumerate}

\usepackage[linktocpage=true]{hyperref}

\newcommand{\chapter}{\section}

\newtheorem{theorem}{Theorem}[section]
\newtheorem{proposition}[theorem]{Proposition}
\newtheorem{lemma}[theorem]{Lemma}
\newtheorem{corollary}[theorem]{Corollary}

\theoremstyle{definition}
\newtheorem{definition}[theorem]{Definition}
\theoremstyle{remark}
\newtheorem{remark}[theorem]{Remark}
\newtheorem{remarks}[theorem]{Remarks}
\newtheorem{ex}[theorem]{Example}

\numberwithin{equation}{section}
\theoremstyle{plain}
\newtheorem*{introthm}{Theorem}

\newcommand{\CH}{{\rm CH}}
\newcommand{\mc}{\mathcal}

\newcommand{\red}{{\rm red}}

\newcommand{\Hom}{{\rm Hom}}

\newcommand{\Spec}{{\rm Spec\,}}

\newcommand{\Char}{{\rm char}}

\newcommand{\supp}{{\rm supp}\,}
\newcommand{\0}{\emptyset}

\newcommand{\sC}{{\mathcal C}}
\newcommand{\sD}{{\mathcal D}}
\newcommand{\sE}{{\mathcal E}}

\newcommand{\sO}{{\mathcal O}}

\newcommand{\sR}{{\mathcal R}}
\newcommand{\sS}{{\mathcal S}}

\newcommand{\sU}{{\mathcal U}}

\newcommand{\sW}{{\mathcal W}}
\newcommand{\sX}{{\mathcal X}}
\newcommand{\sY}{{\mathcal Y}}

\newcommand{\A}{{\mathbb A}}

\newcommand{\F}{{\mathbb F}}

\newcommand{\N}{{\mathbb N}}
\renewcommand{\P}{{\mathbb P}}
\newcommand{\Q}{{\mathbb Q}}
\newcommand{\R}{{\mathbb R}}

\newcommand{\Z}{{\mathbb Z}}

\newcommand{\Zar}{{\text{\rm Zar}}}

\newcommand{\colim}{{\mathop{\rm colim}}}

\renewcommand{\dim}{{\operatorname{\rm dim}}} 
 
\newcommand{\xr}[1]{\xrightarrow{#1}}

\newcommand{\Proj}{{\operatorname{Proj}}}

\newcommand{\Sym}{{\operatorname{Sym}}}

\newcommand{\Tor}{{\operatorname{Tor}}}

\newcommand{\Gr}{{\operatorname{\rm Gr}}}

\newcommand{\Deg}{\text{deg}}

\newcommand{\ind}[1]{}

\newcommand{\inp}[1]{}

\newcommand{\OO}{\sO}

\begin{document}
\setcounter{tocdepth}{1}

\title{Torsion orders of complete intersections}

\author{Andre Chatzistamatiou}
\author{Marc Levine}

\subjclass[2010]{14C25}

\begin{abstract}
By a classical method due to Roitman, a complete intersection $X$ of sufficiently small degree admits a rational decomposition of the diagonal. This means that some multiple of the diagonal by a positive integer $N$, when viewed as a cycle in the Chow group, has support in $X\times D\cup F\times X$, for some divisor $D$ and a finite set of closed points $F$. The minimal such $N$ is called the torsion order. We study lower bounds for the torsion order following the specialization method of Voisin, Colliot-Th\'el\`ene and Pirutka. We give a lower bound for the generic complete intersection with and without point. Moreover, we use methods of Koll\'ar and Totaro to exhibit lower bounds for the very general complete intersection.   
\end{abstract}
\maketitle

\tableofcontents
\section*{Introduction}
Decomposition of the diagonal has played a prominent role in recent progress on stable rationality questions. For a rationally connected variety over a field $k$, there is a minimal integer $\Tor_k(X)\geq 1$ such that the multiple of the diagonal $\Tor_k(X)\cdot \Delta_X$, when viewed in the Chow group of $X\times X$, is supported in $X\times D\cup F\times X$, for some divisor $D$ and some finite set of closed points $F$. We will call $\Tor_k(X)$ the \emph{torsion order} of $X$; it is a stable birational invariant which equals $1$ if $X$ is stably rational and in general gives an upper bound on the exponent of the unramified cohomology of $X$.  This invariant is also studied by Kahn \cite{Kahn}.  In a proper flat family the torsion order of a fiber divides the torsion order of the generic fiber (see Lemma~\ref{lem:specialization} for the precise statement). One can  thus deduce a non-trivial torsion order from a non-trivial torsion order of a cleverly chosen degeneration.

This method was pioneered by Voisin \cite{Voisin}. It was significantly simplified and applied by Colliot-Th\'el\`ene and Pirutka to show the non-rationality of a very general quartic fourfold \cite{CTP} by using a degeneration to a classical example of Artin and Mumford (after a ``universally $\CH_0$-trivial'' resolution of singularities \cite[Definitions 1.1, 1.2]{CTP}), which is a unirational but non-rational variety. The non-trivial $2$-torsion in its Brauer group forces non-triviality of the torsion order (in fact, it implies that the torsion order is even). Totaro \cite{Totaro} used Voisin's method combined with work of Koll\'ar \cite{Kollar} to improve Koll\'ar's non-rationality results for hypersurfaces in {\it loc.~cit}. Roughly speaking, Totaro showed how, for large enough degree, a general hypersurface of even degree degenerates to an inseparable degree 2 cover in characteristic 2 whose resolution of singularities can be shown to support non-vanishing differential forms. As for the Brauer group, action of correspondences (and the fact that the singularities of the degeneration are ``not too bad'') shows divisibility of the torsion order by $2$.   
   
In this paper we study the torsion order of complete intersections in projective space. A classical result by Roitman, which we recall in Proposition \ref{prop:UpperBound}, establishes an upper bound stating that a complete intersection $X$ of multi-degree $(d_1,\dots,d_r)$ in $\P^{n+r}_k$ (over any field $k$) with $\sum_{i=1}^r d_i \leq n+r$ satisfies $\Tor_k(X) \mid \prod_{i=1}^r (d_i!)$. Our first result is a lower bound for a generic complete intersection. 

\begin{introthm}[Theorem~\ref{theorem:main1}, Corollary \ref{corollary-with-point}]
Let $\mathcal{Y}:=\prod_{i=1}^r \P(H^0(\P^{n+r}_k,\OO(d_i))^{\vee})$, and let $\mathcal{X}\subset \mathcal{Y}\times \P^{n+r}_k$ be the incidence variety 
$$
\mathcal{X}=\{(f_1,\dots,f_r,x)\in \mathcal{Y}\times \P^{n+r}_k \mid f_1(x)=\dots=f_r(x)=0\}.
$$ 
We denote by $K$ the quotient field of $\mathcal{Y}$, and let $X/K$ be the generic fiber of the family $\mathcal{X}\xr{} \mathcal{Y}$. For an integer $d\ge1$, let $d!^*$ be  be the least common multiple of the integers $1,\ldots, d$. The following holds: 
\begin{enumerate}[i)]
\item $\Tor_K(X)$ is divisible by $\prod_{i=1}^r d_i!^*$,  
\item $\Tor_{K(X)}(X\otimes_K K(X))$ is divisible by $\frac{\prod_{i=1}^r d_i!^*}{d_1\cdots d_r}$.
\end{enumerate}
\end{introthm}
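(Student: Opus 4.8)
The overall plan is the specialization (degeneration) method. By Lemma~\ref{lem:specialization} the torsion order of any fiber of $\mathcal X\to\mathcal Y$ divides $\Tor_K(X)$, so it suffices to exhibit, inside this family, fibers whose torsion orders are divisible by $\prod_i d_i!^*$. Since $\prod_i d_i!^*=\prod_{p\text{ prime}}p^{\,\sum_i\lfloor\log_p d_i\rfloor}$, it is enough to prove, for each prime $p$ separately, that $p^{\,\sum_i\lfloor\log_p d_i\rfloor}$ divides $\Tor_K(X)$. So (i) reduces to: for every prime $p$ there is a complete intersection $X_0$ of multidegree $(d_1,\dots,d_r)$ in $\P^{n+r}$, over some field, such that $p^{\,\sum_i\lfloor\log_p d_i\rfloor}$ divides $\Tor_F(X_0\otimes F)$ for \emph{every} field extension $F$. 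This base-change robustness is what lets us feed such an $X_0$ into the family over an arbitrary $k$: if $\Char k\ne p$ (in particular if $\Char k=0$), one first lifts the defining equations of $X_0$ to a complete intersection over a suitable discrete valuation ring and applies Lemma~\ref{lem:specialization} once more.

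The heart of the matter is the construction of such an $X_0$ together with a universally $\CH_0$-trivial resolution $\rho\colon\widetilde{X_0}\to X_0$ — this is exactly the Voisin--Colliot-Th\'el\`ene--Pirutka setup. The natural candidates are iterated cyclic covers realized as complete intersections: in $\P^{n+r}$ with coordinates $x_0,\dots,x_n,z_1,\dots,z_r$ take
\[
X_0=\bigl\{\,z_1^{d_1}=f_1(x),\ z_2^{d_2}=f_2(x,z_1),\ \dots,\ z_r^{d_r}=f_r(x,z_1,\dots,z_{r-1})\,\bigr\},
\]
so that $X_0\to\P^n$ is a tower of cyclic covers of degrees $d_1,\dots,d_r$. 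The branch data $f_i$ must be chosen \emph{not} generically but as explicit monomial-type forms, so that on the one hand the singularities of $X_0$ are mild enough (cyclic-quotient type) to admit a universally $\CH_0$-trivial resolution $\widetilde{X_0}$, and on the other hand the local structure along the various components of the branch loci produces, in a suitable unramified-cohomology group (or directly in the torsion of $\CH_0$) of $\widetilde{X_0}$, a class whose order has $p$-part $p^{\lfloor\log_p d_i\rfloor}$ coming from the $i$-th layer, with the contributions of the $r$ layers combining multiplicatively. Granting this, the lower bound on $\Tor(\widetilde{X_0})$ is formal: a rational decomposition of the diagonal of $\widetilde{X_0}$ of order $N$ acts as multiplication by $N$ on such an invariant but factors through $\rho$, which kills it, so $N$ annihilates a class of order $\prod_i p^{\lfloor\log_p d_i\rfloor}=p^{\,\sum_i\lfloor\log_p d_i\rfloor}$.

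I expect the main obstacle to be precisely this construction and its certification: realizing all $r$ cyclic towers simultaneously inside a single complete intersection of the prescribed multidegree; keeping the total singular locus inside the range where a universally $\CH_0$-trivial resolution exists; and — the delicate point — verifying that the resulting obstruction genuinely has $p$-adic valuation $\sum_i\lfloor\log_p d_i\rfloor$ rather than a smaller value, i.e. that the full factor $d_i!^*=\mathrm{lcm}(1,\dots,d_i)$ (and not merely $d_i$ or its radical) is detected in the $i$-th direction and that the layers do not interfere. Matching $\mathrm{lcm}(1,\dots,d_i)$ against the combinatorics of resolving the chosen branch loci is where the real work lies.

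Part (ii) is then deduced from part (i). The variety $X\otimes_K K(X)$ carries the tautological $K(X)$-point, so a rational decomposition of its diagonal of order $M$ spreads out over a dense open $U\subseteq X$ to a relative decomposition over $U$; completing it over $X\setminus U$ costs only the intersection of $\Delta_X$ with a $0$-cycle of degree $I_K(X)$ on $X$ (the index of $X/K$), giving $\Tor_K(X)\mid I_K(X)\cdot\Tor_{K(X)}\bigl(X\otimes_K K(X)\bigr)$. A generic line section of $X$ is a $0$-cycle of degree $d_1\cdots d_r$, so $I_K(X)\mid d_1\cdots d_r$, and since $d_1\cdots d_r\mid\prod_i d_i!^*$, combining with (i) yields $\dfrac{\prod_i d_i!^*}{d_1\cdots d_r}\mid\Tor_{K(X)}\bigl(X\otimes_K K(X)\bigr)$.
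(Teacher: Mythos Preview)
Your plan for part~(i) has a genuine gap, and the gap is not just missing details --- the strategy itself cannot succeed. You reduce to producing, for each prime $p$, a complete intersection $X_0$ whose \emph{geometric} torsion order (i.e.\ $\Tor_F(X_0\otimes F)$ for all extensions $F$, in particular for $F$ algebraically closed) is divisible by $p^{\sum_i\lfloor\log_p d_i\rfloor}$. But the Koll\'ar--Totaro cyclic-cover method you invoke gives much weaker divisibility over algebraically closed fields: that is precisely the content of Sections~\ref{sec:LowerBound1}--\ref{sec:LowerBound2} of the paper, where the bounds obtained (Theorems~\ref{thm:LowerBound} and~\ref{thm-main-alg-closed}) require $d_i\ge p^m\lceil(n+r+1-d'+d_i)/(p^m+1)\rceil$ and are far from $d_i!^*$. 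For instance, for the generic quartic threefold you would need an $X_0$ with geometric torsion order divisible by $12$, yet no such construction is known (and the Koll\'ar--Totaro method does not even give $3\mid\Tor$ in this range). The difficulty you flag --- ``matching $\mathrm{lcm}(1,\dots,d_i)$ against the combinatorics of resolving the chosen branch loci'' --- is not a technicality but an actual obstruction: unramified-cohomology invariants of cyclic covers simply do not see the full $d_i!^*$.

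The paper's proof exploits something your reduction throws away: the field $K=k(\mathcal Y)$ is \emph{not} algebraically closed, and the generic complete intersection $X/K$ has index $I_X=\prod_i d_i$ (Lemma~\ref{lem:ChowGp}(1)). This alone forces $\prod_i d_i\mid\Tor_K(X)$. The remaining factors are obtained by degenerating $X$ not to a single carefully chosen fiber but to the \emph{reducible} special fiber $Y\cup H$, where $Y$ is the generic complete intersection of multidegree $(d_1-1,d_2,\dots,d_r)$ and $H$ is a hyperplane. Totaro's specialization (Lemma~\ref{lem:Spec2}) gives a decomposition of $N\cdot\Delta_Y$ with an extra term supported on $(Y\cap H)\times Y$; the crucial computation (Lemma~\ref{lem:ChowGp}(3), Lemma~\ref{lem:Decomp}) shows $\CH_0((Y\cap H)_{K(Y)})$ is generated by linear sections, so this extra term is absorbed into a genuine decomposition of $\Delta_Y$ of order $N$. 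Induction on the $d_i$ then yields divisibility by all products $i_1\cdots i_r$ with $1\le i_j\le d_j$, whose l.c.m.\ is $\prod_i d_i!^*$ (Lemma~\ref{lem:Product}).

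Your argument for part~(ii) is correct and is the paper's argument (Lemma~\ref{lem:Index}(2) and Lemma~\ref{lem:Basic2}).
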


The invariant which detects divisors of the torsion order in the first part of theorem is the index of a variety, that is, the image of the Chow group of zero cycles via the degree map. The index of $X/K$ is given by $d_1\cdots d_r$. Divisibility of the torsion order by other integers of the form $i_1\cdot\ldots\cdot i_r$ with $1\le i_j\le d_j$ is shown by degeneration to a union of complete intersections with lower degrees and using induction.   

We also consider the generic cubic hypersurface with a line, and use Theorem~\ref{theorem:main1} to show that this has torsion order exactly 2 (Example~\ref{ex:line}). We show the existence of a cubic threefold over $K=\Q_p((x))$ or $K=\F_p((t))((x))$, having a $K$-point and torsion order divisible by  2 (Example~\ref{ex:small}); more generally, we construct  examples of cubic hypersurfaces of dimension $n$ over a field $K=k((x))$, where $k$ is a field of characteristic zero and $u$-invariant at least $n+1$, which have a $K$-point and for which 2 divides the torsion order. This last series of examples is  taken over from \cite{CTNew}, with the kind permission of the author, and it gives an improvement over a construction in an earlier version of this paper, which relied on Rost's degree formula.  We should mention that other examples of this kind  already exist in the literature,   see for example \cite[Th\'eor\`eme 1.21]{CTP}, where cubic threefolds over a $p$-adic field with non-zero torsion order are constructed, as well as examples over $\F_p((x))$ \cite[Remarque 1.23]{CTP}; both examples have a rational point. 

Our second result concerns the torsion order of very general complete intersections over algebraically closed fields of characteristic zero. The idea of the proof is as in the papers of Koll\'ar and Totaro. We are able to generalize the results on the Hodge cohomology of the degeneration in characteristic $p$ to Hodge--Witt cohomology. In this way we can establish results on divisibility by powers of $p$. 

\begin{introthm}[Theorem~\ref{thm-main-alg-closed}] Let $k$ be an algebraically closed field of characteristic zero. Let $X\subset \P^{n+r}_k$ be a very general complete intersection of multi-degree  $d_1, d_2, \dots, d_r$ such that $d':=\sum_{i=1}^r d_i \leq n+r$ and $n\geq 3$. Let $p$ be a prime,  $m\geq 1$, and suppose
\begin{equation*}
d_i \geq p^{m}\cdot \left\lceil {\frac{n+r+1-d'+d_i}{p^m+1}} \right\rceil 
\end{equation*}
for some $i$, where $\left\lceil\; \right\rceil$ denotes the ceiling function. Moreover, we assume that $p$ is odd or $n$ is even. Then $p^m| \Tor_k(X)$.
\end{introthm}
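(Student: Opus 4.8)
The plan is to follow the Kollár–Totaro degeneration strategy, but to replace Hodge cohomology with Hodge–Witt cohomology so that one detects divisibility by $p^m$ rather than just by $p$. First I would set up the specialization: by Lemma~\ref{lem:specialization}, it suffices to exhibit, over an algebraically closed field $\kappa$ of characteristic $p$, a complete intersection $X_0\subset\P^{n+r}_\kappa$ of the same multi-degree specializing the very general one, together with a ``universally $\CH_0$-trivial'' (or at least ``not too bad'', in the sense that the relevant correspondence argument goes through) resolution $\widetilde{X}_0\to X_0$, such that $p^m\mid \Tor_\kappa(\widetilde X_0)$. The chosen degeneration is the one from Kollár: assuming without loss of generality that the constraint holds for $i=r$, one takes $X_0$ to contain a $p^m$-th power in its defining equation for the last hypersurface, so that $X_0$ admits (after adjusting coordinates) a dominant purely inseparable map of degree $p^m$ — a ``$p^m$-cyclic cover'' — onto a complete intersection of lower degree, and one resolves singularities to get $\widetilde X_0$.

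The core of the argument is then a cohomological obstruction: a correspondence argument (action of the decomposition of the diagonal on a suitable cohomology theory, using that $\widetilde X_0\to X_0$ is $\CH_0$-trivial and that $\kappa$ is algebraically closed so $\CH_0$ is generated by rational points) shows that if $\Tor_\kappa(\widetilde X_0)$ is prime to $p$, then a certain global section group on $\widetilde X_0$ must vanish. For Totaro's argument that group is $H^0(\widetilde X_0,\Omega^j)$ for appropriate $j$; here I would instead work with the Hodge–Witt sheaves, i.e.\ show $H^0(\widetilde X_0, W\Omega^j_{\widetilde X_0})$ — or more precisely the length-$m$ truncation $W_m\Omega^j$ — does \emph{not} vanish, and that nonvanishing of this $p$-typical object is incompatible with $p^m\nmid \Tor$. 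The key input is that $W_m\Omega^j$ (respectively its cohomology) is a module over the truncated Witt vectors $W_m(\kappa)$, and the correspondence action is $W_m$-linear, so a decomposition of the diagonal with multiplier $N$ prime to $p$ would force these $W_m$-modules to be killed, hence trivial; running this against an explicit nonzero Hodge–Witt class on the (resolved) $p^m$-cyclic cover gives the contradiction.

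The technical heart — and the step I expect to be the main obstacle — is constructing the nonzero class in $H^0(\widetilde X_0, W_m\Omega^j)$ and controlling its survival under resolution of singularities. On the singular model $X_0$, the inseparable structure gives a natural candidate (pull back a top-degree-type differential form along the $p^m$-cover and lift it Witt-theoretically, as in the classical computation of de Rham–Witt cohomology of cyclic covers); the numerical hypothesis $d_i\ge p^m\lceil (n+r+1-d'+d_i)/(p^m+1)\rceil$ is precisely what makes the resulting class regular, i.e.\ extend to global sections of the relevant sheaf, after accounting for the ramification/discrepancy divisor and the adjunction-type bound for the cover. One must then check that this class lies in the image of $H^0(\widetilde X_0,W_m\Omega^j)\to H^0(X_0^{\rm sm}, W_m\Omega^j)$, which amounts to showing the singularities of $X_0$ impose no new poles — this is where Kollár's careful choice of the defining equations (generic coefficients, so the singular locus is a controlled complete intersection of small dimension) is used, together with the fact that $W_m\Omega^j$ of a smooth variety has no higher pushforward issues in the relevant range. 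The parity hypothesis ``$p$ odd or $n$ even'' enters to guarantee that the relevant cohomology is genuinely nonzero (the $p=2$, $n$ odd case can have a cancellation, analogous to the obstruction in Totaro's original argument with quadrics). Assembling these pieces — specialization, the $W_m$-linear correspondence obstruction, and the explicit Hodge–Witt class surviving resolution — yields $p^m\mid\Tor_k(X)$.
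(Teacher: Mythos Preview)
Your high-level strategy matches the paper's: degenerate to characteristic $p$, realize (a component of) the special fiber as a $p^m$-cyclic cover, resolve, and use the correspondence action on Hodge--Witt cohomology $H^0(\tilde Y, W_m\Omega^{n-1})$ to force $p^m\mid N$. But several points in your sketch are either wrong or seriously understate what must be done.

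\textbf{The degeneration is not a cyclic cover.} You write that $X_0$ ``admits a $p^m$-cyclic cover onto a complete intersection of lower degree''. This only works when $p^m\mid d_i$. In general one writes $d_i=p^m a+c$ with $0\le c<p^m$, and the special fiber is $Y=Y_1+c\cdot Y_2$ where $Y_1$ is the $p^m$-cover and $Y_2$ is a linear section. The specialization then uses Proposition~\ref{prop:specialization3} (the Totaro-style lemma for reducible special fibers), and the decomposition of $\Delta_{\tilde Y_1}$ acquires an extra term $Z_2$ supported on $(Y_1\cap Y_2)\times\tilde Y_1$. Killing this term requires a separate argument: one shows $H^0(\tilde T,W_m\Omega^{n-1})=0$ for a resolution $\tilde T$ of $Y_1\cap Y_2$, via a cohomology computation and Ekedahl duality. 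Your sketch misses this entirely.

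\textbf{The Hodge--Witt class is not constructed by lifting a form.} You propose to ``pull back a top-degree-type differential form along the $p^m$-cover and lift it Witt-theoretically''. The paper does \emph{not} do this; there is no direct lift. Instead it works dually: one introduces a filtration on $W_l\Omega^1_{\tilde Y}(\log E)$ coming from the cover, studies the quotient sheaves $Q_l$, and proves via a delicate $R,V,F$ analysis (Proposition~\ref{proposition:injectivity}, Lemma~\ref{lemma-lifting}, Corollary~\ref{corollary-control-kernel-V}) that $H^n(\tilde Y,Q_l)\cong (W_l(k))^h$. Then Ekedahl duality gives $W_m(k)\subset H^0(\tilde Y,W_m\Omega^{n-1})$. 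A crucial input you do not mention is \emph{ordinarity}: one needs Frobenius to act bijectively on $H^{n-1}(V(s),\sO)$, which comes from Illusie's theorem on ordinarity of general complete intersections. Without this the exact sequence \eqref{equation-V-R-exact-sequence} does not assemble into the claimed $W_l$-module structure.

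\textbf{The parity hypothesis.} Your explanation (``the $p=2$, $n$ odd case can have a cancellation'') is wrong. The cohomology is nonzero in all cases; the obstruction is purely technical. When $p=2$ and $n$ is odd, the resolution of the cyclic cover involves coordinate changes incompatible with Teichm\"uller lifts, and the paper cannot prove surjectivity of $R:K_l\to K_1$ (Lemma~\ref{lemma-lifting} and the remark following it). This lemma is essential for the $Q_l$ computation above.
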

For example, if $\sum_{i=1}^r d_i = n+r$ and $n\geq 3$, which is the extreme case, then $d_i| \Tor_k(X)$ if $d_i$ is odd or $n$ is even. For hypersurfaces and $m=1$, the theorem is due to Totaro, and we give a short proof of the straight-forward generalization to complete intersections and the case $m=1$  in Theorem~\ref{thm:LowerBound}. We should mention that our Theorem~\ref{thm:LowerBound} and Theorem~\ref{thm-main-alg-closed} are actually a bit stronger, in that we prove the same divisibility result for the torsion orders of  level $n-2$ (see below), which automatically divide the torsion orders described above.

The paper is divided into seven sections. Section~\ref{sec:TorsionOrder} contains the definition and basic properties of the torsion order. Following a suggestion of Claire Voisin, we consider decompositions of the diagonal of higher ``niveau level'' and the associated torsion invariants;  we also describe some elementary specialization results.  In section~\ref{sec:trivial} we recall from Colliot-Th\'el\`ene and Pirutka the notion of a universally $\CH_0$-trivial morphism and a related notion, that of a totally $\CH_0$-trivial morphism.   Behavior under a combination of degeneration and modification by a birational totally $\CH_0$-trivial morphism, which is the basic tool used for divisibility results, is the focus of section~\ref{sec:degen}; in this section we follow \cite{CTP} and extend their specialization results to cover decompositions of higher level.  We recall Roitman's theorem in section~\ref{sec:Roitman} and discuss the case of the generic complete intersection in section~\ref{sec:generic}. We recall Totaro's arguments leading to the divisibility results for the torsion order of a very general complete intersection  in section~\ref{sec:LowerBound1} and conclude by proving our refined version in section~\ref{sec:LowerBound2}.

We would like to thank the referees very much for thoroughly reading the paper and suggesting improvements. We are especially grateful to the referee who suggested the statement and proof of Lemma~\ref{lemma-advanced-technology-for-Wm}. This result enabled us to improve an earlier version of our Theorem~\ref{thm-main-alg-closed} to the statement on higher torsion orders mentioned above. We are also grateful to Jean-Louis Colliot-Th\'el\`ene, who very kindly allowed us to include some of the results  of his paper \cite{CTNew}. This led to a new result (Lemma~\ref{lem:Spec3}) on specialization of decompositions of the diagonal, derived from \cite[Lemma 2.2]{CTNew}, and  Example~\ref{ex:small} mentioned above, a version of which appears as \cite[Th\'eor\`eme 2.4]{CTNew}.  

\section{Torsion orders}\label{sec:TorsionOrder}
Let $k$ be a field and $X$ a  $k$-scheme of finite type.  If $A$ is a presheaf on $X_\Zar$, we let 
\[
A(X(i)):=\colim_FA(X\setminus F)
\]
where $F$ runs over all closed subsets of $X$ with $\dim_kF\le i$. We extend this notation to products, defining for a presheaf $A$ on $(X\times_kY)_\Zar$
 \[
 A(X(i)\times Y(j))=\colim_{F, G}A((X\setminus F)\times(Y\setminus G)).
 \]
For example, the contravariant functoriality of the classical Chow groups  for open immersions  allows us to apply this notation to $A(X):=\CH_n(X)$ for some $n$.

Let $k$ be a field with algebraic closure $\bar k$. We say that a  finite type $k$-scheme $X$ is {\em generically reduced}  if $X$ is reduced at each generic point. We call a reduced finite type $k$-scheme $X$ {\em separable} over $k$ if the total quotient ring $k(X)$ is a product of separably generated field extensions of $k$. For $X$ an arbitrary finite type $k$-scheme, call $X$ separable over $k$ if $X_{red}$ is so. We note that for $X$ generically reduced and separable over $k$, $X\times_k\bar{k}$ is also generically reduced. A closed subset $D$ of a finite type $k$-scheme $X$ is called {\em nowhere dense} if $D$ contains no generic point of $X$.

\begin{definition}\label{def:Basic}  Let $k$ be a field and let $X$ be a reduced proper  $k$-scheme of pure dimension $d$ over $k$. \\
1. For $i=0,1, 2,\ldots$, the $i$th torsion order of $X$, $\Tor^{(i)}_k(X)\in \N_+\cup\{\infty\}$, is the order of the image of the diagonal $\Delta_X\subset X\times_kX$ in $\CH_d(X(i)\times X(d-1))$. We write $\Tor_k(X)$ for $\Tor^{(0)}_k(X)$ and call this the torsion order of $X$. \\
2. Suppose $X$ is separable over $k$.  For $1\le i<j\le3$, let $p_{ij}:X\times_kX\times_kX\to X\times_kX$ denote the projection on the $i$th and $j$th factors, and let $\Delta_{ij}\subset X\times_kX\times_kX$ denote the pullback $p_{ij}^{-1}(\Delta_X)$. Consider the Cartesian diagram
\[
\xymatrix{
X_{k(X\times X)}\ar[r]^{\tilde j}\ar[d]&X\times_kX\times_kX\ar[d]^{p_{23}}\\
\Spec k(X\times X)\ar[r]_i&X\times_kX.
}
\]
Let $\eta_1-\eta_2\in \CH_0(X_{k(X\times_k X)})$ denote the class of the pullback $\tilde{j}^*(\Delta_{12}-\Delta_{13})$. The {\em generic torsion order} of $X$, $g\Tor_k(X)\in \N_+\cup\{\infty\}$, is the order of  $\eta_1-\eta_2$ in $\CH_0(X_{k(X\times X)})$.\\
3.  We say that $X$ admits a decomposition of the diagonal of order $N$ and level $i$ if there is a nowhere dense closed subset $D$, a closed subset $Z$ of $X$ with $\dim_kZ\le i$ and cycles $\gamma$, $\gamma'$ on $X\times X$, with $\gamma$ supported in $X\times D$, $\gamma'$ supported in $Z\times X$  and with 
\[
N\cdot[\Delta_X]= \gamma'+\gamma
\]
in $\CH_d(X\times_kX)$.   \\
4. Suppose $X$ is geometrically integral. For an integer $N\ge1$, we say that $X$ admits a decomposition of the diagonal of order $N$ if there is a 0-cycle $x$ on $X$, a proper closed subset $D$ of $X$ and a dimension $d$ cycle $\gamma$ on $X\times_kX$, supported in $X\times D$, such that
\[
N\cdot[\Delta_X]=x\times X+\gamma
\]
in $\CH^d(X\times_kX)$. We say that $X$ admits a $\Q$-decomposition of the diagonal if $X$ admits a decomposition of order $N$ for some $N$, and that $X$ admits a $\Z$-decomposition of the diagonal if $X$ admits  a decomposition of the diagonal of order $1$. \\
5. Let $\deg:CH_0(X)\to \Z$ be the degree map. For $X$ smooth and integral, the {\em index} of $X$ is the positive generator $I_X$ of the subgroup $\deg \CH_0(X)\subset \Z$. Equivalently, $I_X$ is the g.c.d. of all degrees $[k(x):k]$ as $x$ runs over closed points of $X$. We extend the definition of the index to  proper, integral, separable $k$-schemes $Y$ by defining $I_Y$ to be the g.c.d. of all degrees $[k(y):k]$ as $y$ runs over closed points of the smooth locus $Y_{sm}$ of $Y$ (which is dense in $Y,$ as $Y$ is separable over $k$).
\end{definition}

\begin{remarks}\label{rem:1stRemarks} 1.  Suppose $X$ has pure dimension $d$ over $k$ and is geometrically integral. Since the only dimension $d$ cycles $\gamma_{(0)}$ on $X\times X$,  supported on $Z_{(0)}\times X$ with $Z_{(0)}\subset X$ a dimension zero closed subset are of the form $\gamma_{(0)} =x\times X$ for some 0-cycle $x$ on $X$, a decomposition of the diagonal of order $N$ and level 0 is the same as decomposition of the diagonal of order $N$.\\
2. We extend the definition of $\Tor_k^{(i)}(X)$ to all proper, equi-dimensional $k$-schemes by setting $\Tor^{(i)}_k(X):=\Tor^{(i)}_k(X_{red})$.  \\
3. We will often use an equivalent formulation of  Definition~\ref{def:Basic}(3), namely, that $X$ admits a decomposition of the diagonal of order $N$ and level $i$ if there is a closed subset $D$  containing no generic point of $X$ and a closed subset $Z$ of $X$ with $\dim_kZ\le i$  such that 
\[
N\cdot j^*[\Delta_X]=0
\]
in $\CH_d((X\setminus Z)\times_k(X\setminus D))$, where $j:(X\setminus Z)\times_k(X\setminus D)\to X\times_kX$ is the inclusion. This equivalence follows from the localization sequence
\[
\CH_d(Z\times_kX\cup X\times_kD)\xrightarrow{i_*}
\CH_d(X\times_kX)\xrightarrow{j^*} \CH_d((X\setminus Z)\times_k(X\setminus D))\to0
\]
and the surjection
\[
\CH_d(Z\times_kX)\oplus\CH_d(X\times_kD)\to \CH_d(Z\times_kX\cup X\times_kD).
\]
4. Decompositions of the diagonal  for {\em smooth} proper $k$-varieties have been considered in \cite{BS, CTP, Totaro} and by many others. Here we have extended the definition to proper, equi-dimensional, but not necessarily smooth $k$-schemes.
\end{remarks}

\begin{lemma} \label{lem:Basic} Let $X$ be a proper  $k$-scheme of pure dimension $d$ over $k$. \\
1. If  $\Tor^{(i)}_k(X)$ is finite then so $\Tor^{(i+1)}_k(X)$ and in this case, $\Tor^{(i+1)}_k(X)$ divides $\Tor^{(i)}_k(X)$.\\
2. $X$ admits a decomposition of the diagonal of order $N$ and level $i$ if and only if $\Tor^{(i)}_k(X)$ divides $N$; if $X$ is geometrically integral, then $X$ admits a decomposition of the diagonal of order $N$  if and only if $\Tor_k(X)$ divides $N$ and $X$ does not admit a $\Q$-decomposition of the diagonal if and only if $\Tor_k(X)=\infty$.\\
3.  Suppose $X$ is smooth over $k$ and geometrically integral. If $\Tor_k(X)$ is finite then so is $g\Tor_k(X)$ and $g\Tor_k(X)$ divides $\Tor_k(X)$.\\
4. Suppose $X$ is  separable over $k$ and let $L\supset k$ be a field extension. If $\Tor^{(i)}_k(X)$ is finite, then so is $\Tor^{(i)}_L(X_L)$ and in this case, $\Tor^{(i)}_L(X_L)$  divides $\Tor^{(i)}_k(X)$. If $L$ is finite over $k$ then $\Tor^{(i)}_k(X)$ is finite if and only if  $\Tor^{(i)}_L(X_L)$ is finite and in this case   $\Tor^{(i)}_k(X)$ divides $[L:k]\cdot\Tor^{(i)}_L(X_L)$. The  corresponding statements hold replacing $\Tor^{(i)}$ with $g\Tor$.\\
5. $X$ admits a decomposition of the diagonal of level $i$ and order $N$ if and only if there is a closed subset $Z\subset X$ of dimension $\le i$ such that the pull-back of $\Delta_X$ to $(X\setminus Z)\times_k\Spec k(X)$ via the inclusion
\[
(X\setminus Z)\times_k\Spec k(X)\to X\times_k X
\]
has order dividing $N$ in $\CH_0((X\setminus Z)\times_k\Spec k(X))$.
\end{lemma}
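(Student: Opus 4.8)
The plan is to dispatch the five assertions in turn, using throughout the presentation of $\CH_d(X(i)\times X(d-1))$ as a filtered colimit of Chow groups of products of open subschemes, and the reformulation given in Remark~\ref{rem:1stRemarks}(3). For (1), note that for each $i$ the inclusion of index sets $\{F:\dim_kF\le i\}\subset\{F:\dim_kF\le i+1\}$ induces a homomorphism $\CH_d(X(i)\times X(d-1))\to\CH_d(X(i+1)\times X(d-1))$ carrying $[\Delta_X]$ to $[\Delta_X]$; hence if $\Tor^{(i)}_k(X)=M<\infty$, then $M$ annihilates $\Delta_X$ at level $i+1$ as well, so $\Tor^{(i+1)}_k(X)$ is finite and divides $M$. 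For (5) and (2), recall from Remark~\ref{rem:1stRemarks}(3) that a level-$i$ order-$N$ decomposition exists iff $N\cdot j^*[\Delta_X]=0$ in $\CH_d((X\setminus Z)\times_k(X\setminus D))$ for some closed $Z$ with $\dim_kZ\le i$ and some nowhere dense $D$. Since $\Spec k(X)$ is the cofiltered limit of the opens $X\setminus D$, continuity of Chow groups identifies $\colim_D\CH_d((X\setminus Z)\times_k(X\setminus D))$ with $\CH_0((X\setminus Z)\times_k\Spec k(X))$ compatibly with the diagonal class; the displayed vanishing for some $D$ is therefore equivalent to the pullback of $\Delta_X$ to $(X\setminus Z)\times_k\Spec k(X)$ having order dividing $N$, which is (5). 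Taking the further colimit over $Z$ identifies the existence of such a decomposition with $N\cdot[\Delta_X]=0$ in $\CH_d(X(i)\times X(d-1))$, i.e.\ $\Tor^{(i)}_k(X)\mid N$, the first part of (2); for $X$ geometrically integral, Remark~\ref{rem:1stRemarks}(1) identifies level-$0$ decompositions with ordinary ones, and a $\Q$-decomposition of some order $N\ge1$ exists iff $\Tor_k(X)\mid N$ for a finite $N$, i.e.\ iff $\Tor_k(X)<\infty$.

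For (3), suppose $\Tor_k(X)\mid N$ and use (2) together with Remark~\ref{rem:1stRemarks}(1) to write $N\cdot[\Delta_X]=x\times X+\gamma$ in $\CH_d(X\times_kX)$ with $\gamma$ supported in $X\times D$, $D\subsetneq X$ closed. Pulling this identity back along the flat projections $p_{12},p_{13}\colon X\times_kX\times_kX\to X\times_kX$ and subtracting — the two pullbacks of $x\times X$ both equal $x\times X\times X$, so they cancel — gives
$$
N\cdot([\Delta_{12}]-[\Delta_{13}])=p_{12}^*\gamma-p_{13}^*\gamma,
$$
where $p_{12}^*\gamma$ is supported in $X\times D\times X$ and $p_{13}^*\gamma$ in $X\times X\times D$. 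Restricting along $\tilde j\colon X_{k(X\times X)}\to X\times_kX\times_kX$, the right-hand side restricts to $0$ because $p_{23}$ maps both supports into the nowhere dense closed subsets $D\times X$ and $X\times D$ of $X\times_kX$, while by definition the left-hand side restricts to $N\cdot(\eta_1-\eta_2)$. Hence $g\Tor_k(X)\mid N$, and taking $N=\Tor_k(X)$ gives the claim.

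For (4), flat pullback along $X_L\times_LX_L\to X\times_kX$ sends $[\Delta_X]$ to $[\Delta_{X_L}]$, preserves the bound $\dim\le i$, and — using that $X$ separable over $k$ keeps $X_L$ generically reduced and equidimensional — sends nowhere dense closed subsets to nowhere dense ones; so a level-$i$ order-$N$ decomposition over $k$ base-changes to one over $L$, giving $\Tor^{(i)}_L(X_L)\mid\Tor^{(i)}_k(X)$. When $[L:k]<\infty$, proper pushforward along the finite flat map $X_L\times_LX_L\to X\times_kX$ sends $[\Delta_{X_L}]$ to $[L:k]\cdot[\Delta_X]$ and, precomposed with flat pullback, is multiplication by $[L:k]$; pushing a level-$i$ order-$N$ decomposition over $L$ forward (the images of $Z_L$ and $D_L$ under $X_L\to X$ again have dimension $\le i$, resp.\ are nowhere dense) produces a level-$i$ order-$(N[L:k])$ decomposition over $k$, whence $\Tor^{(i)}_k(X)\mid[L:k]\cdot\Tor^{(i)}_L(X_L)$. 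The corresponding statements for $g\Tor$ follow identically, with $\CH_0$ of the relevant generic fibres replacing the niveau Chow groups, both being compatible with flat pullback and with finite pushforward.

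The one genuinely delicate point is (3): one must check that $p_{23}$ carries the supports of $p_{12}^*\gamma$ and $p_{13}^*\gamma$ into nowhere dense closed subsets of $X\times_kX$, so that the restriction of these cycles to the generic fibre of $p_{23}$ is defined and vanishes. The remaining work is bookkeeping with the two colimit identifications used in (1), (2), (5) and the standard functorialities of Chow groups, together with the dimension- and nowhere-density-preservation under base change that is precisely what the separability hypothesis secures in (4).
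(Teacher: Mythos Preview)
Your argument is correct, and for parts (1), (2), (4) and (5) it follows the same path as the paper (colimit identifications, localization as in Remark~\ref{rem:1stRemarks}(3), and flat pullback/finite pushforward for base change).

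For (3) you take a genuinely different route. The paper base-changes the decomposition $N\cdot\Delta_X=x\times X+\gamma$ to $k(X\times X)$ and then lets the resulting correspondences act on $\eta_1-\eta_2\in\CH_0(X_{k(X\times X)})$ via $\alpha^*(\rho)=p_{1*}(p_2^*\rho\cdot\alpha)$; this uses the intersection product on $X_{k(X\times X)}\times X_{k(X\times X)}$ and hence the smoothness hypothesis. Your argument instead pulls the decomposition back along the two flat projections $p_{12},p_{13}:X^3\to X^2$, observes that the pullbacks of $x\times X$ coincide and cancel upon subtraction, and then restricts the remaining identity $N\cdot(\Delta_{12}-\Delta_{13})=p_{12}^*\gamma-p_{13}^*\gamma$ along the flat map $\tilde j$, where the right-hand side dies because its support lies over $D\times X\cup X\times D\subsetneq X\times_kX$. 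Every step is a flat pullback, so your proof never invokes an intersection product; it therefore goes through assuming only that $X$ is proper and geometrically integral, which is strictly weaker than the smoothness the paper uses. The trade-off is that the paper's correspondence formulation is more in line with how $g\Tor$ is used later (e.g.\ in Lemma~\ref{lem:Basic2}), while yours is more self-contained here.
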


\begin{proof} (1) follows from the existence of the restriction homomorphism
\[
\CH_d((X\setminus F)\times_k(X\setminus D))\to 
\CH_d((X\setminus F')\times_k(X\setminus D))
\]
for $F\subset F'$.  (2) follows  from the localization sequence for $\CH_*(-)$, as in Remark~\ref{rem:1stRemarks}(3). 
 
 For (3),  suppose 
\[
N\cdot[\Delta_X]=x\times X+\gamma
\]
in $\CH_d(X\times_kX)$ for $x$ and $\gamma$ as in Definition~\ref{def:Basic}. Since $X$ is smooth and proper, we have for every field extension $F$ of $k$, the action of $\CH_d(X_F\times_FX_F)$ on $\CH_n(X_F)$ as correspondences (see \cite{Fulton}), that is, for $\alpha\in \CH_d(X_F\times_FX_F)$ and $\rho\in \CH_n(X_F)$, one has the well-defined element
\[
\alpha^*(\rho):=p_{1*}(p_2^*\rho\cdot \alpha).
\]
Acting by the correspondence $N\cdot\Delta^*_{X_{k(X\times X)}}$ on $\CH_0(X_{k(X\times_kX)})$ gives
\[
N\cdot (\eta_1-\eta_2)=x-x=0
\]
and thus  $g\Tor_k(X)$ divides $N$. Applying (2) gives (3). 

For  (4), the first assertion follows by applying the pull-back in $\CH_d$ for $X_L\times_LX_L\to X\times_kX$ and using (2). The second part  follows by applying the pushforward map $\CH_d(X_L\times_L X_L)\to 
\CH_d(X\times_k X)$ and using (2), and the assertion for $g\Tor_k(X)$ follows similarly  by applying the pushforward map $\CH_d(X_{L(X\times_kX)})\to \CH_d(X_{k(X\times X)})$. 

The last assertion (5) follows from the identity
\[
\CH_0((X\setminus Z)\times_k\Spec k(X))=\varinjlim_{D\subset X} \CH_d((X\setminus Z)\times_k(X\setminus D))
\]
where the limit is over all closed $D\subset X$ containing no generic point of $X$. 
\end{proof}

\begin{remark} We have restricted our attention to proper $k$-schemes for the definitions of torsion orders and decompositions of the diagonal. Even though the definitions would make sense for non-proper equi-dimensional $k$-schemes, a naive extension is probably not useful. Possibly replacing Chow groups with Suslin homology would make more sense: following Lemma~\ref{lem:Basic}, one could define $\Tor^{(i)}(X)$ for an equi-dimensional finite type $k$-scheme as the order of the restriction of $\Delta_X$ to $X\times_k\Spec k(X)$ in the quotient group 
\[
\lim_{\overrightarrow{Z\subset X}}H_0^{Sus}(X\times_k\Spec k(X))/im(H_0^{Sus}(Z\times_k\Spec k(X)))
\]
where $Z\subset X$ runs over all closed subsets of dimension at most $i$. We will not investigate properties of these torsion orders for non-proper $k$-schemes here.
\end{remark}

Here is the first in a series of elementary but useful specialization lemmas.

\begin{lemma}\label{lem:specialization}  Let $\sO$ be a noetherian regular local ring $f:\sX\to \Spec \sO$ a  proper flat morphism, with $\sX$ equi-dimensional over $\Spec\sO$  of  relative dimension $d$, $X\to \Spec K$ the generic fiber, $Y\to \Spec k$ the special fiber. We suppose that, for each $z\in\Spec\sO$, the fiber $\sX_z$ is generically reduced and separable over $k(z)$. Fix an integer $i$. \\
1. If $\Tor^{(i)}_K(X)$ is finite, then so is $\Tor^{(i)}_k(Y)$, and $\Tor^{(i)}_k(Y)$ divides $\Tor^{(i)}_K(X)$.\\
2. If $g\Tor_K(X)$ is finite, then so is $g\Tor_k(Y)$, and $g\Tor_k(Y)$ divides $g\Tor_K(X)$.\\
3. Let $\bar{k}$ and $\bar{K}$ be the respective algebraic closures of $k$ and $K$, and suppose either $K$ has characteristic zero, or that $\sO$ is excellent. If $\Tor^{(i)}_{\bar{K}}(X_{\bar{K}})$ is finite, then so is $\Tor^{(i)}_{\bar{k}}(Y_{\bar{k}})$, and $\Tor^{(i)}_{\bar{k}}(Y_{\bar{k}})$ divides $\Tor^{(i)}_{\bar{K}}(X_{\bar{K}})$.
\end{lemma}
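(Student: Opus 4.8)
\textbf{Proof proposal for Lemma~\ref{lem:specialization}.}

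\smallskip

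The plan is to reduce all three parts to a single specialization statement for Chow groups of zero-cycles on the ``open diagonal'' family, and then to bootstrap from $\sO$ a DVR to the general regular local ring by a standard dévissage. First I would observe that, by Lemma~\ref{lem:Basic}(5) (and its $g\Tor$ analogue), having a decomposition of the diagonal of level $i$ and order $N$ on a fiber $\sX_z$ is equivalent to the vanishing of $N$ times the restriction of $\Delta_{\sX_z}$ to $(\sX_z\setminus Z)\times_{k(z)}\Spec k(z)(\sX_z)$ for some closed $Z$ of dimension $\le i$; so in each case it suffices to compare the order of such a restricted diagonal class over the generic point with the one over the closed point. The main tool is the specialization map on Chow groups: given the proper flat family $\sX\to\Spec\sO$ with $\sO$ regular local of dimension one (a DVR), and the induced proper flat family $\sX\times_{\sO}\sX\to\Spec\sO$, Fulton's specialization homomorphism $\sigma:\CH_*(X\times_K X)\to\CH_*(Y\times_k Y)$ exists, is compatible with the localization sequences, sends $[\Delta_X]$ to $[\Delta_Y]$ (since the relative diagonal $\Delta_{\sX/\sO}$ is a flat family over $\sO$ restricting to $\Delta_X$ and $\Delta_Y$), and commutes with flat pullback to opens. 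Hence a level-$i$ order-$N$ decomposition of $\Delta_X$, written as in Remark~\ref{rem:1stRemarks}(3) as $N\cdot j^*[\Delta_X]=0$ in $\CH_d((X\setminus Z)\times_K(X\setminus D))$, specializes: taking $\sZ,\sD\subset\sX$ the closures of $Z,D$ (these are nowhere dense in $\sX$ and of relative dimension $\le i$, resp.\ $<d$, over $\sO$ after possibly enlarging), applying $\sigma$ and the compatibility with restriction to $(\sX\setminus\sZ)\times(\sX\setminus\sD)$ yields $N\cdot j^*[\Delta_Y]=0$ in $\CH_d((Y\setminus Z_0)\times_k(Y\setminus D_0))$, which is exactly a level-$i$ order-$N$ decomposition for $Y$. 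By Lemma~\ref{lem:Basic}(2) this gives $\Tor^{(i)}_k(Y)\mid N$; taking $N=\Tor^{(i)}_K(X)$ gives part~1. Part~2 is identical, working instead with the threefold product family $\sX\times_{\sO}\sX\times_{\sO}\sX\to\Spec\sO$ and the class $\eta_1-\eta_2=\tilde j^*(\Delta_{12}-\Delta_{13})$, which again is the restriction of a flat family of cycles over $\sO$; here one needs the hypothesis that every fiber is separable over its residue field so that $g\Tor$ is defined along the whole family, and one uses the generic-point description of $g\Tor$ together with continuity of Chow groups (the colimit over $D$) to pass the specialization through.

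\smallskip

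For the reduction from general regular local $\sO$ to the DVR case, I would factor the specialization as a composition of codimension-one specializations along a system of parameters: choose a regular system of parameters $t_1,\dots,t_n$ for $\sO$, and specialize successively, each step being a DVR specialization for the family over the appropriate localization $\sO/(t_1,\dots,t_{j})$ localized at the prime $(t_{j+1})$. Flatness is preserved at each stage because $t_{j+1}$ is a nonzerodivisor on the (flat, hence torsion-free over the base) structure sheaf of the partial special fiber, and equi-dimensionality plus generic reducedness and separability of the fibers are hypotheses that persist. Composing the divisibilities $\Tor^{(i)}\mid\Tor^{(i)}$ at each step yields part~1 in general; likewise for part~2.

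\smallskip

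Part~3 is the base-change-to-algebraic-closure statement and is where the real subtlety lies. The strategy is: first apply parts~1 (over $\sO$, or after a suitable finite extension of $\sO$) to get a decomposition over some residue field, then descend/ascend along the infinite algebraic extensions $k\hookrightarrow\bar k$ and $K\hookrightarrow\bar K$ using Lemma~\ref{lem:Basic}(4), which handles finite extensions and, by a limit argument over finite subextensions (Chow groups commute with filtered colimits of fields), infinite algebraic ones for the ``becomes finite and divides'' direction. Concretely: $\Tor^{(i)}_{\bar K}(X_{\bar K})$ finite means, by the limit version of Lemma~\ref{lem:Basic}(4), that $\Tor^{(i)}_{K'}(X_{K'})$ is finite and divides it for some finite extension $K'/K$; one then arranges a regular local ring $\sO'$ with fraction field $K'$, finite over $\sO$ (here is where excellence of $\sO$, or characteristic zero, is needed, via the finiteness of normalization in a finite extension and the existence of regular local points on the normalization after shrinking — one may have to pass to a localization of the integral closure and then to a regular local ring dominating it, e.g.\ by blowing up and using excellence for resolution in the relevant low-dimensional cases, or simply localize the normalization which is regular in codimension one and argue as in the DVR reduction), with special fiber over a residue field $k'$ finite over $k$. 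Apply part~1 to $\sO'$ to get $\Tor^{(i)}_{k'}(Y_{k'})\mid\Tor^{(i)}_{K'}(X_{K'})$, hence finite, and then Lemma~\ref{lem:Basic}(4) again (the ``divides for field extensions'' direction, valid for arbitrary extensions, in particular $k'\hookrightarrow\bar k$) gives $\Tor^{(i)}_{\bar k}(Y_{\bar k})\mid\Tor^{(i)}_{k'}(Y_{k'})\mid\Tor^{(i)}_{K'}(X_{K'})\mid\Tor^{(i)}_{\bar K}(X_{\bar K})$. I expect the construction of the finite regular local $\sO'$ with the prescribed generic and special fibers — i.e.\ the geometric input making the specialization available after base change — to be the main obstacle, and the excellence/characteristic-zero hypothesis is precisely what is there to supply it; the Chow-theoretic manipulations (specialization map, localization compatibility, colimit over fields and over divisors $D$) are routine given the earlier lemmas.
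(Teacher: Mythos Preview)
Your proposal is essentially correct and follows the same strategy as the paper: reduce to the DVR case, use Fulton's specialization homomorphism on Chow groups of the open complement, and use that it takes $[\Delta_X]$ to $[\Delta_Y]$ because the fibers are generically reduced.

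One point worth flagging is the organization of part~3. You treat the finite extension $K'/K$ and the construction of a suitable regular local $\sO'$ over the original $\sO$, and hedge about possibly needing blow-ups or resolution to ensure regularity of $\sO'$. The paper avoids all of this by reducing to the DVR case \emph{before} passing to a finite extension: the stratification of $\Spec\sO$ by regular closed subschemes $Z_0\subset\cdots\subset Z_r$ used in your d\'evissage preserves both ``quotient field of characteristic zero'' and excellence, so one may assume from the outset that $\sO$ is a DVR satisfying the hypothesis of part~3. Then the normalization $\sO^N$ of $\sO$ in the finite extension $L$ is automatically a semi-local PID (finiteness of normalization is exactly what char~0 or excellence buys), and localizing at a maximal ideal gives a DVR $\sO'$; no further regularity argument is needed. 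This is cleaner than trying to produce a regular local $\sO'$ over a higher-dimensional $\sO$.

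For part~2, the paper is slightly more explicit than your sketch: rather than working directly with the threefold product over $\sO$, it picks a generic point $\tau$ of $Y\times_kY$, takes the DVR $\sR=\sO_{\sU,\tau}$ (where $\sU\subset\sX\times_\sO\sX$ is the locus smooth over $\sO$, which is dense in $Y\times_kY$ by the separability hypothesis), and applies the specialization map $\CH_0(\sX_F)\to\CH_0(Y_{\mathfrak f})$ for this $\sR$. This makes the passage from $\CH_0(X_{k(X\times_KX)})$ to $\CH_0(Y_{k(Y\times_kY)})$ transparent, and is where the separability hypothesis is actually used.
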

\begin{proof} We use the definition of $\CH_d(X(i)\times X(d-1))$ as a limit to reduce to making computations in groups of the form $\CH_d((X\setminus Z)\times(X\setminus D))$ where $Z, D$ are closed subsets of $X$ with $\dim Z\le i$, $\dim D\le d-1$. We may stratify $\Spec\sO$ by regular closed subschemes $Z_0\subset\ldots\subset Z_r=\Spec\sO$, with $Z_i$ of Krull dimension $i$. This gives us the DVRs $\sO_i:=\sO_{Z_i, Z_{i-1}}$ and the restriction of $\sX$ to $\sX_i\to\Spec\sO_i$.  Regarding the proof of (3), if the original local ring $\sO$ has characteristic zero quotient field, we may stratify $\Spec\sO$ as above so that that each DVR $\sO_i$ has characteristic zero quotient field, and if $\sO$ is excellent, so are each of the $\sO_i$. Proving the result for each of the families $\sX_i$ gives the result for $\sX$, which reduces us to the case of a DVR $\sO$. 

In this case, suppose we have a relation
\begin{equation}\label{eqn:leveli}
N\cdot\Delta_X = 0
\end{equation}
in $\CH_d((X\setminus Z)\times (X\setminus D))$,  with $\dim_KZ\le i$ and $D$ nowhere dense. Taking the closures $\bar{Z}$ and $\bar{D}$ in $\sX$,  and letting $Z_0=Y\cap \bar{Z}$, $D_0=Y\cap \bar{D}$, we have the specialization homomorphism (see for example \cite[6.3.7]{Fulton})
\[
sp:\CH_d((X\setminus Z) \times_K(X\setminus D))  \to \CH_d((Y\setminus Z_0)\times_k(Y\setminus D_0)) 
\]
associated to the family 
\[
\sX\times_\sO\sX\setminus \bar{Z}\times \sX\cup \sX\times \bar{D}\to\Spec\sO. 
\]
Note that, as $\sO$ is a DVR, the closure $\bar{Z}$ is equi-dimensional over $\Spec \sO$, and thus $\dim_k Z_0\le i$; similarly, $D_0$ is nowhere dense in $Y$. Since  $\sX\to\Spec\sO$ is flat and the fibers are generically reduced, we have 
\[
sp(\Delta_X)=\Delta_Y
\]
in $\CH_d((Y\setminus Z_0)\times_k(Y\setminus D_0))$,
so applying $sp$ to \eqref{eqn:leveli}  proves (1). 

The proof of (2) is a similar specialization argument. Indeed,  we reduce as before to the case of a DVR $\sO$. Due to the generic separability assumption, there is a dense open subscheme $\sU$ of $\sX\times_\sO\sX$ that is smooth over $\Spec \sO$,  with special fiber dense in $Y\times_kY$. If now $\tau$ is a generic point of $Y\times_kY$, let $\sR$ be the local ring $\sO_{\sU, \tau}$. Then $\sR$ is a DVR and we may consider the $\sR$-scheme $\sX\otimes_\sO\sR\to \Spec\sR$. The quotient field $F$ of $\sR$ is one of the field factors of $k(X\times_KX)$ and the residue field $\mathfrak{f}$ of $\sR$ is the factor of $k(Y\times_kY)$ corresponding to $\tau$. Let $\eta_i^X$, $\eta^Y_i$, $i=1,2$ denote the images of the ``generic'' points used to define $g\Tor_K(X)$, resp. $g\Tor_k(Y)$  in  $\CH_0(\sX_F)$, resp. $\CH_0(Y_\mathfrak{f})$. Applying the specialization homomorphism
\[
sp:\CH_0(\sX_F)\to \CH_0(Y_\mathfrak{f})
\]
to a relation $N\cdot(\eta^X_1-\eta^X_2)$ in  $\CH_0(\sX_F)$ shows that $N\cdot(\eta^Y_1-\eta^Y_2)=0$ in $\CH_0(Y_\mathfrak{f})$ for each generic point $\tau$, and thus  $g\Tor_k(Y)$ divides $N$.

For (3), we note that there is a finite extension $L$ of $K$ so that 
\[
\Tor^{(i)}_{\bar{K}}(X_{\bar{K}})=\Tor^{(i)}_L(X_L) =\Tor^{(i)}_F(X_F)
\]
 for all finite extensions $F$ of $L$. Since either $K$ has characteristic zero or $\sO$ is excellent, the normalization $\sO^N$ of $\sO$ in $L$ is a semi-local principle ideal ring, finite over $\sO$ (the characteristic zero case follows from \cite[Chap.~V, Thm.~7]{ZS} and the excellent case follows from \cite[Theorem 78]{Matsumura}). Thus,  after replacing $\sO$ with the localization $\sO'$ of $\sO^N$ at a maximal ideal, and replacing $\sX$ with $\sX':=\sX\otimes_\sO\sO'$, we may assume that $\Tor^{(i)}_K(X)=\Tor^{(i)}_{\bar{K}}(X_{\bar{K}})$. Since  $\Tor^{(i)}_{\bar{k}}(Y_{\bar{k}})$ divides $\Tor^{(i)}_k(Y)$ by Lemma~\ref{lem:Basic}(4),  (3) follows from (1).
 \end{proof}

A global version of Lemma~\ref{lem:specialization}(3) follows by an argument using Hilbert schemes and Chow varieties. See \cite[Theorem~1.1 and Prop.~1.4]{Voisin} or \cite[Appendix B]{CTP} for similar statements.

\begin{corollary}\label{cor:specialization}  Let $p:\sX\to B$ be a flat, equi-dimensional and projective family over a  scheme $B$ of finite type over a field $k$ and let $b_0$ be a point of $B$. We suppose that each geometric fiber of $p$ is generically reduced.  Fix an integer $i\ge0$. Then there is a countable union of closed subsets $F=\cup_{i=1}^\infty F_i$ with $b_0\not\in F$ such that for all $b\in B\setminus F$, the geometric fiber $\sX_{\overline{k(b)}}$  satisfies
$\Tor^{(i)}(\sX_{\overline{k(b_0)}})\ |\ \Tor^{(i)}(\sX_{\overline{k(b)}})$. Here we use the convention that $N | \infty$ for all $N\in \N_+\cup\{\infty\}$ and $\infty | N\Rightarrow N=\infty$.  
\end{corollary}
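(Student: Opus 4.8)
The plan is to reduce the global statement to the local statement of Lemma~\ref{lem:specialization}(3) by a standard Hilbert-scheme/Chow-variety argument. Since $p:\sX\to B$ is projective and flat with $B$ of finite type over $k$, the scheme $B$ has only countably many points, but more importantly its constructible subsets are controlled by countably many irreducible locally closed pieces; so it suffices to show that for each irreducible closed subset $W\subset B$ containing $b_0$ there is a proper closed subset $F_W\subsetneq W$ such that the divisibility $\Tor^{(i)}(\sX_{\overline{k(b_0)}}) \mid \Tor^{(i)}(\sX_{\overline{k(w)}})$ holds for every $w\in W\setminus F_W$, and then take $F$ to be a suitable countable union of the $F_W$ together with translates. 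Concretely, fix the generic point $\eta_W$ of such a $W$; if $\Tor^{(i)}(\sX_{\overline{k(\eta_W)}})=\infty$ there is nothing to prove, so assume it equals some finite $N$. By Remark~\ref{rem:1stRemarks}(3) the relation $N\cdot j^*[\Delta_{\sX_{\bar\eta_W}}]=0$ holds in $\CH_d((\sX_{\bar\eta_W}\setminus Z)\times(\sX_{\bar\eta_W}\setminus D))$ for appropriate closed $Z$ of dimension $\le i$ and nowhere dense $D$; these data, being finitely many cycles subject to finitely many rational equivalences, are all defined over a finite extension $L/k(\eta_W)$ and, by a spreading-out argument (Hilbert schemes parametrizing $\bar Z$, $\bar D$, and the closures of the cycles and of the varieties giving the rational equivalences), extend to a family over some dense open $U_W$ of the normalization of $W$ in $L$. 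Over $U_W$ one has, by the flat base change / specialization compatibility exactly as in the proof of Lemma~\ref{lem:specialization}(1)–(3), that the fiber over a closed point $w$ inherits the relation $N\cdot[\Delta]=0$, hence $\Tor^{(i)}(\sX_{\overline{k(w)}}) \mid N = \Tor^{(i)}(\sX_{\overline{k(\eta_W)}})$.

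The second ingredient is to relate $\Tor^{(i)}(\sX_{\overline{k(\eta_W)}})$ to $\Tor^{(i)}(\sX_{\overline{k(b_0)}})$ when $b_0\in W$. This is precisely Lemma~\ref{lem:specialization}(3) applied to the local ring $\sO_{W,b_0}$ (which is regular after replacing $W$ by a resolution or, more elementarily, after restricting to a regular locus and using that one only needs divisibility which is insensitive to further specialization along Remark's localization sequence): the generic fiber of $\sX$ over $\Spec\sO_{W,b_0}$ has geometric torsion order $\Tor^{(i)}(\sX_{\overline{k(\eta_W)}})$, the special fiber has geometric torsion order $\Tor^{(i)}(\sX_{\overline{k(b_0)}})$, and the hypothesis of Lemma~\ref{lem:specialization}(3) on generically reduced, separable geometric fibers is guaranteed by our standing assumption that every geometric fiber of $p$ is generically reduced (separability over an algebraically closed field being automatic for a reduced scheme). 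Combining the two steps: $\Tor^{(i)}(\sX_{\overline{k(b_0)}}) \mid \Tor^{(i)}(\sX_{\overline{k(\eta_W)}}) \mid \Tor^{(i)}(\sX_{\overline{k(w)}})$ for all $w\in U_W$.

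To finish, one must organize the countably many exceptional sets. For each irreducible closed $W\subset B$ through $b_0$ we obtain a proper closed $F_W\subsetneq W$ (the complement of the locus over which the spread-out relation remains valid, pushed down from the normalization of $W$ in $L$, which is a constructible hence, after one more shrink, closed condition). Running over a countable cofinal family of such $W$ — and then, inductively, over the irreducible components of each $F_W$ minus $b_0$, and so on — produces a countable collection $\{F_i\}$ of closed subsets of $B$, none containing $b_0$, whose union $F$ has the property that every $b\notin F$ lies in some $U_W$ and hence satisfies the asserted divisibility. The main obstacle I expect is the spreading-out step: one has to be careful that not only the cycle $N\cdot\Delta$ but the entire rational equivalence witnessing $N\cdot\Delta=0$ — i.e.\ the auxiliary subvarieties and rational functions on them — spread out over a common open set, and that the specialization map behaves compatibly with all of this, so that the generic relation genuinely descends to closed fibers; this is exactly the content cited from \cite[Theorem~1.1 and Prop.~1.4]{Voisin} and \cite[Appendix B]{CTP}, and the proof should invoke those (or reprove them in this setting) rather than redo the Hilbert-scheme bookkeeping from scratch.
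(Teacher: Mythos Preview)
There is a genuine gap: the direction of divisibility in your concluding chain is reversed. In the first paragraph you correctly argue that spreading out the order-$N$ decomposition (with $N=\Tor^{(i)}(\sX_{\overline{k(\eta_W)}})$) over an open $U_W$ gives, for $w\in U_W$, a decomposition of order $N$ on the fiber over $w$; this yields $\Tor^{(i)}(\sX_{\overline{k(w)}})\mid N$, \emph{not} $N\mid \Tor^{(i)}(\sX_{\overline{k(w)}})$. This is exactly the direction in Lemma~\ref{lem:specialization}: specialization can only make the torsion order smaller. Hence the chain $\Tor^{(i)}(\sX_{\overline{k(b_0)}}) \mid \Tor^{(i)}(\sX_{\overline{k(\eta_W)}}) \mid \Tor^{(i)}(\sX_{\overline{k(w)}})$ fails at the second step; knowing that both $\Tor^{(i)}(\sX_{\overline{k(b_0)}})$ and $\Tor^{(i)}(\sX_{\overline{k(w)}})$ divide $N$ does not give the desired divisibility. (A secondary issue is that the ``countable cofinal family'' of irreducible $W$ through $b_0$ is not well-defined: there are in general uncountably many such $W$.)

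The paper's proof sidesteps this by turning the problem around. One fixes $M=\Tor^{(i)}(\sX_{\overline{k(b_0)}})$ and studies the ``bad'' set $\sS(M)=\{b: M\nmid \Tor^{(i)}(\sX_{\overline{k(b)}})\}$. Since $\Tor^{(i)}(\sX_{\overline{k(\bar b)}})\mid \Tor^{(i)}(\sX_{\overline{k(b)}})$ whenever $b$ specializes to $\bar b$, the set $\sS(M)$ is closed under specialization. Next, for each integer $N$ with $M\nmid N$ and each choice of Hilbert polynomials for $Z,D$ and bi-degrees for the rational-equivalence cycles $W^+,W^-$, one builds (via Hilbert schemes and Chow varieties) a parameter scheme over $B$ whose image in $B$ is exactly the constructible locus of $b$ admitting an order-$N$ decomposition with those discrete invariants; the countable union of these images over all $(N,\text{invariants})$ with $M\nmid N$ is $\sS(M)$. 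A specialization-closed countable union of constructible sets is a countable union of closed sets, and $b_0\notin\sS(M)$ is automatic. Your spreading-out instinct is the right one, but it has to be applied to the existence of an order-$N$ decomposition for each $N$ separately, rather than to the value of the torsion order at a single generic point.
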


\begin{proof}  Let $d$ be the relative dimension of $\sX$ over $B$. For a positive integer $M$, let $\sS(M)$ be the set  of $b\in B$ such that $M$ does not divide $\Tor^{(i)}(\sX_{\overline{k(b)}})$.  Taking $M=\Tor^{(i)}(\sX_{\overline{k(b_0)}})$ and $F=\sS(M)$, it suffices to show that $\sS(M)$ is a countable union of closed subsets of $B$. 

We first show that $\sS(M)$ is closed under specialization. Indeed, if we have a specialization $b\leadsto \bar{b}$ with $b\in \sS(M)$, then there is an excellent DVR $\sO$ and a morphism $\Spec\sO\to B$ with $b$ the image of the generic point of $\Spec\sO$ and $\bar{b}$ the image of the closed point. Indeed,  let $C$ be the closure of $b$ in $B$, blow-up $\Spec \sO_{C,\bar{b}}$ along $\bar{b}$, normalize to obtain a normal  scheme $\pi:T\to \Spec\sO_{C,\bar{b}}$ of finite type over  $\sO_{C,\bar{b}}$, choose a generic point $t$ of the Cartier divisor $\pi^{-1}(\bar{b})$ on $T$ and take $\sO:=\sO_{T,t}$. The local ring $ \sO_{C,\bar{b}}$ is excellent since $C$ is of finite type over a field, and the operations used in constructing $\sO$ from  $\sO_{C,\bar{b}}$ all preserve excellence (see \cite[Chapters 12, 13]{Matsumura}). Pulling back $\sX$ to $\Spec\sO$, it follows from Lemma~\ref{lem:specialization} and Lemma~\ref{lem:Index}(3) that $\bar{b}$ is also in $\sS(M)$. 

Since  $\sS(M)$ is closed under specialization, it suffices to show that, for each affine open subscheme $U$ of $B$, $\sS(M)\cap U$ is a countable union of closed subsets of $U$. Thus, we may assume that $B$ is affine, and that $\sX$ is a closed subscheme of $B\times\P^n_k$ for some $n$, with $p:\sX\to B$ the restriction of the projection.

By standard Hilbert scheme arguments, there is a projective $B$-scheme $q:\sY\to B$ such that the geometric points of $\sY$ consists of triples $(b,  Z, D)$, with $b$ a geometric point of $B$,   $Z\subset \sX_{\overline{k(b)}}$ a closed subscheme of dimension $j\le i$ and $D\subset \sX_{\overline{k(b)}}$ a  closed subscheme of dimension $<d$, and with $Z$ and $D$ having fixed Hilbert polynomials (chosen in advance). Similarly, using Chow varieties, there is a projective $B$-scheme $r:\sW\to B$ whose geometric points consists of  triples $(b, W^+, W^-)$ with  $W^+, W^-\subset \sX_{\overline{k(b)}}\times_{\overline{k(b)}}\sX_{\overline{k(b)}}\times\P^1$ dimension $d+1$ effective cycles of some fixed bi-degrees  (chosen in advance). $\sW$ contains the open subscheme $\sW^0$ of triples  $(b, W^+, W^-)$ such that both $W^+$ and $W^-$ have no component contained in  $\sX_{\overline{k(b)}}\times_{\overline{k(b)}}\sX_{\overline{k(b)}}\times\{0,\infty\}$. 

Fix an integer $N>0$. In $\sY\times_B\sW^0$ we have the closed subscheme $\sR_N$ whose geometric points consists of tuples $(b, Z, D, W^+, W^-)$ such that the cycle
\[
(\sX_{\overline{k(b)}}\times_{\overline{k(b)}}\sX_{\overline{k(b)}}\times0)\cdot(W^+-W^-)
\]
is supported in $Z\times \sX_{\overline{k(b)}}\times0\cup \sX_{\overline{k(b)}}\times D\times0$, and
\[
(\sX_{\overline{k(b)}}\times_{\overline{k(b)}}\sX_{\overline{k(b)}}\times\infty)\cdot(W^+-W^-)=N\cdot \Delta_X\times\infty.
\]

The image of $\sR_N$ under the projection $\sR_N\to B$ is a constructible subset of $B$. We vary the choice of $N$ over integers not divisible by $M$, and also vary over all choices of Hilbert polynomials (for dimension $\le i$ closed subschemes $Z$ and  closed subschemes $D$ of dimension $< d$) and all bi-degrees for the effective cycles $W^+$, $W^-$. As this set of choices is countable, it follows that $\sS(M)$ is a countable union of constructible subsets of $B$. As  $\sS(M)$ is closed under specialization, the proof is complete. 
\end{proof}

Next, we prove a modification of the specialization Lemma \ref{lem:specialization}. A related result may be found in \cite[Lemma 2.4]{Totaro}.

\begin{lemma}\label{lem:Spec2} Let $\sO$ be a discrete valuation ring with quotient field $K$ and residue field $k$. Let $f:\sX\to\Spec \sO$ be a flat and proper morphism of dimension $d$ over $\Spec \sO$ with generic fiber $X$ and special fiber $Y$. We suppose $Y$ is a union of closed subschemes, $Y=Y_1\cup Y_2$, with $Y_1$ and $Y_2$ having no common components,  and  that $X$ and $Y_1\setminus Y_2$ are generically reduced. Suppose in addition that $X$ admits a decomposition of the diagonal of order $N$ and level $i$. Then there is an identity in $\CH_d(Y_1\times_k Y_1)$
\[
N\Delta_{Y_1}=\gamma+\gamma_1+\gamma_2
\]
with $\gamma$ supported in $Z_1\times Y_1$ for some closed subset $Z_1\subset Y_1$ of dimension $\le i$, $\gamma_1$ supported on $Y_1\times D_1$, for some  nowhere dense closed subset $D_1\subset Y_1$, and $\gamma_2$  supported in $(Y_1\cap Y_2)\times Y_1$.
\end{lemma}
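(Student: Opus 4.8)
The plan is to start from the level-$i$ decomposition of the diagonal on $X$ and transport it to the special fiber via the specialization homomorphism, then isolate the component along $Y_1$ using the projectors coming from the decomposition $Y = Y_1 \cup Y_2$. By Lemma~\ref{lem:Basic}(2) the hypothesis gives an identity $N\cdot[\Delta_X] = \gamma' + \gamma$ in $\CH_d(X\times_K X)$ with $\gamma'$ supported in $Z'\times X$ for a closed $Z'\subset X$ of dimension $\le i$ and $\gamma$ supported in $X\times D'$ for a nowhere dense closed $D'\subset X$. First I would take closures $\overline{Z'}$, $\overline{D'}$ in $\sX$ and apply the specialization map of \cite[6.3.7]{Fulton} for the family $\sX\times_\sO\sX \setminus (\overline{Z'}\times\sX \cup \sX\times\overline{D'}) \to \Spec\sO$, exactly as in the proof of Lemma~\ref{lem:specialization}(1). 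Since $\sX\to\Spec\sO$ is flat with generically reduced fibers, $sp(\Delta_X) = \Delta_Y$, so after taking closures back up and using the localization sequence (Remark~\ref{rem:1stRemarks}(3)) we obtain an identity in $\CH_d(Y\times_k Y)$ of the form $N\cdot[\Delta_Y] = \delta + \delta'$ with $\delta$ supported in $\widetilde Z\times Y$ for some $\widetilde Z\subset Y$ of dimension $\le i$ and $\delta'$ supported in $Y\times \widetilde D$ for some nowhere dense $\widetilde D\subset Y$.

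The key step is then to restrict this identity to $Y_1\times_k Y_1$ and reorganize the support. Let $\iota\colon Y_1\times_k Y_1 \to Y\times_k Y$ be the inclusion and $\iota_1\colon Y_1 \hookrightarrow Y$. One has $\iota^*[\Delta_Y]$: by the self-intersection / refined Gysin formula for the regular-ish situation, or more elementarily by comparing cycles, $[\Delta_Y]$ restricted to $Y_1\times Y_1$ differs from $[\Delta_{Y_1}]$ only by a cycle supported on $(Y_1\cap Y_2)\times Y_1$ — indeed, away from $Y_1\cap Y_2$, $Y_1$ is open in $Y$ and generically reduced by hypothesis, so $\Delta_{Y_1}$ agrees with $\Delta_Y$ on $(Y_1\setminus Y_2)\times(Y_1\setminus Y_2)$; the discrepancy is therefore a $d$-cycle on $Y_1\times Y_1$ supported in $(Y_1\cap Y_2)\times Y_1 \cup Y_1\times(Y_1\cap Y_2)$, and the part supported in $Y_1\times(Y_1\cap Y_2)$ can be absorbed into $\gamma_1$ since $Y_1\cap Y_2$ is nowhere dense in $Y_1$ (as $Y_1,Y_2$ have no common component). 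Pulling back the transported relation along $\iota$ then gives
\[
N\cdot[\Delta_{Y_1}] = \iota^*\delta + \iota^*\delta' + (\text{cycle on } (Y_1\cap Y_2)\times Y_1).
\]
Now $\iota^*\delta$ is supported in $(\widetilde Z\cap Y_1)\times Y_1$; set $Z_1 := \widetilde Z\cap Y_1$, of dimension $\le i$, and call this cycle $\gamma$. For $\iota^*\delta'$, its support lies in $Y_1\times(\widetilde D\cap Y_1)$; the set $\widetilde D\cap Y_1$ need not be nowhere dense in $Y_1$ a priori, but any component of $Y_1$ contained in $\widetilde D$ would force that component to be a component of $Y$ contained in the nowhere dense $\widetilde D$, a contradiction — so $D_1 := \widetilde D\cap Y_1$ is nowhere dense in $Y_1$, and we call $\iota^*\delta'$ our $\gamma_1$. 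The leftover term, together with the discrepancy cycle above, is $\gamma_2$, supported in $(Y_1\cap Y_2)\times Y_1$.

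The main obstacle I anticipate is the bookkeeping around $\iota^*[\Delta_Y]$ versus $[\Delta_{Y_1}]$ when $Y$ is singular and non-reduced along $Y_1\cap Y_2$: the pullback $\iota^*$ of cycles is only well-defined up to rational equivalence and one must be careful that the refined intersection $\Delta_Y \cdot (Y_1\times Y_1)$ really does equal $\Delta_{Y_1}$ modulo cycles supported on $(Y_1\cap Y_2)\times Y_1$ and not some spurious multiplicity elsewhere. The clean way to handle this is to work on the open complement $(Y\setminus Y_2)$ first — there $Y_1\setminus Y_2$ is open and dense in an open subscheme of $Y$ and generically reduced, so the comparison is immediate — and then use the localization sequence $\CH_d((Y_1\cap Y_2)\times Y_1) \to \CH_d(Y_1\times Y_1) \to \CH_d((Y_1\setminus Y_2)\times Y_1)\to 0$ to conclude that the two restrictions of the transported relation to $(Y_1\setminus Y_2)\times Y_1$ agree, hence differ on $Y_1\times Y_1$ by something pushed forward from $(Y_1\cap Y_2)\times Y_1$. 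Combined with a further application of the localization sequence in the $D$-direction to push the nowhere-dense part into $\gamma_1$, this yields the asserted identity. The whole argument is structurally parallel to Lemma~\ref{lem:specialization}(1) with the added input of splitting off $Y_1$, and the only genuinely new ingredient is the elementary observation that $\Delta_Y$ and $\Delta_{Y_1}$ agree generically on $Y_1\setminus Y_2$.
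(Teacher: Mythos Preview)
Your proposal reaches the right destination but the main route you describe is blocked, and it is only your final-paragraph workaround that actually works---and that workaround \emph{is} the paper's proof.

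The problem is the middle paragraph. You want to pull back the identity $N\cdot[\Delta_Y]=\delta+\delta'$ along the closed immersion $\iota:Y_1\times_k Y_1\to Y\times_k Y$, but there is no map $\iota^*$ on Chow groups here: $Y_1\hookrightarrow Y$ is a closed immersion of equidimensional, generally singular schemes of the \emph{same} dimension, so it is not a regular embedding (unless $Y_1$ is a union of connected components of $Y$), and no refined Gysin map is available. Your phrase ``only well-defined up to rational equivalence'' understates this; the map simply does not exist. Also, your justification ``$\sX\to\Spec\sO$ is flat with generically reduced fibers'' is not what the hypotheses say: only $X$ and $Y_1\setminus Y_2$ are assumed generically reduced, and $Y$ itself need not be. So $sp(\Delta_X)$ is the cycle of the scheme-theoretic diagonal $\Delta_Y$, which may carry multiplicities along $Y_2$.

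Your last paragraph identifies exactly the right fix: restrict to the open $(Y\setminus Y_2)\times(Y\setminus Y_2)$, where (since $Y=Y_1\cup Y_2$ scheme-theoretically) one has $Y\setminus Y_2=Y_1\setminus Y_2=:Y_1^0$ as schemes, and there the diagonal is generically reduced; then invoke the localization sequence to lift to $Y_1\times_k Y_1$. This is precisely what the paper does, except that the paper is more economical: it removes $Y_2$ \emph{before} specializing, working directly with the family $(\sX\setminus Y_2)\times_\sO(\sX\setminus Y_2)\to\Spec\sO$, whose special fiber is already $Y_1^0\times_k Y_1^0$. Then $sp([\Delta_X])=[\Delta_{Y_1^0}]$ immediately, and a single application of the localization sequence for $(Y_1^0\setminus Z_0)\times(Y_1^0\setminus D_0)\hookrightarrow Y_1\times Y_1$ yields the three-term decomposition. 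You should rewrite the argument to do this directly and drop the $\iota^*$ discussion entirely.
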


\begin{proof} We consider the (non-proper) $\sO$-scheme
$(\sX\setminus Y_2)\times_\sO(\sX\setminus Y_2)\to\Spec \sO$, closed subsets $Z, D$ of $X$ with $\dim_KZ\le i$, $D$ nowhere dense,   and a relation
\[
N\cdot[\Delta_X]= 0
\]
in $\CH_d((X\setminus Z)\times_K(X\setminus D))$, where $[\Delta_X]$ denotes the cycle class represented by the restriction of the diagonal. 

As in the proof of Lemma~\ref{lem:specialization}(1), we have closed subsets $Z_0, D_0$ of $Y_1^0:=Y_1\setminus Y_2$ with $\dim_kZ_0\le i$, $D_0$ nowhere dense,  and a specialization homomorphism
\[
sp:\CH_d((X\setminus Z)\times_K(X\setminus D)) \to \CH_d((Y_1^0\setminus Z_0)\times_k(Y_1^0\setminus  D_0)).
\]
As $X$ and $Y_1^0$ are  reduced at each generic point, it follows that $sp([\Delta_X])=[\Delta_{Y_1^0}]$, where $[\Delta_{Y_1^0}]$ is the cycle class of the restriction of the diagonal on $Y^0_1$. Applying $sp$ thus gives  the relation
\[
N\cdot [\Delta_{Y_1^0}] =0
\]
in $\CH_d((Y_1^0\setminus Z_0)\times (Y_1^0\setminus D_0))$. 

Let $Z_1:=\bar{Z}_0$ be the closures of $Z_0$  in $Y_1$, let $\bar{D}_0$ be the closure of $D_0$ in $Y_1$ and let $D_1=\bar{D}_0\cup (Y_1\cap Y_2)$. Using the localization sequence
\begin{multline*}
\CH_d(Z_1\times Y_1\cup    Y_1\times D_1\cup (Y_1\cap Y_2)\times Y_1)\to\\
\CH_d(Y_1\times_k Y_1)\to \CH_d((Y_1^0\setminus Z_0)\times (Y_1^0\setminus D_0))\to 0
\end{multline*}
and the surjection
\begin{multline*}
\CH_d(Z_1\times Y_1)\oplus \CH_d(Y_1\times D_1\oplus\CH_d((Y_1\cap Y_2)\times Y_1)\\\to
\CH_d(Z_1\times Y_1\cup    Y_1\times D_1\cup (Y_1\cap Y_2)\times Y_1),
\end{multline*}
the relation $N\cdot [\Delta_{Y_1^0}] =0$
in $\CH_d((Y_1^0\setminus Z_0)\times (Y_1^0\setminus D_0))$ lifts to a relation of the desired form in $\CH_d(Y_1\times_k Y_1)$. 
\end{proof}

We conclude this series of specialization results with the following variation on Lemma~\ref{lem:Spec2}; a similar result may be found in \cite[Lemma 2.2]{CTNew}.

\begin{lemma}\label{lem:Spec3} Let $\sO$ be a discrete valuation ring with quotient field $K$ and residue field $k$. Let $f:\sX\to\Spec \sO$ be a flat and proper morphism of dimension $d$ over $\Spec \sO$ with generic fiber $X$ and special fiber $Y$. We suppose $Y$ is a union of closed subschemes, $Y=Y_1\cup Y_2$, with $X$ and  $Y_1$ separable and geometrically irreducible.  Suppose that $X$ admits a decomposition of the diagonal of order $N$. Let $Z=(Y_1\cap Y_2)_{red}$ with inclusion $i_Z:Z\to Y_1$. Suppose further that $Y_{2k(Y_1)}$ admits a zero-cycle $y_2$ of degree r supported in the smooth locus of $Y_{2k(Y_1)}$.

Then there is an identity in $\CH_d(Y_1\times_k Y_1)$
\[
Nr\Delta_{Y_1}=\gamma_1+\gamma_2
\]
with  $\gamma_1$ supported on $Y_1\times D_1$, for some divisor $D_1\subset Y_1$, and $\gamma_2$  supported in $Z\times Y_1$.
\end{lemma}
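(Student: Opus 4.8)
The plan is to reduce to Lemma~\ref{lem:Spec2} by a base-change trick on $\sO$, then to use the auxiliary zero-cycle $y_2$ on $Y_2$ to "move" the unwanted component-at-infinity term in the decomposition of $\Delta_{Y_1}$ over to the divisor. First I would start from the hypothesis that $X$ admits a decomposition of the diagonal of order $N$, i.e.\ a level-$0$ decomposition, so Lemma~\ref{lem:Spec2} applies (taking $Y_1$, $Y_2$ as given; note $X$ and $Y_1\setminus Y_2$ are generically reduced since $X$ and $Y_1$ are separable hence generically reduced, and $Y_1\setminus Y_2$ is an open subscheme of $Y_1$). This already yields, in $\CH_d(Y_1\times_k Y_1)$, an identity $N\Delta_{Y_1}=\gamma+\gamma_1+\gamma_2$ with $\gamma$ supported in $Z_1\times Y_1$, $\dim Z_1\le 0$, $\gamma_1$ supported in $Y_1\times D_1$ for a nowhere dense $D_1$, and $\gamma_2$ supported in $(Y_1\cap Y_2)\times Y_1$. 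So $\gamma_1$ and $\gamma_2$ are already fine; the only offending term is $\gamma=x\times Y_1$ for some zero-cycle $x$ on $Y_1$ (using Remark~\ref{rem:1stRemarks}(1), since the level is $0$).

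The key point is then to kill $r\cdot(x\times Y_1)$ by absorbing it into the $Z\times Y_1$ part, using that $Y_1$ itself — more precisely $Y_{2\,k(Y_1)}$ — carries a degree-$r$ zero-cycle $y_2$ supported in the smooth locus. Here I would work over the function field $k(Y_1)$: restricting the identity $Nr\Delta_{Y_1}=r\gamma+r\gamma_1+r\gamma_2$ to the generic fiber of $\mathrm{pr}_1\colon Y_1\times_k Y_1\to Y_1$, i.e.\ to $\CH_0(Y_{1,\,k(Y_1)})$, the diagonal restricts to the canonical point $\eta_1$, the term $r\gamma_1$ restricts into a divisor, and the term $r\gamma_2$ restricts to a zero-cycle supported on $Z_{k(Y_1)}$. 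The term $r\gamma = r(x\times Y_1)$ restricts to $r\cdot x_{k(Y_1)}$, i.e.\ to $r$ times a $k(Y_1)$-rational zero-cycle of some fixed degree — which I want to compare with $r$ times the canonical point. The idea is that over the field $K'=k(Y_1)$ the zero-cycle $y_2$ on $Y_{2,K'}$ is, as a cycle on $Y_{K'}=Y_{1,K'}\cup Y_{2,K'}$, supported on the smooth locus of $Y_2$ hence away from $Y_1$, and a degree-$r$ cycle there can be connected via the total space $\sX\otimes_\sO \sO_{K'}$ to a degree-$r$ cycle on $Y_{1,K'}$; using $r$ copies of the decomposition and a rational equivalence argument in the family, $r\cdot\eta_1$ becomes rationally equivalent to a cycle supported on $Z$ plus a divisorial term. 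Translating back up via the localization sequence for $\CH_d(Y_1\times_k Y_1)$ (as in Remark~\ref{rem:1stRemarks}(3)) then promotes this to the claimed identity $Nr\Delta_{Y_1}=\gamma_1'+\gamma_2'$ with $\gamma_1'$ supported on $Y_1\times D_1'$ (for a divisor $D_1'$) and $\gamma_2'$ supported on $Z\times Y_1$.

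The main obstacle is the middle step: making precise how the degree-$r$ zero-cycle $y_2$ on $Y_{2,k(Y_1)}$, living on a component of the special fiber disjoint in codimension from $Y_1$, produces a rational equivalence on $Y_{1,k(Y_1)}$ between $r$ times the canonical point and something supported on $Z$ (plus a divisor). This is exactly the content one expects to extract from \cite[Lemma 2.2]{CTNew}: one works in the total space $\sX$ over the DVR obtained by localizing $\sO$-geometry at the generic point of $Y_1$ (so that one has a DVR with residue field a field extension of $k(Y_1)$), spreads $y_2$ out to a horizontal cycle, specializes it to the generic fiber, and uses that in the closed fiber the cycle meets $Y_1$ only along $Y_1\cap Y_2=Z$. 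The bookkeeping of Hilbert-polynomial/closure arguments to ensure $D_1$ can be taken to be an honest divisor (not merely nowhere dense) uses that $Y_1$ is geometrically irreducible, so "nowhere dense" and "contained in a divisor" coincide. I would expect the argument to be essentially a careful repackaging of the proof of Lemma~\ref{lem:Spec2} together with the cycle-moving step, with no genuinely new difficulty beyond keeping track of which supports land where.
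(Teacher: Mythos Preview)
Your proposal has a genuine gap in the middle step, and the detour through Lemma~\ref{lem:Spec2} does not actually help. After applying Lemma~\ref{lem:Spec2} and restricting to $\CH_0(Y_{1,k(Y_1)})$, you obtain $N\cdot\eta_1 = x_{k(Y_1)} + (\text{cycle on }Z_{k(Y_1)})$, where $x$ is a zero-cycle on $Y_1$ coming from $k$ (a ``constant'' cycle). To reach the conclusion you would then need $r\cdot x_{k(Y_1)}\in i_{Z*}\CH_0(Z_{k(Y_1)})$, and there is no reason this should hold: the presence of a degree-$r$ zero-cycle $y_2$ on $Y_{2,k(Y_1)}$ says nothing about a fixed constant zero-cycle on $Y_1$. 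Your description of the middle step (``spread $y_2$ out, specialize to the generic fiber, the closed fiber meets $Y_1$ only along $Z$'') is too vague to bridge this, and in fact has the direction of specialization backwards.

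The paper bypasses Lemma~\ref{lem:Spec2} entirely. It henselizes $\sO_{\sX,\eta_1}$ to obtain a DVR $\sD$ with residue field $k(Y_1)$ and quotient field $L$, so that $\sX_\sD$ has generic fiber $X_L$ and special fiber $Y_{k(Y_1)}$. Because $\sD$ is henselian, \emph{both} $\eta_1$ and the smooth-locus cycle $y_2$ lift to horizontal cycles $s_1$, $\mathfrak{y}_2$ over $\sD$. The crucial trick is to form the degree-\emph{zero} cycle $\rho_L := r\cdot s_1(\Spec L) - \mathfrak{y}_{2L}$ on $X_L$ and let the correspondence $N\cdot\Delta_{X_L} = x_L\times X_L + \gamma_L$ act on it: the point term $(x_L\times X_L)^*$ annihilates any degree-zero cycle, and $\gamma_L^*$ vanishes because (after moving $y_2$ within its rational equivalence class) $\rho_L$ is supported away from $E_L$. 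Hence $N\rho_L=0$ in $\CH_0(X_L)$; specializing gives $N(r\eta_1 - y_2)=0$ in $\CH_0(Y_{k(Y_1)})$, and since $y_2$ is supported on $Y_2\setminus Z$ this yields $Nr\cdot\eta_1=0$ in $\CH_0(Y_{1,k(Y_1)}\setminus Z_{k(Y_1)})$. Localization then gives the claimed decomposition. The degree-zero combination is the missing idea in your sketch: it is precisely what makes the unwanted $x\times X$ term disappear, rather than leaving you with a constant cycle $x$ that must somehow be pushed into $Z$.
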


\begin{proof} 
Let $\eta_1$ be the generic point of $Y_1$, let $\sO_1=\sO_{\sX,\eta_1}$ and let $\sD$ be the henselization of $\sO_1$. Let $L$ be the quotient field of $\sD$; clearly $\sD$ has residue field $k(Y_1)$. Then as $\Spec \sO_1\to \Spec \sO$ is essentially smooth, the base-change $\sX_\sD:=\sX\otimes_\sO\sD\to\Spec\sD$ has generic fiber $\sX_L$ and special fiber $Y_{k(Y_1)}=Y_{1k(Y_1)}\cup Y_{2k(Y_1)}$.  Let $\sX_\sD^{sm}\subset \sX_\sD$ be the maximal  open subscheme of $\sX_\sD$ that is smooth over $\sD$.

Fix a rational equivalence
\[
N\cdot \Delta_X\sim x\times X+\gamma
\]
with $x$ a 0-cycle on $X$ and $\gamma$ supported on $X\times E$ for some divisor $E$. Pulling this back to $X_L$ gives the rational equivalence
\[
N\cdot \Delta_{X_L}\sim x_L\times_L X_L+\gamma_L
\]
with $\gamma_L$   supported on $X_L\times_L E_L$. Let $\sE$ be the closure of $E_L$ in $\sX_\sD$ and let $E_0=\sE\cap Y_{k(Y_1)}$; $E_0$ contains no generic point of $Y_{k(Y_1)}$. Furthermore, since the 0-cycle $y_2$ on $Y_{2k(Y_1)}$ is contained in the smooth locus of $Y_{2k(Y_1)}$, we may find a 0-cycle $y_2'$ on $Y_{2k(Y_1)}$, rationally equivalent to $y_2$, and with support in the smooth locus of $Y_{2k(Y_1)}\setminus(E_0\cup Z_{k(Y_1)})$. Changing notation, we may assume that $y_2$ is supported in the smooth locus of $Y_{2k(Y_1)}\setminus(E_0\cup Z_{k(Y_1)})$.

Since $\sD$ is Hensel, we may lift $\eta_1\in Y_1(k(Y_1))$ to a section $s_1:\Spec \sD\to \sX_\sD$. Since $y_2$ is supported in the smooth locus of $Y_{k(Y_1)}$, we may similarly lift the 0-cycle $y_2$ on $Y_{2k(Y_1)}$ to a cycle $\mathfrak{y}_2$ on $\sX_\sD$ of relative dimension zero and relative degree $r$ over $\sD$.  This gives us the 0-cycle of degree zero $\rho_L:=r\cdot s_1(\Spec L)-\mathfrak{y}_{2L}$ on $X_L$. Since  $\sD$ is local, $\sX_\sD$ is flat over $\sD$ and both $y_2$ and $\eta_1$ are supported in the smooth locus of $Y\setminus E_0$, it follows that both $s_1(\Spec\sD)$ and $\mathfrak{y}_2$ are supported in  $\sX^{sm}_\sD\setminus\sE$, and thus $\rho_L$ is supported in the smooth locus of $X_L\setminus E$.  

Let $p$ be a closed point in the smooth locus of $X_L$, inducing the inclusion $i_p:X_L\times_Lp\to X_L\times_LX_L$. Since $i_p$ is a regular codimension $d=\dim X$ embedding, we have the pull-back map (see  \cite[Chap. 6]{Fulton})
\[
i_p^*:\CH_d(X_L\times_LX_L)\to \CH_0(X_L\times_Lp)
\]
If $\mathfrak{z}$ is a 0-cycle  supported in the smooth locus of $X_L$, $\mathfrak{z}=\sum_jn_jp_j$, we have the map
\[
\mathfrak{z}^*:\CH_d(X_L\times_LX_L)\to \CH_0(X_L)
\]
defined as the sum $\sum_jn_jp_{1*}\circ i_{p_j}^*$. If $\gamma$ is a $d$-cycle on $X_L\times_LX_L$ such that each component of $\gamma$ intersects each subvariety $X_L\times p_j$ properly, then $\gamma^*(\mathfrak{z})$ is well-defined and
\[
\mathfrak{z}^*(\gamma)=\gamma^*(\mathfrak{z}).
\]

We apply these comments to the 0-cycle $\rho_L$ and the cycles $N\cdot \Delta_{X_L}$, $x_L\times_L X_L$ and $\gamma_L$. We get the identities in $\CH_0(X_L)$
\begin{align*}
N\cdot \rho_L&=\rho_L^*(N\cdot \Delta_{X_L})\\
&=\rho_L^*(x_L\times_L X_L)+  \rho_L^*(\gamma_L).
\end{align*}
Both terms in this last line are zero, the first since, as $X_L$ is irreducible, we have $\rho_L^*(x_L\times_L X_L)=\deg(\rho_L)\cdot x_L=0$, and the second since $X_L\times \supp(\rho_L)\cap \supp(\gamma_L)=\0$. In other words, $N\cdot \rho_L=0$ in $\CH_0(X_L)$.

We apply the specialization map
\[
sp:\CH_0(X_L)\to \CH_0(Y_{k(Y_1)})
\]
and find that $N(r\cdot\eta_1-y_2)=0$ in $\CH_0(Y_{k(Y_1)})$. Thus $Nr\cdot\eta_1=0$ in 
$\CH_0(Y_{1k(Y_1)}\setminus Z_{k(Y_1)})$, and by using the localization sequence for the inclusion $Z_{k(Y_1)}\to Y_{k(Y_1)}$, there is a 0-cycle $\gamma_{2k(Y_1)}$ on $Z_{k(Y_1)}$ with
\[
Nr\cdot\eta_1=i_{Z*}(\gamma_{2k(Y_1)})
\]
in $\CH_0(Y_{1k(Y_1)})$. Spreading this relation out over $Y_1$ as in previous proofs gives the desired decomposition of $Nr\cdot\Delta_{Y_1}$.
\end{proof}

\begin{remark}\label{rem:Spec3} Suppose we have $\sX$, $Y=Y_1\cup Y_2$ and $Z=Y_1\cap Y_2$ satisfying the hypotheses of Lemma~\ref{lem:Spec3}; suppose in addition that $Y_1$ is smooth over $k$. Then for all fields $F\supset k$, the quotient group $\CH_0(Y_{1F})/i_{Z*}(\CH_0(Z_F))$ is $Nr$-torsion. Indeed, since $Y_1$ is smooth, we have an operation of correspondences on $\CH_0(Y_{1F})$, the correspondence $\gamma_1^*$ of Lemma~\ref{lem:Spec3} acts trivially on $\CH_0(Y_{1F})$, $\gamma_2^*$  maps $\CH_0(Y_{1F})$ to $i_{Z*}(\CH_0(Z_F))$ and the sum acts by multiplication by $Nr$.
\end{remark}

The torsion orders behave well with respect to base-change.
 
\begin{lemma}\label{lem:Index} Let $X$ and $Y$ be  proper separable $k$-schemes, with $Y$ integral and with $X$ equi-dimensional over $k$. Let $K$ be the function field $k(Y)$, $I_Y$ the index  of $Y$. \\
1. For all $i$,  $\Tor_k^{(i)}(X)$ is finite if and only if $ \Tor_K^{(i)}(X_K)$ is finite and in this case, $\Tor_k^{(i)}(X)$ divides $I_Y\Tor_K^{(i)}(X_K)$. \\
 2.  Suppose $X$ is geometrically integral.  If  $g\Tor_k(X)$ is finite, then so is $\Tor_k(X)$ and  $\Tor_k(X)$ divides $I_X\cdot g\Tor_k(X)$.\\
 3. Let  $k\subset L$ be an extension of fields with $k$ algebraically closed. Then  $\Tor_k^{(i)}(X)=\Tor_L^{(i)}(X_L)$  for all $i$. Suppose in addition $X$ is smooth and integral. Then $g\Tor_k(X)=g\Tor_L(X_L)$ and $\Tor_k(X)=g\Tor_k(X)$.
 \end{lemma}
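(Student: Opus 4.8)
\emph{Plan.} The idea is to derive all three parts from the elementary results already established --- mainly Lemma~\ref{lem:Basic}(4) and the specialization Lemma~\ref{lem:specialization} --- by applying the latter to \emph{constant} families $X\times_k\Spec\sO\to\Spec\sO$ over suitable noetherian regular local rings $\sO$. Replacing $X$ by $X_\red$ (still separable over $k$) changes none of the invariants, and a constant family over such an $\sO$ is automatically proper, flat, equi-dimensional with all fibers generically reduced and separable over their residue fields, so Lemma~\ref{lem:specialization} always applies to it.

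\emph{Part 1.} If $\Tor_k^{(i)}(X)$ is finite, then Lemma~\ref{lem:Basic}(4) for the extension $k\subseteq K=k(Y)$ shows $\Tor_K^{(i)}(X_K)$ is finite and divides it. Conversely, suppose $\Tor_K^{(i)}(X_K)$ is finite. For a closed point $y\in Y_{sm}$ the ring $\sO_{Y_{sm},y}$ is regular local with fraction field $K$ and residue field $k(y)$, so Lemma~\ref{lem:specialization}(1) applied to the constant family over it gives that $\Tor_{k(y)}^{(i)}(X_{k(y)})$ is finite and divides $\Tor_K^{(i)}(X_K)$; combining this with Lemma~\ref{lem:Basic}(4) for the finite extension $k\subseteq k(y)$ yields
\[
\Tor_k^{(i)}(X)\ \mid\ [k(y):k]\cdot\Tor_{k(y)}^{(i)}(X_{k(y)})\ \mid\ [k(y):k]\cdot\Tor_K^{(i)}(X_K).
\]
In particular $\Tor_k^{(i)}(X)$ is finite, and since $I_Y$ is by definition the greatest common divisor of the $[k(y):k]$ over closed points $y\in Y_{sm}$, taking the g.c.d.\ of the right-hand sides gives $\Tor_k^{(i)}(X)\mid I_Y\cdot\Tor_K^{(i)}(X_K)$.

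\emph{Part 2.} Write $K=k(X)$ and $M=g\Tor_k(X)$, assumed finite, so $M(\eta_1-\eta_2)=0$ in $\CH_0(X_{k(X\times X)})$; the idea is to specialize the ``second and third factors'' occurring in the definition of $g\Tor_k(X)$. Fix a closed point $P\in X_{sm}$ and a point $x_0$ of the finite $K$-scheme $P_K:=P\times_k\Spec K$ (the fiber over $P$ of $(X_{sm})_K\to X_{sm}$), with residue field $\kappa$ and $e:=[\kappa:K]$. Then $\sO:=\sO_{(X_{sm})_K,\,x_0}$ is regular local with fraction field $k(X\times X)$ and residue field $\kappa$, and the constant family over $\Spec\sO$ has generic fiber $X_{k(X\times X)}$ and special fiber $X_\kappa$. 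Inspecting the closures of $\eta_1$ and $\eta_2$ in $X\times_k\Spec\sO$, one finds --- for the specialization homomorphism on Chow groups assembled out of DVR specializations as in the proof of Lemma~\ref{lem:specialization} --- that $sp(\eta_1)$ is the pull-back of $\Delta_X$ to $X\times_k\Spec\kappa$ (via $\Spec\kappa\to\Spec K\to X$), while $sp(\eta_2)$ is the class of a single closed point of $X_\kappa$ lying over $P$. Hence $M\cdot sp(\eta_1)=M\cdot sp(\eta_2)$ in $\CH_0(X_\kappa)$, and pushing forward along the finite degree-$e$ morphism $X_\kappa\to X_K$ --- under which $sp(\eta_1)$ maps to $e$ times the pull-back of $\Delta_X$ to $X_K$, and $sp(\eta_2)$ to a $0$-cycle supported on $P_K$ --- shows that $Me$ times the pull-back of $\Delta_X$ to $(X\setminus P)\times_k\Spec K$ vanishes. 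By Lemma~\ref{lem:Basic}(5) (with $Z=P$), Lemma~\ref{lem:Basic}(2) and Remark~\ref{rem:1stRemarks}(1) this means $\Tor_k(X)\mid Me$. Letting $x_0$ vary over the points of $P_K$ one gets that the g.c.d.\ of the corresponding integers $e$ divides $[k(P):k]$, and then letting $P$ vary over $X_{sm}$ and taking g.c.d.'s gives $\Tor_k(X)\mid I_X\cdot g\Tor_k(X)$; in particular $\Tor_k(X)$ is finite.

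\emph{Part 3.} For $k$ algebraically closed, Lemma~\ref{lem:Basic}(4) gives $\Tor_L^{(i)}(X_L)\mid\Tor_k^{(i)}(X)$ (and likewise for $g\Tor$). For the reverse, a decomposition of the diagonal of $X_L$ of order $N$ and level $i$ involves only finitely much data, hence is already defined over a finitely generated subextension $k'=k(V)\subseteq L$ with $V$ a smooth integral $k$-variety; choosing a $k$-rational point $v_0$ in the smooth locus of $V$ (which exists since $k$ is algebraically closed) and applying Lemma~\ref{lem:specialization}(1) to the constant family over $\sO_{V,v_0}$, whose special fiber is $X$ itself, gives $\Tor_k^{(i)}(X)\mid\Tor_{k'}^{(i)}(X_{k'})\mid N$. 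Thus $\Tor_k^{(i)}(X)$ is finite exactly when $\Tor_L^{(i)}(X_L)$ is, and the two agree. If moreover $X$ is smooth and integral, it is geometrically integral over $k$, and the identical spreading-out argument with Lemma~\ref{lem:specialization}(2) in place of (1) gives $g\Tor_k(X)=g\Tor_L(X_L)$. Finally $I_X=1$ over the algebraically closed field $k$, so Lemma~\ref{lem:Basic}(3) (giving $g\Tor_k(X)\mid\Tor_k(X)$) combined with Part 2 (giving $\Tor_k(X)\mid I_X\cdot g\Tor_k(X)=g\Tor_k(X)$) yields $\Tor_k(X)=g\Tor_k(X)$. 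The only genuinely delicate step in this whole argument is the computation in Part 2 of the specializations $sp(\eta_1)$ and $sp(\eta_2)$, together with the bookkeeping of residue-field degrees through the pushforward $X_\kappa\to X_K$: one must keep straight which tensor factor carries $K=k(X)$ when passing from $X\times_kX\times_kX$ to the partial base-change over $\sO$, and exercise a little care when $\kappa/K$ is inseparable; the remaining ingredients are routine.
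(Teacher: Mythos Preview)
Your proof is correct and follows essentially the same route as the paper: constant-family specialization (Lemma~\ref{lem:specialization}) combined with Lemma~\ref{lem:Basic}(4) for Part~1, a specialization of $\eta_1-\eta_2$ from $k(X\times X)$ down to a finite extension of $k(X)$ followed by pushforward for Part~2, and reduction to the finitely generated case (which is the content of Part~1 with $I_Y=1$) for Part~3. One remark on Part~2: since $X$ is geometrically integral, $k(P)\otimes_k K$ is already a field, so $P_K$ is a single point with $e=[\kappa:K]=[k(P):k]$; your step of varying $x_0$ over points of $P_K$ and taking g.c.d.'s is therefore harmless but superfluous, and your $\kappa$ is exactly the paper's $k(y)(X)$.
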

 
 \begin{proof}(1)  If $\Tor^{(i)}_k(X)$ is finite, then so is $\Tor^{(i)}_K(X_K)$ by Lemma~\ref{lem:Basic}(4). Suppose $\Tor^{(i)}_K(X_K)$ is finite. Let $y$ be a closed point of $Y$, contained in the smooth locus of $Y$ over $k$, and let $\sO:=\sO_{Y,y}$. Applying Lemma~\ref{lem:specialization} to the constant family $\sX:=X\times_k\sO$, we see that $\Tor^{(i)}_{k(y)}(X_{k(y)})$ is finite and $\Tor^{(i)}_{k(y)}(X_{k(y)})$ divides $\Tor^{(i)}_K(X_K)$. Applying Lemma~\ref{lem:Basic}(4) again, $\Tor^{(i)}_k(X)$ is finite and divides $[k(y):k]\cdot \Tor^{(i)}_{k(y)}(X_{k(y)})$. This proves the first assertion.

For (2), let $y$ be a closed point of $X$, contained in the smooth locus of $X$ over $k$, let $\sO:=\sO_{X,y}$, and let $\eta\in X(k(X))$ be the canonical point, that is, the restriction of the diagonal section $X\to X\times_kX$ to $\Spec k(X)$.  As in  the proof of Lemma~\ref{lem:specialization}, we may stratify $\Spec\sO$ by regular closed subschemes $y=Z_0\subset \ldots\subset Z_d=\Spec \sO$, $d=\dim_kX$, and thereby define specialization homomorphisms
\[
sp_i:\CH_0(X_{k(Z_i)(X)})\to \CH_0(X_{k(Z_{i-1})(X)}); \quad i=1,\ldots, d.
\]
Letting $sp_y:\CH_0(X_{k(X\times_kX)})\to \CH_0(X_{k(y)(X)})$ be the composition of the $sp_i$, we have   $sp_y(\eta_1-\eta_2)=\eta_y-y_{gen}$, where $\eta_y\in X(k(y)(X))$ is base-change of $y\in X(k(y))$  and $y_{gen}\in X(k(y)(X))$ is the base-change of $\eta\in X(k(X))$. Thus $g\Tor_k(X)\cdot (\eta_y-y_{gen})=0$  in $\CH_0(X_{k(y)(X)})$; pushing forward to $\CH_0(X_{k(X)})$ gives $[k(y):k]\cdot g\Tor_k(X)\cdot \eta-g\Tor_k(X)\cdot y\times_kk(X)=0$ in $\CH_0(X_{k(X)})$. Applying localization gives us the decomposition of the diagonal $\Delta_X$ of order $[k(y):k]\cdot g\Tor_k(X)$; doing this for each closed point $y$ gives us the decomposition of the diagonal of order $I_X\cdot g\Tor_k(X)$, hence $\Tor_k(X)$ is finite  and divides $I_X\cdot g\Tor_k(X)$.

 For (3), we may assume that $L$ is finitely generated over $k$, so that $L=k(Y)$ for some integral proper  $k$-scheme $Y$. Since $k$ is algebraically closed, $I_Y=1$, so the first assertion for $\Tor^{(i)}$ follows from (1). The assertions about $g\Tor$ follow from this, (2) and Lemma~\ref{lem:Basic}. 
 \end{proof}
 For example, $\Tor^{(i)}_k(X)=\Tor^{(i)}_L(X_L)$ if $L$ is a pure transcendental extension of a field $k$. 

\begin{definition} Let $X$ be a  proper, separable $k$-scheme. Let $\bar{k}$ be the algebraic closure of $k$ and define $\Tor^{(i)}(X):=\Tor_{\bar k}^{(i)}(X_{\bar k})$. We call $\Tor^{(i)}(X)$ the $i$th {\em geometric torsion order} of $X$. We write $\Tor(X)$ for $\Tor^{(0)}(X)$.
\end{definition}

Note that  $\Tor^{(i)}(X)$  is invariant under base-extension $X\leadsto X_L$ for a field extension $L\supset k$. Also,  assuming $X$ to be smooth and geometrically integral, $\Tor(X)$ is equal to $g\Tor_{\bar k}(X_{\bar k})$.

In much the same vein as Lemma~\ref{lem:Basic}, we show that the generic torsion order measures the torsion order after adjoining a ``generic'' rational point, that is:

\begin{lemma} \label{lem:Basic2} Let $X$ be a smooth proper geometrically integral $k$-scheme and let $K=k(X)$. Then $g\Tor_k(X)=\Tor_K(X_K)$.
\end{lemma}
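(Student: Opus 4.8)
Proof plan for Lemma~\ref{lem:Basic2} ($g\Tor_k(X) = \Tor_K(X_K)$ for $X$ smooth proper geometrically integral, $K = k(X)$).

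The plan is to identify the "generic point" construction defining $g\Tor_k(X)$ with the canonical rational point of $X_K$ and then compare the two torsion orders via Lemma~\ref{lem:Basic}(5) (or its analogue over $K$). First recall the setup: $g\Tor_k(X)$ is the order of $\eta_1 - \eta_2 \in \CH_0(X_{k(X\times X)})$, where $\eta_1 - \eta_2$ is the pullback of $\Delta_{12} - \Delta_{13}$ along the generic point $\Spec k(X\times X) \to X\times_k X$ (acting on the last factor). On the other hand, $\Tor_K(X_K)$ is, by Lemma~\ref{lem:Basic}(2) and Remark~\ref{rem:1stRemarks}(1), the order of the decomposition of the diagonal $\Delta_{X_K}$, equivalently by Lemma~\ref{lem:Basic}(5) the order of the pullback of $\Delta_{X_K}$ to $X_K \times_K \Spec K(X_K)$ inside $\CH_0(X_K \times_K \Spec K(X_K))$. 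So both invariants are orders of explicit zero-cycles on $X$ base-changed to a suitable function field, and the crux is to see these two function fields and two cycles agree.

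The key observation is that $K(X_K) = k(X)(X\otimes_k k(X)) = k(X\times_k X)$, compatibly with the two maps to $X\times_k X$: writing $X\times_k X$ with coordinates "first factor $=$ the copy giving $K$, second factor $=$ the copy over which we take the generic point," the field $k(X\times_k X)$ is exactly the function field of the generic fiber $X_K$ of $p_1: X\times_k X \to X$, i.e.\ $K(X_K)$. Under this identification, the canonical point $\eta \in X_K(K(X_K))$ — the restriction of the diagonal section $X_K \to X_K \times_K X_K$ — corresponds to one of the two "tautological" sections, and the class of $\Delta_{X_K}$ pulled back to $X_K \times_K \Spec K(X_K)$ is represented by $\eta$ minus (the base change of) the canonical $K$-point coming from $\Delta_X \subset X\times_k X$. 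This is precisely $\tilde j^*(\Delta_{12} - \Delta_{13})$ restricted appropriately: $\Delta_{12}$ gives the diagonal section and $\Delta_{13}$ gives the pullback of the "original" point. Thus the cycle $\eta_1 - \eta_2$ defining $g\Tor_k(X)$ is literally the same class in the same Chow group as the cycle defining $\Tor_K(X_K)$ via Lemma~\ref{lem:Basic}(5), giving equality of orders.

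I would carry this out in two short steps: (i) a bookkeeping step matching fields and sections — unwind the Cartesian diagram in Definition~\ref{def:Basic}(2), note $\Spec k(X\times X) \to \Spec K$ exhibits $X_{k(X\times X)} = (X_K)_{K(X_K)}$, and check that $\tilde j^*\Delta_{12}$, $\tilde j^*\Delta_{13}$ are respectively the canonical point of $(X_K)_{K(X_K)}$ and the pullback of the canonical $K$-point of $X_K$; (ii) invoke Lemma~\ref{lem:Basic}(5) for $X_K$ over $K$ (applicable since $X_K$ is smooth proper over $K$) with $Z = \emptyset$, which says $\Tor_K(X_K)$ is the order of the pullback of $\Delta_{X_K}$ to $X_K \times_K \Spec K(X_K)$, and observe this pullback equals $\eta_1 - \eta_2$. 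The main obstacle — really the only thing requiring care — is step (i): keeping straight which of the three factors plays which role and verifying that pulling back $\Delta_{12} - \Delta_{13}$ along $p_{23}^{-1}$ of the generic point does produce exactly (canonical point) $-$ (spread-out original point) rather than some permuted or twisted version; once the indices are pinned down this is formal, since over a field the pullback of a diagonal section along a point is just that point viewed in the fiber.
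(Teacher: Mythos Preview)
Your bookkeeping in step (i) is essentially right: under the identification $K(X_K)=k(X\times_kX)$, the two tautological points $\eta_1,\eta_2\in\CH_0(X_{k(X\times X)})$ do match up with the restriction of $\Delta_{X_K}$ (giving $\eta_1$) and with the base-change of the canonical $K$-point $\eta\in X_K(K)$ (giving $\eta_2$). But step (ii) contains a genuine error. The pullback of $\Delta_{X_K}$ to $X_K\times_K\Spec K(X_K)$ is the single point $\eta_1$, not a difference; and Lemma~\ref{lem:Basic}(5) with $Z=\0$ does \emph{not} compute $\Tor_K(X_K)$ --- it computes the (possibly larger) order of $\eta_1$ itself. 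What Lemma~\ref{lem:Basic}(5) actually gives is: $\Tor_K(X_K)\mid N$ iff $N\eta_1$ lies in the image of $\CH_0(Z_{K(X_K)})$ for some $0$-dimensional $Z\subset X_K$, i.e.\ iff $N\eta_1=x_{K(X_K)}$ for \emph{some} $0$-cycle $x$ on $X_K$. Your argument therefore proves the divisibility $\Tor_K(X_K)\mid g\Tor_k(X)$ (take $x=N\eta$), but not the converse.

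For the converse you must show: if $N\eta_1=x_{K(X_K)}$ for some $x\in\CH_0(X_K)$, then in fact $x_{K(X_K)}=N\eta_2$, i.e.\ $x=N\eta$ in $\CH_0(X_K)$. This is not formal --- a priori $x$ could be any degree-$N$ zero-cycle --- and it is precisely here that smoothness of $X$ enters. The paper supplies this step by letting the correspondence $N\cdot\Delta_{X_K}$ act on $\eta$: from a decomposition $N\Delta_{X_K}=x\times X_K+\gamma$ one gets $N\eta=(N\Delta_{X_K})^*(\eta)=(x\times X_K)^*(\eta)+\gamma^*(\eta)=x+0$ in $\CH_0(X_K)$, forcing $x=N\eta$. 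Once you have that, restricting to $X_K\times_K\Spec K(X_K)$ yields $N\eta_1=N\eta_2$, hence $g\Tor_k(X)\mid N$. This correspondence step is the missing idea in your plan.
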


\begin{proof} If $N\cdot(\eta_1-\eta_2)=0$ in $\CH_0(X_{k(X\times_kX)})$, then we have a decomposition of the diagonal of order $N$ for $X_{k(X)}$:
\[
N\cdot \Delta_{X_K}= N\cdot\eta\times_KX_K +\gamma
\]
with $\gamma$ supported in $X_K\times_K D$, with $D\subsetneq X_K$, and with $\eta$ the restriction of $\Delta_X$ to $X\times_kk(X)\subset X\times_kX$. Thus $\Tor_K(X_K)$ divides $g\Tor_kX$. Conversely, if $X_K$ admits a decomposition of the diagonal of order $n$,
\[
n\cdot \Delta_{X_K} =x\times X_K+\gamma
\]
with $x$ a 0-cycle on $X_K$ and $\gamma$ supported on $X_K\times D$ for some divisor $D\subset X_K$, then applying $n\cdot\Delta_{X_K}^*$ to $\eta$ gives us $x=n\cdot\eta$ in $\CH_0(X_K)$, so $n\cdot \Delta_{X_K} =n\cdot \eta\times X_K+\gamma$ in $\CH_d(X_K\times_KX_K)$. Restriction to $X\times_KK(X_K)$ gives $n\cdot \eta_1=n\cdot\eta_2$ in $\CH_0(X_{k(X\times_kX)})$, so $g\Tor_k(X)$ divides $\Tor_K(X_K)$.
\end{proof}

One last elementary property of the torsion indices concerns the behavior with respect to morphisms

\begin{lemma} \label{lem:TorDegree} Let $f:Y\to X$ be a surjective morphism of  integral reduced proper $k$-schemes of the same dimension $d$. Then $\Tor^{(i)}_k X$ divides $\deg f\cdot \Tor^{(i)}_k Y$ for all $i$. If $X$ and $Y$ are  separable over $k$, then $g\Tor_k X$ divides $(\deg f)^2\cdot g\Tor_k Y$.
\end{lemma}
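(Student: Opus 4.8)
The plan is to use the push-forward and pull-back homomorphisms on Chow groups attached to $f$ and its graph, together with the characterization of decompositions of the diagonal in terms of $\Tor^{(i)}$ given in Lemma~\ref{lem:Basic}(2) and Lemma~\ref{lem:Basic}(5). Let $\Gamma_f\subset Y\times_kX$ be the graph of $f$, and let ${}^t\Gamma_f\subset X\times_kY$ be the transpose. The key algebraic identity is that in $\CH_d(X\times_kX)$ one has ${}^t\Gamma_f\circ \Gamma_f = \deg f\cdot \Delta_X$ (composition of correspondences; this uses that $Y$ and $X$ have the same dimension, so $f$ is generically finite of degree $\deg f$, and the projection formula). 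So if $\Tor^{(i)}_k(Y)$ divides $N$, one wants to ``transport'' a level-$i$ decomposition of $N\cdot\Delta_Y$ through this composition to get a level-$i$ decomposition of $N\deg f\cdot\Delta_X$, whence $\Tor^{(i)}_k(X)\mid \deg f\cdot\Tor^{(i)}_k(Y)$ by Lemma~\ref{lem:Basic}(2).

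Concretely, I would work with the formulation in Remark~\ref{rem:1stRemarks}(3) / Lemma~\ref{lem:Basic}(5). Choose $Z_Y\subset Y$ of dimension $\le i$ and $D_Y\subsetneq Y$ nowhere dense with $N\cdot j_Y^*[\Delta_Y]=0$ in $\CH_d((Y\setminus Z_Y)\times_k(Y\setminus D_Y))$, or equivalently $N$ kills the restriction of $\Delta_Y$ to $(Y\setminus Z_Y)\times_k\Spec k(Y)$. Set $Z_X := \overline{f(Z_Y)}$, which has dimension $\le i$ since $f$ is a morphism, and let $D_X\subsetneq X$ be a suitable nowhere dense closed subset containing the image of $D_Y$ and the locus over which $f$ fails to be finite flat (shrinking further if necessary so that over $X\setminus D_X$ the map is nice). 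Over the generic point one has $k(X)\hookrightarrow k(Y)$ a finite extension of degree $\deg f$; pushing forward the relation $N\cdot[\text{restriction of }\Delta_Y]=0$ in $\CH_0((Y\setminus Z_Y)_{k(X)})$ along the finite morphism $(Y\setminus Z_Y)_{k(X)}\to (X\setminus Z_X)_{k(X)}$, and comparing with the image of $\Delta_X$ restricted to $(X\setminus Z_X)_{k(X)}$ via the projection formula $f_*(f^*\alpha)=\deg f\cdot\alpha$, yields $N\deg f\cdot[\text{restriction of }\Delta_X]=0$. This is exactly a level-$i$, order-$N\deg f$ decomposition of the diagonal of $X$, so Lemma~\ref{lem:Basic}(2) gives the first claim.

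For the $g\Tor$ statement I would run essentially the same argument but with two ``slots'' to correct, reflecting the definition of $g\Tor$ via $\eta_1-\eta_2\in\CH_0(X_{k(X\times_kX)})$. Pulling back through $f\times f$ on the base $X\times_kX$ (a finite extension of function fields of degree $(\deg f)^2$) relates the generic class $\eta_1^X-\eta_2^X$ of $X$ to the image of $\eta_1^Y-\eta_2^Y$; applying $f_*$ on the fiber and the projection formula twice (once for each factor) produces the factor $(\deg f)^2$, so that $(\deg f)^2\cdot g\Tor_k(Y)$ kills $\eta_1^X-\eta_2^X$, i.e.\ $g\Tor_k X\mid (\deg f)^2\cdot g\Tor_k Y$. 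The separability hypothesis is what guarantees that the relevant function fields are genuine products of fields and that the smooth loci are dense, so the correspondence/specialization machinery applies.

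The main obstacle I expect is bookkeeping at the generic point when $f$ is not flat: one must choose the nowhere dense $D_X$ (and correspondingly shrink on the $Y$ side) so that $f$ becomes finite and flat over the complement, in order to have a well-defined push-forward of cycles and a valid projection formula; and one must check that the class $sp$ or push-forward of the restricted diagonal of $Y$ indeed maps to $\deg f$ times the restricted diagonal of $X$ rather than some other zero-cycle. Since we are free to enlarge $D_X$ to any nowhere dense closed subset and we only need the conclusion after passing to the colimit over such $D_X$, this is not a serious difficulty, but it is the step that requires care.
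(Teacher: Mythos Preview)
Your approach is morally correct but considerably more elaborate than necessary, and parts of it lean on machinery that is not available under the stated hypotheses. You invoke the composition of correspondences ${}^t\Gamma_f\circ\Gamma_f$ and the projection formula $f_*(f^*\alpha)=\deg f\cdot\alpha$; both of these require either smoothness of $X$ or flatness of $f$ to make sense of the pull-back $f^*$ on Chow groups, and neither is assumed. Your subsequent generic-point argument is an attempt to work around this, but the bookkeeping gets muddled: the restriction of $\Delta_Y$ lives naturally in $\CH_0((Y\setminus Z_Y)_{k(Y)})$, not $\CH_0((Y\setminus Z_Y)_{k(X)})$, so an extra push-forward along the second factor is hidden in your phrasing.

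The paper's proof avoids all of this by observing that proper push-forward $(f\times f)_*:\CH_d(Y\times_kY)\to\CH_d(X\times_kX)$ exists unconditionally, and that $(f\times f)_*(\Delta_Y)=\deg f\cdot\Delta_X$ simply because the restriction of $f\times f$ to $\Delta_Y\cong Y$ is $f$ itself onto $\Delta_X\cong X$. Applying $(f\times f)_*$ to a decomposition $N\cdot\Delta_Y=\gamma_i+\gamma'$ immediately gives $N\deg f\cdot\Delta_X=(f\times f)_*\gamma_i+(f\times f)_*\gamma'$, and push-forward visibly respects the support conditions: $(f\times f)_*\gamma_i$ is supported on $f(Z)\times X$ with $\dim f(Z)\le i$, and $(f\times f)_*\gamma'$ on $X\times f(D)$. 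For $g\Tor$ the same idea works with $(f\times f\times f)_*(\Delta_{Y,ij})=(\deg f)^2\cdot\Delta_{X,ij}$. No pull-backs, no correspondences, no shrinking to flat loci. Your instinct to ``transport'' the decomposition is right; the point is that plain push-forward already does the transporting.
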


\begin{proof} Suppose the diagonal for $Y$ admits a decomposition of order $N$ and level $i$:
\[
N\cdot\Delta_Y=\gamma_i+\gamma'
\]
with $\gamma'$ supported on $Y\times D$ for some divisor $D$ and $\gamma_i$ supported on $Z\times Y$ for some closed subset $Z$ of $Y$ with $\dim_kZ\le i$. 
Pushing forward by $f\times f$ gives
\[
\deg f\cdot N\cdot\Delta_X= (f\times f)_*\gamma_i+(f\times f)_*\gamma',
\]
and thus  $\Tor^{(i)}_k X$ divides $\Deg f\cdot \Tor^{(i)}_k Y$.
Similarly, we have $(f\times f\times f)_*(\Delta_{Y, ij})=(\Deg f)^2\cdot \Delta_{X, ij}$ for $ij=12, 13$, which shows that $g\Tor_k X$ divides $(\Deg f)^2\cdot g\Tor_k Y$.
\end{proof}

The behavior of the torsion indices with respect to rational and birational maps will be discussed in the next  section. 

\section{Universally and totally $\CH_0$-trivial morphisms}\label{sec:trivial}
We  recall the notion  of a universally $\CH_0$-trivial morphism and a related notion, that of a totally $\CH_0$-trivial morphism.

\begin{definition}[\hbox{\cite[Definitions 1.1, 1.2]{CTP}}]
Let $p:Z\to Y$ be a proper morphism of finite type $k$-schemes for some field $k$. The morphism  $p$ is {\em universally $\CH_0$-trivial} if for all field extensions $F\supset k$, the map $p_*:\CH_0(Z_F)\to \CH_0(Y_F)$ is an isomorphism. A proper $k$-scheme $\pi_Y:Y\to \Spec k$  is called a universally $\CH_0$-trivial $k$-scheme if $\pi_Y$ is a universally $\CH_0$-trivial morphism.
\end{definition}

\begin{definition}
A proper morphism  $p:Z\to Y$  of  $k$-schemes is {\em totally  $\CH_0$-trivial} if for each point $y\in Y$, the  fiber $p^{-1}(y)$ is a universally $\CH_0$-trivial $k(y)$-scheme. 
\end{definition}
It follows directly from the definition that the property of a proper morphism being totally  $\CH_0$-trivial is stable under arbitrary base-change.

We rephrase a result of Colliot-Th\'el\`ene and Pirutka.

\begin{proposition}[\hbox{\cite[Proposition 1.7]{CTP}}]\label{prop:Tot} Let $p:Z\to Y$ be a totally $\CH_0$-trivial morphism. Then  $p$ is  universally $\CH_0$-trivial. 
\end{proposition}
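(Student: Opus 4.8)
The plan is to establish universal $\CH_0$-triviality of $p \colon Z \to Y$ by verifying, for every field extension $F \supset k$, that $p_* \colon \CH_0(Z_F) \to \CH_0(Y_F)$ is an isomorphism. Since being totally $\CH_0$-trivial is stable under base change, it suffices to prove the statement for $F = k$; applying the conclusion to each base change $Z_F \to Y_F$ then gives the full result. So I would fix $F = k$ and concentrate on showing $p_* \colon \CH_0(Z) \to \CH_0(Y)$ is both surjective and injective.

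For surjectivity, I would start from a closed point $y \in Y$. The fiber $p^{-1}(y)$, being a universally $\CH_0$-trivial $k(y)$-scheme, is in particular nonempty (the structure map $p^{-1}(y) \to \Spec k(y)$ induces an isomorphism on $\CH_0$, and $\CH_0(\Spec k(y)) = \Z \neq 0$), so it carries a closed point $z$; then $p_*[z] = [k(z):k(y)]\cdot[y]$. To hit $[y]$ exactly on the nose, I would use that $p^{-1}(y)$ being universally $\CH_0$-trivial forces its index over $k(y)$ to be $1$: indeed the degree map $\CH_0(p^{-1}(y)) \to \Z$ is, via the isomorphism with $\CH_0(\Spec k(y)) \cong \Z$, an isomorphism, which is only possible if there is a zero-cycle of degree $1$. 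Pushing such a cycle forward gives $[y]$ in the image, and since closed points generate $\CH_0(Y)$, $p_*$ is surjective.

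For injectivity, I would take a zero-cycle $\alpha$ on $Z$ with $p_*\alpha \sim 0$ on $Y$, witnessed by finitely many curves $C_i \subset Y$ and rational functions $f_i \in k(C_i)^\times$ with $\sum \Div_{C_i}(f_i) = p_*\alpha$. The strategy is a standard moving/specialization argument along these curves: replace each $C_i$ by its normalization and pull the situation back along $p$ to reduce the rational equivalence $p_*\alpha \sim 0$ upstairs. Concretely, for a curve $C \subset Y$ one considers $Z \times_Y C \to C$; over the generic point $\eta_C$ the fiber is the universally $\CH_0$-trivial $k(\eta_C)$-scheme $p^{-1}(\eta_C)$, so the zero-cycle data on $C$ lifts through the generic-fiber isomorphism $\CH_0(p^{-1}(\eta_C)) \xrightarrow{\sim} \CH_0(\eta_C)$, and then one spreads out and specializes at the finitely many points in the support of $\Div(f_i)$, using that the special fibers $p^{-1}(y)$ are also universally $\CH_0$-trivial so that the specialization map is compatible with the isomorphisms $p_* $. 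Carrying this out shows $\alpha$ itself is rationally equivalent to zero on $Z$.

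The main obstacle I expect is the injectivity argument: one must lift the whole one-parameter family of rational equivalences from $Y$ to $Z$ coherently, and reconcile the generic-fiber isomorphism with behavior at the special points of the curves, which is exactly where the full strength of \emph{totally} $\CH_0$-trivial (as opposed to merely an isomorphism over $\Spec k$) enters. This is the point at which one invokes Colliot-Th\'el\`ene--Pirutka's own argument for \cite[Proposition 1.7]{CTP}, and I would follow their treatment rather than reprove it from scratch; the surjectivity half is elementary by comparison.
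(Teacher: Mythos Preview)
The paper does not give its own proof of this proposition; it is simply stated as a rephrasing of \cite[Proposition 1.7]{CTP} and cited without further argument. Your sketch---surjectivity via a degree-one zero-cycle on each fiber, injectivity by lifting rational equivalences along curves using the $\CH_0$-triviality of both generic and special fibers, deferring the details to \cite{CTP}---is correct and is precisely the argument in the cited reference, so your treatment is in fact more detailed than what the paper provides.
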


\begin{remarks} \label{rem:Tot} 1. By the base-change property of totally  $\CH_0$-trivial morphisms, we see that for  $p:Z\to Y$  a totally $\CH_0$-trivial morphism and $W\to Y$ a morphism of   $k$-schemes, the projection $Z\times_YW\to W$ is universally $\CH_0$-trivial.\\
2. There are examples of universally $\CH_0$-trivial morphisms that are not totally $\CH_0$-trivial\footnote{For example, let $k$ be an algebraically closed field of characteristic $\neq2$, let $S$ be the cone in $\P^3_k$ over a smooth plane curve $C$ of degree $\ge3$,  let $Y\to S$ be the double cover branched over the transverse intersection of $S$ with a quadric, and let $y_1, y_2\in Y$ be the points lying over the vertex of $S$.   Let $p:Z\to Y$ be the blow-up of $Y$ at $y_1$ and let $z=p^{-1}(y_2)$. Then for all fields $L\supset k$, $\CH_0(z_L)\xrightarrow{i_{z*}} \CH_0(Z_L)$  and  $\CH_0(y_{2L})\xrightarrow{i_{y_2*}} \CH_0(Y_L)$ are isomorphisms, and thus $p$ is universally $\CH_0$-trivial. However, $p^{-1}(y_1)\cong C$, so $p$ is not totally $\CH_0$-trivial.}; in particular, the property of a morphism being universally $\CH_0$-trivial is not stable under base-change. 
\end{remarks}

\begin{corollary} \label{cor:UnivTot} 
1. Universally $\CH_0$-trivial morphisms and totally $\CH_0$-trivial morphisms are closed under composition.\\
2. Let $p:Z\to Y$ be a morphism of smooth $k$-schemes that is a sequence of blow-ups with smooth centers. Then $p$ is a totally $\CH_0$-trivial morphism.  \\
3. Suppose that the field $k$ admits resolution of singularities of birational morphisms for smooth $k$-schemes of dimension $\le d$, that is: if $p:Z\to Y$ is a proper birational morphism of smooth $k$-schemes of dimension $\le d$, there is a sequence of blow-ups of $Y$ with smooth centers, $q:W\to Y$, such that resulting birational map $r:W\to Z$ is a morphism.  Then each proper birational morphism $p:Z\to Y$  of smooth $k$-schemes of dimension $\le d$ is totally $\CH_0$-trivial. In particular, this holds for $k$ of characteristic zero, or for $d\le 3$ and $k$ algebraically closed (see \cite{Ab}).
\end{corollary}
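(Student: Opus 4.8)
The plan is to prove the three parts in order, each feeding the next. For part (1), closure of universally $\CH_0$-trivial morphisms under composition is immediate, since a composite of isomorphisms on each $\CH_0(-_F)$ is an isomorphism; for totally $\CH_0$-trivial morphisms $p\colon X\to Y$ and $q\colon Y\to W$ I would fix a point $w\in W$, write $(q\circ p)^{-1}(w)=p^{-1}(q^{-1}(w))$, observe that $q^{-1}(w)\to\Spec k(w)$ is universally $\CH_0$-trivial (as $q$ is totally $\CH_0$-trivial) and that $p^{-1}(q^{-1}(w))\to q^{-1}(w)$ is a base change of $p$, hence totally $\CH_0$-trivial, hence universally $\CH_0$-trivial by Proposition~\ref{prop:Tot}; composing the two then shows $(q\circ p)^{-1}(w)$ is a universally $\CH_0$-trivial $k(w)$-scheme, and $w$ was arbitrary. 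For part (2), I would reduce by part (1) to a single blow-up $p\colon\Bl_C Y\to Y$ with $Y$ and $C$ smooth (every scheme in such a tower is automatically smooth), note that $p^{-1}(y)$ is $\Spec k(y)$ for $y\notin C$ and is the fibre over $y$ of the projective bundle $\P(N_{C/Y})\to C$, i.e.\ $\P^{c-1}_{k(y)}$ with $c=\codim_Y C$, for $y\in C$, and then invoke the classical fact that $\P^n_L$ is a universally $\CH_0$-trivial $L$-scheme for every field $L$ (i.e.\ $\CH_0(\P^n_F)=\Z$ via the degree map for all $F\supset L$).

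The substantive part is (3). Given a proper birational morphism $p\colon Z\to Y$ of smooth $k$-schemes of dimension $\le d$ (which I may assume integral, treating connected components separately), I would apply the resolution hypothesis to produce a tower of blow-ups with smooth centres $q\colon W\to Y$ whose induced birational map $r\colon W\to Z$ is a morphism; then $p\circ r=q$ (two morphisms agreeing on a dense open of the reduced scheme $W$, with $Y$ separated), and $r$ is proper (cancellation, using $p$ separated) and birational. By parts (1) and (2), $q$ is totally $\CH_0$-trivial, so every $q^{-1}(y)$ is a universally $\CH_0$-trivial $k(y)$-scheme. The tempting move --- ``cancel $r$'' to conclude for $p$ --- is circular, since it asks for the same statement, that $r$ is totally $\CH_0$-trivial. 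The way I would get around this is to apply the resolution hypothesis a \emph{second} time, to $r\colon W\to Z$: this gives a tower of blow-ups $q_1\colon W_1\to Z$ with $r\circ r_1=q_1$ for a morphism $r_1$. A tower of blow-ups with smooth centres has a $k(z)$-rational point in its fibre over any closed point $z$ of the base (lift through the blow-ups one at a time, using rational points of $\P^m_{k(z)}$), so pushing such points forward along the surjective $r_1$ shows that every fibre $r^{-1}(z)$ --- and hence every fibre of every base change of $r$ --- has a rational point, so has index $1$.

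To finish, I would fix $y\in Y$ and a field extension $F\supset k(y)$: since $q^{-1}(y)=r^{-1}(p^{-1}(y))$, the morphism $q^{-1}(y)_F\to p^{-1}(y)_F$ is a base change of $r$, so all of its fibres over closed points have index $1$, whence it is surjective on $\CH_0$ (closed points generate $\CH_0$, and each is the pushforward of a degree-one zero-cycle from its fibre); in the factorization $q^{-1}(y)_F\to p^{-1}(y)_F\to\Spec F$ the composite is an isomorphism on $\CH_0$ (universal $\CH_0$-triviality of $q^{-1}(y)$ over $k(y)$) and the first arrow is $\CH_0$-surjective, which forces $\CH_0(p^{-1}(y)_F)\to\Z$ to be an isomorphism. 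Hence $p^{-1}(y)$ is universally $\CH_0$-trivial over $k(y)$ for every $y$, i.e.\ $p$ is totally $\CH_0$-trivial. I expect the main obstacle to be exactly this apparent circularity in part (3): the point is to extract from the second application of the resolution hypothesis only the weak ``rational point in every fibre of $r$'' statement, which already suffices to run the $\CH_0$-surjectivity and cancellation argument, without ever proving $r$ itself totally $\CH_0$-trivial.
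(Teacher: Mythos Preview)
Your proof is correct and follows essentially the same approach as the paper. Parts (1) and (2) are identical to the paper's treatment. For part (3), both you and the paper apply the resolution hypothesis twice---first to $p$ to obtain $q=p\circ r$ with $q$ a tower of blow-ups, then to $r$ to obtain a second tower $s\colon X\to Z$ (your $q_1$) factoring as $r\circ t$ (your $r\circ r_1$)---and then conclude by showing $r_*$ is surjective on $\CH_0$ of fibres, which together with $q_*$ being an isomorphism forces $p_*$ to be an isomorphism. The only cosmetic difference is that the paper uses the full total $\CH_0$-triviality of $s$ (hence $s_*$ is an isomorphism on fibres over $y$, so $r_*$ is split surjective), whereas you extract only the weaker fact that fibres of $r$ carry rational points, which already suffices for $\CH_0$-surjectivity; the logical skeleton is the same.
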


\begin{proof} (1) for universally $\CH_0$-trivial morphisms is obvious from the definition and for totally $\CH_0$-trivial morphisms this follows with the help of Proposition~\ref{prop:Tot}.

For (2), we use (1) to reduce to  checking for the blow-up of $Y$ along a smooth closed subscheme $F$, for which the assertion is clear.

For (3), let $y$ be a point of $Y$ and $L\supset k(y)$ a field extension. Dominating $Z$ by a $q:W\to Y$ as above,   we have the maps
\[
\CH_0(q^{-1}(y)_L)\xrightarrow{r_*}\CH_0(p^{-1}(y)_L) \xrightarrow{p_*}\CH_0(\Spec L)=\Z
\]
which, as $\CH_0(q^{-1}(y)_L)\to \CH_0(\Spec L)$ is an isomorphism, gives us a splitting to $p_*$. Applying resolution of singularities to $r:W\to Z$ gives a sequence of blow-ups with smooth centers $s:X\to Z$ such that $t:=r^{-1}s:X\to W$ is a morphism. Since $X\to Z$ is totally $\CH_0$-trivial, the sequence
\[
\CH_0(t^{-1}(q^{-1}(y))_L)\xrightarrow{t_*}\CH_0(q^{-1}(y)_L)\xrightarrow{r_*}\CH_0(p^{-1}(y)_L)  
\]
gives a splitting to $r_*$, so $p_*$ is an isomorphism.
\end{proof}

\begin{lemma}\label{lem:LevelBirat}
1. Let $q:Z\to Y$ be a birational totally $\CH_0$-trivial morphism of integral, separable, $k$-schemes. Let  $N>0$ be an integer, let $Y_i, W, D\subset Y$ be  proper closed subsets with $\dim\, Y_i\le i$, and suppose we have a decomposition of $\Delta_Y$ as
\[
N\cdot \Delta_Y=\gamma+\gamma_1+\gamma_2,
\]
with $\gamma$ supported on $Y_i\times Y$,  $\gamma_1$ supported on $Y\times D$ and $\gamma_2$ supported on $W\times Y$. Then there are proper closed subsets $Z_i, D'\subset Z$ with $\dim\, Z_i\le i$ and a decomposition of $\Delta_Z$ as
\[
N\cdot \Delta_Z=\gamma'+\gamma'_1+\gamma'_2,
\]
with $\gamma'$ supported on $Z_i\times Z$,  $\gamma'_1$ supported on $Z\times D'$ and $\gamma'_2$ supported on $q^{-1}(W)\times Z$.\\
2. Let $q:Z\to Y$ be a birational totally $\CH_0$-trivial morphism of integral, separable, proper $k$-schemes.
Then $\Tor_k^{(i)}(Z)=\Tor_k^{(i)}(Y)$ for all $i$. \\
3. Let  $q:Z\to Y$ be a birational universally $\CH_0$-trivial morphism of  integral proper $k$-schemes.   Then $\Tor_k(Z)=\Tor_k(Y)$. If moreover $Z$ and $Y$ are geometrically integral, then $g\Tor_k(Z)=g\Tor_k(Y)$.\\
\end{lemma}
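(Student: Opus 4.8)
The plan is to prove (1) in full and to deduce (2) and (3) from it, using Lemmas~\ref{lem:Basic} and~\ref{lem:TorDegree}. Since a birational proper morphism is surjective of degree $1$, Lemma~\ref{lem:TorDegree} already yields $\Tor^{(i)}_k(Y)\mid\Tor^{(i)}_k(Z)$ and $g\Tor_k(Y)\mid g\Tor_k(Z)$. Conversely, if $\Tor^{(i)}_k(Y)=N<\infty$, then by Lemma~\ref{lem:Basic}(2) $\Delta_Y$ admits a decomposition of order $N$ and level $i$, which is of the form in (1) with $W=\0$; applying (1) produces such a decomposition of $\Delta_Z$, whence $\Tor^{(i)}_k(Z)\mid N$. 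With the conventions on $\infty$ this gives $\Tor^{(i)}_k(Z)=\Tor^{(i)}_k(Y)$, i.e.\ (2), and the case $i=0$ gives the first assertion of (3); in that case $Y_0$ is a zero-cycle (Remark~\ref{rem:1stRemarks}(1)) and $W=\0$, so the argument for (1) below uses only that $q$ (hence $q_K$) is universally $\CH_0$-trivial, which is exactly the hypothesis of (3).

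To prove (1), set $K:=k(Y)=k(Z)$ and write $Y_K:=Y\times_k\Spec K$, and similarly for closed subschemes of $Y$ and for $Z$. Restricting the given identity $N\cdot\Delta_Y=\gamma+\gamma_1+\gamma_2$ along $\id_Y\times\eta_Y\colon Y_K\to Y\times_kY$, $\eta_Y$ the generic point of $Y$ --- equivalently, passing to the colimit $\CH_0(Y_K)=\varinjlim_D\CH_d(Y\times_k(Y\setminus D))$ appearing in the proof of Lemma~\ref{lem:Basic}(5) --- kills $\gamma_1$, whose support $Y\times D$ avoids $Y\times\{\eta_Y\}$, and yields
\[
N\cdot[\eta_Y]=\bar\gamma+\bar\gamma_2\quad\text{in }\CH_0(Y_K),
\]
with $\bar\gamma\in\im(\CH_0((Y_i)_K)\to\CH_0(Y_K))$ and $\bar\gamma_2\in\im(\CH_0(W_K)\to\CH_0(Y_K))$, where $[\eta_Y]$ is the class of the canonical point. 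Now $q_K\colon Z_K\to Y_K$ is totally $\CH_0$-trivial (stability under base change), hence universally $\CH_0$-trivial by Proposition~\ref{prop:Tot}, so $q_{K*}\colon\CH_0(Z_K)\xrightarrow{\sim}\CH_0(Y_K)$ and $q_{K*}[\eta_Z]=[\eta_Y]$. Thus $N\cdot[\eta_Z]=q_{K*}^{-1}(\bar\gamma)+q_{K*}^{-1}(\bar\gamma_2)$, and it remains to show that these two summands are supported, respectively, on $(Z_i)_K$ for a closed $Z_i\subset Z$ with $\dim_kZ_i\le i$, and on $q^{-1}(W)_K$; spreading the resulting relation back out over $Z$ (again via the colimit description and localization) then gives $N\cdot\Delta_Z=\gamma'+\gamma_1'+\gamma_2'$ of the required shape, with $\gamma_1'$ supported on $Z\times D'$ for a suitable nowhere dense $D'$.

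The summand $q_{K*}^{-1}(\bar\gamma_2)$ causes no trouble: $q^{-1}(W)\to W$ is again totally, hence (after base change) universally, $\CH_0$-trivial, so $\CH_0(q^{-1}(W)_K)\xrightarrow{\sim}\CH_0(W_K)$, and the preimage of a representative of $\bar\gamma_2$ supported on $W_K$ is supported on $q^{-1}(W)_K$. The treatment of $\bar\gamma$ is the heart of the matter and, I expect, the main obstacle: the naive choice $Z_i=q^{-1}(Y_i)$ fails, since $q^{-1}(Y_i)$ may have dimension $>i$ (already for a single blow-up). Instead, decompose $Y_i$ into its irreducible components $B_a$ (each of dimension $\le i$) and $\bar\gamma$ correspondingly into pieces $\bar\gamma_a\in\im(\CH_0((B_a)_K)\to\CH_0(Y_K))$, and lift each $\bar\gamma_a$ through the universally $\CH_0$-trivial map $q^{-1}(B_a)_K\to(B_a)_K$. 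A representative of the class on $(B_a)_K$ is a sum of closed points $p$, each with image $b\in B_a$ satisfying $\dim\overline{\{b\}}\le\dim B_a\le i$; the fibre $q^{-1}(b)$ is universally $\CH_0$-trivial over $k(b)$, hence carries a zero-cycle of degree $1$, and base-changing such a cycle along $k(b)\hookrightarrow k(p)$ lifts $p$, the closure in $Z$ of the support of the chosen cycle on $q^{-1}(b)$ being a closed subvariety of $q^{-1}(B_a)$ of dimension $\mathrm{trdeg}_k k(b)\le i$. Taking $Z_i$ to be the finite union of these closures over all the points $b$ occurring does the job; the rest is the standard function-field-and-specialization formalism already used for decompositions of the diagonal.

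It remains to prove the assertion on $g\Tor$ in (3). Assuming $Z$ and $Y$ geometrically integral, $q\times q\colon Z\times_kZ\to Y\times_kY$ is birational and proper, so $L:=k(Z\times_kZ)=k(Y\times_kY)$; the classes $\eta_1-\eta_2$ of Definition~\ref{def:Basic}(2) for $Z$ and $Y$ lie in $\CH_0(Z_L)$ and $\CH_0(Y_L)$ and correspond under $q_{L*}$ --- because $q$ sends the generic point of $Z$ to that of $Y$ with trivial residue-field extension --- and $q_{L*}$ is an isomorphism since $q$ is universally $\CH_0$-trivial; hence the two classes have the same order, i.e.\ $g\Tor_k(Z)=g\Tor_k(Y)$.
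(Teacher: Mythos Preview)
Your argument is correct and follows the same overall shape as the paper's: restrict the given decomposition to the generic point, lift through the isomorphism $q_{K*}$ on $\CH_0$, control the support of the lift, and spread back out by localization. The one substantive difference is in how you produce the dimension-$\le i$ subset $Z_i$ carrying the lift of $\bar\gamma$. The paper runs a noetherian induction on $Y_i$: at each step it peels off an irreducible component $Y^j_0$ at its generic point $\nu$, uses the $\CH_0$-triviality of $q^{-1}(\nu)$ to rewrite the part of the lifted cycle living over $\nu$ as a multiple of a fixed degree-$1$ cycle $\epsilon$ on $q^{-1}(\nu)$ (after shrinking), and takes the closure of $\supp\epsilon$ as the new piece of $Z_i$. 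You instead fix a representative $\sum n_p p$ of $\bar\gamma$ on $(Y_i)_K$ and lift each $p$ through the fibre $q^{-1}(b)$ over its image $b\in Y_i$, observing that the closure in $Z$ of any closed point of $q^{-1}(b)$ has dimension $\mathrm{trdeg}_k k(b)\le i$. Your route is a bit more direct and avoids the induction; the paper's route makes the spreading-out bookkeeping more explicit.

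One small correction on your derivation of (3). Your argument for (1), as written, invokes universal $\CH_0$-triviality of the individual fibres $q^{-1}(b)$ to produce the degree-$1$ lifts; that is the \emph{totally} $\CH_0$-trivial hypothesis, not the universally $\CH_0$-trivial one. So the sentence ``the argument for (1) below uses only that $q$ (hence $q_K$) is universally $\CH_0$-trivial'' is not accurate for the proof you actually give. For level $0$ the fix is immediate (and is what the paper does): in that case $\bar\gamma=y_K$ for a zero-cycle $y$ on $Y$, and one simply lifts $y$ through the isomorphism $q_*\colon\CH_0(Z)\xrightarrow{\sim}\CH_0(Y)$ over $k$ itself, so that $q_{K*}^{-1}(y_K)=(q_*^{-1}y)_K$ is visibly in the image of $\CH_0(Z)$. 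This genuinely uses only universal $\CH_0$-triviality, and with this adjustment your deduction of (3) goes through.
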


\begin{proof} We note that (2) follows easily from (1). Indeed, (1) with $W=\0$ shows that $\Tor_k^{(i)}(Z)$ divides $\Tor_k^{(i)}(Y)$ for all $i$; as $(q\times q)_*(\Delta_Z)=\Delta_Y$, it follows that a decomposition of $\Delta_Z$ of order $N$ and level $i$ gives a similar decomposition of $\Delta_Y$ by applying $(q\times q)_*$. 

We now prove (1). We may assume that $W=\0$. Indeed, if we replace $Y$ with $Y':=Y\setminus W$ and $Z$ with $Z':=Z\setminus q^{-1}(W)$, the result for $q_{|Z'}:Z'\to Y'$ and the decomposition 
\[
N\cdot \Delta_{Y'}=\gamma_{|Y'\times Y'}+\gamma_{1|Y'\times Y'},
\]
together with localization gives (1) for the original data. 

Suppose then we have  
\[
N\cdot \Delta_Y= \gamma+\gamma_1
\]
with $\gamma$ supported on $Y_i\times Y$ and $\gamma_1$ supported on $Y\times D$.  Let $K=k(Y)$ and let $\eta_Y\in Y$ be the generic point. We have a rational equivalence of 0-cycles on $Y\times\eta_Y$
\[
N\cdot\eta_Y\times\eta_Y\sim \gamma_{\eta_Y}
\]
with $ \gamma_{\eta_Y}$ a 0-cycle supported on $Y_i\times\eta_Y$.  Thus $N\cdot\eta_Y\times\eta_Y\sim0$ on $(Y\setminus Y_i)\times\eta_Y$.

 Since $Z\setminus q^{-1}(Y_i)\to Y\setminus Y_i$ is birational and universally $\CH_0$-trivial (Remark~\ref{rem:Tot}), there is a rational equivalence of 0-cycles
\[
N\cdot\eta_Z\times\eta_Z\sim0
\]
on $(Z\setminus q^{-1}(Y_i))\times\eta_Z$, where $\eta_Z\in Z$ is the generic point. We claim that there is a  dimension $\le i$ closed subset $Z'$ of $Z$ and a rational equivalence of 0-cycles on $Z\times\eta_Z$
\[
N\cdot\eta_Z\times\eta_Z\sim  \rho_Z
\]
with $\rho_Z$ a 0-cycle supported on $Z'\times\eta_Z$. We proceed by a noetherian induction: We assume there is a closed subset $Y^j\subset Y_i$, a  dimension $\le i$ closed subset $Z_j$ of $q^{-1}(Y_i)$ and a rational equivalence of 0-cycles on $(Z\setminus q^{-1}(Y^j))\times\eta_Z$
\[
N\cdot\eta_Z\times\eta_Z\sim  \rho_j
\]
with $\rho_j$ a 0-cycle supported on $Z_j\times\eta_Z$, and we show the parallel statement for a proper closed subset $Y^{j+1}$ of $Y^j$. The induction starts with $Y^0=Y_i$. 

Chose an irreducible component $Y^j_0$ of $Y^j$ and let $\nu$ be its generic point. Let $Y'$ be the union of the components of $Y^j$ different from $Y^j_0$. We have the exact  localization sequence
\begin{multline*}
\CH_0((q^{-1}(Y^j_0\setminus Y'))\times\eta_Z)\xrightarrow{i_*} \CH_0((Z\setminus q^{-1}(Y'))\times\eta_Z)\\\to 
\CH_0((Z\setminus q^{-1}(Y^j))\times\eta_Z)\to 0
\end{multline*}
and thus there is a 0-cycle $\rho'$ on $q^{-1}(Y^j_0\setminus Y')\times\eta_Z$ and a rational equivalence
\[
N\cdot\eta_Z\times\eta_Z\sim  \rho_j+i_*(\rho')
\]
on $(Z\setminus q^{-1}(Y'))\times\eta_Z$. 

Write 
\[
\rho'=\sum_im_ix_i+\sum_jn_jx_j',
\]
where the $x_i$, $x_j'$ are closed points of $q^{-1}(Y^j_0\setminus Y')\times\eta_Z$, such that $q\circ p_1(x_i)=\nu$ for all $i$ and $q\circ p_1(x_j')$ is contained in some proper closed subset (say $Y''$)  of $Y^j_0$ for all $j$. Replacing $Y'$ with $Y'\cup Y''$ and changing notation, we may assume that $\rho'=\sum_im_ix_i$.

By assumption, the map $q^{-1}(\nu)\to \nu$ is universally $\CH_0$-trivial, so there is a degree one 0-cycle $\epsilon$ on $q^{-1}(\nu)$ so that $\epsilon_L$ generates $\CH_0(q^{-1}(\nu)_L)$ for all field extensions $L\supset k(\nu)$, in particular, $\epsilon\times\eta_Z$ generates $\CH_0(q^{-1}(\nu)\times\eta_Z)$. Enlarging $Y'$ again by a proper closed subset of $Y^j_0$, we may assume that 
\[
\rho'=m\cdot \epsilon\times\eta_Z
\]
in $\CH_0(q^{-1}(Y^j_0\setminus Y')\times\eta_Z)$, for some $m\in\Z$. Since $\epsilon$ is a 0-cycle on $q^{-1}(\nu)$, the closure $Z'$ of the support of $\epsilon$ in $q^{-1}(Y^j_0)$ has dimension over $k$ bounded by the transcendence dimension of $k(\nu)$ over $k$, that is, by $\dim_kY^j_0$; since $Y^j_0\subset Y_i$, we have
\[
\dim_kZ'\le i.
\]
Taking $Y^{j+1}=Y'$, $Z_{j+1}=Z_j\cup Z'$, $\rho_{j+1}=\rho_j+m\cdot \epsilon\times\eta_Z$,  the 0-cycle $\rho_{j+1}$ is supported on $Z_{j+1} \times\eta_Z$, $\dim_k Z_{j+1}\le i$, and we have
\[
N\cdot\eta_Z\times\eta_Z= \rho_{j+1}
\]
in $\CH_0((Z\setminus q^{-1}(Y^{j+1}))\times\eta_Z)$. The induction thus goes through, proving the result.

The proof of (3) is similar but easier. We have already seen that if $Z$ has a decomposition of the diagonal of order $N$, then so does $Y$. If conversely $Y$ has a decomposition of the diagonal of order $N$, then there is a 0-cycle $y$ on $Y$ with 
\[
N\cdot \eta_Y\times\eta_Y=y\times\eta_Y
\]
in $\CH_0(Y\times\eta_Y)$. As $q:Z\to Y$ is universally $\CH_0$ trivial, there is a 0-cycle $z$ on $Z$ with $q_*z=y$ in $\CH_0(Y)$ and since $(q\times q)_*:\CH_0(Z\times\eta_Z)\to \CH_0(Y\times\eta_Y)$ is an isomorphism, we have
\[
N\cdot \eta_Z\times\eta_Z=z\times\eta_Z
\]
in $\CH_0(Z\times\eta_Z)$. The proof for $g\Tor$ is the same.
\end{proof}

We note some consequences of Lemma~\ref{lem:LevelBirat}.

\begin{proposition} \label{prop:Ratl} Let $f:Y\to X$ be a dominant rational map of smooth integral proper $k$-schemes of the same dimension $d$. \\
1. Suppose $k$  admits resolution of singularities for  rational maps of varieties of dimension $\le d$, that is, if  $p:Y\to X$ is a  rational morphism of smooth $k$-schemes of dimension $\le d$, there is a sequence of blow-ups of $Y$ with smooth center, $q:W\to Y$, such that resulting rational map $r:W\to X$ is a morphism. Then $\Tor_k^{(i)} X$ divides $\deg f\cdot \Tor_k^{(i)} Y$ for all $i$.\\
2.  Without assumption on $k$, $\Tor_k X$ divides $\deg f\cdot \Tor_k Y$ and  $g\Tor_k X$ divides $(\deg f)^2\cdot g\Tor_k Y$. 
\end{proposition}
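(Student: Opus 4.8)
The plan is to reduce the rational map $f$ to an honest surjective morphism between proper $k$-schemes birational to $Y$, and then to combine Lemma~\ref{lem:TorDegree}, which controls torsion orders along surjective morphisms, with Lemma~\ref{lem:LevelBirat}, which shows torsion orders are unchanged under suitable birational modifications. Parts (1) and (2) differ only in how this reduction is carried out.

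For part (1): I would apply the assumed resolution of rational maps to obtain a sequence of blow-ups with smooth centers $q\colon W\to Y$ such that $r:=f\circ q\colon W\to X$ is a morphism. By Corollary~\ref{cor:UnivTot}(2) the morphism $q$ is birational and totally $\CH_0$-trivial, so Lemma~\ref{lem:LevelBirat}(2) gives $\Tor^{(i)}_k(W)=\Tor^{(i)}_k(Y)$ for all $i$. Since $f$ is dominant and $W$ is proper, $r(W)$ is closed and dense in $X$, so $r$ is surjective; it is a morphism of integral reduced proper $k$-schemes of the same dimension $d$, with $\deg r=\deg f$ because $q$ is birational. Lemma~\ref{lem:TorDegree} then yields $\Tor^{(i)}_k(X)\mid \deg f\cdot\Tor^{(i)}_k(W)=\deg f\cdot\Tor^{(i)}_k(Y)$, which is the assertion.

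For part (2), where resolution of singularities need not be available, I would instead take $\Gamma\subset Y\times_k X$ to be the closure of the graph of $f$, equipped with its reduced structure, with projections $p\colon\Gamma\to Y$ and $\pi\colon\Gamma\to X$. Over the domain of definition of $f$ the graph is isomorphic to a dense open of $Y$, so $\Gamma$ is integral of dimension $d$, separable over $k$ with function field $k(Y)$, the projection $p$ is proper and birational, and $\pi$ is surjective with $\deg\pi=\deg f$. The crucial point is that, because $Y$ is smooth, the proper birational morphism $p$ is universally $\CH_0$-trivial; granting this, Lemma~\ref{lem:LevelBirat}(3) gives $\Tor_k(\Gamma)=\Tor_k(Y)$ and $g\Tor_k(\Gamma)=g\Tor_k(Y)$, and Lemma~\ref{lem:TorDegree} applied to $\pi$ gives $\Tor_k(X)\mid\deg f\cdot\Tor_k(\Gamma)$ and $g\Tor_k(X)\mid(\deg f)^2\cdot g\Tor_k(\Gamma)$. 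Combining these divisibilities proves part (2). (For the $g\Tor$ clause one uses the geometrically integral hypothesis in Lemma~\ref{lem:LevelBirat}(3) in the usual way.)

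The step I expect to be the main obstacle is proving that $p\colon\Gamma\to Y$ is universally $\CH_0$-trivial in arbitrary characteristic: one cannot simply dominate $\Gamma$ by a resolution and invoke Corollary~\ref{cor:UnivTot}, so this must be deduced directly from the regularity of $Y$. The argument I have in mind: since $Y$ is normal, a rational map out of $Y$ extends over every codimension-one point, so the locus in $Y$ over which $p$ is not an isomorphism has codimension $\ge 2$; combining this with the fact that on the smooth scheme $Y_F$ every zero-cycle is rationally equivalent to one supported away from a given closed subset of positive codimension, one should obtain that $p_{F\,*}\colon\CH_0(\Gamma_F)\to\CH_0(Y_F)$ is an isomorphism for all field extensions $F\supset k$. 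Everything else is formal bookkeeping with Lemmas~\ref{lem:TorDegree} and~\ref{lem:LevelBirat}.
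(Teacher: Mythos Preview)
Your argument for part~(1) is correct and coincides with the paper's proof.

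For part~(2) you take a different route from the paper, and there is a genuine gap. You want the projection $p\colon\Gamma\to Y$ from the graph closure to be universally $\CH_0$-trivial, but your sketch only establishes surjectivity of $p_{F*}$: the moving lemma on the smooth target $Y_F$ lets you move any zero-cycle into the open over which $p$ is an isomorphism and then lift it. Injectivity is the real issue, and nothing in your outline addresses it. There is no reason it should hold: a fibre $p^{-1}(y)\subset\{y\}\times X\cong X$ over a point of indeterminacy can be an arbitrary positive-dimensional closed subvariety of $X$ (for instance a curve of positive genus), and degree-zero zero-cycles on such a fibre need not become rationally trivial on the singular variety $\Gamma$. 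So you cannot invoke Lemma~\ref{lem:LevelBirat}(3) to conclude $\Tor_k(\Gamma)=\Tor_k(Y)$.

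The paper sidesteps this entirely: it never compares $\Tor_k(\Gamma)$ with $\Tor_k(Y)$. Instead it uses the graph $Z=\Gamma\subset Y\times X$ directly as a correspondence. The cycle $Z\times_k Z$ induces a homomorphism $g\colon\CH_d(Y\times Y)\to\CH_d(X\times X)$, and one checks, by restricting over an open $X_0\subset X$ for which $p_1\colon p_2^{-1}(X_0)\cap Z\to Y$ is an open immersion and then using localization, that
\[
g(\Delta_Y)=\deg(f)\cdot\Delta_X+\gamma
\]
with $\gamma$ supported on $X\times(X\setminus X_0)$. A decomposition of $N\cdot\Delta_Y$ then pushes forward to a decomposition of $N\deg(f)\cdot\Delta_X$, giving $\Tor_k(X)\mid\deg(f)\cdot\Tor_k(Y)$ without any control over $\CH_0$ of the possibly singular $\Gamma$. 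The statement for $g\Tor_k$ is obtained by the analogous correspondence argument on triple products.
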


\begin{proof}
For (1)  we may find a   sequence of blow-ups  with smooth centers, $g:Z\to Y$, so that the induced rational map $h:Z\to X$ is a morphism. Since $g$ is a totally $\CH_0$-trivial morphism, $\Tor_k^{(i)} Z=\Tor_k^{(i)}Y$ by Lemma~\ref{lem:LevelBirat}(2), so we may assume that $g$ is a morphism; the result then follows from Lemma~\ref{lem:TorDegree}. 

For (2), let $Z\subset Y\times X$ be the graph of $f$, that is, the closure of the graph of $f:V\xr{} X$ for a non-empty open subset $V\subset Y$ on which $f$ is defined. The map $p_1:Z\xr{} Y$ is birational and there is a non-empty open $X_0\subset X$ such that $p_1:p_2^{-1}(X_0)\cap Z\xr{} Y$ is an open immersion; set $Y_0:=p_1(p_2^{-1}(X_0)\cap Z)$. The correspondence $Z\times_k Z$ yields a homomorphism 
$$
g:\CH_d(Y\times Y) \xr{} \CH_d(X\times X). 
$$   
We claim that $g(\Delta_Y)=\deg(f)\cdot \Delta_X+\gamma$ where $\gamma$ is a cycle supported on $X\times (X\backslash X_0)$, which implies the assertion for $\Tor_k X$. Keeping track of supports and using localization, we have an identity in $\CH_d(Z\times_kZ)$ of the form
\begin{equation}\label{equation-never-ending-cold}
[Z\times Z]\cdot (p_1\times p_1)^*(\Delta_Y)= \Delta_Z + \gamma', 
\end{equation}
where $\gamma'$ has support in $(p_1^{-1}({Y\backslash Y_0})\cap Z)\times_{Y\backslash Y_0}(p_1^{-1}({Y\backslash Y_0})\cap Z)$. Thus $(p_2\times p_2)_*(\gamma')$ has support in $X\times (X\backslash X_0)$. Applying $(p_2\times p_2)_*$ to \eqref{equation-never-ending-cold} we prove our claim.

The proof for $g\Tor_k$ is similar.
\end{proof} 
In particular, if we have resolution of singularities of birational maps, $\Tor_k^{(i)}$ is a birational invariant and in general $\Tor_k$ is a birational invariant; from this it follows easily that $\Tor_k^{(i)}$ is a stable birational invariant  if we have resolution of singularities of birational maps and in general $\Tor_k$ is a stable birational invariant.

\section{Specialization and degeneration}\label{sec:degen}

The  next result, in a somewhat different form,  is proven in  \cite[Th\'eor\`eme 1.12]{CTP}. In a less general setting, a similar result may be found in \cite[Theorem 1.1]{Voisin}.

\begin{proposition}  \label{prop:specialization} Let $\sO$ be a regular local ring with quotient field $K$ and residue field $k$. Let $f:\sX\to\Spec \sO$ be a flat and proper morphism with geometrically integral fibers and let $X$ be the generic fiber $\sX_K$, $Y$ the special fiber $\sX_k$. We suppose that $Y$ admit a resolution of singularities $q:Z\to Y$ such that $q$ is a universally $\CH_0$-trivial morphism. Suppose in addition that $X$ admits a decomposition of the diagonal of order $N$.   Then $Z$ also admits a decomposition of the diagonal of order $N$. In particular, if $\Tor_K(X)$ is finite then so is
$\Tor_k(Z)$, and in this case
$\Tor_k(Z)\ |\ \Tor_K(X)$. 
\end{proposition}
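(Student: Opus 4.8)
The plan is to reduce the statement to a concrete relation in a Chow group over the function field $K$, and then push that relation through a specialization homomorphism, exactly in the spirit of Lemma~\ref{lem:specialization} but taking advantage of the resolution $q:Z\to Y$ being universally $\CH_0$-trivial. First I would unwind Definition~\ref{def:Basic}(4): a decomposition of the diagonal of order $N$ for $X$ means there is a $0$-cycle $x$ on $X$, a proper closed subset $D\subsetneq X$, and a cycle $\gamma$ supported in $X\times_K D$ with $N\cdot[\Delta_X]=x\times X+\gamma$ in $\CH_d(X\times_KX)$. Equivalently, restricting to the generic point $\eta_X$ of $X$ and using the localization sequence, this says $N\cdot\eta_X\times\eta_X=x\times\eta_X$ in $\CH_0(X\times_K K(X))$, i.e. the restriction of $\Delta_X$ to $X_{K(X)}$ is $N$-divisible modulo a $0$-cycle pulled back from the base, away from $D$.

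Next I would take closures over $\Spec\sO$. Stratifying $\Spec\sO$ by regular subschemes as in the proof of Lemma~\ref{lem:specialization}, I reduce to the case where $\sO$ is a DVR; the regular local hypothesis is exactly what makes this stratification work, and flatness plus geometric integrality of the fibers are preserved. Over a DVR, let $\bar D$ be the closure of $D$ in $\sX$ and $D_0=Y\cap\bar D$, a nowhere dense closed subset of $Y$ (here using that $\bar D$ is equi-dimensional over the DVR). Spreading out $x$ to a relative $0$-cycle and taking its closure gives a cycle on $\sX$. The specialization homomorphism $sp:\CH_d(\sX_K\times_K\sX_K)\to\CH_d(Y\times_k Y)$ of \cite[6.3.7]{Fulton}, applied to the relation $N\cdot[\Delta_X]=x\times X+\gamma$, yields $N\cdot[\Delta_Y]=x_0\times Y+\gamma_0$ in $\CH_d(Y\times_k Y)$, with $\gamma_0$ supported in $Y\times D_0$ and $x_0$ a $0$-cycle on $Y$; here I use that $sp([\Delta_X])=[\Delta_Y]$ since $f$ is flat with generically reduced (indeed integral) fibers, which is the same computation used in Lemma~\ref{lem:specialization}. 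At this point $Y$ has a ``decomposition of the diagonal of order $N$'' in the sense of Remark~\ref{rem:1stRemarks}(1), even though $Y$ may be singular.

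Finally I would transport this decomposition from $Y$ to its resolution $Z$ using universal $\CH_0$-triviality of $q$. This is the heart of the argument and the step I expect to be the main obstacle, since $Z$ need not be birational to $Y$ as nicely as in Lemma~\ref{lem:LevelBirat} — but in fact $q$ \emph{is} birational (a resolution), so Lemma~\ref{lem:LevelBirat}(3) applies verbatim: for a proper birational universally $\CH_0$-trivial morphism $q:Z\to Y$ of integral proper $k$-schemes, $\Tor_k(Z)=\Tor_k(Y)$ (and $Z$ admits a decomposition of order $N$ iff $Y$ does). Concretely, from $N\cdot\eta_Y\times\eta_Y=x_0\times\eta_Y$ in $\CH_0(Y_{k(Y)})$ one uses surjectivity of $q_*:\CH_0(Z)\to\CH_0(Y)$ to lift $x_0$ to a $0$-cycle on $Z$, and the isomorphism $(q\times q)_*:\CH_0(Z\times\eta_Z)\xrightarrow{\sim}\CH_0(Y\times\eta_Y)$ (valid since $q$ restricted to the fiber over the generic point $k(Y)=k(Z)$ is universally $\CH_0$-trivial, by Remark~\ref{rem:Tot}(1)) to conclude $N\cdot\eta_Z\times\eta_Z=z\times\eta_Z$ in $\CH_0(Z_{k(Z)})$. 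Spreading out via localization gives the decomposition of $\Delta_Z$ of order $N$. The last sentence of the Proposition is then immediate from Lemma~\ref{lem:Basic}(2): a decomposition of order $N$ for $Z$ means $\Tor_k(Z)\mid N$, so taking $N=\Tor_K(X)$ (finite by hypothesis, which guarantees a decomposition of that order exists) gives $\Tor_k(Z)\mid\Tor_K(X)$.
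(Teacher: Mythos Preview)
Your proof is correct and follows the same strategy the paper uses (by reference to \cite{CTP} for this proposition, and explicitly in the proof of the closely related Proposition~\ref{prop:specialization2}): specialize the decomposition from $X$ to $Y$ via Lemma~\ref{lem:specialization}, then transport it from $Y$ to $Z$ via Lemma~\ref{lem:LevelBirat}(3). One minor correction: your invocation of Remark~\ref{rem:Tot}(1) for the isomorphism $(q\times q)_*:\CH_0(Z_{k(Z)})\to\CH_0(Y_{k(Y)})$ is misplaced, since that remark concerns \emph{totally} $\CH_0$-trivial morphisms; the isomorphism follows directly from the definition of universally $\CH_0$-trivial applied to the field extension $F=k(Z)=k(Y)$, exactly as in the paper's proof of Lemma~\ref{lem:LevelBirat}(3).
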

In \cite{CTP} it is assumed that $X$ has a resolution of singularities $\tilde{X}\to X$ such that $\tilde{X}_K$ admits a decomposition of the diagonal of order $N$, which implies the same condition on $X$ by pushing forward; there is also an assumption that $Z$ has a 0-cycle of degree 1. This resolution of singularities in \cite{CTP} arises because they consider decompositions of the diagonal only on smooth proper varieties; the existence of a degree 1 0-cycle comes from considering only the case $N=1$. The modified version stated above is proved exactly as as in {\it loc cit.}

We prove an extension of this specialization result which takes the decompositions of higher level into account.

\begin{proposition}  \label{prop:specialization2}  Let $\sO$ be a regular local ring with quotient field $K$ and residue field $k$. Let $f:\sX\to\Spec \sO$ be a flat and proper morphism with geometrically integral fibers and let $X$ be the generic fiber $\sX_K$, $Y$ the special fiber $\sX_k$.  Suppose that there is a  birational totally $\CH_0$-trivial morphism $q:Z\to Y$  of geometrically integral proper $k$-schemes.  \\
1. Suppose  $X$ admits a decomposition of the diagonal of order $N$ and level $i$.  Then $Z$ also admits a decomposition of the diagonal of order $N$ and level $i$.  If $\Tor_K^{(i)}(X)$ is finite then so is $\Tor^{(i)}_k(Z)$ and  in this case $\Tor^{(i)}_k(Z)\ |\ \Tor^{(i)}_K(X)$.\\
2. Let $\bar K$ and $\bar k$ be the respective algebraic closures of $K$ and $k$ and suppose that $X_{\bar K}$ admits a decomposition of the diagonal of order $N$ and level $i$. Suppose that $K$ has characteristic zero, or that $\sO$ is excellent. Then $Z_{\bar k}$ also admits a decomposition of the diagonal of order $N$ and level $i$.  If $\Tor^{(i)}(X)$ is finite then so is $\Tor^{(i)}(Z)$ and  in this case
$\Tor^{(i)}(Z)\ |\ \Tor^{(i)}(X)$.
\end{proposition}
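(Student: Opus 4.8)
The plan is to deduce both parts from three results already in hand: the equivalence ``$X$ admits a decomposition of the diagonal of order $N$ and level $i$'' $\Leftrightarrow$ ``$\Tor^{(i)}\mid N$'' (Lemma~\ref{lem:Basic}(2)), the specialization estimate for level-$i$ torsion orders (Lemma~\ref{lem:specialization}), and the birational invariance of $\Tor^{(i)}_k$ under totally $\CH_0$-trivial morphisms (Lemma~\ref{lem:LevelBirat}(2)). In particular I would not repeat the cycle-level bookkeeping used to prove Proposition~\ref{prop:specialization}; the whole argument becomes a short divisibility chain once the hypotheses of those lemmas are checked.

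For part~(1): since each fiber $\sX_z$ is geometrically integral, it is reduced, hence generically reduced, and its function field, being finitely generated and geometrically reduced over $k(z)$, is separably generated; thus $f:\sX\to\Spec\sO$ meets the hypotheses of Lemma~\ref{lem:specialization} (the equidimensionality hypothesis holds automatically for a flat proper family with integral fibers over the connected base $\Spec\sO$). The hypothesis that $X$ admits a decomposition of the diagonal of order $N$ and level $i$ means $\Tor^{(i)}_K(X)\mid N$ by Lemma~\ref{lem:Basic}(2). Lemma~\ref{lem:specialization}(1) then gives $\Tor^{(i)}_k(Y)\mid\Tor^{(i)}_K(X)\mid N$, and since $q:Z\to Y$ is a birational totally $\CH_0$-trivial morphism of integral, separable, proper $k$-schemes, Lemma~\ref{lem:LevelBirat}(2) yields $\Tor^{(i)}_k(Z)=\Tor^{(i)}_k(Y)\mid N$. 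A final application of Lemma~\ref{lem:Basic}(2) produces the desired decomposition of $\Delta_Z$, and the ``in particular'' clause follows by taking $N=\Tor^{(i)}_K(X)$ when it is finite.

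Part~(2) runs identically, with Lemma~\ref{lem:specialization}(3) — whose hypotheses (characteristic zero, or $\sO$ excellent) are exactly the ones imposed here — replacing Lemma~\ref{lem:specialization}(1): from $\Tor^{(i)}_{\bar K}(X_{\bar K})\mid N$ one obtains $\Tor^{(i)}_{\bar k}(Y_{\bar k})\mid\Tor^{(i)}_{\bar K}(X_{\bar K})\mid N$. To conclude I would apply Lemma~\ref{lem:LevelBirat}(2) to the base change $q_{\bar k}:Z_{\bar k}\to Y_{\bar k}$, noting that it is still birational (as $q$ is birational and $Z,Y$ are geometrically integral) and still totally $\CH_0$-trivial (that property being stable under arbitrary base change), and that $Z_{\bar k}$, $Y_{\bar k}$ are integral and, over the perfect field $\bar k$, automatically separable. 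Hence $\Tor^{(i)}(Z)=\Tor^{(i)}_{\bar k}(Z_{\bar k})=\Tor^{(i)}_{\bar k}(Y_{\bar k})\mid N$, which gives the decomposition of $\Delta_{Z_{\bar k}}$ and, recalling the definition of the geometric torsion order, the divisibility $\Tor^{(i)}(Z)\mid\Tor^{(i)}(X)$.

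The substantive content is all borrowed, so the only places I expect to spend effort are the two hypothesis checks: that geometrically integral fibers are generically reduced and separable over their residue fields (so that Lemma~\ref{lem:specialization} applies), and that ``birational $+$ totally $\CH_0$-trivial'' is preserved when passing to $\bar k$ in part~(2) (so that Lemma~\ref{lem:LevelBirat}(2) applies over $\bar k$). Both are standard, but worth making explicit since the ambient lemmas are stated under precisely those standing assumptions.
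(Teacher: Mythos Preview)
Your proof is correct and, for part~(1), essentially identical to the paper's: both apply Lemma~\ref{lem:specialization} to pass the level-$i$ decomposition from $X$ to $Y$, then Lemma~\ref{lem:LevelBirat} to pass it from $Y$ to $Z$. For part~(2) you take a slightly different but equally valid route: you invoke Lemma~\ref{lem:specialization}(3) as a black box to get $\Tor^{(i)}_{\bar k}(Y_{\bar k})\mid N$ and then apply Lemma~\ref{lem:LevelBirat}(2) to the base-changed morphism $q_{\bar k}$, whereas the paper instead reduces~(2) to~(1) by redoing the normalization argument (finite extension $L/K$ realizing the geometric torsion order, normalize $\sO$ in $L$, localize) and applying~(1) over the new DVR. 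Your organization is marginally cleaner since it does not repeat the normalization step already absorbed into Lemma~\ref{lem:specialization}(3), at the cost of the extra check---which you correctly make---that ``birational and totally $\CH_0$-trivial'' survives base change to $\bar k$.
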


\begin{proof}
The assertion (2) follows from (1) by first stratifying $\Spec \sO$ as in the proof of Lemma~\ref{lem:specialization} to reduce to the case of a DVR. We then take a finite extension $L$ of $K$ so that $\Tor^{(i)}(X)=\Tor^{(i)}_L(X_L)$, take the normalization $\sO\to \sO^N$ of $\sO$ in $L$ and replace $\sO$ with the localization $\sO'$ of $\sO^N$ at some maximal ideal. Letting $k'$ be the residue field of $\sO'$, $\Tor^{(i)}(Z)$ divides $\Tor^{(i)}_{k'}(Z_{k'})$, so (1) implies (2).  We now prove (1).

By  Lemma~\ref{lem:specialization}, $Y$ admits a  decomposition of the diagonal of order $N$ and level $i$. By Lemma~\ref{lem:LevelBirat}, $Z$ also admits a a decomposition of the diagonal of order $N$ and level $i$, proving (1).
\end{proof}

We also have a version that incorporates Totaro's extended specialization Lemma~\ref{lem:Spec2}.

\begin{proposition}  \label{prop:specialization3}  Let $\sO$ be a discrete valuation ring with quotient field $K$ and residue field $k$. Let $f:\sX\to\Spec \sO$ be a flat and proper morphism of dimension $d$ over $\Spec \sO$ with generic fiber $X$ and special fiber $Y$. We suppose $Y$ is a union of closed subschemes, $Y=Y_1\cup Y_2$  and  that $X$ and $Y_1$ are geometrically integral. Suppose there is a  birational totally $\CH_0$-trivial morphism $q:Z\to Y_1$  of geometrically integral proper $k$-schemes and that $X$ admits a decomposition of the diagonal of order $N$ and level $i$. Then there are proper closed subsets $Z_i, D\subset Z$ with $\dim Z_i\le i$ and a decomposition
\[
N\cdot\Delta_Z=\gamma+\gamma_1+\gamma_2
\]
with $\gamma$ supported in $Z_i\times Z$, $\gamma_1$ supported in $Z\times D$ and $\gamma_2$ supported in $q^{-1}(Y_1\cap Y_2)\times Z$.
\end{proposition}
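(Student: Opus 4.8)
The plan is to deduce the statement by chaining the two preceding specialization tools. Lemma~\ref{lem:Spec2} takes a level-$i$ decomposition of $\Delta_X$ on the generic fiber and produces a decomposition of $\Delta_{Y_1}$ that carries an extra term supported along $Y_1\cap Y_2$; Lemma~\ref{lem:LevelBirat}(1) then transports any such decomposition across the birational totally $\CH_0$-trivial morphism $q:Z\to Y_1$, converting the $(Y_1\cap Y_2)\times Y_1$ term into a term supported on $q^{-1}(Y_1\cap Y_2)\times Z$. So no new argument is needed beyond checking that the support conditions output by the first lemma are exactly those accepted as input by the second.

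\textbf{Reduction and hypothesis check.} First I would dispose of the degenerate case $Y_1\subseteq Y_2$: then $Y_1\cap Y_2=Y_1$, so $q^{-1}(Y_1\cap Y_2)=Z$, and one may take $\gamma=\gamma_1=0$, $\gamma_2=N\cdot\Delta_Z$. So assume $Y_1\not\subseteq Y_2$. Since $Y_1$ is irreducible, this guarantees that $Y_1$ and $Y_2$ share no irreducible component, that $Y_1\setminus Y_2$ is a dense open subscheme of the integral scheme $Y_1$, and that $Y_1\cap Y_2$ is a proper closed subset of $Y_1$. As $X$ and $Y_1$ are geometrically integral, $X$ and $Y_1\setminus Y_2$ are reduced, in particular generically reduced. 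Hence all hypotheses of Lemma~\ref{lem:Spec2} are satisfied for $f:\sX\to\Spec\sO$ with the given decomposition of $\Delta_X$ of order $N$ and level $i$.

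\textbf{Application of the two lemmas.} Applying Lemma~\ref{lem:Spec2} produces an identity in $\CH_d(Y_1\times_k Y_1)$ of the form $N\cdot\Delta_{Y_1}=\alpha+\alpha_1+\alpha_2$, with $\alpha$ supported in $Z_1\times Y_1$ for some closed $Z_1\subset Y_1$ with $\dim_k Z_1\le i$, with $\alpha_1$ supported on $Y_1\times D_1$ for a nowhere dense closed $D_1\subset Y_1$, and with $\alpha_2$ supported in $(Y_1\cap Y_2)\times Y_1$. Now $Y_1$ and $Z$ are integral and separable over $k$ (being geometrically integral), and $q:Z\to Y_1$ is birational and totally $\CH_0$-trivial by hypothesis, so I would feed this decomposition into Lemma~\ref{lem:LevelBirat}(1) with the substitutions $Y=Y_1$, $Y_i=Z_1$, $D=D_1$ and $W=Y_1\cap Y_2$ (a proper closed subset of $Y_1$, as noted above). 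This yields proper closed subsets $Z_i,D\subset Z$ with $\dim Z_i\le i$ and a decomposition $N\cdot\Delta_Z=\gamma+\gamma_1+\gamma_2$ with $\gamma$ supported on $Z_i\times Z$, $\gamma_1$ supported on $Z\times D$, and $\gamma_2$ supported on $q^{-1}(Y_1\cap Y_2)\times Z$, which is exactly the assertion.

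\textbf{Main obstacle.} There is no genuine obstacle here; the proof is a direct concatenation of Lemma~\ref{lem:Spec2} and Lemma~\ref{lem:LevelBirat}(1). The only points requiring care are the harmless bookkeeping of the degenerate case $Y_1\subseteq Y_2$ and the verification that the triple of support conditions coming out of Lemma~\ref{lem:Spec2} (small first factor; divisorial second factor; $Y_1\cap Y_2$ in the first factor) matches precisely the triple accepted as input by Lemma~\ref{lem:LevelBirat}(1).
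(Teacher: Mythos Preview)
Your proposal is correct and follows exactly the paper's own route: the paper's proof reads in its entirety ``This follows directly from Lemma~\ref{lem:Spec2} and Lemma~\ref{lem:LevelBirat}.'' You have simply spelled out the hypothesis verification and the matching of support conditions between the two lemmas, together with the harmless degenerate case $Y_1\subseteq Y_2$.
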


\begin{proof}
This follows directly from Lemma~\ref{lem:Spec2} and Lemma~\ref{lem:LevelBirat}.
\end{proof}

\begin{remark} \label{remark-geometric-specialization} As in the second part of Proposition~\ref{prop:specialization2}, we may take the $N$ in Proposition~\ref{prop:specialization3} to be $\Tor^{(i)}_{\bar{K}}(X_{\bar{K}})$ if $\sO$ is excellent or if $K$ has characteristic zero, by replacing $\sO$ with its normalization $\sO'$ in a  finite extension $L$ of $K$ so that $\Tor^{(i)}_{\bar{K}}(X_{\bar{K}})=\Tor^{(i)}_L(X_L)$, replacing $\sX$ with $\sX\times_\sO\sO'$, replacing $k$ with the residue field $k'$ of $\sO'$ and replacing $Z$ with $Z\otimes_kk'$.
\end{remark}

\section{Torsion order for complete intersections in a projective space: an upper bound}\label{sec:Roitman} We concentrate on the 0th  torsion order of a (reduced, separable) complete intersection $X=X^n_{d_1,\ldots, d_r}$ in $\P^{n+r}$ of dimension $n$ and multi-degree $d_1, d_2, \ldots, d_r$. In this section, we recall the construction of Roitman \cite{Roitman},  which gives an upper bound for $\Tor_k(X)$; by Lemma~\ref{lem:Basic}(1), this gives an upper bound for $\Tor^{(i)}_k(X)$ for all $i$.

We often shorten the notation by writing $d_*$ for a sequence $d_1, d_2, \ldots, d_r$.
 
\begin{proposition}\label{prop:UpperBound} Let $k$ be a field and let   $X=X^n_{d_1,\ldots, d_r}$ in $\P^{n+r}_k$ with $\sum_id_i\le n+r$ be a reduced, separable complete intersection of multi-degree $d_1,\ldots, d_r$, with $n\ge1$. Then  $\Tor_k(X)$ is finite and divides $\prod_{i=1}^rd_i!$.
\end{proposition}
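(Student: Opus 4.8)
The plan is to use Roitman's classical argument, which produces an explicit rational equivalence exhibiting $\bigl(\prod_i d_i!\bigr)\cdot[\Delta_X]$ as a decomposition of the diagonal of order $\prod_i d_i!$, whence $\Tor_k(X)\mid\prod_i d_i!$ by Lemma~\ref{lem:Basic}(2) (note $X$ is geometrically integral in the range $\sum_i d_i\le n+r$, or more precisely we only need the equivalent formulation in Remark~\ref{rem:1stRemarks}(3)). By Lemma~\ref{lem:Basic}(5) it suffices to show that the point $\eta\in X(k(X))$ coming from the diagonal becomes $\bigl(\prod_i d_i!\bigr)$-divisible modulo the image of $\CH_0$ of a divisor, i.e. that after passing to the generic point, $\prod_i d_i!\cdot[\eta]$ is rationally equivalent on $X_{k(X)}$ to a zero-cycle supported in a divisor. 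So I would work over an arbitrary field $F$ (which at the end is taken to be $k(X)$) and show: for any $F$-point $x$ of $X_{d_*}^n\subset\P^{n+r}_F$ with $\sum_i d_i\le n+r$, the cycle $\prod_i d_i!\cdot[x]$ is rationally equivalent on $X$ to a zero-cycle with support in a hyperplane section.

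The key mechanism is a projection/degree-lowering step, carried out one defining equation at a time. Fix the complete intersection $X=V(f_1,\dots,f_r)$ and a point $x\in X(F)$. Consider a general line $\ell$ through $x$ in $\P^{n+r}$: it meets $V(f_1)$ in $d_1$ points (counted with multiplicity), one of which is $x$; call the residual $0$-cycle $R_1$ of degree $d_1-1$. The crucial classical observation (Roitman) is that one can choose the lines in a family so that, as $x$ varies, this construction exhibits a rational equivalence on $V(f_1)$ between $d_1\cdot[x]$ and a cycle supported on a hyperplane section — more precisely, intersecting with a pencil of lines through a fixed linear subspace (or with the cone construction) shows $d_1!\cdot[x]$ is rationally equivalent, \emph{within} $V(f_1)\cap(\text{generic }\P^{n+r-1})$ or rather after iterating, to a zero-cycle in a hyperplane. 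Concretely, the standard argument: the residual $R_1$ to $x$ in $V(f_1)$ along a varying line sweeps out, and one gets $d_1\cdot x\sim$(cycle of degree $d_1(d_1-1)$ one dimension down)$\sim\cdots$, the product $d_1!$ appearing because each residuation step multiplies the degree by the previous degree minus one and one must clear these by induction. This should be organized as: first handle a hypersurface of degree $d$ in $\P^{m}$ with $d\le m$, showing $d!\cdot[x]\sim$ (cycle supported in a hyperplane) by induction on $d$ via residuation along lines; then handle the complete intersection by applying this to $f_1$ inside $\P^{n+r}$, which moves $d_1!\cdot[x]$ into a hyperplane $H_1$, observe that $X\cap H_1$ is again a complete intersection of the same multidegree in $\P^{n+r-1}$ (after generic choice, still with $\sum d_i\le n+r-1$... here one must be a little careful about the numerics), and recurse, or rather iterate the residuation construction simultaneously on each $f_i$.

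Let me restate the induction more cleanly, since the clean bookkeeping is the real content. I would prove by downward induction on $r$ and on the $d_i$: for $X=X^n_{d_1,\dots,d_r}\subset\P^{n+r}_F$ with $n\ge1$ and $\sum d_i\le n+r$, and any closed point $x$ of $X$ with $[F(x):F]$ prime to (some integer), the cycle $\bigl(\prod_i d_i!\bigr)[x]$ is rationally equivalent to a zero-cycle supported in a nowhere-dense closed subset. The base case $d_i=1$ for all $i$ is $\P^n$, where $\CH_0=\Z$ generated by any point and any point is rationally equivalent to any other, in particular to one in a hyperplane. The inductive step: pick $i$ with $d_i\ge2$. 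Consider the linear projection, or better, for $x\in X(F)$ choose a general pencil of hyperplanes and intersect; the residuation along a generic line $\ell\ni x$ in $\P^{n+r}$ with respect to $f_i$ gives a residual $0$-cycle $R$ of degree $d_i-1$ lying on $V(f_i)$, and varying $\ell$ in a $1$-parameter family through $x$ gives $d_i[x]+(\text{stuff})\sim$ something; intersecting further with $V(f_j)$, $j\ne i$, one arrives at $d_i[x]$ being equivalent on $X$ to a zero-cycle supported on a hyperplane section $X\cap H$, and $X\cap H$ is a complete intersection of multidegree $(d_1,\dots,d_r)$ in $\P^{n+r-1}$ of dimension $n-1$, to which induction on $n$ applies — but this shaves a $1$ off $n$, which is fine since $\sum d_i\le n+r$ becomes $\sum d_i\le (n-1)+r+1$, still within range after also noting the residual $R$ has lower degree in the direction we just residuated. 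I expect the correct formulation actually keeps $X$ fixed and residuates $f_i$ down to degree $1$, accumulating the factor $d_i(d_i-1)\cdots 2\cdot 1=d_i!$ for each $i$; the residual cycles at each stage live on auxiliary complete intersections where one $d_i$ has dropped, and the induction hypothesis (on $\sum d_i$, say) applies to those.

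\textbf{Main obstacle.} The genuine difficulty is the careful verification that the residuation construction (line through $x$ meeting $V(f_i)$ in $x$ plus a residual cycle) can be performed in an algebraic \emph{family} over a base whose generic behaviour witnesses a rational equivalence, that the residual cycles land in the smooth/good locus so the intersection-theoretic manipulations are legitimate, and that the degenerate lines (tangent lines, lines in $V(f_i)$, lines through singular points) form a locus one can avoid or whose contribution is controlled — all in arbitrary characteristic and over a not-necessarily-perfect field, with only the separability/reducedness hypotheses available. Keeping track of the exact numerical condition ($\sum d_i\le n+r$ is what guarantees $n\ge 1$ is preserved and that the relevant linear systems are large enough to move cycles into divisors) through the double induction is the fiddly part; the factor $\prod_i d_i!$ is forced precisely by the iterated residuation. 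Since the statement attributes this to Roitman and the section is titled as recalling his construction, I would follow \cite{Roitman} closely rather than reconstruct it from scratch, citing the family-of-lines argument and adapting the statement to the present (reduced, separable, possibly singular) setting via the localization-sequence formulation of Remark~\ref{rem:1stRemarks}(3) and Lemma~\ref{lem:Basic}(5).
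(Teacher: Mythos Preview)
Your proposal is in the right spirit---Roitman's construction with lines through a point---but it is organized quite differently from the paper's proof, and your version leaves real work undone.

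The paper's proof makes two simplifications you do not. First, it immediately reduces to the \emph{generic} complete intersection via the specialization Lemma~\ref{lem:specialization}: since $\Tor_k$ can only go down under specialization, it suffices to bound the torsion order of the generic fiber $X$ over $K=k(\sU_{d_*;n})$. This single move dissolves your ``main obstacle'' about avoiding degenerate lines, singular points, etc.---for the generic $X$ and the generic point $x=\eta$, everything is automatically in general position. Second, rather than iterating a residuation step and inducting on the $d_i$, the paper finds in one shot the locus $W_x\subset\P^{n+r-1}_{K(x)}$ of lines $\ell\ni x$ with $(\ell\cap X)_{\red}=\{x\}$: writing the defining equations in affine coordinates centered at $x$ and separating homogeneous parts, $W_x$ is the complete intersection cut out by all $F_i^{(j)}$, $1\le j\le d_i-1$, of multi-degree $(d_1-1,d_1-2,\ldots,1,\ldots,d_r-1,\ldots,1)$ and hence of degree $\prod_i(d_i-1)!$. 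Cutting $W_x$ by a linear space to dimension $r-1$ (avoiding lines contained in $X$) and taking the cone $\sC_x$ with vertex $x$ gives $\sC_x\cdot X=\bigl(\prod_i d_i!\bigr)\cdot x$ as cycles; but $\sC_x\sim \prod_i(d_i-1)!\cdot L_r$ in $\CH_r(\P^{n+r}_{K(x)})$, so $\bigl(\prod_i d_i!\bigr)\cdot\eta$ equals a constant $0$-cycle in $\CH_0(X_{K(\eta)})$, and Lemma~\ref{lem:Basic}(5) finishes.

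The gap in your version is that the residuation step you describe---take a line through $x$, form the residual $R_1$ of degree $d_1-1$ with respect to $f_1$---produces a cycle on $V(f_1)$, not on $X$, and it is not clear how your induction on the $d_i$ is supposed to close: the residual points do not lie on a complete intersection of lower multi-degree, so there is no obvious smaller instance of the same problem to feed back into. Your sketch never actually produces the factor $d_i!$; it only gestures at it. The paper's packaging of all the tangency conditions into the single complete intersection $W_x$ is precisely what makes the degree $\prod_i(d_i-1)!$ (hence $\prod_i d_i!$ after intersecting with $X$) appear cleanly, with no induction needed.
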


\begin{proof} The reduced, separable complete intersections in $\P^{n+r}$ and of multi-degree $d_1,\ldots, d_r$ are para\-metrized by an open subscheme $\sU_{d_*;n}$ of a product of projective spaces; by Lemma~\ref{lem:specialization} it suffices to prove the result for the subscheme $X:=X_{d_*, gen}$ of $\P^{n+r}_K$ defined over the field $K:=k(\sU_{d_*;n})$  corresponding to the generic point of $\sU_{d_*;n}$. For such an $X$, there is an open subset $V\subset X$, such that, for $x\in V$, the set of lines $\ell\subset \P^{n+r}$ such that $x\in \ell$ and $(\ell\cap X)_\red$ is either $\{x\}$  or is $\ell$ is  defined by a complete intersection $W_x$ of  multi-degree 
\[
d_1-1, d_1-2,\ldots, 2, 1, d_2-1, d_2-2,\ldots 2,1,\ldots, d_r-1,\ldots, 2,1
\]
in the projective space $\P^{n+r-1}_{K(x)}$ of lines through $x$.
Indeed, we may choose a standard affine open $U$ in $\P^{n+r}_{K(x)}$ containing $x$ and chose affine coordinates $t_0,\ldots, t_{n+r-1}$ for $U$ so that $x$ is the origin, and $X\cap U$ is defined by inhomogeneous equations $F_1=\ldots=F_r=0$. Writing each $F_i$ as a sum of homogeneous terms $F_i^{(j)}$ of degree $j$, 
\[
F_i=\sum_{j=1}^{d_i}F_i^{(j)},
\]
$W_x$ is defined by ideal $(\ldots F_i^{(j)}\ldots)$, $i=1,\ldots, r$, $j=1,\ldots, d_i-1$. Since we are choosing $X$ to be the generic hypersurface, and as we may also chose $x$ to lie outside any proper closed subset of $X$, the homogeneous terms $F_i^{(j)}\in K(x)[t_0,\ldots, t_{n+r-1}]_j$ will define a complete intersection in $\P^{n+r-1}_{K(x)}$. In particular $W_x$  has codimension $\sum_{i=1}^r(d_i-1)\le n+r-1$ in $\P^{n+r-1}_{K(x)}$, is non-empty (Bezout's theorem!) and has degree $\prod_{i=1}^r(d_i-1)!$.  

Let $W^0_x\subset W_x$ be the closed subset of lines $\ell$ containing $x$ with $\ell\subset X$; this is defined by the $r$ additional equations $F_i^{(d_i)}=0$. Thus, for general $(X, x)$, $W^0_x$ has codimension $r$ on $W_x$ (or is empty).

Since $n+r-1-\sum_{i=1}^r(d_i-1)\ge r-1$, we may intersect $W_x$ with a suitably general linear space $L\subset \P^{n+r-1}_{K(x)}$ to form a closed subscheme $\bar{W}_x\subset W_x$ of dimension $r-1$ and degree $\prod_{i=1}^r(d_i-1)!$ and we may chose $L$ with $L\cap W^0_x=\0$. The cone over $\bar{W}_x$ with vertex $x$, $\sC_x\subset \P^{n+r}_{K(x)}$, is thus a dimension $r$ closed subscheme of degree $\prod_{i=1}^r(d_i-1)!$ with intersection (set) $\sC_x\cap X=\{x\}$. Thus as cycles
\[
\sC_x\cdot X=(\prod_{i=1}^r d_i!)\cdot x.
\]

Let $\eta$ be the generic point of $X$.  Taking $x=\eta$ in the above discussion gives
\[
\prod_{i=1}^rd_i!\cdot\eta =\sC_\eta\cdot X.
\]
But $\sC_\eta$ is an $r$-cycle on $\P^{n+r}_{K(\eta)}$ of degree $\prod_{i=1}^r(d_i-1)!$, so we have $\sC_\eta=\prod_{i=1}^r(d_i-1)!\cdot L_r$ in $\CH_r(\P^{n+r}_{K(\eta)})$, where $L_r\subset \P^{n+r}_K$ is any dimension $r$ linear subspace. Since $K$ is infinite, we may choose $L_r$ so that the intersection $L_r\cap X$ has dimension zero. Thus, letting $z=\prod_{i=1}^r(d_i-1)!\cdot (L_r\cdot X)$, we have
\[
\prod_{i=1}^rd_i!\cdot\eta-z_{K(\eta)}=0
\]
in $\CH_0(X_{K(\eta)})$, which gives a decomposition of the diagonal in $X$ of order $\prod_{i=1}^rd_i!$. Thus $\Tor_K(X)$ is finite and divides $\prod_{i=1}^rd_i!$, as desired.\end{proof}

\begin{corollary}\label{cor:UpperBound} Let $X=X^n_{d_1,\ldots, d_r}$ in $\P^{n+r}_k$ be a smooth complete intersection of multi-degree $d_1,\ldots, d_r$ and of dimension $n\ge1$  with $\sum_id_i\le n+r$. Then  $g\Tor_k(X)$ and  $\Tor(X)$ are both finite and both divide $\prod_{i=1}^rd_i!$.
\end{corollary}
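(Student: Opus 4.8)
The plan is to deduce Corollary~\ref{cor:UpperBound} directly from Proposition~\ref{prop:UpperBound} together with the elementary comparison results for $\Tor_k$, $g\Tor_k$ and $\Tor$ established in Section~\ref{sec:TorsionOrder}. First I would note that since $X$ is smooth and integral (a smooth complete intersection of dimension $n\ge 1$ is connected, hence integral), Proposition~\ref{prop:UpperBound} applies to give that $\Tor_k(X)$ is finite and divides $\prod_{i=1}^r d_i!$.

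Next I would pass to $g\Tor_k(X)$. By Lemma~\ref{lem:Basic}(3), since $X$ is smooth and geometrically integral and $\Tor_k(X)$ is finite, $g\Tor_k(X)$ is finite and divides $\Tor_k(X)$; hence $g\Tor_k(X)$ divides $\prod_{i=1}^r d_i!$.

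For the statement about $\Tor(X)=\Tor_{\bar k}(X_{\bar k})$, I would invoke Lemma~\ref{lem:Basic}(4) (or Lemma~\ref{lem:Index}(3)): the base change $X_{\bar k}$ is again a smooth complete intersection of the same multi-degree, so $\Tor_{\bar k}(X_{\bar k})$ is finite and divides $\Tor_k(X)$ (finiteness and divisibility under field extension), and therefore divides $\prod_{i=1}^r d_i!$ as well. Alternatively one can apply Proposition~\ref{prop:UpperBound} over $\bar k$ directly to $X_{\bar k}$; either route gives the bound. Since the excerpt breaks off exactly at the statement, there is essentially no hard step here: the whole content is the bookkeeping that reduces the three invariants $g\Tor_k(X)$ and $\Tor(X)$ to $\Tor_k(X)$, and the real work was already done in Proposition~\ref{prop:UpperBound} and in the basic lemmas. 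The only mild point to be careful about is confirming geometric integrality/separability of $X_{\bar k}$, which is automatic because $X$ is smooth over $k$, so $X_{\bar k}$ is smooth over the algebraically closed field $\bar k$ and the relevant hypotheses of the cited lemmas are met.
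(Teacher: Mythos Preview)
Your proposal is correct and follows essentially the same route as the paper: the paper's proof simply notes that both $g\Tor_k(X)$ and $\Tor(X)=\Tor_{\bar k}(X_{\bar k})$ divide $\Tor_k(X)$ by Lemma~\ref{lem:Basic}, and then invokes Proposition~\ref{prop:UpperBound}. Your extra remarks on geometric integrality of $X$ are the only additional care taken, and they are appropriate.
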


\begin{proof}   Both $g\Tor_k(X)$ and $\Tor(X):=\Tor_{\bar{k}}(X_{\bar{k}})$ divide $\Tor_k(X)$ (Lemma~\ref{lem:Basic}) so the result follows from Proposition~\ref{prop:UpperBound}.
\end{proof}

 \section{The generic case}\label{sec:generic} In this section we discuss the case of the generic complete intersection. Let $k$ denote a fixed base-field, for instance the prime field. The bounds we find for the generic case are independent of $k$, so one could equally well take $k$ to be the reader's favorite field, even an algebraically closed one. 
 
Before going into details, we outline the case of hypersurfaces, which uses all the main ideas. 

Let $d!^*$ denote the l.c.m.\ of the integers $2,\ldots, d$. Note that $d!^*$ is inductively the l.c.m.\ of $d$ and $(d-1)!^*$ (Lemma~\ref{lem:Product}). Our main result in the case of hypersurfaces is that the torsion order of level 0 of the generic hypersurface of degree $d\le n+1$ in $\P^{n+1}$ is divisible by $d!^*$, in other words, if the generic hypersurface admits a decomposition of the diagonal of degree $N$, then $d!^*$ divides $N$. 

The hypersurfaces of degree $d\le n+1$ in $\P^{n+1}_k$ are parametrized by a projective space $\P^{N_{n,d}}$
and it is not hard to show that the index  over $k(\P^{N_{n,d}})$ of the generic degree $d$ hypersurface $X$ is $d$. In fact, we have a much stronger statement,  namely $\CH_0(X)=\Z$, generated by $X\cdot \ell$ for $\ell\subset \P^{n+1}$ a line (Lemma~\ref{lem:ChowGp}(1)). 

 If we have a decomposition of order $N$ of the diagonal on  $X$,
\[
N\cdot\Delta_X\sim x\times X+\gamma,
\]
then since $N=\deg_{k(\P^{N_{n,d}})}x$, it follows that $d|N$. Now degenerate $X$ to the generic degree $d-1$ hypersurface $Y$ in  $\P^{n+1}$ plus the hyperplane $H$ given by $x_{n+1}=0$, and let $Z=Y\cap H$. Here $Y$ and $Z$ are defined over $L:=k(\P^{N_{n,d-1}})$. Specializing the above rational equivalence using Lemma~\ref{lem:Spec2} gives a rational equivalence on $Y\times_L Y$ of the form
\[
N\cdot \Delta_Y\sim \bar{x}\times Y+\gamma_1+\gamma_2
\]
with $\bar{x}$ a zero-cycle on $Y$, $\gamma_1$ a dimension $n$ cycle on $Z\times_LY$ and $\gamma_2$ supported in $Y\times D$ for some divisor $D$ on $Y$.   Passing to the generic point of $Y$, $\gamma_1$ gives a 0-cycle on $Z\times_LL(Y)$. The main point is to show that $\CH_0(Z\times_LL(Y))$ is also $\Z$, generated by intersections from $\P^{n-1}$ (Lemma~\ref{lem:ChowGp}(3)), so we can replace $\gamma_1$ with $y\times Y+\gamma_3$, where $y$ is a 0-cycle on $Z$ and  $\gamma_3$ is supported on $Z\times D'$ for some divisor $D'$ on $Y$ (Lemma~\ref{lem:Decomp}). In other words,
\[
N\cdot \Delta_Y\sim (\bar{x}+y)\times Y+\gamma_2+\gamma_3,
\]
so $Y$ admits a decomposition of the diagonal of degree $N$. 
Now use induction on $d$ to conclude that  $(d-1)!^*|N$. As we already know that $d|N$, we find $d!^*|N$.

Now for the details. Fix integers $n, r\ge1$. For an integer $d$, let  $\sS_{d, n+r}$ be the set of indices $I=(i_0,\ldots, i_{n+r})$ with $0\le i_j$ and $\sum_ji_j=d$.  We let $\sS_i=\sS_{d_i, n+r}$ and let $N_i:=\# S_i$. Let $\{ u_i^{(I)} | I\in \sS_i\}$ be homogeneous coordinates for $\P^{N_i}$ and let $x_0,\ldots, x_{n+r}$ be homogeneous coordinates for $\P^{n+r}$. The universal family of intersections of multi-degree $d_1, \ldots, d_r$ in $\P^{n+r}$, $\sX^{d_*, n}$, is the subscheme of $\P^{N_1}\times\ldots\times\P^{N_r}\times\P^{n+r}$ defined by the multi-homogeneous ideal in the polynomial ring $k[\{u^{(I)}_i\}_{I\in \sS_i, i=1,\ldots, r}, x_0,\ldots, x_{n+r}]$ generated by the elements
 \[
 \sum_{I\in \sS_i}u_i^{(I)} x^I;\quad i=1,\ldots, r
\]
where as usual $x^I=x_0^{i_0}\cdots x_{n+r}^{i_{n+r}}$ for $I=(i_0,\ldots, i_{n+r})$. We let $\eta:=\eta_{d_*;n}$ denote the generic point of  $\P^{N_1}\times\ldots\times\P^{N_r}$ and let $\sX^{d_*, n}_{\eta}$ denote the fiber product 
\[
\sX^{d_*, n}_\eta:=\sX^{d_*, n}\times_{\P^{N_1}\times\ldots\times\P^{N_r}}\eta\subset \P^{n+r}_\eta.
\]

By Proposition~\ref{prop:UpperBound}, we know that if $\sum_{i=1}^rd_i\le n+r$, then $\Tor_{k(\eta)}(\sX^{d_*, n}_\eta)$ is finite and divides $\prod_id_i!$. We turn to a computation of a lower bound.  

 Let $H\subset \P^{N_1}\times\ldots\times\P^{N_r}\times\P^{n+r}$ be the subscheme defined by $(x_{n+r}=0)$, let $\sX^{d_*, n}_H:=\sX^{d_*, n}\cap H$ and let $\sX^{d_*, n}_{H,\eta}:=\sX^{d_*, n}_\eta\cap H$. Let $\eta':=\eta_{d_*, n-1}$.

We separate the indices $\sS_i$ into two disjoint subsets $\sS_i^0$ and $\sS_i^1$, with $\sS_i^0$ the set of $(i_0,\ldots, i_{n+r})$ with $i_{n+r}=0$ and $\sS_i^1$ those with $i_{r+n}>0$. We set $v_i^{(I)}=u_i^{(I)}$ for $I\in \sS_i^0$ and   $w_i^{(I)}=u_i^{(I)}$ for $I\in \sS_i^1$. We write $k(\{u^{(I)}_i\}_0)$ for the field extension of $k$ generated by the ratios $u^{(I)}_i/u^{(I')}_i$ $I\neq I'$, and similarly for $k(\{v^{(I)}_i\}_0)$, giving us the field extension $k(\{v^{(I)}_i\}_0)\subset k(\{u^{(I)}_i\}_0)$.   We note that $k(\{u^{(I)}_i\}_0)=k(\eta)$, 
$k(\{v^{(I)}_i\}_0)=k(\eta')$   and the $k(\eta)$-scheme $\sX^{d_*, n}_{H,\eta}$ is canonically isomorphic to the base-change of the $k(\eta')$-scheme $\sX^{d_*, n-1}_{\eta'}$ via the base extension $k(\eta')\subset k(\eta)$:
\[
\sX^{d_*, n}_{H,\eta}\cong \sX^{d_*, n-1}_{\eta'}\otimes_{k(\eta')}k(\eta).
\]
This defines for us the projection $q_1:\sX^{d_*, n}_{H,\eta}\to \sX^{d_*, n-1}_{\eta'}$.

Let $K=k(\eta)(\sX^{d_*;n}_\eta)=k(\sX^{d_*,n})$. 
We have the morphism of $k(\eta')$-schemes 
\[
\pi:\sX^{d_*, n}_{H,\eta}\otimes_{k(\eta_{d_*;n})}K\to \sX^{d_*, n-1}_{\eta'}
\]
formed by the composition
\[
\sX^{d_*, n}_{H,\eta}\otimes_{k(\eta)}K 
\xrightarrow{p_1}\sX^{d_*, n}_{H,\eta}\xrightarrow{q_1} \sX^{d_*, n-1}_{\eta'}
\]

\begin{lemma}\label{lem:ChowGp} 1. For $i=0,\ldots, n$, the intersection map
\[
\CH_{r+i}(\P^{n+r}_{k(\eta)})\to \CH_i(\sX^{d_*;n}_\eta)
\]
is an isomorphism.\\
2. For $i=0,\ldots, n-1$, the pullback
\[
\pi^*:\CH_i(\sX^{d_*, n-1}_{\eta'})\to \CH_i(\sX^{d_*, n}_{H,\eta}\otimes_{k(\eta)}K)
\]
is an isomorphism. \\
3. For $i=0,\ldots, n-1$, the intersection map
\[
\CH_{r+i}(\P^{n+r}_K)\to \CH_i(\sX^{d_*, n}_{H,\eta}\otimes_{k(\eta)}K)
\]
is an isomorphism.
\end{lemma}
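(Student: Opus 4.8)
The three statements are cousins: each asserts that the Chow groups of a complete intersection (or a hyperplane slice of one) in low codimension are as small as possible, generated by linear-space intersections. The unifying tool is the following principle, which goes back to the projective bundle / weak Lefschetz philosophy for Chow groups: if $X\subset\P^{n+r}$ is a complete intersection of dimension $n$, then the intersection map $\CH_{r+i}(\P^{n+r})\to\CH_i(X)$ is surjective for $i\le n-1$ and an isomorphism for $i\le n-1$ \emph{when the ambient field is ``generic enough''}. The surjectivity is the hard direction and uses that $X$ is \emph{generic}; the injectivity side is comparatively soft (a degree/hyperplane-section argument, since $\CH_{r+i}(\P^{n+r})=\Z$). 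I would organize the proof so that (1) is proved first and directly, then (2) is deduced by a descent/specialization argument from (1) applied one dimension down, and (3) follows by combining (1) and (2).

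\textbf{Step 1: proof of (1).} Here $X:=\sX^{d_*;n}_\eta\subset\P^{n+r}_{k(\eta)}$ is the \emph{generic} complete intersection. The claim is $\CH_i(X)=\Z$ for $i\le n-1$, generated by $L_{r+i}\cdot X$. Surjectivity: I would argue that every $i$-dimensional subvariety $V\subset X$ with $i\le n-1$ is rationally equivalent to a multiple of a linear-space intersection. The cleanest route is a Roitman-type / specialization argument: realize $X$ as the generic member of the universal family $\sX^{d_*;n}\to\P^{N_1}\times\cdots\times\P^{N_r}$, so that a cycle on $X$ spreads out over an open $U$ of the base; then use that for a \emph{very} general member the only subvarieties of dimension $\le n-1$ that ``come from the whole family'' are the ones cut by linear spaces, together with the rigidity of the base (a cycle defined over $k(\eta)$ is algebraic over the generic point, hence, after base change and a trace/norm argument, controlled). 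Alternatively — and this is probably what the authors do — one invokes the known result that a generic complete intersection $X$ of dimension $n$ satisfies $\CH_0(X)=\Z$ (index $d_1\cdots d_r$ is attained with multiplicity erased only rationally; in fact $\CH_0(X)=\Z\cdot(L_r\cdot X)$), and bootstraps: a subvariety $V\subset X$ of dimension $i\le n-1$ is itself contained in a generic complete intersection of smaller dimension obtained by slicing $X$ with generic hyperplanes, reducing to the $\CH_0$ statement. For injectivity, since $\CH_{r+i}(\P^{n+r}_{k(\eta)})=\Z$ generated by $[L_{r+i}]$, I only need that $L_{r+i}\cdot X$ is nonzero in $\CH_i(X)$, which follows by intersecting with a complementary linear space and taking degrees (Bezout gives degree $d_1\cdots d_r>0$).

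\textbf{Step 2: proof of (2).} Recall $\sX^{d_*,n}_{H,\eta}\cong\sX^{d_*,n-1}_{\eta'}\otimes_{k(\eta')}k(\eta)$, and $\pi$ is the composite of this base change $q_1$ with the base change $p_1$ along $k(\eta)\subset K$. So $\pi^*$ is a composite of two \emph{field-extension base-change} maps on Chow groups. By Lemma~\ref{lem:Index}(3)-type reasoning (base change of Chow groups of a variety under a field extension), it suffices to show each is an isomorphism on $\CH_i$ for $i\le n-1$. The extension $k(\eta')\subset k(\eta)$ is purely transcendental (we've adjoined the ``new'' coefficients $w_i^{(I)}$, $I\in\sS_i^1$), and the extension $k(\eta)\subset K=k(\eta)(\sX^{d_*;n}_\eta)$ is the function field of a variety over $k(\eta)$ — but here I would use part (1) crucially: by (1) applied to $\sX^{d_*,n-1}_{\eta'}$ (dimension $n-1$, so the relevant range is $i\le n-2$... ) — wait, I need $i\le n-1$ here, so I must use (1) for a complete intersection of dimension $n-1$ in the range up to $i=n-1=\dim$, i.e. including $\CH_0$. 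That is exactly the $\CH_0$ case, which I will have in hand. So the strategy is: $\CH_i(\sX^{d_*,n-1}_{\eta'})=\Z$ for $i\le n-1$ by (1) (noting $n-1$ is the dimension, so this includes the zero-cycle case $I_{X}=d_1\cdots d_r$ forces $\CH_0=\Z$ for the generic one), and base change $\Z\to\CH_i(\cdots\otimes_{k(\eta)}K)$ is injective (split by degree) and surjective (again by a spreading-out / generic-member argument for the family $\sX^{d_*,n-1}$ base-changed appropriately, since $\sX^{d_*,n}_{H,\eta}\otimes K$ is still a generic object of the same type).

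\textbf{Step 3: proof of (3).} This is formal from (1) and (2): the intersection map $\CH_{r+i}(\P^{n+r}_K)\to\CH_i(\sX^{d_*,n}_{H,\eta}\otimes_{k(\eta)}K)$ factors (up to the identifications above and compatibility of intersection with pullback) through $\CH_{r+i}(\P^{n+r}_{k(\eta')})\to\CH_i(\sX^{d_*,n-1}_{\eta'})\xrightarrow{\pi^*}\CH_i(\sX^{d_*,n}_{H,\eta}\otimes K)$, where the first map is an isomorphism by (1) (applied in dimension $n-1$, range $i\le n-1$) and the second by (2). One checks the diagram
\[
\xymatrix{
\CH_{r+i}(\P^{n+r}_{k(\eta')}) \ar[r] \ar[d] & \CH_i(\sX^{d_*,n-1}_{\eta'}) \ar[d]^{\pi^*} \\
\CH_{r+i}(\P^{n+r}_K) \ar[r] & \CH_i(\sX^{d_*,n}_{H,\eta}\otimes_{k(\eta)}K)
}
\]
commutes (linear-space intersection classes map to linear-space intersection classes, and $\CH_{r+i}$ of a projective space over any field is $\Z$ on the linear class, so the left vertical is an isomorphism), and conclude.

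\textbf{Expected main obstacle.} The crux is the surjectivity in Step 1 — showing that the generic complete intersection has \emph{no} ``extra'' low-dimensional cycles beyond linear-space intersections. The $\CH_0$ case is essentially Roitman's construction (Proposition~\ref{prop:UpperBound} already produces the generator and the relation $d_1\cdots d_r\cdot\eta\sim$ linear-space cycle), but promoting this to all $\CH_i$, $i\le n-1$, requires the spreading-out argument over the parameter space together with the input that a cycle on the generic fiber, when restricted to a very general fiber, must degenerate to a linear combination of the restrictions of cycles defined over the base — which over a projective-space base means linear-space intersections. Making this rigorous (rather than hand-wavy) is where the real work lies; the injectivity statements and the reductions in Steps 2–3 are routine degree and base-change bookkeeping by contrast.
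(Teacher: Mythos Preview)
Your reduction of (3) to (1) and (2) matches the paper exactly. However, you are missing the two short arguments that actually prove (1) and (2), and your proposed substitutes are either incomplete or much harder than necessary.

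For (1), you anticipate a delicate spreading-out/specialization argument to establish surjectivity, and you flag this as ``the crux'' where ``the real work lies''. But the paper's argument is essentially a one-liner: the \emph{second} projection $p_2:\sX^{d_*,n}\to\P^{n+r}$ (onto the point, not onto the parameter space) realizes the total incidence variety as a $\P^{N_1-1}\times\cdots\times\P^{N_r-1}$-bundle over $\P^{n+r}$, linearly embedded fiberwise in $\P^{N_1}\times\cdots\times\P^{N_r}$. By the projective-bundle formula, $\CH_*(\sX^{d_*,n})$ is therefore generated by restriction from $\CH_*(\P^{N_1}\times\cdots\times\P^{N_r}\times\P^{n+r})$; localizing at $\eta$ kills the $\P^{N_j}$ contributions and leaves exactly the image of $\CH_*(\P^{n+r}_{k(\eta)})$. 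Injectivity is then the degree argument you already have. No Roitman-type input, no analysis of very general fibers, is needed.

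For (2), you correctly observe that $k(\eta')\subset k(\eta)$ is purely transcendental, but for $k(\eta)\subset K$ you fall back on the unproven claim that $\sX^{d_*,n}_{H,\eta}\otimes K$ is ``still a generic object of the same type'', to which you hope to re-apply (1). This is circular as stated: (1) is a statement about the generic fiber over its natural base field, not about an arbitrary base change of it. The paper instead proves directly that the \emph{entire} extension $k(\eta')\subset K$ is purely transcendental: writing the defining equations of $\sX^{d_*,n}$ in affine coordinates $y_j, v_i^{(I)0}, w_i^{(I)0}$, each of the $r$ equations can be solved for one variable $w_i^{(I_i^1)0}$, exhibiting $K$ as the fraction field of a polynomial ring over $k(\eta')$. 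Since Chow groups are invariant under purely transcendental base change, (2) follows immediately. Your intuition that the base-changed object behaves like the generic one is correct, but pure transcendence of $K/k(\eta')$ is precisely the fact that justifies it, and you neither state nor prove this.
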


\begin{proof} Noting that the base-extension $\CH_*(\P^{n+r}_{k(\eta)}) \to \CH_*(\P^{n+r}_K)$ is an isomorphism, the assertion (3)  follows from (1) (for $n-1$) and (2). For (1), the projection
\[
p_2:\sX^{d_*, n}\to\P^{n+r}
\]
expresses $\sX^{d_*, n}$ as $\P^{N_1-1}\times\ldots\times \P^{N_r-1}$-bundle over $\P^{n+r}$, with fibers embedded  in $\P^{N_1}\times\ldots\times \P^{N_r}$ linearly in each factor.  Thus $\CH_*(\sX^{d_*;n})$ is generated by  $\CH_*(\P^{N_1}\times\ldots\times \P^{N_r}\times\P^{n+r})$ via restriction. After localization at $\eta$, this shows that $\CH_*(\sX^{d_*;n}_\eta)$ is generated by $\CH_*(\P^{n+r}_{k(\eta)})$ via restriction.  The fact that the surjective map $\CH_{r+i}(\P^{n+r}_{k(\eta)})\to \CH_i(\sX^{d_*;n}_\eta)$ is also injective in the stated range follows by noting that the intersection pairing on $\sX^{d_*;n}_\eta$  is non-degenerate when restricted to these cycles. This proves (1).

For (2), fix for each $i$ the index $I^0_i:=(d_i,0,\ldots,0)$, and the index $I_i^1:=(0,\ldots, 0,d_i)$, and for each homogeneous variable  $w_i^{(I)}$, let $w_i^{(I)0}$ be the corresponding affine coordinate $w_i^{(I)}/v_i^{(I_i^0)}$. Similarly, we let $v_i^{(I)0}=v_i^{(I)}/v_i^{(I_i^0)}$. Let $y_i=x_i/x_0$, $i=1,\ldots, n+r$, $y_0=1$. The field extension $k(\eta')\to K$ is isomorphic to the field extension given by including the constants $k(\{v^{(I)}_i\}_0)$ of the  $k(\{v^{(I)}_i\}_0)$-algebra $A$,
\[
A:=k(\{v^{(I)}_i\}_0, y_1,\ldots, y_{n+r})[\{w_i^{(I)0}\}]/(\ldots, \sum_{I\in\sS_i^0}v_i^{(I)0}\cdot y^I + \sum_{I'\in \sS_i^1}w_i^{(I')0}\cdot y^{I'},\ldots)
\]
into the quotient field $L$ of $A$. In each defining relation for $A$, we can solve for $w_i^{(I_i^1)0}$ in terms of the $y_i$'s and the other $w_i^{(I')0}$'s. After eliminating each $w_i^{(I_i^1)0}$ in this way, we see that $A$ is a polynomial algebra over $k(\{v^{(I)}_i\}_0, y_1,\ldots, y_{n+r})$. The $y_i$  and the $w_i^{(I')0}$, after removing $w_i^{(I_i^1)0}$ for each $i$, therefore form an algebraically independent set of generators for $L$ over $k(\{v^{(I)}_i\}_0)$,  and thus $K$ is a  pure transcendental extension of $k(\eta')$.   As Chow groups are invariant under base-change by  purely transcendental field extensions,  this proves (2).
\end{proof}

\begin{lemma}\label{lem:Decomp}  Take  $\gamma$ in $\CH_n(\sX^{d_*, n}_{H,\eta}\times_{k(\eta)}\sX^{d_*, n}_\eta)$. Then there is a zero cycle $y$ on $\sX^{d_*, n}_{H,\eta}$ a proper closed subset $D'$ of $\sX^{d_*, n}_\eta$ and a cycle $\gamma'$   supported on $\sX^{d_*, n}_{H,\eta}\times_{k(\eta)}D'$ such that
\[
\gamma=y\times \sX^{d_*, n}_\eta+\gamma'
\]
in $\CH_n(\sX^{d_*, n}_{H,\eta}\times_{k(\eta)}\sX^{d_*, n}_\eta)$. Furthermore the degree of $y$ is divisible by $\prod_{i=1}^rd_i$. 
\end{lemma}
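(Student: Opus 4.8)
The plan is to restrict $\gamma$ to the generic fibre of the second projection, identify the resulting class by means of Lemma~\ref{lem:ChowGp}, and then spread the resulting identity back out over $\sX^{d_*,n}_\eta$. Throughout, abbreviate $A:=\sX^{d_*,n}_{H,\eta}$ — a complete intersection of multi-degree $d_*$ of dimension $n-1$, lying in the hyperplane $\P^{n+r-1}_{k(\eta)}\subset\P^{n+r}_{k(\eta)}$ — and $B:=\sX^{d_*,n}_\eta$, $K:=k(\eta)(B)$, so that $K$ is the field of the text and $A\otimes_{k(\eta)}K$ is the generic fibre of the projection $A\times_{k(\eta)}B\to B$. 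Using the standard description of $\CH_0$ of a generic fibre as a colimit over shrinking bases, one has $\CH_0(A\otimes_{k(\eta)}K)=\varinjlim_{D'}\CH_n(A\times_{k(\eta)}(B\setminus D'))$, with $D'$ running over the proper closed subsets of $B$; let $\bar\gamma\in\CH_0(A\otimes_{k(\eta)}K)$ denote the image of $\gamma$.

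The first real step is to pin down $\bar\gamma$. By Lemma~\ref{lem:ChowGp}(3) the intersection map $\CH_r(\P^{n+r}_K)\to\CH_0(A\otimes_{k(\eta)}K)$ is an isomorphism, so $\CH_0(A\otimes_{k(\eta)}K)\cong\Z$ and a generator is the class cut out on $A\otimes_{k(\eta)}K$ by a linear subspace $\Lambda$ of complementary dimension lying in the hyperplane carrying $A$. Since $k(\eta)$ has positive transcendence degree over $k$ it is infinite, so $\Lambda$ can be taken in sufficiently general position and defined already over $k(\eta)$; then $\lambda_0:=[\Lambda\cap A]\in\CH_0(A)$ is a zero-cycle of degree $\prod_{i=1}^rd_i$ (B\'ezout, $A$ being a complete intersection of multi-degree $d_*$ in $\P^{n+r-1}_{k(\eta)}$) whose image in $\CH_0(A\otimes_{k(\eta)}K)$ is a generator. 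Hence $\bar\gamma=m\cdot(\lambda_0)_K$ for a unique integer $m$.

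It then remains to spread this out. I would set $y:=m\lambda_0\in\CH_0(A)$; it is a zero-cycle on $A$ whose degree $m\prod_{i=1}^rd_i$ is divisible by $\prod_{i=1}^rd_i$, and the restriction of the cycle $y\times B$ to the generic fibre of $A\times_{k(\eta)}B\to B$ is exactly $m\cdot(\lambda_0)_K=\bar\gamma$. Therefore $\gamma-y\times B$ maps to $0$ in $\CH_0(A\otimes_{k(\eta)}K)$, so by the colimit description there is a proper closed subset $D'\subset B$ with $(\gamma-y\times B)|_{A\times_{k(\eta)}(B\setminus D')}=0$ in $\CH_n(A\times_{k(\eta)}(B\setminus D'))$. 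Feeding this into the localization sequence
\[
\CH_n(A\times_{k(\eta)}D')\to\CH_n(A\times_{k(\eta)}B)\to\CH_n(A\times_{k(\eta)}(B\setminus D'))\to 0
\]
exhibits $\gamma-y\times B$ as the class of a cycle $\gamma'$ supported on $A\times_{k(\eta)}D'$, which is the asserted decomposition.

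The sole nontrivial ingredient is Lemma~\ref{lem:ChowGp}(3), which is already available; everything else is bookkeeping with colimits and the localization sequence. The point requiring a little care — and the reason the divisibility of $\deg y$ by $\prod_{i=1}^rd_i$ is forced — is that the generator of $\CH_0(A\otimes_{k(\eta)}K)\cong\Z$ must be lifted to a genuine zero-cycle on $A$ of degree $\prod_{i=1}^rd_i$, not merely to one on $A\otimes_{k(\eta)}K$; this is possible precisely because the isomorphism of Lemma~\ref{lem:ChowGp}(3) is realized by intersection with linear subspaces, which can be chosen rational over the infinite field $k(\eta)$.
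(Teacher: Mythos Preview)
Your argument is correct and is essentially the paper's own proof: restrict $\gamma$ to the generic fibre $A\otimes_{k(\eta)}K$, invoke Lemma~\ref{lem:ChowGp}(3) to identify the result as an integer multiple of a linear section (which you then lift to a zero-cycle $y$ on $A$ over $k(\eta)$, of degree $m\prod_i d_i$ by B\'ezout), and conclude via localization. The paper compresses all of this into three sentences, but every step you spell out---the colimit description of $\CH_0$ of the generic fibre, the choice of a rational linear subspace, the localization sequence---is exactly what is being used.
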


\begin{proof} Let $\xi$ denote the generic point of $\sX^{d_*, n}_\eta$. By Lemma~\ref{lem:ChowGp}(3), the class of the restriction $j^*\gamma$ of $\gamma$ to $\sX^{d_*, n}_{H,\eta}\times_{k(\eta)}\xi$ is of the form
\[
j^*\gamma=M\cdot L\cdot \sX^{d_*, n}_{H,\eta}\times_{k(\eta)}\xi,
\]
where $L$ is a linear subspace of $H\subset \P^{n+r}$, $M$ an integer. Letting $y\in \CH_0(\sX^{d_*, n}_{H,\eta})$ be the 0-cycle $M\cdot L\cdot \sX^{d_*, n}_{H,\eta}$, the result follows from the localization theorem for the Chow groups; the assertion on the degree follows from the fact that $\sX^{d_*, n}_{H,\eta}$ has degree $\prod_{i=1}^rd_i$ and hence $y$ has degree $M\cdot\prod_{i=1}^rd_i$. 
\end{proof}

\begin{definition} For a natural number $n\ge1$, we let $n!^*$ denote the least common multiple of the numbers $1, 2,\ldots, n$. 
\end{definition}

\begin{lemma}\label{lem:Product} Let $d_1,\ldots, d_r$ be a sequence of positive natural numbers. Then the product $\prod_{i=1}^r(d_i!^*)$ is equal to the least common multiple $M$ of all products $i_1\cdot\ldots\cdot i_r$ with $1\le i_j\le d_j$, $j=1,\ldots, r$. 
\end{lemma}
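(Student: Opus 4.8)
The plan is to establish the equality prime by prime, i.e.\ to show that $\prod_{i=1}^r d_i!^*$ and $M$ have the same $p$-adic valuation $v_p$ for every prime $p$. The elementary input is the formula $v_p\bigl(d!^*\bigr)=\max\{v_p(m):1\le m\le d\}$, valid because the $p$-adic valuation of a least common multiple is the maximum of the valuations of its terms; this maximum is moreover attained at the pure prime power $m=p^{k}$, where $k$ is the largest exponent with $p^{k}\le d$.

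Applying the same two facts (the valuation of an l.c.m.\ is a maximum, the valuation of a product is a sum) to the definition of $M$, I would write
\[
v_p(M)=\max_{1\le i_j\le d_j}\; v_p(i_1\cdots i_r)=\max_{1\le i_j\le d_j}\; \sum_{j=1}^r v_p(i_j),
\]
where the $i_j$ range over their respective intervals \emph{independently}. Since a maximum of a sum over independent choices is the sum of the maxima, this gives
\[
v_p(M)=\sum_{j=1}^r\; \max_{1\le i_j\le d_j} v_p(i_j)=\sum_{j=1}^r v_p\bigl(d_j!^*\bigr)=v_p\Bigl(\prod_{j=1}^r d_j!^*\Bigr).
\]
As this holds for every prime $p$, the two positive integers coincide.

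There is essentially no obstacle here; the one point worth making explicit is the interchange of $\max$ and $\sum$, which is exactly where the freedom to choose each index $i_j$ separately (rather than a single global index) enters. If one prefers to avoid valuations, the same conclusion follows from two direct divisibilities: $M\mid\prod_j d_j!^*$ because $i_j\mid d_j!^*$ for every admissible tuple, and $\prod_j d_j!^*\mid M$ because the admissible tuple $(p^{k_1},\dots,p^{k_r})$, with $p^{k_j}$ the largest $p$-power not exceeding $d_j$, exhibits, prime by prime, every prime power dividing the left-hand side as a divisor of $M$. Alternatively one could induct on $r$, using the relation $d!^*=\operatorname{lcm}\bigl(d,(d-1)!^*\bigr)$ noted in the text, but the valuation computation is the cleanest route.
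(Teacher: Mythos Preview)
Your proof is correct and follows essentially the same approach as the paper: both arguments compare $p$-adic valuations prime by prime, using that $\nu_p$ of an l.c.m.\ is the maximum of the valuations and that the maximum of $\sum_j \nu_p(i_j)$ over independently varying $i_j$ is achieved by maximizing each coordinate separately. The paper phrases this last step by explicitly choosing an optimal tuple $(i_1^*,\ldots,i_r^*)$, whereas you state it as ``max of a sum over independent choices equals sum of the maxes,'' but the content is identical.
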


\begin{proof}  Fix a prime number $p$. For each $j=1,\ldots, r$, let $i^*_j$ be an integer with $1\le i^*_j\le d_j$ and with $p$-adic valuation $\nu_p(i^*_j)$ equal to  $\nu_p(d_j!^*)$. Then
\[
\nu_p(\prod_{j=1}^ri^*_j)=\nu_p(\prod_{i=1}^r(d_i!^*))
\]
and $\nu_p(\prod_{j=1}^ri_j)\le \nu_p(\prod_{j=1}^ri^*_j)$ for all sequences $i_1,\ldots, i_r$ with $1\le i_j\le d_j$.  Thus $\nu_p(M)= \nu_p(\prod_{i=1}^ri^*_j)=\nu_p(\prod_{i=1}^r(d_i!^*))$. Since $p$ was arbitrary, this gives 
$M=\prod_{i=1}^r(d_i!^*)$.  \end{proof}

 \begin{theorem} \label{theorem:main1} For integers $d_1, \ldots, d_r$ with $\sum_id_i\le n+r$,  $\prod_{i=1}^rd_i!^*$ divides $\Tor_{k(\eta)}(\sX^{d_*, n}_\eta)$.
 \end{theorem}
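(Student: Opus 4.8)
The plan is to reduce, via Lemma~\ref{lem:Product}, to proving that every product $i_1\cdots i_r$ with $1\le i_j\le d_j$ divides $N_0:=\Tor_{k(\eta)}(\sX^{d_*,n}_\eta)$ (which is finite by Proposition~\ref{prop:UpperBound}), and then to run an induction on $\sum_j(d_j-i_j)$. For the base case $i_j=d_j$ for all $j$, I would argue by index: fixing a decomposition $N_0\cdot\Delta=x\times\sX^{d_*,n}_\eta+\gamma$ with $\gamma$ supported on $\sX^{d_*,n}_\eta\times D$ for a divisor $D$, restriction to the generic point of the second factor kills $\gamma$ and yields $N_0\cdot[\text{diag pt}]=x_{k(\sX^{d_*,n}_\eta)}$ in $\CH_0$ of the generic fibre; taking degrees shows $N_0$ lies in the image of $\deg\colon\CH_0(\sX^{d_*,n}_\eta)\to\Z$, which by Lemma~\ref{lem:ChowGp}(1) with $i=0$ is $(\prod_j d_j)\Z$ — the generator being the intersection with a linear $\P^r$, of degree $\prod_j d_j$. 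Hence $\prod_j d_j\mid N_0$.

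For the inductive step, suppose $i_r<d_r$ (reorder the factors if necessary; this is harmless since everything is symmetric in the $d_i$), so $d_r\ge2$, and set $e_*:=(d_1,\dots,d_{r-1},d_r-1)$, for which $(i_1,\dots,i_r)$ is admissible with $\sum_j(e_j-i_j)=\sum_j(d_j-i_j)-1$. The key claim I would establish is $\Tor_{k(\eta_{e_*})}(\sX^{e_*,n}_\eta)\mid N_0$; together with the inductive hypothesis $i_1\cdots i_r\mid\Tor_{k(\eta_{e_*})}(\sX^{e_*,n}_\eta)$ this closes the induction. To prove the claim I would degenerate the generic multidegree‑$d_*$ complete intersection to one whose last defining form factors through $x_{n+r}$: take a DVR $\sO$ with uniformizer $s$, residue field $\kappa$ a purely transcendental extension of $k(\eta_{e_*})$ and fraction field $F$, and the flat proper family $\sX_\sO\subset\P^{n+r}_\sO$ cut by $f_1=\dots=f_{r-1}=0$ and $g_rx_{n+r}+s\,h=0$, where $f_1,\dots,f_{r-1},g_r$ are the generic forms of degrees $d_1,\dots,d_{r-1},d_r-1$ and $h$ is a generic form of degree $d_r$. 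The generic fibre $X_F$ is a multidegree‑$d_*$ complete intersection over $F$, hence a fibre of the universal family, so by Lemma~\ref{lem:Basic}(4) and Lemma~\ref{lem:specialization}(1) its torsion order divides $N_0$ and $X_F$ admits a decomposition of the diagonal of order $N_0$. The special fibre is $Y_1\cup Y_2$ with $Y_1=\{f_i=g_r=0\}$ a purely transcendental base change of $\sX^{e_*,n}_\eta$, $Y_2=\{f_i=x_{n+r}=0\}$, and $Z:=Y_1\cap Y_2=\{f_i=g_r=x_{n+r}=0\}$ precisely the hyperplane section figuring in Lemmas~\ref{lem:ChowGp} and~\ref{lem:Decomp} for multidegree $e_*$; one checks $Y_1,Y_2$ have no common component and $X_F$, $Y_1\setminus Y_2$ are generically reduced, so Lemma~\ref{lem:Spec2} applies.

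Applying Lemma~\ref{lem:Spec2} at level $0$ gives $N_0\Delta_{Y_1}=\gamma+\gamma_1+\gamma_2$ with $\gamma$ of the form $(\text{$0$-cycle})\times Y_1$, $\gamma_1$ supported on $Y_1\times(\text{divisor})$, and $\gamma_2$ supported on $Z\times Y_1$. I would then view $\gamma_2$ as a class in $\CH_n(Z\times_\kappa Y_1)$ and apply Lemma~\ref{lem:Decomp} for multidegree $e_*$ (the purely transcendental base change $\kappa/k(\eta_{e_*})$ leaves the Chow‑group computations of Lemma~\ref{lem:ChowGp} intact), writing $\gamma_2=y\times Y_1+\gamma'$ with $y$ a $0$-cycle on $Z$ and $\gamma'$ supported on $Z\times D'$ for a divisor $D'\subset Y_1$. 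Pushing forward along $i_Z\colon Z\hookrightarrow Y_1$ and regrouping then exhibits a decomposition of the diagonal of order $N_0$ on $Y_1$; since $\sX^{e_*,n}_\eta$ is geometrically integral and torsion orders are invariant under purely transcendental extensions, Lemma~\ref{lem:Basic}(2) gives $\Tor_{k(\eta_{e_*})}(\sX^{e_*,n}_\eta)=\Tor_\kappa(Y_1)\mid N_0$, proving the claim.

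The hard part will be the construction of the degeneration: the transcendence degrees do not allow the generic fibre to be literally the generic multidegree‑$d_*$ complete intersection while $Y_1$ is literally the generic multidegree‑$e_*$ complete intersection, so one must interpose a purely transcendental extension and check that both the inheritance of the order‑$N_0$ decomposition by $X_F$ and the applicability of Lemmas~\ref{lem:ChowGp}, \ref{lem:Decomp} and~\ref{lem:Spec2} survive it. Once the family is set up correctly, the remaining manipulation of cycles and their supports is routine.
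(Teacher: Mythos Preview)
Your proposal is correct and follows essentially the same route as the paper: prove $\prod_j d_j\mid N_0$ via the index computation from Lemma~\ref{lem:ChowGp}(1), degenerate the generic $d_*$-complete intersection so that one defining form acquires a hyperplane factor, apply Lemma~\ref{lem:Spec2} to obtain a decomposition on the $e_*$-component with an extra term supported on the hyperplane section, and then kill that extra term using Lemma~\ref{lem:Decomp}. The paper organizes the induction as an induction on the theorem statement itself (assuming the result for $e_*=(d_1-1,d_2,\dots,d_r)$ and invoking symmetry), whereas you phrase it as an induction on $\sum_j(d_j-i_j)$ over all pairs $(d_*,(i_j))$; these are equivalent packagings of the same argument, and your remark about the transcendence-degree bookkeeping being the only genuine subtlety is exactly right.
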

 
 \begin{proof} We may suppose that $d_1>1$. Let $d_*'=(d_1-1, d_2,\ldots, d_r)$.    Let $\sO$ be the local ring of the origin in $\A^1_{k(\eta)}=\Spec k(\eta)[t]$ and let $\tilde\sX$ be the subscheme of $\P^{N_1}\times\ldots\times\P^{N_r}_\sO$ defined by the homogeneous ideal $(f_1,\ldots,f_r)$, with
 \[
 f_j=\begin{cases} \sum_{I\in \sS_{d_j, n+r}}u_j^{(I)}x^I&\text{ for } j\neq 1\\
  t\cdot \sum_{I\in \sS_{d_1, n+r}}u_1^{(I)}x^I+(1-t)\cdot x_{n+r}\cdot \sum_{J\in \sS_{d_1-1, n+r}}u_1^{(J)}x^J
 & \text{ for } j= 1.
  \end{cases}
 \]
The generic fiber of $\sX$ is thus isomorphic to $\sX^{d_*, n}_\eta\times_{k(\eta)}k(\eta, t)$ and the special fiber is $\sX^{d'_*, n}_\eta\cup H$. 

Suppose that $\sX^{d_*, n}_\eta$ admits a decomposition of the diagonal of order $N$:
\[
N\cdot \Delta_{\sX^{d_*, n}_\eta}=x\times \sX^{d_*, n}_\eta+\gamma
\]
with $\gamma$ supported on $\sX^{d_*, n}_\eta\times D$ for some divisor $D$. By Lemma~\ref{lem:ChowGp}, $\text{deg }x$ is divisible by $\prod_{i=1}^rd_i$,  and thus $\prod_{i=1}^rd_i$ divides $N$. 

By applying Totaro's specialization lemma (Lemma~\ref{lem:Spec2}) to the family $\sX\to\Spec\sO$, the diagonal for $\sX^{d'_*, n}_\eta$ admits a decomposition of the form
\[
N\cdot \Delta_{\sX^{d'_*, n}_\eta}=\bar{x}\times\sX^{d'_*, n}_\eta+ \gamma_1+\gamma_2
\]
with $\gamma_1$ supported in $\sX^{d'_*, n}_{H, \eta}\times \sX^{d'_*, n}_\eta$ and $\gamma_2$ supported in $\sX^{d'_*, n}_\eta\times D_2$ for some divisor $D_2$ on $\sX^{d'_*, n}_\eta$. By Lemma~\ref{lem:Decomp}, we have the identity
\[
\gamma_1=y\times \sX^{d'_*, n}_\eta+\gamma_3
\]
with $y$ a zero-cycle on $\sX^{d'_*, n}_\eta$ and $\gamma_3$ supported on $\sX^{d'_*, n}_\eta\times D_3$ for some divisor $D_3$. Thus, the diagonal on $\sX^{d'_*, n}_\eta$ admits a decomposition of order $N$ as well. By induction $(d_1-1)!^*\cdot\prod_{i=2}^r(d_i!^*)$ divides $N$; by symmetry $(d_j-1)!^*\cdot\prod_{i=1, i\neq j}^r(d_i!^*)$ divides $N$ for all $j$ with $d_j>1$. As we have already seen that $\prod_id_i$ divides $N$, Lemma~\ref{lem:Product} completes the proof.
\end{proof}

We also have a lower bound for the generic complete intersection with a rational point.

\begin{corollary} \label{corollary-with-point} For integers $d_1,\ldots, d_r$ with $\sum_id_i\le n+r$, let $K$ be the function field of the generic complete intersection of multi-degree $d_1,\ldots, d_r$, $K:=k(\eta)(\sX^{d_*, n}_\eta)$. Then $(1/\prod_{i=1}^rd_i)\prod_{i=1}^r(d_i!^*)$ divides $\Tor_K(\sX^{d_*, n}_\eta\times_{k(\eta)}K)$.
\end{corollary}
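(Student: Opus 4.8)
The plan is to derive Corollary~\ref{corollary-with-point} from Theorem~\ref{theorem:main1} by routing the statement about $\Tor_K$ through the generic torsion order and the index. Write $X := \sX^{d_*, n}_\eta$, so that $K = k(\eta)(X)$ and the quantity we must bound below is $\Tor_K(X_K)$, where $X_K = X\otimes_{k(\eta)}K$. First I would record that $X$ is smooth over $k(\eta)$ --- the point $\eta$ lies in the dense open subset of $\P^{N_1}\times\ldots\times\P^{N_r}$ parametrizing smooth complete intersections, exactly as in the proof of Proposition~\ref{prop:UpperBound} --- and that $X$ is geometrically integral, a smooth complete intersection in $\P^{n+r}$ of dimension $n\ge1$ being geometrically connected. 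This is what makes Lemma~\ref{lem:Basic2} and Lemma~\ref{lem:Index}(2) applicable.

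Next, by Lemma~\ref{lem:Basic2} we have $\Tor_K(X_K) = g\Tor_{k(\eta)}(X)$. By Proposition~\ref{prop:UpperBound} and Lemma~\ref{lem:Basic}(3), $g\Tor_{k(\eta)}(X)$ is finite, so Lemma~\ref{lem:Index}(2) gives
\[
\Tor_{k(\eta)}(X)\mid I_X\cdot g\Tor_{k(\eta)}(X) = I_X\cdot \Tor_K(X_K).
\]
I would then compute the index $I_X$ using Lemma~\ref{lem:ChowGp}(1) with $i=0$: the intersection map $\CH_r(\P^{n+r}_{k(\eta)})\to\CH_0(X)$ is an isomorphism, so $\CH_0(X)\cong\Z$ is generated by the class of $L_r\cdot X$ for a dimension $r$ linear subspace $L_r\subset\P^{n+r}$; this is an effective $0$-cycle of degree $\prod_{i=1}^r d_i$ by B\'ezout, so every $0$-cycle on $X$ has degree divisible by $\prod_{i=1}^r d_i$, and hence $I_X = \prod_{i=1}^r d_i$.

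Finally I would combine the pieces. Theorem~\ref{theorem:main1} gives $\prod_{i=1}^r d_i!^*\mid\Tor_{k(\eta)}(X)$, so together with the displayed divisibility,
\[
\prod_{i=1}^r d_i!^*\mid\Big(\prod_{i=1}^r d_i\Big)\cdot\Tor_K(X_K).
\]
Since $d_i\mid d_i!^*$ for each $i$, the integer $\prod_i d_i$ divides $\prod_i d_i!^*$, so $(1/\prod_i d_i)\prod_i d_i!^*$ is an integer; and comparing $p$-adic valuations, the inequality $\nu_p\big(\prod_i d_i!^*\big)\le\nu_p\big(\prod_i d_i\big)+\nu_p\big(\Tor_K(X_K)\big)$ forces $\nu_p\big((1/\prod_i d_i)\prod_i d_i!^*\big)\le\nu_p\big(\Tor_K(X_K)\big)$ for every prime $p$. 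Hence $(1/\prod_i d_i)\prod_i d_i!^*$ divides $\Tor_K(X_K)$, which is the claim.

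I do not anticipate a real obstacle here: the entire content sits in Theorem~\ref{theorem:main1} and the formal lemmas of Section~\ref{sec:TorsionOrder}. The only step that needs a word of justification is that $X$ is smooth and geometrically integral, so that the correspondence action underlying Lemmas~\ref{lem:Basic2} and~\ref{lem:Index}(2) is available; after that everything is bookkeeping with valuations.
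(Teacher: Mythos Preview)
Your proof is correct and follows essentially the same approach as the paper: compute $I_X=\prod_i d_i$ via Lemma~\ref{lem:ChowGp}, use the divisibility $\Tor_{k(\eta)}(X)\mid I_X\cdot\Tor_K(X_K)$, and apply Theorem~\ref{theorem:main1}. The only cosmetic difference is that the paper reaches this divisibility directly from Lemma~\ref{lem:Index}(1) (taking $Y=X$), whereas you route through $g\Tor_{k(\eta)}(X)$ via Lemma~\ref{lem:Basic2} and Lemma~\ref{lem:Index}(2); both yield the same relation, and your extra care about smoothness and geometric integrality is harmless but not strictly needed if one quotes Lemma~\ref{lem:Index}(1) instead.
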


\begin{proof} Let $X=\sX^{d_*, n}_\eta$. By Lemma~\ref{lem:ChowGp}, $I_X=\prod_{i=1}^rd_i$ and thus by Lemma~\ref{lem:Index}, $\Tor_{k(\eta)}(\sX^{d_*, n}_\eta)$ divides $I_X\cdot\Tor_K(\sX^{d_*, n}_\eta\times_{k(\eta)}K)$.
Clearly $\prod_{i=1}^rd_i$ divides $\prod_{i=1}^r(d_i!^*)$, whence the result.
\end{proof}

\begin{ex}[Generic cubic hypersurfaces] For the generic cubic hypersurface $X:=\sX^{3, n}_\eta$, $n\ge2$, we thus have $\Tor_{k(\eta)}X=6$ and the generic cubic hypersurface with a rational point $X_K$, $K=k(\eta)(X)$, has $2|\Tor_{K}X_K|6$. If $X_K$ were to admit a dominant rational map  $\P^n\dashrightarrow X_K$ of degree prime to 3, then by Proposition~\ref{prop:Ratl}(2), we would have $\Tor_{K}X_K=2$.   We know that if a cubic hypersurface $X$ has a line (defined over the base-field) then there is a degree two  dominant rational map  $\P^n\dashrightarrow X$ (see for example \cite[\S 5]{Murre}), but it is not clear if this is the case if we only assume that $X$ has a (suitably general) rational point.

However, as pointed out by a referee, the generic cubic {\em surface} with a rational point does have $\Tor_{K}X_K=6$, at least if $k$ has characteristic not equal to 3. Indeed, if we take a field $k_0$ of characteristic different from three, containing a primitive cube root of 1, and let $k$ be a pure transcendental extension of $k_0$, we may find an element $a\in k$ that is not a cube. Then the smooth cubic surface $Y\subset \P^3_k$ given by $x^3+y^3+z^3+at^3=0$ has a rational point but also has $Br(Y)/Br(k)\cong (\Z/3)^2$ (see for example \cite{Manin}), and thus $\Tor_k(Y)$ is divisible by 3. Specializing the generic cubic surface  with a rational point $X_K$ to $Y$, we may apply the divisibility lemma~\ref{lem:specialization} to conclude that $3|\Tor_{K}X_K$. In particular, the generic cubic surface with a rational point does not admit a rational map $\P^2\dashrightarrow X_K$ of degree not divisible by 6.
\end{ex}

\begin{ex}[Generic cubic hypersurfaces with a line]\label{ex:line} Take $n\ge2$. For $X$ a cubic hypersurface in $\P^{n+1}_L$ (defined over some field $L\supset k$), we have the Fano variety of lines on $X$, $F_X$, a closed subscheme of the Grassmann variety $\Gr(2, n+2)_L$. In fact, if $U\to \Gr(2, n+2)$ is the universal rank two bundle, and $f$ is the defining equation for $X$,   then $F_X$ is the closed subscheme defined by the vanishing of the section of the rank four bundle $\Sym^3 U$ determined by $f$. In particular, the class of $F_X$ in $\CH^4(\Gr(2, n+2)_L)$ is given by the Chern class $c_4(\Sym^3 U)$. One computes this easily as $c_4=9c_2^2(U)+18c_1(U)^2c_2(U)$.  As $c_2(U)^n$ and $c_2(U)^{n-2}c_1(U)^2$ both have degree one, we see that $F_X\cdot c_2(U)^{n-2}$ has degree 27, and thus $I_{F_X}$ divides 27. This 27 is of course the famous 27 lines on a cubic surface, as intersecting $F_X$ with $c_2(U)^{n-2}$ in $\Gr(2,n+2)$ is the same as taking the Fano variety of the intersection of $X$ with a general $\P^3$ in $\P^{n+1}$. See for example \cite[14.7.13]{Fulton} for details of the Chern class computation.

Taking $X=\sX^{3, n}_\eta$, and letting $K=k(\eta)(F_X)$, it follows from Lemma~\ref{lem:Index} that  $6=\Tor_{k(\eta)}(\sX^{d_*, n}_\eta)$ divides $27\cdot\Tor_K(\sX^{d_*, n}_\eta\times_{k(\eta)}K)$; since we have the degree two rational map $\P^n_K\dashrightarrow \sX^{d_*, n}_\eta\times_{k(\eta)}K$, we have $\Tor_K(\sX^{d_*, n}_\eta\times_{k(\eta)}K)=2$. In particular, the generic cubic with a line is not stably rational over its natural field of definition $k(\eta)(F_X)$. 
\end{ex}

We are indebted to J.-L. Colliot-Th\'el\`ene for the next example (see \cite[Th\'eor\`eme]{CTNew}), which improves the bounds and simplifies the argument of an example in an earlier version of this paper.
\begin{ex}[Cubics over a ``small'' field] \label{ex:small} Take $n\ge2$. We consider a DVR $\sO$ with quotient field $K$ and residue field $k$ (of characteristic $\neq2$), and a degree $3$ hypersurface $\sX\subset \P^{n+1}_\sO$. Let $X=\sX_K$ and $Y=\sX_k$. We suppose that $X$ is  smooth and $Y=Q\cup H$, with $Q$ a smooth quadric and $H$ a hyperplane. Furthermore, we assume
\begin{enumerate}
\item $I_Q=1$.
\item  $Q$ and $H$ intersect transversely.
\item $I_{Q\cap H}=2$.
\end{enumerate}
From Proposition~\ref{prop:UpperBound}, we know that $\Tor_K(X)$ is finite and divides 6. We will show that 2 divides $\Tor_K(X)$.

For this, suppose we have a decomposition of the diagonal of $X$ of order $N$. We note that our family $\sX$ satisfies the hypotheses of Lemma~\ref{lem:Spec3}, with $Y_1=Q$, $Y_2=H$, and $r=1$. By Remark~\ref{rem:Spec3}, $N\cdot(\CH_0(Q)/i_{Q\cap H*}(\CH_0(Q\cap H))=0$; considering degrees, we see that $2|N$. 

To construct an explicit example, recall \cite{Lam} that a field $k$ has {\em $u$-invariant}  $u(k)\ge r$ if there exists an anisotropic quadratic form over $k$ of dimension $r$. The above construction gives us a cubic hypersurface $X$ of dimension $n\ge 2$ over $K:=k((x))$ with $2 |\Tor_K(X)$ and $X(K)\neq\0$ if $k$ is an infinite field of characteristic $\neq2$ with $u$-invariant $\ge n+1$. Indeed, take a anisotropic quadratic form $q_0$ in $n+1$-variables $X_0,\ldots, X_n$,  choose $\alpha\in k^\times$ represented by $q_0$ and let $q=q_0-\alpha\cdot X^2_{n+1}$, so $q$ is non-degenerate. Let $Q\subset\P^{n+1}_k$ be the quadric defined by $q$ and let $H$ be the hyperplane $X_{n+1}=0$. Take a cubic form $c_0\in k[X_0,\ldots, X_{n+1}]$ and let $c=xc_0+q\cdot X_{n+1}\in k[[x]][X_0,\ldots, X_{n+1}]$. Since $k$ is infinite, we can choose $c_0$ so that the subscheme $X$ of $\P^{n+1}_{k((x))}$ defined by $c$ is smooth (and hence geometrically integral); it suffices to choose $c_0$ so that $c_0=0$ is smooth and intersects $Q$ and $H$ transversely. Clearly $I_Q=1$, $Q$ and $H$ intersect transversely and $I_{Q\cap H}=2$, giving us the desired example.  

For example, $\F_p$ has $u$-invariant 2, and $\Q_p$ and $\F_p((t))$  both have $u$-invariant 4 (see for example  \cite{Lam}).  Thus there are cubic threefolds $X$ over $K:=\Q_p((x))$ with $2|\Tor_K(X)$ and with $X(K)\neq\0$. Similarly, there are examples of such cubic threefolds over $K=\F_p((t))((x))$ for $p\neq 2$.  Over $K=\Q((x))$ or even over $K=\R((x))$ there are cubic hypersurfaces $X$ of   dimension $n$  over $K$ for arbitrary $n\ge2$, with $2 |\Tor_K(X)$ and $X(K)\neq\0$. As in the previous example, we may pass to an odd degree field extension $L$ of $K$ to find a cubic hypersurface $X_L$ with a line, and with $\Tor_L(X_L)=2$; all these cubics are thus not stably rational over their corresponding field of definition. 
\end{ex}

\begin{remark} As mentioned in the introduction, Colliot-Th\'el\`ene and Pirutka have constructed cubic threefolds over a $p$-adic field  \cite[Th\'eor\`eme 1.21]{CTP} and over $\F_p((x))$ \cite[Remarque 1.23]{CTP} with non-zero torsion order and having a rational point.\end{remark}

\section{Torsion order for complete intersections in a projective space: a lower bound}\label{sec:LowerBound1} As in the previous sections, we consider smooth complete intersection subschemes $X$ of $\P^{n+r}$ of multi-degree $d_1,\ldots, d_r$. 

By saying a property holds for  a very general complete intersection in $\P_k^{n+r}$ of multi-degree $d_1,\ldots, d_r$ we mean that there is a countable union $F$ of proper closed subsets of the parameter scheme of  such complete intersections 
(an open in a product of projective spaces over $k$)  such that the property holds for $X_b$ if $b\not\in F$.

Recall that for $X$ a proper, separable $L$-scheme for some field $L$, and $\bar L$  the algebraic closure of $L$, we have defined $\Tor^{(i)}(X):=\Tor^{(i)}_{\bar L}(X_{\bar L})$.

\begin{theorem}\label{thm:LowerBound} Let $k$ be a field of characteristic zero.  Let $d_1,\ldots, d_r$ and $n\ge3$ be integers with $d':=\sum_{j=1}^rd_j\le n+r$. Let $p$ be a prime number. Suppose that
\begin{equation}\label{eqn:*}
d_i \geq p\cdot \left\lceil {\frac{n+r+1-d'+d_i}{p+1}} \right\rceil 
\end{equation}
for some $i$, $1\le i\le r$. Then $p | \Tor^{(n-2)}(X)$ for all very general $X=X_{d_1,\ldots, d_r}\subset \P_k^{n+r}$.
\end{theorem}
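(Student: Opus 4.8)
The plan is to combine the specialization results of Sections~\ref{sec:TorsionOrder}--\ref{sec:degen} with the characteristic-$p$ method of Koll\'ar \cite{Kollar} and Totaro \cite{Totaro}. Concretely, I would degenerate a complete intersection from characteristic $0$ to a complete intersection $Y$ in characteristic $p$ whose resolution $Z$ (taken to be birational and totally $\CH_0$-trivial) carries a nonzero global $n$-form. The existence of such a form then obstructs any decomposition of the diagonal of $Z$ of level $n-2$ and order prime to $p$, and the specialization machinery transports this divisibility back to the very general complete intersection in characteristic $0$.

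For the characteristic-$p$ input, reindex so that \eqref{eqn:*} holds for $i=r$, set $e:=\bigl\lceil\frac{n+r+1-d'+d_r}{p+1}\bigr\rceil$, and let $\bar{W}\subset\P^{n+r-1}_{\bar{\F}_p}$ be a general complete intersection of multidegree $(d_1,\dots,d_{r-1})$ and dimension $n$, so that $\omega_{\bar{W}}=\OO_{\bar{W}}(d'-d_r-n-r)$. Following Koll\'ar one constructs a complete intersection $Y\subset\P^{n+r}_{\bar{\F}_p}$ of multidegree $(d_1,\dots,d_r)$ which, after an explicit sequence of blow-ups with rational exceptional fibres, becomes a purely inseparable degree-$p$ cover $\pi\colon Z\to\bar{W}$ with cover line bundle $L=\OO_{\bar{W}}(e)$; the content of the hypothesis \eqref{eqn:*} is exactly the inequality $pe\le d_r$, which is what is needed for such a cover to be realizable inside a complete intersection of multidegree $(d_1,\dots,d_r)$ (for $r=1$ this is the construction of \cite{Kollar,Totaro} with $\bar{W}=\P^n$). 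I would then verify, as in \cite{Kollar,Totaro}: (a) for general data $Y$ is geometrically integral; (b) the resolution $q\colon Z\to Y$ can be chosen birational and totally $\CH_0$-trivial, which requires understanding the singularities of $Y$ explicitly so that the exceptional fibres are universally $\CH_0$-trivial; and (c) Koll\'ar's formula for the canonical bundle of the resolution of an inseparable $p$-cover, applied with $\P^n$ replaced by $\bar{W}$, gives $H^0(Z,\Omega^n_Z)\neq 0$ precisely because of the choice of $e$.

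Now fix a complete discrete valuation ring $\sO$ with residue field $\bar{\F}_p$ and fraction field $K$ of characteristic $0$ (for example $\sO=W(\bar{\F}_p)$, which is excellent), lift the defining equations of $Y$ to $\sO$ choosing general lifts --- in particular lifting to the maximal ideal the coefficients of the monomials that do not occur in $Y$ --- and obtain a flat proper family $\sX\to\Spec\sO$ of complete intersections of multidegree $(d_1,\dots,d_r)$ in $\P^{n+r}_\sO$ with special fibre $Y$ and smooth, geometrically integral generic fibre $X_0/K$. Since $\Tor^{(n-2)}(X_0)$ is finite (Corollary~\ref{cor:UpperBound} and Lemma~\ref{lem:Basic}(1)), Proposition~\ref{prop:specialization2}(2) applies ($K$ has characteristic $0$, all fibres of $\sX$ are geometrically integral, $q$ is birational totally $\CH_0$-trivial) and yields that $\Tor^{(n-2)}_{\bar{\F}_p}(Z)$ is finite and divides $\Tor^{(n-2)}(X_0)$. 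To see that $p\mid\Tor^{(n-2)}_{\bar{\F}_p}(Z)$, suppose $N\cdot\Delta_Z=\gamma+\gamma'$ is a decomposition of level $n-2$, with $\gamma$ supported on $Z\times D$ for a divisor $D\subset Z$ and $\gamma'$ supported on $T\times Z$ for a closed $T\subset Z$ with $\dim T\le n-2$, and let the correspondence act on $H^0(Z,\Omega^n_Z)$ (legitimate as $Z$ is smooth and proper): $\Delta_Z$ acts as the identity; $\gamma$ acts as $0$ because a cycle supported on $Z\times D$ annihilates $H^0(\Omega^j_Z)$ for $j>\dim D=n-1$; and $\gamma'$ acts as $0$ because a cycle supported on $T\times Z$ factors, on coherent cohomology, through a resolution of $T$, on which $\Omega^n$ vanishes since $\dim T<n$. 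Hence $N$ annihilates the nonzero $\bar{\F}_p$-vector space $H^0(Z,\Omega^n_Z)$, so $p\mid N$; taking $N=\Tor^{(n-2)}_{\bar{\F}_p}(Z)$ gives the claim. Therefore $p\mid\Tor^{(n-2)}(X_0)$, and finally Corollary~\ref{cor:specialization}, applied to the universal family of smooth complete intersections of multidegree $(d_1,\dots,d_r)$ in $\P^{n+r}_\Q$ and to the point determined by $X_0$, propagates this to $p\mid\Tor^{(n-2)}(X)$ for all $X$ outside a countable union of proper closed subsets of the parameter space over $k$, i.e. for very general $X$. (Since $\Tor^{(n-2)}\mid\Tor^{(0)}=\Tor$ by Lemma~\ref{lem:Basic}(1), this is a priori stronger than $p\mid\Tor(X)$, and in fact the same argument obstructs decompositions of level up to $n-1$.)

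The step I expect to be the real obstacle is the characteristic-$p$ material imported from \cite{Kollar}: the explicit resolution of the singularities of $Y$ by blow-ups with universally $\CH_0$-trivial fibres, so that $q$ is \emph{totally} $\CH_0$-trivial (which is what the specialization propositions of Section~\ref{sec:degen} require), together with the canonical-bundle bookkeeping for an inseparable $p$-cover of the complete intersection $\bar{W}$ in place of $\P^n$, whose numerical conclusion is exactly \eqref{eqn:*}. The remaining steps --- the reduction to a single characteristic-$p$ model, the passage through the resolution, and the cohomological obstruction to a decomposition of the diagonal --- are formal consequences of the results already established and of the standard action of correspondences on coherent cohomology; this is why the proof can be kept short.
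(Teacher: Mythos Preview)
Your overall architecture is exactly the paper's: degenerate to characteristic $p$, resolve by a totally $\CH_0$-trivial morphism, use a global differential form on the resolution $Z$ to obstruct a decomposition of $N\cdot\Delta_Z$, and then invoke Proposition~\ref{prop:specialization2} and Corollary~\ref{cor:specialization}. The problem is the \emph{degree} of the form. Koll\'ar's construction produces a nonzero section of $\Omega^{n-1}_{Z/k}$, not of $\Omega^n_{Z/k}=\omega_Z$; concretely, he builds an injection $q^*(\omega_W\otimes L^{\otimes p})\hookrightarrow\Omega^{n-1}_{Z/k}$, and the numerical condition \eqref{eqn:*} is precisely what makes $\omega_W\otimes L^{\otimes p}$ effective. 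By contrast, under the standing Fano hypothesis $d'=\sum_j d_j\le n+r$ one has $\pi_*\omega_Y=\bigoplus_{i=0}^{p-1}\omega_W\otimes L^{\otimes i}$ with every summand of strictly negative degree, so $H^0(Y,\omega_Y)=0$ and hence $H^0(Z,\Omega^n_Z)=0$. Thus your step (c) fails as written, and so does the correspondence argument; there is no top form for $N\cdot\Delta_Z$ to annihilate. This is also why the theorem yields level $n-2$ rather than the level $n-1$ you predict in your final parenthetical: an $(n-1)$-form is killed by correspondences supported on $T\times Z$ only once $\dim T\le n-2$.

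Two further points where your sketch diverges from what actually works. First, the degeneration: the special fibre $Y$ is not itself a complete intersection in $\P^{n+r}$, and the base of the cover is not a multidegree $(d_1,\dots,d_{r-1})$ complete intersection in $\P^{n+r-1}$. One works in a weighted projective space $\Proj\,\sO[X_0,\dots,X_{n+r},Y]$ with $Y$ of weight $q$; when $p\mid d_1=pq$ the family is cut out by $(f_2,\dots,f_r,\,Y^p-f,\,g-tY)$, the generic fibre is the ordinary complete intersection $g^p-t^pf=f_2=\dots=f_r=0$ in $\P^{n+r}_K$, and the special fibre is the inseparable $p$-cover of the complete intersection $W=(\bar g=\bar f_2=\dots=\bar f_r=0)\subset\P^{n+r}_k$ of multidegree $(q,d_2,\dots,d_r)$. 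Second, the case $p\nmid d_1$ cannot be absorbed into the same construction by adjusting $e$; one writes $d_1=pq+c$ with $0<c<p$, degenerates to a reducible special fibre $Y_1\cup Y_2$ (the extra component coming from a degree-$c$ factor), and then uses Proposition~\ref{prop:specialization3} together with a separate check that the correspondence supported on $q^{-1}(Y_1\cap Y_2)\times Z$ also kills $H^0(Z,\Omega^{n-1}_Z)$.
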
 

\begin{corollary}\label{cor:LowerBound} Let $k$, $d_1,\ldots, d_r$,   $n$ and $p$ be as in Theorem~\ref{thm:LowerBound} and suppose that $d_i$ satisfies \eqref{eqn:*}. Then $p | \Tor(X)$ for all very general $X=X_{d_1,\ldots, d_r}\subset \P_k^{n+r}$.
\end{corollary}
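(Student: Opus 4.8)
The plan is to obtain this as an immediate consequence of Theorem~\ref{thm:LowerBound} together with the monotonicity of torsion orders in the level. First I would record that, since $X=X_{d_1,\ldots,d_r}\subset\P^{n+r}_k$ is a smooth complete intersection with $d'=\sum_j d_j\le n+r$, Corollary~\ref{cor:UpperBound} guarantees that $\Tor(X)=\Tor^{(0)}(X)$ is finite (indeed it divides $\prod_j d_j!$). By Lemma~\ref{lem:Basic}(1), applied over the algebraic closure $\bar k$ to the smooth proper $\bar k$-scheme $X_{\bar k}$, finiteness of $\Tor^{(0)}_{\bar k}(X_{\bar k})$ propagates upward: each $\Tor^{(i)}_{\bar k}(X_{\bar k})$ is finite and $\Tor^{(i+1)}_{\bar k}(X_{\bar k})$ divides $\Tor^{(i)}_{\bar k}(X_{\bar k})$. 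Chaining these divisibilities from $i=0$ up to $i=n-3$ (note $n\ge3$, so this chain is nonempty) yields, in the notation $\Tor^{(i)}(X):=\Tor^{(i)}_{\bar k}(X_{\bar k})$,
\[
\Tor^{(n-2)}(X)\ \bigm|\ \Tor^{(n-3)}(X)\ \bigm|\ \cdots\ \bigm|\ \Tor^{(0)}(X)=\Tor(X).
\]

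Next I would invoke Theorem~\ref{thm:LowerBound} directly: under the hypothesis \eqref{eqn:*} on $d_i$, there is a countable union $F$ of proper closed subsets of the parameter scheme (an open subset of a product of projective spaces over $k$) such that $p\mid\Tor^{(n-2)}(X)$ whenever $X=X_b$ with $b\notin F$. For precisely those $X$, the divisibility chain above gives $p\mid\Tor^{(n-2)}(X)\mid\Tor(X)$, hence $p\mid\Tor(X)$. Since the exceptional set $F$ is the same one furnished by Theorem~\ref{thm:LowerBound}, the conclusion holds for all very general $X=X_{d_1,\ldots,d_r}\subset\P^{n+r}_k$, as claimed.

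There is essentially no obstacle in this deduction; the substantive content lies entirely in Theorem~\ref{thm:LowerBound}. The only points requiring minor care are that all the relevant torsion orders are finite, so that Lemma~\ref{lem:Basic}(1) may be applied at every step of the chain, and that one works throughout with the geometric torsion orders $\Tor^{(i)}(X)=\Tor^{(i)}_{\bar k}(X_{\bar k})$, matching the normalization used in the statement of Theorem~\ref{thm:LowerBound}.
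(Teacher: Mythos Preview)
Your proposal is correct and follows essentially the same approach as the paper: the paper's proof is the single line ``$\Tor^{(n-2)}(X)$ divides $\Tor(X):=\Tor^{(0)}(X)$ by Lemma~\ref{lem:Basic}(1),'' and you have simply unpacked this divisibility chain and made the finiteness hypotheses explicit. The extra care you take (invoking Corollary~\ref{cor:UpperBound} for finiteness and working explicitly over $\bar k$) is harmless but not strictly necessary, since Lemma~\ref{lem:Basic}(1) already propagates finiteness from level $0$ upward.
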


\begin{proof}   $\Tor^{(n-2)}(X)$  divides $\Tor(X):=\Tor^{(0)}(X)$ by Lemma~\ref{lem:Basic}(1).
\end{proof}

\begin{remarks}
1.  We know that $\Tor(X)$ is finite for all  $X=X_{d_1,\ldots, d_r}\subset \P^{n+r}$ with $\sum_jd_j\le n+r$ by Proposition~\ref{prop:UpperBound} and hence $\Tor^{(n-2)}(X)$ is also finite.
\\
2. For $p=2$ and for hypersurfaces, the corollary follows directly from the results in Totaro's paper \cite{Totaro}. \\
3. We only use the hypothesis of characteristic zero to allow for a specialization to characteristic $p$, where $p$ is the prime number in the statement. For $k$ a field of positive characteristic, the analogous result holds, but only for $p=\Char k$.\\
4. There are two interesting cases of complete intersection threefolds we would like to mention: that of a multi-degree $(3,2)$ complete intersection in $\P^5$ and a multi-degree $(2,2,2)$ complete intersection in  $\P^6$ (see the recent   results of Hassett-Tschinkel \cite{HT}). In both cases we take $d_i=2$ and get a divisibility by 2.
Notice that in the $(2,3)$ case taking $d_i=3$ and $p=3$ works.
\end{remarks}

\begin{proof}[Proof of the theorem] This is another application of the argument of Koll\'ar \cite{Kollar}, as used for example by Totaro \cite{Totaro}, Colliot-Th\'el\`ene and Pirutka \cite{CTP2}, or Okada \cite{O}. We may reorder the $d_j$ so that $d_i=d_1$. We first assume that $p$ divides $d_1$,   $d_1=q\cdot p$. Take $f$ and $g$ suitably general homogeneous polynomials of degree $d_1$ and $q$, respectively, and let $f_2,\ldots, f_r$ be suitably general homogeneous polynomials, with $f_j$ of degree $d_j$, $j=2,\ldots, r$. We take these to be in the polynomial ring $\sO[X_0,\ldots, X_{n+r}]$, where $\sO$ is a complete (hence excellent)  discrete valuation ring with maximal ideal $(t)$,  residue field $k=\bar{\F}_p$, the algebraic closure of $\F_p$, and with quotient field $K$ a field of characteristic zero. We let $\sX\to \Spec \sO$ be the closed subscheme of a weighted projective space $\P=\Proj \sO[X_0,\ldots, X_{n+r}, Y]$, with the $X_i$ having weight 1 and $Y$ having weight $q$, defined by the homogeneous ideal
\[
(f_2,\ldots, f_r, Y^p-f, g-tY).
\]
The generic fiber $X:=\sX_K$ is isomorphic to the complete intersection subscheme of $\P^{n+r}_K$ defined by $g^p-t^pf=f_2=\ldots=f_r=0$ and the special fiber $Y:=\sX_k$ is the cyclic $p$ to 1 cover $Y\to W$, with $W\subset \P^{n+r}_k$ the complete intersection defined by $\bar{g}=\bar{f}_2=\ldots=\bar{f}_r=0$, and $y^p=f_{|W}$.

For general $f, g, f_2,\ldots, f_r$, $X$ and $W$ are smooth, and $Y$ has only finitely many singularities, which may be resolved by an explicit iterated blow-up $q:Z\to Y$ which is totally $\CH_0$-trivial: for details, see Proposition~\ref{proposition:good-sections} if $p\geq 3$. If $p=d_1=2$,  then we use Lemma~\ref{lemma-2-non-degenerate} and Proposition~\ref{proposition:resolution} for the construction of the resolution of singularities and the proof that the resolution morphism $q$ is totally $\CH_0$-trivial. Koll\'ar shows in addition, that under the assumption \eqref{eqn:*}, one has $H^0(Z, \Omega^{n-1}_{Z/k})\neq\{0\}$. In somewhat more detail, Koll\'ar  (see \cite[\S 15, Lemma 16]{Kollar} defines an invertible sheaf $Q$ (denoted $\pi^*Q(L,s)$ in {\it loc.~cit.}) with an injection $Q\to (\Omega^{n-1}_{Y/k})^{**}$, where ${}^{**}$ denotes the double dual. A local computation (see \cite{CTP2}, \cite{O} or Remark \ref{remark-m=1} for details) in a neighborhood of the finitely many singularities of $Y$ shows that this injection extends to an injection $q^*Q\to \Omega^{n-1}_{Z/k}$; here is where the condition $n\ge3$ is used. In addition, $q^*Q$  is isomorphic to the pullback to $Z$ of $\omega_W\otimes\sO_W(d_1)$, where $\omega_W$ is the canonical sheaf on $W$. As $\omega_W=\sO_W(d_1/p+\sum_{j\ge2}d_j-n-r-1)$,  we have a non-zero section of  $\Omega^{n-1}_{Z/k}$  if $d_1(p+1)/p\ge n+r+1-\sum_{i=2}^rd_i$, which is exactly the condition in the statement of the theorem.

By   Proposition~\ref{prop:UpperBound}, we know that $\Tor(X_{\bar K})$ is finite and thus 
 $\Tor^{(n-2)}(X_{\bar K})$ is finite as well. The specialization result Proposition~\ref{prop:specialization2} thus implies that $\Tor^{(n-2)}(Z_{\bar k})$ is finite and divides $\Tor^{(n-2)}(X_{\bar K})$. By \cite[Prop.~4.2.33]{Gros}, \cite[Thm.~3.1.8]{CR1}, and \cite[III.3.Prop.~4]{EZ}, correspondences on $Z\times_k Z$ act on $H^0(Z, \Omega^{n-1}_{Z/k})$ and if $\gamma$ is a correspondence on $Z\times_k Z$ supported in some $Z'\times Z$ with $\dim_k Z'\le n-2$, then by \cite[Proposition~3.2.2(2)]{CR1}, $\gamma_*$ acts by zero on $H^0(Z, \Omega^{n-1}_{Z/k})$. Similarly, if $\gamma$ is a correspondence on $Z\times Z$, supported in $Z\times D$ for some divisor $D\subset Z$, then  $\gamma_*(\omega)_{|Z\setminus D}=0$ for each $\omega \in H^0(Z, \Omega^{n-1}_{Z/k})$; as  $\Omega^{n-1}_{Z/k}$ is locally free, it follows that $\gamma_*(\omega)=0$. Thus, if  $\Delta_Z$ admits a decomposition of order $N$ and level $n-2$, this implies that $N\cdot \omega=0$ for all $\omega\in H^0(Z, \Omega^{n-1}_{Z/k})$, and since $H^0(Z, \Omega^{n-1}_{Z/k})$ is a non-zero $k$-vector space, this implies that $p|N$. Since $\Tor^{(n-2)}(Z_{\bar k})$  divides $\Tor^{(n-2)}(X_{\bar K})$, it follows that $p|\Tor^{(n-2)}(X_{\bar{K}})$ and Corollary~\ref{cor:specialization} finishes the proof in this case.

 In the case of a general $d_1$, write $d_1=q\cdot p+c$, $0<c<p$, and consider a family $\sX\to \Spec \sO$ defined by a homogeneous ideal of the form
\[
(f_2,\ldots, f_r, (Y^p-h)s+tu, g-tY),
\]
with $u, h, g, s\in \sO[X_0,\ldots, X_{n+r}]$, $u$ of degree $d_1$, $h$ of degree $pq$, $g$ of degree $q$ and $s$ of degree $c$, suitably general, and with $Y$ as above of weight $q$. The generic fiber $X$ is the complete intersection $f_1=f_2=\ldots=f_r=0$, with $f_1=(g^p-t^ph)s+t^{p+1}u$; the special fiber $Y$ has two components $Y_1, Y_2$, with $Y_1$ the $p$ to 1 cyclic cover of 
$W:=(\bar{f}_2=\ldots=\bar{f}_r=\bar{g}=0)$, branched along $W\cap(h=0)$. We take $q:Z\to Y_1$ the resolution as in the previous case. Having chosen $h, g, s$, we may take $u$ sufficiently general so that $X$ is a smooth complete intersection. 

Since $\sO$ is excellent, we are free to make a finite extension $L$ of $K$, take the integral closure $\sO_L$ of $\sO$ in $L$, replace $\sO$ with the localization  $\sO'$ at a maximal ideal of $\sO_L$, and replace $\sX$ with $\sX\otimes_\sO\sO'$; changing notation, we may assume that $\Tor^{(n-2)}_K(X)$ is the geometric torsion order $\Tor^{(n-2)}(X)$. By Proposition~\ref{prop:specialization3},  the smooth proper $k$-scheme $Z$ admits a decomposition of the diagonal as
\[
N\cdot \Delta_Z=\gamma+\gamma_1+\gamma_2,
\]
with $N=\Tor^{(n-2)}(X)$, $\gamma$ supported in $Z_{n-2}\times Z$ with $\dim Z_{n-2}\le n-2$, $\gamma_1$ supported in $q^{-1}(Y_1\cap Y_2)\times Z$ and $\gamma_2$ supported in $Z\times D$ for some divisor $D$ on $Z$.

We may take the degree $c$ part $s$ as general as we like. In particular, we may assume that $Y_1\cap Y_2$ is contained in the smooth locus of $Y_1$ and is thus isomorphic to a closed subscheme $Z'$ of $Z$. 
 
Our  decomposition of the diagonal on $Z$ gives the relation
\[
N\cdot \omega=\gamma_{1*}\omega
\]
for each $\omega\in H^0(Z,\Omega^{n-1}_Z)$. Indeed, 
\[
N\cdot\omega=N\cdot\Delta_{Z*}\omega=\gamma_{1*}\omega+\gamma_{2*}\omega+\gamma_{*}\omega.
\]
But $\gamma_{*}$ factors through the restriction to $Z_{n-2}$, so $\gamma_*\omega=0$. Similarly, $\gamma_{2*}\omega$ is a global section of $\Omega^{n-1}_Z$ supported in $D$, which is zero, since $\Omega^{n-1}_Z$ is a locally free sheaf.

One computes  that the canonical class of $Y_1\cap Y_2$ is  anti-ample and thus the canonical line bundle on the dimension $n-1$ subscheme $Z'$ has no sections. Note that $Z'$ is a cyclic $p$ to $1$ cover of the complete intersection $W\cap V(\bar{s})$. If $s$ is general then there is a rational resolution of singularities $\tilde{Z}'$ (Proposition \ref{proposition:resolution}, Lemma \ref{lemma:rational-resolution}), hence the canonical line bundle of $\tilde{Z}'$ has no non-vanishing sections. But $\gamma_{1*}\omega$ factors through the restriction of $\omega$ to $\tilde{Z}'$, hence $\gamma_{1*}\omega=0$. Since $h$ has degree $q\cdot p$ in the range needed to give the existence of a non-zero $\omega$ in $H^0(Z,\Omega^{n-1}_Z)$, we conclude as before that $p|N$. 
\end{proof}

\begin{ex} We consider the case of hypersurfaces of degree $d$ in $\P^{n+1}$, $n\ge3$. The theorem says that $p$ divides $\Tor^{(n-2)}(X)$ for very general degree $d\le n+1$ hypersurfaces $X$ in $\P^{n+1}$ if
 \[
 d \geq p\cdot \left\lceil {\frac{n+2}{p+1}} \right\rceil 
 \]
For $p=2$, this is the range considered by Totaro; for $p=3$, the first case is degree 6 in $\P^6$. For the extreme case of degree $d=n+1$ in $\P^{n+1}$, we have $p|\Tor^{(n-2)}(X)$ for all $p$ dividing $n+1$.  
 \end{ex}

\section{An improved lower bound for the very general complete intersection}\label{sec:LowerBound2}
 
In this section we extend Theorem~\ref{thm:LowerBound} to cover prime powers. The basic idea is to replace the differential forms with  Hodge-Witt cohomology. We are grateful to the referee for providing the argument for the next lemma which shows  that a cycle on $Z\times Z$,  supported on $Z'\times Z$ with $\dim Z'\le n-2$, acts trivially on $H^0(Z, W_m\Omega^{n-1}_Z)$. 

\begin{lemma}\label{lemma-advanced-technology-for-Wm}
Let $k$ be a perfect field of positive characteristic $p$, and $X,Y$ smooth, equidimensional, and quasi-projective $k$-schemes. Set $n=\dim X$ and ${\rm CH}^n_{{\rm prop}/Y}(X\times Y)=\varinjlim_Z {\rm CH}_{\dim Y}(Z)$, where the limit is over all closed subsets $Z\subset X\times Y$ that are proper over $Y$. For $\alpha \in {\rm CH}^n_{{\rm prop}/Y}(X\times Y)$ denote by 
$$
\alpha_*: \oplus_{i,j} H^i(X,W_m\Omega^j) \xr{} \oplus_{i,j} H^i(Y,W_m\Omega^j)
$$
the map induced by $\alpha$ via the cycle action from \cite{CR-drw}. Assume $\alpha$ is supported on $A\times Y$, where $A\subset X$ is a closed subset of codimension $\geq r$. Then $\alpha_*$ vanishes on $\oplus_{i,j+r>n} H^i(X,W_m\Omega^j)$.
\begin{proof}
We may assume $\alpha=[Z]$, with $Z\subset X\times Y$ an integral closed subscheme of codimension $n$ supported on $A\times Y$. Denote by $p_X,p_Y$ the respective projections from $X\times Y$. It suffices to show for $i\geq 0, j+r> n$, and $b\in H^i(X,W_m\Omega^j)$ that 
\begin{equation}\label{equation-Z-kills}
p_X^*(b)\cup {\rm cl}[Z] = 0 \quad \text{in $H^{i+n}_Z(X\times Y,W_m\Omega^{j+n}_{X\times Y})$.}
\end{equation}
Then $\alpha_*(b)=p_{Y*}(p_X^*(b)\cup {\rm cl}[Z])$ will also vanish.

We first prove \eqref{equation-Z-kills} for $i=0$. Denote by $\eta\in X\times Y$ the generic point of $Z$. Since $W_m\Omega_{X\times Y}^{j+n}$ is Cohen-Macaulay the natural map 
$
H^n_Z(X\times Y,W_m\Omega_{X\times Y}^{j+n}) \xr{} H^n_{\eta}(X\times Y,W_m\Omega_{X\times Y}^{j+n})
$
is injective. Set $B=\OO_{X\times Y,\eta}$ and $C=\OO_{X,p_X(\eta)}$; by assumption we have $\dim C\geq r$. Since $B$ is formally smooth over $C$ we find $t_1,\dots,t_r\in C$ and $s_{r+1},\dots,s_n\in B$ such that $p_X^*(t_1),\dots,p_X^*(t_r),s_{r+1},\dots,s_n$ form a regular sequence of parameters of $B$. Hence by \cite[II, 3.5]{Gros} (see also \cite[Prop.~2.4.1]{CR-drw}), \cite[Lem.~3.1.5]{CR-drw} and in the notation of \cite[1.11.1]{CR-drw} the image of $p_X^*(b)\cup {\rm cl}[Z]= \Delta^*(p_X^*(b)\times {\rm cl}[Z])$ in  $H^n_{\eta}(X\times Y,W_m\Omega_{X\times Y}^{j+n})$ is up to a sign given by 
$$
{p_X^*(b\cdot d[t_1]\cdots d[t_r]) \cdot d[s_{r+1}]\cdots d[s_n] \brack p_X^*([t_1]),\dots,p_X^*([t_r]),[s_{r+1}],\dots,[s_n]}.
$$
Hence the vanishing follows from $b\cdot d[t_1]\cdots d[t_r]\in W_m\Omega_X^{j+r}=0$. 

For the general case $i\geq 0$, we first observe that the CM property of $W_m\Omega_{X\times Y}^{j+n}$ implies $R\underline{\Gamma}_Z(W_m\Omega_{X\times Y}^{j+n})\cong \mathcal{H}^n_Z(W_m\Omega_{X\times Y}^{j+n})[-n]$. Therefore 
$$
H^{i+n}_Z(X\times Y, W_m\Omega_{X\times Y}^{j+n})= H^i(X\times Y,\mathcal{H}^n_Z(W_m\Omega_{X\times Y}^{j+n})).
$$
Let $\mc{U}$ be an open affine cover of $X$ and denote by $\mc{U}\times Y$ the open (not necessarily affine) cover of $X\times Y$. We can consider the Cech cohomology with respect to $\mc{U}\times Y$ and obtain a natural map
\begin{equation}\label{equation-cech-support}
\check{H}^i(\mc{U}\times Y, \mathcal{H}^n_Z(W_m\Omega_{X\times Y}^{j+n})) \xr{} H^i(X\times Y, \mathcal{H}^n_Z(W_m\Omega_{X\times Y}^{j+n})).
\end{equation}
Since $\check{H}^i(\mc{U},W_m\Omega^j_X)=H^i(X,W_m\Omega^j_X)$ and pullback and cup product are compatible with restriction to open subsets, we see that $p_X^*(-)\cup {\rm cl}[Z]:H^i(X,W_m\Omega^j_X) \xr{}  H^{i+n}_Z(X\times Y, W_m\Omega^{j+n}_{X\times Y})$ naturally factors via \eqref{equation-cech-support}. Therefore the case $i\geq 0$ follows from the case $i=0$.       
\end{proof}
\end{lemma}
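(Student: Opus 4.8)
The plan is to run the standard argument for cycle-action vanishing, now carried out for de Rham--Witt cohomology and exploiting the Cohen--Macaulay property of the sheaves $W_m\Omega^j$. First I would reduce to the case $\alpha = [Z]$ with $Z\subset X\times Y$ an integral closed subscheme of codimension $n$ supported on $A\times Y$; this is harmless by linearity and the definition of $\CH^n_{{\rm prop}/Y}(X\times Y)$. Writing $p_X,p_Y$ for the two projections, the cycle action of \cite{CR-drw} gives $\alpha_*(b) = p_{Y*}\big(p_X^*(b)\cup {\rm cl}[Z]\big)$, with ${\rm cl}[Z]$ a class in $H^n_Z(X\times Y, W_m\Omega^n_{X\times Y})$, so it suffices to show
\[
p_X^*(b)\cup {\rm cl}[Z] = 0 \quad\text{in }\ H^{i+n}_Z(X\times Y, W_m\Omega^{j+n}_{X\times Y})
\]
for every $b\in H^i(X, W_m\Omega^j)$ with $j+r>n$.

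For $i=0$ I would argue locally at the generic point $\eta$ of $Z$. Since $W_m\Omega^{j+n}_{X\times Y}$ is Cohen--Macaulay, its local cohomology along $Z$ is concentrated in degree $n = \codim Z$, and the natural map $H^n_Z(X\times Y, W_m\Omega^{j+n}_{X\times Y}) \to H^n_\eta(X\times Y, W_m\Omega^{j+n}_{X\times Y})$ is injective; hence it is enough to check vanishing after passing to the stalk at $\eta$. Put $B = \mathcal{O}_{X\times Y,\eta}$ and $C = \mathcal{O}_{X, p_X(\eta)}$; the codimension hypothesis gives $\dim C\ge r$, and since $B$ is formally smooth over $C$ one may choose $t_1,\dots,t_r\in C$ and $s_{r+1},\dots,s_n\in B$ so that $p_X^*(t_1),\dots,p_X^*(t_r),s_{r+1},\dots,s_n$ is a regular system of parameters of $B$. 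Using the residue-symbol description of $\mathcal{H}^n_Z(W_m\Omega^{j+n}_{X\times Y})$ and the compatibility of the cup product with the symbol (from \cite{Gros} and \cite{CR-drw}), the class $p_X^*(b)\cup {\rm cl}[Z]$ is represented, up to sign, by a residue symbol whose numerator contains the factor $p_X^*\big(b\cdot d[t_1]\cdots d[t_r]\big)$. Since $b\in W_m\Omega^j$ and the $d[t_k]$ raise the form degree by $r$ in total, this factor lies in $W_m\Omega^{j+r}_X$, which is $0$ because $j+r>n=\dim X$; hence the symbol vanishes.

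For general $i\ge 0$ I would bootstrap by \v{C}ech cohomology. The Cohen--Macaulay property again gives $R\underline{\Gamma}_Z(W_m\Omega^{j+n}_{X\times Y}) \cong \mathcal{H}^n_Z(W_m\Omega^{j+n}_{X\times Y})[-n]$, hence
\[
H^{i+n}_Z(X\times Y, W_m\Omega^{j+n}_{X\times Y}) = H^i\big(X\times Y,\ \mathcal{H}^n_Z(W_m\Omega^{j+n}_{X\times Y})\big).
\]
Choosing an affine open cover $\mathcal{U}$ of $X$ and forming the (not necessarily affine) cover $\mathcal{U}\times Y$ of $X\times Y$, one has $\check{H}^i(\mathcal{U}, W_m\Omega^j_X) = H^i(X, W_m\Omega^j_X)$, and since pullback and cup product commute with restriction to opens, the assignment $b\mapsto p_X^*(b)\cup {\rm cl}[Z]$ factors through $\check{H}^i\big(\mathcal{U}\times Y,\ \mathcal{H}^n_Z(W_m\Omega^{j+n}_{X\times Y})\big)$. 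Thus the \v{C}ech representative of $p_X^*(b)\cup{\rm cl}[Z]$ has each component computed by the $i=0$ construction on an open piece, which vanishes by the previous step; applying $p_{Y*}$ then kills $\alpha_*(b)$, completing the proof.

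The step I expect to be the main obstacle is the explicit local computation in the $i=0$ case: one has to make precise the residue-symbol formula for ${\rm cl}[Z]$ inside $\mathcal{H}^n_Z(W_m\Omega^{\bullet}_{X\times Y})$, the behavior of the cup product under it, and the point that after pullback the top differential-form entry genuinely lies in $W_m\Omega^{j+r}$ on $X$ rather than merely on $X\times Y$. This is exactly where the de Rham--Witt cycle formalism of \cite{CR-drw}, building on \cite{Gros}, is needed; everything else is formal manipulation with local cohomology, the Cohen--Macaulay concentration, and \v{C}ech complexes.
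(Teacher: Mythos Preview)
Your proposal is correct and follows essentially the same approach as the paper's proof: reduce to an integral cycle $Z$, prove vanishing of $p_X^*(b)\cup{\rm cl}[Z]$ in $H^{i+n}_Z$ by passing to the generic point of $Z$ via the Cohen--Macaulay injectivity, compute there using a regular system of parameters partially pulled back from $X$ so that the residue symbol has numerator in $W_m\Omega^{j+r}_X=0$, and then bootstrap to $i>0$ by the Cohen--Macaulay concentration $R\underline{\Gamma}_Z\cong\mathcal{H}^n_Z[-n]$ together with a \v{C}ech argument over an affine cover of $X$. The technical references you anticipate needing (\cite[II,~3.5]{Gros}, \cite[Prop.~2.4.1, Lem.~3.1.5, 1.11.1]{CR-drw}) are exactly the ones the paper invokes.
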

 
\begin{theorem}\label{thm-main-alg-closed} Let $k$ be a field of characteristic zero. 
  Let $X\subset \P^{n+r}_k$ be a very general complete intersection of multi-degree  $d_1, d_2, \dots, d_r$ such that $d':=\sum_{i=1}^r d_i \leq n+r$ and $n\geq 3$. Let $p$ be a prime,  $m\geq 1$, and suppose
\begin{equation}\label{equation:inequality}
d_i \geq p^{m}\cdot \left\lceil {\frac{n+r+1-d'+d_i}{p^m+1}} \right\rceil 
\end{equation}
for some $i$. Furthermore, we suppose that $p$ is odd or $n$ is even. Then $p^m| \Tor^{(n-2)}(X)$.
\end{theorem}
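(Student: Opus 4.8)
The plan is to transcribe the proof of Theorem~\ref{thm:LowerBound} essentially word for word, systematically replacing the sheaf of relative differentials $\Omega^{n-1}_{Z/k}$ by the de Rham--Witt sheaf $W_m\Omega^{n-1}_{Z/k}$; Lemma~\ref{lemma-advanced-technology-for-Wm} is precisely the Hodge--Witt substitute for the facts about correspondences used there. Concretely, I would keep the same setup: a complete (hence excellent) discrete valuation ring $\sO$ with residue field $k=\bar{\F}_p$ and quotient field $K$ of characteristic zero, and the same families $\sX\to\Spec\sO$ as in Theorem~\ref{thm:LowerBound} --- a cyclic $p$-cover $Y\to W$ of a complete intersection $W\subset\P^{n+r}_k$ when $p\mid d_i$, and the two-component degeneration $Y=Y_1\cup Y_2$ with $Y_1$ such a cover when $p\nmid d_i$ --- together with the same totally $\CH_0$-trivial resolution $q\colon Z\to Y$ (resp.\ $q\colon Z\to Y_1$) furnished by Proposition~\ref{proposition:good-sections} when $p\geq3$, and by Lemma~\ref{lemma-2-non-degenerate} and Proposition~\ref{proposition:resolution} when $p=2$; the hypothesis that $p$ is odd or $n$ is even enters so that this characteristic $2$ construction, and the non-vanishing below, go through. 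By Proposition~\ref{prop:UpperBound} and Proposition~\ref{prop:specialization2} (resp.\ Proposition~\ref{prop:specialization3} with Remark~\ref{remark-geometric-specialization}), $\Tor^{(n-2)}(Z_{\bar k})$ is finite and divides $\Tor^{(n-2)}(X_{\bar K})$, and one obtains a decomposition $N\cdot\Delta_Z=\gamma+\gamma_1+\gamma_2$ in $\CH_n(Z\times_kZ)$ with $N=\Tor^{(n-2)}(Z_{\bar k})$, where $\gamma$ is supported in $Z_{n-2}\times Z$ with $\dim_kZ_{n-2}\leq n-2$, $\gamma_2$ is supported in $Z\times D$ for a divisor $D$, and $\gamma_1$ is either absent or supported in $q^{-1}(Y_1\cap Y_2)\times Z$.

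The correspondence computation then proceeds as follows. Since $k$ is perfect, correspondences on $Z\times_kZ$ act on $\bigoplus_{i,j}H^i(Z,W_m\Omega^j_{Z/k})$ (by \cite{CR-drw}, as in Lemma~\ref{lemma-advanced-technology-for-Wm}), and for any $\omega\in H^0(Z,W_m\Omega^{n-1}_{Z/k})$ we get $N\cdot\omega=\gamma_*\omega+\gamma_{1*}\omega+\gamma_{2*}\omega$. Here $\gamma_*\omega=0$ by Lemma~\ref{lemma-advanced-technology-for-Wm} applied with $r=2$, since $\codim_Z Z_{n-2}\geq2$ and $(n-1)+2>n$; and $\gamma_{2*}\omega=0$ because it is a global section of $W_m\Omega^{n-1}_{Z/k}$ supported on the divisor $D$, while this sheaf is Cohen--Macaulay (again as in the proof of Lemma~\ref{lemma-advanced-technology-for-Wm}) and so has no nonzero section supported in positive codimension. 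For the last term, in the split case $\gamma_{1*}\omega$ factors (cf.\ \cite{CR1}) through the pullback of $\omega$ to a smooth rational resolution $\tilde Z'$ of $Z'=Y_1\cap Y_2$ (Proposition~\ref{proposition:resolution}, Lemma~\ref{lemma:rational-resolution}); as in the $m=1$ case the canonical sheaf of the $(n-1)$-dimensional scheme $Y_1\cap Y_2$ is anti-ample, so $H^{n-1}(\tilde Z',\OO_{\tilde Z'})=0$, hence $H^{n-1}(\tilde Z',W_m\OO_{\tilde Z'})=0$ by d\'evissage along $0\to\OO\to W_m\OO\to W_{m-1}\OO\to0$, and therefore $H^0(\tilde Z',W_m\Omega^{n-1}_{\tilde Z'})=0$ by duality, so that $\gamma_{1*}\omega=0$. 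Consequently $N\cdot\omega=0$ for every $\omega\in H^0(Z,W_m\Omega^{n-1}_{Z/k})$. As $k$ is perfect this group is a $W_m(k)$-module killed by $p^m$, so once we know it contains an element of exact additive order $p^m$ we conclude $p^m\mid N$, whence $p^m\mid\Tor^{(n-2)}(X_{\bar K})$, and then $p^m\mid\Tor^{(n-2)}(X)$ for very general $X$ by Corollary~\ref{cor:specialization}.

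It remains --- and this is the heart of the matter --- to prove that under \eqref{equation:inequality} the $W_m(k)$-module $H^0(Z,W_m\Omega^{n-1}_{Z/k})$ is not annihilated by $p^{m-1}$. This is the Hodge--Witt upgrade of Koll\'ar's non-vanishing theorem \cite[\S15]{Kollar} used in Theorem~\ref{thm:LowerBound}. I would construct a $W_m$-enhancement of Koll\'ar's invertible sheaf $Q$: a coherent $W_m\OO_W$-module $\mathcal Q_m$ together with an injection $q^*\mathcal Q_m\hookrightarrow W_m\Omega^{n-1}_{Z/k}$ that reduces, under the de Rham--Witt restriction $R^{m-1}$, to Koll\'ar's injection $Q\to(\Omega^{n-1}_{Y/k})^{**}$ extended across the finitely many singular points of $Y$ (this extension is exactly the local computation already invoked in Theorem~\ref{thm:LowerBound}, where $n\geq3$ is used), and such that $\mathcal Q_m$ carries a global section not killed by $p^{m-1}$. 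The degree bookkeeping is where the exponent $p^m$ --- rather than $p$ --- appears in \eqref{equation:inequality}: the positivity needed for $\mathcal Q_m$ involves an extra twist by $p^{m-1}$ compared with the $m=1$ situation, coming from the $m$ layers of the de Rham--Witt complex (equivalently, from iterating the inverse Cartier operator), and it holds exactly when $d_i(p^m+1)/p^m\geq n+r+1-d'+d_i$, i.e.\ when \eqref{equation:inequality} holds; applying $R^{m-1}$ to the resulting section then yields a nonzero section of $\Omega^{n-1}_{Z/k}$, so the section itself has additive order exactly $p^m$. I expect this step --- defining $\mathcal Q_m$ with the correct positivity, controlling its image in $W_m\Omega^{n-1}_{Z/k}$ near the singularities of $Y$, and tracking the operators $R$, $F$, $V$ through it --- to be the main obstacle; the remainder is a faithful transcription of the $m=1$ argument.
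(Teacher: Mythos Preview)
Your correspondence argument in the first two paragraphs is correct and matches the paper. The gap is in the non-vanishing step, and it has two parts.

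First, the degeneration is wrong: the paper does \emph{not} reuse the $p$-cover from Theorem~\ref{thm:LowerBound}. It degenerates $X$ to (a component containing) a cyclic $p^m$-cover $Y_1\to W$, attached to a section $s\in H^0(W,L^{\otimes p^m})$ with $L=\sO_W(a)$ and $a=\lceil(n+r+1-d'+d_i)/(p^m+1)\rceil$. The exponent $p^m$ in \eqref{equation:inequality} is the degree of this inseparable cover and nothing else; it is not produced by ``extra twists in the Witt layers'' on top of a $p$-cover. For a $p$-cover one has $\pi_*\sO_Y=\bigoplus_{i=0}^{p-1}L^{-i}$, so only powers of $L$ up to $L^{-(p-1)}$ appear, and no de Rham--Witt bookkeeping promotes the positivity condition on $\omega_W\otimes L^{\otimes p}$ to one on $\omega_W\otimes L^{\otimes p^m}$.

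Second, your implication ``$R^{m-1}(\omega)\neq0$ hence $\omega$ has additive order $p^m$'' is false in general: a $W_m(k)$-module can be entirely $p$-torsion and still surject onto a nonzero $k$-vector space under $R^{m-1}$. The paper instead proves the inclusion $W_m(k)\subset H^0(\tilde Y_1,W_m\Omega^{n-1})$ directly. This is Theorem~\ref{thm:cyclic-covers}, whose proof occupies most of \S\ref{sec:LowerBound2}: working on the dual side, one introduces quotient sheaves $Q_l=W_l\Omega^1_{\tilde Y_1}(\log E)/K_l$ and, after a delicate analysis of the $R,F,V$-calculus (Lemmas~\ref{lemma-R-surjective}--\ref{lemma-lifting}, Proposition~\ref{proposition:injectivity}, Corollary~\ref{corollary-control-kernel-V}), shows that $H^n(\tilde Y_1,Q_l)$ is a \emph{free} $W_l(k)$-module of rank $\dim_k H^n(W,L^{-p^m})$. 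Freeness hinges on an ordinarity hypothesis you do not mention --- Frobenius must act bijectively on $H^{n-1}(V(s),\sO)$ --- arranged for general data via Illusie's theorem \cite{Illusie-ordinary}. Ekedahl duality then converts the resulting surjection $H^n(\tilde Y_1,W_m\Omega^1)\twoheadrightarrow W_m(k)$ into the desired inclusion. Finally, the parity restriction ``$p$ odd or $n$ even'' enters not through the resolution (Proposition~\ref{proposition:resolution} handles all cases) but through Lemma~\ref{lemma-lifting}, which the paper does not know how to prove for $p=2$ and $n$ odd.
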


\begin{remark} Just as for Theorem~\ref{thm:LowerBound}, the same result holds for $k$ a field of positive characteristic, but only for $p=\Char k$.
\end{remark}

\begin{proof}   
The proof relies on Theorem~\ref{thm:cyclic-covers}, which we prove later in this section.

By Corollary~\ref{cor:specialization}, we need to find only one smooth complete intersection $X\subset  \P^{n+r}_k$ such that $p^m| \Tor^{(n-2)}(X)$.

For a scheme $X$ with locally free sheaf $\sE$ and a section $s:\sO_X\to \sE$, we let $V(s)$ denote the closed subscheme of $X$ defined by $s$.  

We set $d=d_i$, $a=\left\lceil {\frac{n+r+1-d'+d}{p^m+1}} \right\rceil $, and $c=d-p^{m}\cdot a$. Let $\sO=W(\bar{\mathbb{F}}_p)$ and $K={\rm Frac}(\sO)$, we take $r$, $f$, $g$, $l$, and $f_2,\dots,f_r$ suitably general (we will make this precise) homogeneous polynomials in $\sO[X_0,\ldots, X_{n+r}]$ of degree $d$, $d-c$, $a$, $1$, and $d_2,\dots,d_r$, respectively. We let $\sX\to \Spec \sO$ be the closed subscheme of the weighted projective space $\P=\Proj\; \sO[X_0,\ldots, X_{n+r}, Y]$, with the $X_i$ having weight $1$, and $Y$ having weight $a$, defined by the homogeneous ideal
\begin{equation}
l^c\cdot (Y^{p^m}-f) + p\cdot r, g-p\cdot Y, f_2,\dots, f_r.
\end{equation}
The generic fiber $X:=\sX_K$ is isomorphic to the complete intersection of $\P^{n+r}_K$ defined by $l^c\cdot (g^{p^m}-p^{p^{m}}\cdot f) + p^{p^m+1}\cdot r,f_2\dots,f_r$. For $r,f_2,\dots,f_r$ general, it is smooth.  
By replacing $\sO$ with its normalization in a suitable finite extension of $K$ and changing notation, we may assume that $\Tor^{(n-2)}_K(X)$ is equal to the geometric torsion order $\Tor^{(n-2)}(X)$.

The special fiber $Y:=\sX_{\bar{\mathbb{F}}_p}$ is $Y=Y_1+ c\cdot Y_2$. Here, $Y_1$ is the cyclic $p^m$ cover $Y_1\to W$ defined by $f\in H^0(W,\sO(a)^{\otimes p^m})$, with $W\subset \P^{n+r}_{\bar{\mathbb{F}}_p}$ the complete intersection defined by $g,f_2,\dots,f_r$. We will take $f,g,f_2,\dots,f_r$ general enough so that 
\begin{enumerate}
\item $W$ is smooth,
\item $Y_1$ has non-degenerate singularities (see \S\ref{section:cyclic}), 
\item the assumption (3) of Theorem~\ref{thm:cyclic-covers} is satisfied for $Y_1$. 
\end{enumerate}
For (2) we use Proposition~\ref{proposition:good-sections} if $d-c\geq 3$. If $d-c=2$ hence $p=2$ then we use Lemma~\ref{lemma-2-non-degenerate}. For (3) we use the theorem of Illusie about ordinarity of a general complete intersections \cite{Illusie-ordinary}.
 Let us check that all other assumptions of Theorem~\ref{thm:cyclic-covers} are satisfied. (1) is evident, and (2) is equivalent to $(p^m+1)\cdot a - n - r - 1 +d' -d\geq 0$ which follows immediately from the definition of $a$. Assumption (4) is equivalent to $i \cdot a + a - n - r - 1 + d' - d < 0$, for all $i=0,\dots,p^m-1$, which follows from $d'<n+r+1$; (5) is obvious.

The variety $Y_2$ is defined by $l,g,f_2,\dots,f_r$, and only exists if $c\neq 0$. We take $l$ general so that $Y_2$ does not contain the singular points of $Y_1$, $W\cap V(l)$ is smooth, and the $p^m$ cyclic covering of $W\cap V(l)$ corresponding to $f_{| W\cap V(l)}$ has non-degenerate singularities.

Let $r:\tilde{Y}_1\xrightarrow{} Y_1$ be the resolution of singularities constructed in Proposition~\ref{proposition:resolution}; the map $r:\tilde{Y}_1\xr{} Y_1$ is totally $\CH_0$-trivial. By Proposition~\ref{prop:specialization3},
$$
\Tor^{(n-2)}(X)\cdot \Delta_{\tilde{Y}_1} = \gamma + Z + Z_2, 
$$
where $\gamma$ is a cycle with support in $A\times \tilde{Y}_1$ with $\dim A \leq n-2$, $Z$ has support in $\tilde{Y}_1\times D$ with $D$ a divisor, and $Z_2$ has support in $(Y_1\cap Y_2)\times \tilde{Y}_1$. 

In view of Theorem~\ref{thm:cyclic-covers}, we have $\Z/p^m\subset H^0(\tilde{Y}_1,W_m\Omega^{n-1})$. By the work \cite{CR-drw} on Hodge-Witt cohomology, we have an action of algebraic correspondences on $H^0(\tilde{Y}_1,W_m\Omega^{n-1})$ (relying on Gros' cycle class \cite{Gros}). 
Let us show that $Z_2$ acts trivially. Note that $T:=Y_1\cap Y_2$ is the $p^m$ cyclic covering of $W\cap V(l)$ corresponding to $f_{| W\cap V(l)}$. An easy computation shows $H^{>0}(Y_1\cap Y_2,\sO)=0$, hence $H^{>0}(\tilde{T},\sO)=0$ by Lemma~\ref{lemma:rational-resolution}, where  $\tilde{T}$ is the resolution constructed in Proposition~\ref{proposition:resolution}, and $H^{>0}(\tilde{T},W_m(\sO))=0$. By Ekedahl duality \cite{E}, we get $H^{<n-1}(\tilde{T},W_m\Omega^{n-1})=0$. Let $\tilde{Z}_2$ be a lift of $Z_2$ to $\tilde{T}\times \tilde{Y}_1$. The action of $Z_2$ factors
as
$$
H^0(\tilde{Y}_1,W_m\Omega^{n-1}) \xr{} H^0(\tilde{T},W_m\Omega^{n-1})  \xr{\tilde{Z}_2}
H^0(\tilde{Y}_1,W_m\Omega^{n-1}),  
$$
the first map being the pullback for the map $\tilde{T}\xr{} \tilde{Y}_1$, thus it is zero. 

Lemma \ref{lemma-advanced-technology-for-Wm} implies that the action of $\gamma$ on $H^0(\tilde{Y}_1,W_m\Omega^{n-1})$ vanishes. Therefore 
\begin{multline*}
H^0(\tilde{Y}_1,W_m\Omega^{n-1}) \xr{\Tor^{(n-2)}(X)\cdot} H^0(\tilde{Y}_1,W_m\Omega^{n-1}) \\ \xr{\text{restriction}} H^0(\tilde{Y}_1\backslash D,W_m\Omega^{n-1})
\end{multline*}
is zero. Since the restriction map is injective, we get $p^m|\Tor^{(n-2)}(X)$.
\end{proof}

\begin{corollary} Let $k$ be a field of characteristic zero. Let $X\subset \P^{n+r}_k$ be a very general complete intersection of multi-degree $(d_1,\dots,d_r)$ with $\sum_i d_i=n+r$ and $n\geq 3$. If $n$ is even or $d_i$ is odd then $d_i|\Tor^{(n-2)}(X)$.
\end{corollary}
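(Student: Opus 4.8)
The plan is to deduce the corollary from Theorem~\ref{thm-main-alg-closed} by applying it separately to each prime power exactly dividing $d_i$ and then reassembling via coprimality. I would write $d_i=\prod_{p\mid d_i}p^{m_p}$ as its prime factorization; for each prime $p\mid d_i$ I would check that the pair $(p,m_p)$ satisfies every hypothesis of Theorem~\ref{thm-main-alg-closed}. Granting this, the theorem produces, inside the parameter space of complete intersections of multi-degree $(d_1,\dots,d_r)$, a countable union $F_p$ of proper closed subsets such that $p^{m_p}\mid\Tor^{(n-2)}(X_b)$ for $b\notin F_p$. Since $d_i$ has only finitely many prime divisors, $F:=\bigcup_{p\mid d_i}F_p$ is again a countable union of proper closed subsets, and for $b\notin F$ one gets $p^{m_p}\mid\Tor^{(n-2)}(X_b)$ simultaneously for all $p\mid d_i$; as the $p^{m_p}$ are pairwise coprime with product $d_i$, this yields $d_i\mid\Tor^{(n-2)}(X_b)$, which is the assertion for very general $X$.

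The key computation is the verification of the degree inequality \eqref{equation:inequality} for each $(p,m_p)$. Here the hypothesis $d'=\sum_{j}d_j=n+r$ (the extreme case permitted by Theorem~\ref{thm-main-alg-closed}) is used: it collapses the numerator appearing in \eqref{equation:inequality} to $n+r+1-d'+d_i=d_i+1$. Writing $d_i=p^{m_p}b$ with $b\geq1$ an integer, the inequality to be checked is
\[
d_i\ \geq\ p^{m_p}\left\lceil\frac{d_i+1}{p^{m_p}+1}\right\rceil,
\]
and since $d_i+1=p^{m_p}b+1\leq b(p^{m_p}+1)$ one has $\frac{d_i+1}{p^{m_p}+1}\leq b$, hence $\left\lceil\frac{d_i+1}{p^{m_p}+1}\right\rceil\leq b$ and $p^{m_p}\left\lceil\frac{d_i+1}{p^{m_p}+1}\right\rceil\leq p^{m_p}b=d_i$. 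So the inequality holds for every prime power dividing $d_i$.

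Next I would dispose of the parity hypothesis of Theorem~\ref{thm-main-alg-closed}, namely that $p$ is odd or $n$ is even: if $n$ is even there is nothing to check, and if $d_i$ is odd then $2\nmid d_i$, so every prime $p\mid d_i$ is automatically odd. Under the hypothesis of the corollary one of these two holds, so the theorem applies to all the $(p,m_p)$ and the argument closes. I do not expect any real obstacle here: the content is the elementary arithmetic above together with Theorem~\ref{thm-main-alg-closed}; the only point needing a little care is keeping the exceptional locus $F$ a countable union of closed subsets, which works precisely because $d_i$ has only finitely many prime divisors.
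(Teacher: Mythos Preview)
Your proposal is correct and follows the intended route: the paper states this corollary without proof, as an immediate consequence of Theorem~\ref{thm-main-alg-closed}, and your prime-power decomposition together with the arithmetic check $\lceil (d_i+1)/(p^{m_p}+1)\rceil\le b$ when $d_i=p^{m_p}b$ is exactly the verification needed. Your handling of the parity hypothesis and of the finite union of exceptional loci is also fine.
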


\subsection{} \label{section:cyclic} 
Let $X$ be a smooth variety over an algebraically closed field $k$ of characteristic $p$. Suppose that $n:=\dim X\geq 2$.
Let $L$ be a line bundle on $X$, and let $s\in H^0(X,L^{\otimes p^m})$. We denote by 
$\pi:Y\xrightarrow{} X$ the $p^m$ cyclic covering corresponding to $s$. It is an inseparable morphism and induces an homeomorphism on 
the underlying  topological spaces.

There is a tautological connection $d:L^{\otimes p^m} \xrightarrow{} L^{\otimes p^m}\otimes \Omega_X^1$ which satisfies $d(t^{p^m})=0$ for all sections $t\in L$. 
In particular, we have $d(s)\in H^0(X,L^{\otimes p^m}\otimes \Omega_X^1)$. Note that $Y_{{\rm sing}}=\pi^{-1}(V(d(s)))$. 

We say that $Y$ has \emph{non-degenerate} singularities if the following conditions hold:
\begin{enumerate}
\item $Y$ has at most isolated singularities, or equivalently, $\dim(V(d(s)))=0$ or $V(d(s))=\emptyset$.
\item For all $x\in V(d(s))$, ${\rm length}(\sO_{V(d(s)),x})\leq 1$, if $p$ is odd or $p=2$ and $n$ is even. If $p=2$ and $n$ is odd then we require ${\rm length}(\sO_{V(d(s)),x})\leq 2$ and the blow up ${\rm Bl}_xY$ of $x$ has an exceptional divisor that is a cone over a smooth quadric.
\end{enumerate}

Around a non-degenerate singularity of $Y$, we can find local coordinates $x_1,\dots,x_n$ of $X$ such that $Y$ is defined by 
\begin{align}
&y^{p^m}+x_1^2+\dots+x_n^2+f_3 \qquad \text{if $p$ is odd}, \label{align-1}\\
&y^{p^m}+x_1x_2+\dots+x_{n-1}x_n+f_3 \qquad \text{if $p=2$ and $n$ is even}, \label{align-2}\\
&y^{p^m}+x_1^2+x_2x_3+\dots+x_{n-1}x_n+ b\cdot x_1^3+f_3 \qquad \text{if $p=2$ and $n$ is odd,} \label{align-3}
\end{align}
where $f_3\in (x_1,\dots,x_n)^3$, $b\in k^{\times},$ and $f_3$ has no $x_1^3$ term in the last case. 

An easy dimension counting argument yields the following proposition (cf.~\cite[\textsection18]{Kollar}).

\begin{proposition}\label{proposition:good-sections}
Let $W\subset H^0(X,L^{\otimes p^m})$ be such that for every closed point $x\in X$ the restriction map 
$$
W\xr{} \sO_{X,x}/m_x^{4} \otimes L^{\otimes p^m}
$$
is surjective. For a general section $s\in W$ the corresponding $p^m$ cyclic covering has non-degenerate singularities. 
\end{proposition}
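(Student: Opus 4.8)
The plan is to combine a local normal-form analysis with a dimension count in $W$, in the spirit of \cite[\S18]{Kollar}.

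First I would reduce the notion of a non-degenerate singularity to an open condition on a $3$-jet of the section. Fix a closed point $x\in X$, a regular system of parameters $x_1,\dots,x_n$ of $\sO_{X,x}$ and a local trivialisation of $L$, so that a section $s$ is represented near $x$ by a power series $\tilde s=s_0+s_1+s_2+\cdots$ with $s_j$ the part of degree $j$ in the $x_i$, and $Y_s$ is locally $\{y^{p^m}=\tilde s\}$. Since $\partial_y(y^{p^m}-\tilde s)=0$, the scheme $V(d(s))$ coincides near $x$ with the critical scheme $V(\partial_1\tilde s,\dots,\partial_n\tilde s)$, and $Y_s$ is singular over $x$ exactly when $s_1=0$; replacing $y$ by $y-c$ with $c^{p^m}=s_0$ (possible as $k$ is perfect of characteristic $p$) one may take $s_0=0$. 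With $s_1=0$, the linear parts of the $\partial_i\tilde s$ form the Hessian $H=(\partial_i\partial_j s_2)$ of $s_2$, and $\mathrm{length}\,\sO_{V(d(s)),x}=1$ precisely when $H$ is invertible. For $p$ odd this is ordinary nondegeneracy of the quadratic form $s_2$; for $p=2$ and $n$ even it is invertibility of the alternating matrix $H$; in both cases a linear change of the $x_i$ normalising $s_2$ puts $\tilde s$ into the form \eqref{align-1}, resp.\ \eqref{align-2}. For $p=2$ and $n$ odd, $H$ is alternating of odd size and hence always singular, and here I would check that the conditions ``$\mathrm{length}\,\sO_{V(d(s)),x}\le 2$'' and ``$\Bl_xY$ has exceptional divisor a cone over a smooth quadric'' together amount to $\mathrm{corank}\,H=1$ and the cubic $s_3$ not vanishing along the radical line of the polar form of $s_2$; normalising $s_2$ and $s_3$ then yields \eqref{align-3} with $b\ne 0$, and a direct computation of the tangent cone of \eqref{align-3} at the origin (using $y^2+x_1^2=(y+x_1)^2$ when $m=1$, and discarding the term $y^{p^m}$, of degree $\ge4$, when $m\ge 2$) exhibits the exceptional divisor of $\Bl_xY$ as a cone over a smooth quadric. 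The upshot is that ``$Y_s$ has a non-degenerate singularity over $x$'' is equivalent to $s_1(x)=0$ together with a Zariski open and dense condition on the image of $(s_2,s_3)$, that is, a dense open condition on the class of $s$ in $J_x:=\sO_{X,x}/\mathfrak m_x^4\otimes L^{\otimes p^m}$.

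Next I would run the dimension count. Set $I=\{(x,s)\in X\times W: s_1(x)=0\}$ and $I^{\mathrm{deg}}=\{(x,s)\in I: Y_s\text{ has a singularity over }x\text{ failing one of the conditions of }\S\ref{section:cyclic}\}$; both are constructible, and $\dim X=n$. By the surjectivity hypothesis $W\twoheadrightarrow J_x$, for each closed point $x$ the fibre of $I\to X$ over $x$ has codimension $n=\dim(\mathfrak m_x/\mathfrak m_x^2\otimes L^{\otimes p^m})$ in $W$, whereas the fibre of $I^{\mathrm{deg}}\to X$ over $x$ is the complement of a (non-empty, hence dense) open inside that fibre, so of codimension at least $n+1$ in $W$. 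Hence $\dim I^{\mathrm{deg}}\le \dim X+(\dim W-n-1)=\dim W-1$, so the image of $I^{\mathrm{deg}}$ under the projection to $W$ is a constructible set of dimension $<\dim W$, and its closure is a proper closed subset $Z\subsetneq W$. For $s\in W\setminus Z$ the cover $Y_s$ has no degenerate singularity over any point of $X$; in particular every point of $V(d(s))$ has $\mathrm{length}\,\sO_{V(d(s)),x}\le2$, so $V(d(s))$ is finite and $Y_s$ has only isolated, non-degenerate singularities, which is the assertion.

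The main obstacle I expect is the local classification when $p=2$ and $n$ is odd: pinning down the precise open condition on $(s_2,s_3)$ equivalent to the length-and-cone conditions, and carrying out the associated tangent-cone computation for \eqref{align-3}. Once the reformulation of non-degeneracy as a dense open jet condition is available, the dimension count itself is routine — the hypothesis on $W$ is exactly what forces the fibres of $I$ and $I^{\mathrm{deg}}$ to have the expected codimensions.
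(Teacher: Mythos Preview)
Your proposal is correct and follows the same strategy the paper indicates: the paper gives no detailed proof but only the sentence ``An easy dimension counting argument yields the following proposition (cf.~\cite[\S18]{Kollar})'', and what you have written is precisely a careful execution of that dimension count. Your local analysis (reducing non-degeneracy to a dense open condition on the $3$-jet, with the case $p=2$, $n$ odd requiring the extra cubic coefficient) is exactly what justifies the codimension estimate, and it also explains the paper's subsequent Remark that surjectivity onto $\sO_{X,x}/\mathfrak m_x^3$ suffices when $p\neq 2$ or $n$ is even, since then only the $2$-jet matters.
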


\begin{remark}
If $p\neq 2$ or $\dim\, X$ even then the following surjectivity is sufficient to conclude the assertion of the proposition: 
$$
W\xr{} \sO_{X,x}/m_x^{3} \otimes L^{\otimes p^m} \quad \text{for every closed point $x\in X$.}
$$ 
\end{remark}

In order to handle the case 
$d_i=2=p, m=1,$ and $n+r+1-d'+2\leq 3$ in Theorem~\ref{thm-main-alg-closed} we need the following lemma.

\begin{lemma} \label{lemma-2-non-degenerate}
For a general complete intersection $X$ in $\P^{n+r}$ with $n\geq 2$ and multi-degree $(d_1,d_2,\dots,d_r)$ such that $d_1\geq 2$, and a general $s\in H^0(\P^{n+r},\OO(2))$, the double covering corresponding to $s_{\mid X}$ has non-degenerate singularities.
\begin{proof}
Only the case $p=2$ and $n$ odd has to be proved. Consider the variety $A$ consisting of points $(x,f_1,\dots,f_r,s)$ where $x\in \P^{n+r}$, $(f_1,\dots,f_r,s)$ are homogeneous of degree $(d_1,\dots,d_r,2)$, $X=V(f_1)\cap \dots \cap V(f_r)$ is smooth at $x$, and $d(s)_{\mid X}$ is vanishing at $x$. Those points for which the double covering corresponding to $s_{\mid X}$ has non-degenerate singularities at $x$ form an open set $B$. It is not difficult to show that it is non-empty. Indeed, take $x=[1:0:0\dots:0]$, and (in coordinates $x_1,\dots,x_{n+r}$ around $x$) $s=1+x_1^2+x_2x_3+\dots+x_{n-1}x_{n}+x_1x_{n+1}$, $f_{1}=x_{n+1}+x_1^2$, and $f_{i}=x_{n+i}+\text{terms of degree $\geq 2$}$. 

Let $V\subset A$ be the open set consisting of points such that $V(f_1)\cap \dots \cap V(f_r)$ is smooth. Since $B\cap V\neq \emptyset$, we conclude that for a general complete intersection $X$ there is an open non-empty set $U\subset X$ such that for any $x\in U$ the set 
\begin{align*}
\{s\in H^0(\P^{n+r},\OO(2))\mid &d(s)_{\mid X}(x)=0\; \text{and $s$ does not yield}\\ 
&\text{ a non-degenerate double covering at $x$}\}
\end{align*} 
has codimension $\geq n+1$. Counting dimensions yields the claim.    
\end{proof}
\end{lemma}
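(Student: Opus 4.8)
The plan is to argue via an incidence variety, first isolating the only genuinely new case. If $p\neq 2$, or if $p=2$ but $n$ is even, then non-degeneracy of the double cover at a point $x$ depends only on the $2$-jet of $s|_X$ there, so by the Remark following Proposition~\ref{proposition:good-sections} (applied with $L=\OO_X(1)$ and with $W$ the image of $H^0(\P^{n+r},\OO(2))$ in $H^0(X,\OO_X(2))$) it is enough to know that $W\to \sO_{X,x}/m_x^{3}\otimes\OO_X(2)$ is surjective for every closed point $x\in X$; this is clear, since quadratic forms on $\P^{n+r}$ already identify $H^0(\P^{n+r},\OO(2))$ with $\sO_{\P^{n+r},x}/m_x^{3}\otimes\OO(2)$ and $\sO_{\P^{n+r},x}/m_x^{3}\surj\sO_{X,x}/m_x^{3}$. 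Hence I may assume from now on $p=2$ and $n$ odd, in which case non-degeneracy at $x$ means the local normal form \eqref{align-3} with $b\neq 0$, a condition on the $3$-jet of $s|_X$ which is \emph{not} controlled by quadrics on $\P^{n+r}$ alone.

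Next I would set up the parameter space. Let $\mc A$ be the variety of tuples $(x,f_1,\dots,f_r,s)$ with $x\in\P^{n+r}$, the $f_i$ homogeneous of degree $d_i$, $s$ homogeneous of degree $2$, such that $X:=V(f_1)\cap\dots\cap V(f_r)$ is smooth at $x$ and the tautological section $d(s)|_X$ (with $d$ the characteristic-$2$ connection on $\OO_X(2)$ killing squares) vanishes at $x$; inside $\mc A$ the locus $\mc B$ where moreover $s|_X$ defines a non-degenerate double cover at $x$ is open, as is the locus $V\subset\mc A$ where $X$ is smooth everywhere. The aim is to show that $\mc B\cap V$ dominates the space of those $(f_1,\dots,f_r)$ for which $X$ is smooth, with fibres of the expected codimension. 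Granting this, for a general $X$ one obtains a nonempty open $U\subset X$ such that for each $x\in U$ the set of $s$ with $d(s)|_X(x)=0$ but not yielding a non-degenerate cover at $x$ has codimension $\ge n+1$ in $H^0(\P^{n+r},\OO(2))$. Since for a general $s$ the closed set $V(d(s)|_X)$ is finite — the sections $d(s)|_X$ globally generate the locally free sheaf $\Om^1_X\otimes\OO_X(2)$, so a general one vanishes in codimension $n=\dim X$ — and can be arranged to lie in $U$, the desired conclusion follows.

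The crucial input is the nonemptiness of $\mc B\cap V$, which I would verify by an explicit example. Take $x=[1:0:\dots:0]$, affine coordinates $x_1,\dots,x_{n+r}$ near $x$, and put
\[
s=1+x_1^2+x_2x_3+\dots+x_{n-1}x_n+x_1x_{n+1},\quad f_1=x_{n+1}+x_1^2,\quad f_i=x_{n+i}+(\text{higher order}),\; i\ge 2.
\]
Then $X$ is smooth at $x$ because the linear parts $x_{n+1},\dots,x_{n+r}$ are independent; one checks that $d(s)|_X$ vanishes at $x$; and eliminating $x_{n+1},\dots,x_{n+r}$ from the equations of $X$ and substituting into $s-1$, then passing to the cover $y^2=s|_X$, produces the normal form $y^2+x_1^2+x_2x_3+\dots+x_{n-1}x_n+b\,x_1^3+f_3$ with $b\neq 0$, the nonzero cubic term being manufactured by the monomial $x_1x_{n+1}$ of $s$ together with $x_{n+1}=x_1^2+\dots$ coming from $f_1$. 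Hence $(x,f_1,\dots,f_r,s)\in\mc B\cap V$, which by openness gives a dense subset of $\mc B\cap V$ over the smooth-complete-intersection locus.

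Finally comes the dimension count. Imposing $d(s)|_X(x)=0$ is $n$ independent linear conditions on $s$; among the remaining $s$, failure of the non-degeneracy conditions of \S\ref{section:cyclic} at $x$ (degeneracy of the quadratic part, or its corank-$1$ degeneration with the cubic invariant $b$ vanishing) is a further closed condition of codimension $\ge1$, which yields the codimension $\ge n+1$ estimate above. The incidence locus $\{(x,s):x\in U,\ s\ \text{bad at}\ x\}$ then has dimension $\le \dim X+\dim H^0(\P^{n+r},\OO(2))-(n+1)=\dim H^0(\P^{n+r},\OO(2))-1$, so its image in $H^0(\P^{n+r},\OO(2))$ is a proper closed subset; a general $s$ avoids it, proving the lemma. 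The main obstacle I anticipate is the honest verification of the explicit example — in particular, carrying out the characteristic-$2$ elimination carefully enough to confirm that the cubic coefficient $b$ is genuinely nonzero — together with the somewhat delicate bookkeeping of corank (and of the length-$2$/blow-up invariant) in characteristic $2$ that underlies the "$\ge1$'' in the codimension estimate.
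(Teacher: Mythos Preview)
Your proposal is correct and follows essentially the same route as the paper: the same reduction to the case $p=2$, $n$ odd, the same incidence variety $\mc A$ with its open loci $\mc B$ and $V$, the identical explicit example $s=1+x_1^2+x_2x_3+\dots+x_{n-1}x_n+x_1x_{n+1}$, $f_1=x_{n+1}+x_1^2$, $f_i=x_{n+i}+\cdots$ at $x=[1:0:\dots:0]$, and the same concluding codimension-$\ge n+1$ dimension count. You supply more detail than the paper does (the surjectivity argument for the easy cases, the mechanism producing the nonzero cubic coefficient, and the finiteness of $V(d(s)|_X)$), but the strategy is the same.
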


The following proposition has been proved for the case $m=1$ in \cite{CTP2}, and for the general case in \cite{O}. 

\begin{proposition}\label{proposition:resolution}
Suppose $Y$ has non-degenerate singularities. Then by successively blowing up singular points, we can construct a resolution of singularities $r:\tilde{Y}\xr{} Y$ such that the exceptional divisor is a normal crossings divisor  (cf.~\cite{Kollar}). Over every singular point $y\in Y$ the fiber $r^{-1}(y)$ is a chain of smooth irreducible divisors, each component of which is  either a  projective space, a  smooth quadric or a projective bundle over a smooth quadric. The intersection of two irreducible components is a smooth quadric or is empty. In particular, since $k$ is algebraically closed,
the morphism $r$ is totally $\CH_0$ trivial.
\begin{proof}
We distinguish three cases:
\begin{enumerate}
\item $p$ is odd,
\item $p=2$, and $n$ is even,
\item $p=2$, and $n$ is odd.
\end{enumerate}
 In any case we will only blow up singular points, and over any singular $s$ there will be at most one singular point appearing in the exceptional divisor of the blow up of $s$. 

We may assume that $Y$ has only one singular point. In case (1), note that we have a singularity of the form \eqref{align-1}. We need $\frac{p^m-1}{2}+1$ blow ups:
$$
\tilde{Y}:=Y_{\frac{p^m-1}{2}+1}\xr{} Y_{\frac{p^m-1}{2}} \xr{} \dots\xr{} Y_1\xr{} Y_0:=Y. 
$$  
Around the singularity of $Y_i$, for $0\leq i<\frac{p^m-1}{2}$, $Y_i$ is defined by 
\begin{equation}\label{equation:around-cone-point}
y^{p^m-2\cdot i}+{x}_1'^2+\dots+{x}_n'^2+f'_3, 
\end{equation}
where $x_i'=\frac{x_i}{y^i}$ and $f_3'\in y^i\cdot ({x}_1',\dots,{x}_n')^3$. Therefore the exceptional divisor of $Y_{i+1}\xr{} Y_i$ is the cone $C$ defined by ${x}_1'^2+\dots+{x}_n'^2$ in the projective space with homogeneous variables $y,{x}_1',\dots,{x}_n'$. For $i=\frac{p^m-1}{2}$, $Y_i$ is also given by \eqref{equation:around-cone-point} around the vertex of the exceptional divisor, hence $p^m-2i=1$ implies that it is smooth and the exceptional divisor of $Y_{\frac{p^m-1}{2}+1}\xr{} Y_{\frac{p^m-1}{2}}$ is $\P^{n-1}$. Denoting by $\tilde{E}_i$ the strict transform in $\tilde{Y}$ of the exceptional divisor of $Y_i\xr{} Y_{i-1}$, we conclude that $\tilde{E}_i$ is the blow-up of $C$ in its vertex if $i\leq \frac{p^m-1}{2}$, and $\tilde{E}_{\frac{p^m-1}{2}+1}=\P^{n-1}$. Every $\tilde{E}_i$ has only non-empty intersection with $\tilde{E}_{i+1}$ (if $i\leq \frac{p^m-1}{2}$) and $\tilde{E}_{i-1}$ (if $i>1$); the intersection is the smooth quadric given by ${x}_1'^2+\dots+{x}_n'^2$ in the projective space with homogeneous variables ${x}_1',\dots,{x}_n'$.

For case (2), this case is similar to (1). We need $2^{m-1}$ blow ups to arrive at $\tilde{Y}$.  Around the singularity of $Y_i$, for $0\leq i<2^{m-1}$, $Y_i$ is defined by 
\begin{equation}\label{equation:around-cone-point-p=2}
y^{2^m-2\cdot i}+{x}_1'x_2'+\dots+x_{n-1}'{x}_n'+f'_3, 
\end{equation}
and the exceptional divisor of $Y_{i}\xr{} Y_{i-1}$ is the cone $C$ defined by ${x}_1'x_2'+\dots+x_{n-1}'{x}_n'$ in the projective space $P$ with homogeneous variables $y,{x}_1',\dots,{x}_n'$. The exceptional divisor of $\tilde{Y}:=Y_{2^{m-1}} \xr{} Y_{2^{m-1}-1}$ is the smooth quadric defined by $y^{2}+{x}_1'x_2'+\dots+x_{n-1}'{x}_n'$ in $P$. Again, the intersection of $\tilde{E}_i$ with $\tilde{E}_{i-1}$ is the smooth quadric given by ${x}_1'x_2'+\dots+x_{n-1}'{x}_n'$ in the projective space with homogeneous variables ${x}_1',\dots,{x}_n'$.

For case (3), we need $2^{m}$ blow ups to arrive at $\tilde{Y}$. The case $m=1$ is easy to check; we will assume $m>1$. We start with $Y$ and the singularity \eqref{align-3}. After $2^{m-1}-1$ blow ups the singularity  is of the form
$$
b\cdot y^{2^{m-1}+2}+{x_1^{[1]}}^2+x'_2x'_3+\dots+x'_{n-1}x'_n+ b\cdot x_1^{[1]}\cdot y^{2^{m-1}+1} + {\rm h.o.t.},
$$
where $x'_i=\frac{x_i}{y^{2^{m-1}-1}}$, $x_1^{[1]}=x_1'+y$, and the higher order terms ${\rm h.o.t.}$ can be ignored. After $2^{m-2}$ more blow ups we introduce $x_{1}^{[2]}=\frac{x_1^{[1]}}{y^{2^{m-2}}}+\sqrt{b}\cdot y$, after $2^{m-3}$ more blow ups we introduce $x_{1}^{[3]}=\frac{x_1^{[2]}}{y^{2^{m-3}}}+\sqrt{\sqrt{b}\cdot b}\cdot y$, etc. The singularity is after $2^{m-1}-1 + 2^{m-2} + 2^{m-3} + \dots + 2^{m-i}$ blow ups of the form
\begin{equation}\label{equation-sing-3}
b_i\cdot y^{2^{m-i}+2}+{x_1^{[i]}}^2+x'_2x'_3+\dots+x'_{n-1}x'_n+ b\cdot x_1^{[i]}\cdot y^{2^{m-i}+1} + {\rm h.o.t.},
\end{equation}
where $x'_i=\frac{x_i}{y^{ -1+\sum_{j=1}^i 2^{m-j}}}$ and $b_i=b\cdot\sqrt{b_{i-1}}$ with $b_1=b$. After $2^m-2$ blow ups we get a singularity \eqref{equation-sing-3} with $i=m$. After one more blow up the variety becomes smooth, and we need one more blow up to obtain an exceptional divisor with strict normal crossings. 

The exceptional divisor $E_i$ of $Y_i\xr{} Y_{i-1}$ is a cone defined by ${x_1^{[j]}}^2+x'_2x'_3+\dots+x'_{n-1}x'_n$ in the projective space with homogeneous variables $y,x_1^{[j]},x_2',\dots,x_n'$, except for the last blow up where it is a projective space. The strict transform $\tilde{E}_i$ is the blow up of the vertex. 
\end{proof}
\end{proposition}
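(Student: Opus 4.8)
The plan is to carry out the explicit local resolution of Koll\'ar \cite{Kollar}, which in the present setting was done for $m=1$ in \cite{CTP2} and in general in \cite{O}. Since blowing up a closed point is a local operation and, by the non-degeneracy hypothesis, $Y$ has only finitely many singular points, all of them closed and hence $k$-rational, I would first reduce to the case where $Y$ has a single isolated non-degenerate singularity at a point $y$, and work in the complete local ring there, where the singularity takes one of the normal forms \eqref{align-1}, \eqref{align-2}, \eqref{align-3}. In each case the strategy is to blow up repeatedly the unique singular point lying on the exceptional locus produced at the previous step, tracking the defining equation in the affine chart around that point, until the total space becomes smooth, and then to perform at most one further blow-up to turn the exceptional locus into a strict normal crossings divisor; along the way one records the strict transforms $\tilde E_i$ of the successive exceptional divisors.

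For $p$ odd one blows up the origin of \eqref{align-1}; in the chart with coordinates $x_i' = x_i/y$ the proper transform is $y^{p^m-2} + \sum_i (x_i')^2 + (\text{higher order in } y)$, so the multiplicity of $y$ drops by $2$ at each stage, and after $(p^m-1)/2$ blow-ups it equals $1$, i.e.\ the variety is smooth; one more blow-up gives normal crossings. Each intermediate exceptional divisor is the projective cone over the smooth quadric $Q = \{\sum_i (x_i')^2 = 0\}$ of dimension $n-2$, so its strict transform $\tilde E_i$ is the blow-up of that cone at its vertex, a $\P^1$-bundle over $Q$; the last exceptional divisor is a $\P^{n-1}$; consecutive $\tilde E_i$ meet transversally along $Q$ and non-consecutive ones are disjoint. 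The case $p=2$, $n$ even is identical after replacing $\sum x_i^2$ by the split quadratic form, needing $2^{m-1}$ blow-ups, the final exceptional divisor being itself a smooth quadric. The case $p=2$, $n$ odd of \eqref{align-3} is the one that requires genuine work: after $2^{m-1}-1$ blow-ups one completes the square by the substitution $x_1^{[1]} = x_1' + y$ to absorb the mixed term, and then, after $2^{m-2}, 2^{m-3}, \dots$ further blow-ups, one successively introduces $x_1^{[i]} = x_1^{[i-1]}/y^{2^{m-i}} + \sqrt{b_{i-1}}\, y$ governed by the recursion $b_i = b\sqrt{b_{i-1}}$, $b_1 = b$ --- extracting these square roots is where one uses that $k$ is perfect (here, algebraically closed). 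After $2^m-2$ blow-ups the equation is again of the expected ``cone over a quadric plus cubic error'' shape, one more blow-up makes it smooth, and one more gives normal crossings; each $\tilde E_i$ is again the blow-up at the vertex of a projective cone over a smooth quadric, hence a $\P^1$-bundle over a smooth quadric, except the last, which is a $\P^{n-1}$, and consecutive components again meet along the quadric.

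To conclude that $r$ is totally $\CH_0$-trivial, observe first that over a point of $Y$ outside the finite singular locus the fibre of $r$ is a single reduced point, so it suffices to check that for each singular point $y$ the chain $C := r^{-1}(y) = \tilde E_1 \cup \cdots \cup \tilde E_s$ is a universally $\CH_0$-trivial $k$-scheme. Fix an extension $F \supset k$. Each component $\tilde E_{i,F}$ is a projective space, a smooth quadric, or a $\P^1$-bundle over a smooth quadric; since all these quadrics are base-changed from quadrics over the algebraically closed $k$ they carry $F$-points, and a smooth quadric of positive dimension with a rational point is rational, so $\CH_0(\tilde E_{i,F}) = \Z$ (using also the projective-bundle formula, and that $\CH_0$ of a smooth projective variety is a birational invariant). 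Likewise each nonempty $\tilde E_{i,F} \cap \tilde E_{i+1,F}$ is a smooth quadric with an $F$-point, so $\CH_0 = \Z$. Now induct on $s$: using the localization sequences for the decomposition of $C_F$ into the open $\tilde E_{s,F} \setminus \tilde E_{s-1,F}$ and its closed complement $\tilde E_{1,F} \cup \cdots \cup \tilde E_{s-1,F}$, together with the analogous decomposition of $\tilde E_{s,F}$, the class of a single $F$-rational point is seen to generate $\CH_0$ in all the pieces compatibly, forcing $\CH_0(C_F) = \Z$. Hence $r$ is totally $\CH_0$-trivial, and a fortiori universally $\CH_0$-trivial by Proposition~\ref{prop:Tot}.

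The main obstacle is the case $p=2$, $n$ odd: one must verify that the iterated blow-ups really terminate in a smooth variety, keeping careful track of the nested substitutions $x_1^{[i]}$ and the radical recursion $b_i = b\sqrt{b_{i-1}}$, and check that the cubic error terms $f_3 \in (x_1,\dots,x_n)^3$ remain inessential at every stage. A secondary point requiring care is that the quadrics $Q$ appearing as intersections of exceptional components have dimension $n-2$, so are geometrically connected of positive dimension exactly in the range $n\ge 3$ relevant to our applications; this connectedness is what makes the $\CH_0$-triviality argument for the chain go through.
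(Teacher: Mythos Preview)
Your proposal is correct and follows essentially the same approach as the paper: the same three-case split, the same blow-up counts ($\tfrac{p^m-1}{2}+1$, $2^{m-1}$, $2^m$), the same coordinate substitutions $x_1^{[i]}$ with the recursion $b_i=b\sqrt{b_{i-1}}$ in case (3), and the same identification of the strict transforms $\tilde E_i$ as blow-ups of projective cones over smooth quadrics (equivalently $\P^1$-bundles over $Q$). The paper leaves the totally $\CH_0$-trivial conclusion as an immediate remark, whereas you spell out the chain-by-chain localization argument; your added caveat about the quadrics $Q$ having positive dimension when $n\ge 3$ is a fair observation, and indeed all applications in the paper assume $n\ge 3$.
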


For $p$ odd or $n$ odd, 
we get a projective space as exceptional divisor in the last step.
 Denoting by $E$ the sum over all components of the exceptional divisor of $r$, we set
\begin{equation}\label{definition-E-prime}
  E':= \begin{cases} E  + \text{(exc.~div.~from last step)}, &\text{if $p$ is odd or $n$ is odd,} \\
    E &\text{if $p=2$ and $n$ is even.}
  \end{cases}
\end{equation}
Thus the 
 exceptional divisor of the last blow up (a projective space) 
 has multiplicity $2$ in $E'$ in the first case. If the singularity is of the form \eqref{align-1}, \eqref{align-2}, or \eqref{align-3}, then $E'$ is the restriction of ${\rm div}(y)$ to the exceptional divisor of the resolution $r$. 

\begin{lemma}\label{lemma:rational-resolution}
The resolution $r:\tilde{Y}\xr{} Y$ is rational, that is $Rr_*\OO_{\tilde{Y}}=\OO_Y$. 
\begin{proof}
We may suppose that $Y$ has only one singularity. We will show that for each $r_i:Y_i\xr{} Y_{i-1}$, we have $Rr_{i*}\OO_{Y_i}=\OO_{Y_{i-1}}$. Since $Y_{i-1}$ is normal, it suffices to prove $R^jr_{i*}\OO_{Y_i}=0$. 
We know that $r_i$ is the blown up of a point and the exceptional divisor $D$ is a cone over a smooth quadric, a smooth quadric, or a projective space, and comes with a given embedding into projective space; we call the corresponding ample line bundle $\OO_D(1)$. In any case, $H^{>0}(D,\sO(-s\cdot D))\cong H^{>0}(D,\sO(s))=0$ for all $s\geq 0$, where $\sO_D(s)=\sO_D(1)^{\otimes s}$. This implies the claim.
\end{proof}
\end{lemma}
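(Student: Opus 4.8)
The plan is to reduce the statement to a local computation at each singular point of $Y$ and then to the vanishing of cohomology of the quadrics and projective spaces out of which the exceptional fibre of $r$ is built in Proposition~\ref{proposition:resolution}. First, since $r$ is an isomorphism away from the finite singular locus of $Y$, and near each singular point $y$ is a finite tower of blow-ups of closed points
\[
\tilde Y = Y_N \xr{r_N} Y_{N-1}\xr{} \cdots \xr{r_1} Y_0 = Y,
\]
the composition formula $Rr_* = R(r_1)_*\circ\cdots\circ R(r_N)_*$ reduces the claim to $R(r_i)_*\OO_{Y_i}=\OO_{Y_{i-1}}$ for every $i$. Each $Y_{i-1}$ is, near the locus being modified, a hypersurface with at worst isolated singularities (read off the explicit local equations in the proof of Proposition~\ref{proposition:resolution}), hence is normal (Cohen--Macaulay plus regular in codimension one) of dimension $n\ge3$; therefore $(r_i)_*\OO_{Y_i}=\OO_{Y_{i-1}}$ automatically, and it remains to prove $R^j(r_i)_*\OO_{Y_i}=0$ for $j>0$.

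For this I would apply the theorem on formal functions: $R^j(r_i)_*\OO_{Y_i}$ is coherent and supported at the centre $y$ of $r_i$, and its completed stalk is $\varprojlim_s H^j(sD,\OO_{sD})$, where $D=r_i^{-1}(y)$ is the exceptional divisor and $sD$ its $s$-th infinitesimal neighbourhood. The standard filtration of $\OO_{sD}$ with graded pieces $\OO_D(-tD)$ for $0\le t< s$ then reduces the vanishing of all $H^j(sD,\OO_{sD})$, $j>0$, to that of
\[
H^j\bigl(D,\OO_D(-tD)\bigr)=0 \qquad (j>0,\ t\ge0).
\]
Since $r_i$ is the blow-up of a closed point, the conormal sheaf $\OO_{Y_i}(-D)|_D$ is the tautological very ample sheaf $\OO_D(1)$ on the projectivized tangent cone $D$; by Proposition~\ref{proposition:resolution} this $D$ is either a projective space $\P^{n-1}$, a smooth quadric in $\P^n$, or a quadric cone in $\P^n$, and $\OO_D(1)$ is in each case the restriction of $\OO_{\P^n}(1)$ (resp.\ $\OO_{\P^{n-1}}(1)$).

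It therefore suffices to check $H^j(D,\OO_D(t))=0$ for all $j>0$ and $t\ge0$. For $D=\P^{n-1}$ this is classical. For $D$ a quadric hypersurface in $\P^n$ — smooth or a cone, which makes no difference — it follows from the structure sequence $0\to\OO_{\P^n}(t-2)\to\OO_{\P^n}(t)\to\OO_D(t)\to0$, using $H^{>0}(\P^n,\OO_{\P^n}(t))=0$ and $H^n(\P^n,\OO_{\P^n}(t-2))=0$ (the latter because $t-2\ge-2>-n-1$ for $n\ge2$); the same computation with $t=0$ gives $H^{>0}(D,\OO_D)=0$, which starts the induction on $s$. Assembling these facts yields $R^j(r_i)_*\OO_{Y_i}=0$ for $j>0$ and hence the lemma.

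The one step that needs a little care is the geometric input: identifying the conormal bundle of each exceptional divisor with the ample class $\OO_D(1)$ and recognising $D$ as one of the three listed types. Both are exactly what the (elementary but somewhat lengthy) analysis in the proof of Proposition~\ref{proposition:resolution} provides, so once that proposition is in hand the present lemma is essentially a formal-functions bookkeeping argument combined with the well-known cohomology of quadrics and linear subspaces of projective space.
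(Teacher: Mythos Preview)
Your proposal is correct and follows essentially the same approach as the paper: reduce to one blow-up $r_i$ at a time, use normality of $Y_{i-1}$ to reduce to the vanishing of higher direct images, and establish this via $H^{>0}(D,\OO_D(-sD))\cong H^{>0}(D,\OO_D(s))=0$ for the exceptional divisor $D$. The paper's version is terser---it simply asserts the last vanishing and says ``this implies the claim''---whereas you have helpfully spelled out the formal functions step, the filtration of $\OO_{sD}$, and the explicit verification of the quadric cohomology via the structure sequence.
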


\begin{lemma}\label{lemma:cohomology-exc-divisor}
  Let $E'$ be as defined in \eqref{definition-E-prime}. For all $i\geq 2$ we have  
  $$
  H^i(E',\sO(E'))=0.
  $$
\begin{proof}
  We may suppose that $Y$ has only one singular point.  The exceptional divisor is $\sum_{i=1}^s \tilde{E}_i$, and $\tilde{E}_i$ has non-empty intersection only with $\tilde{E}_{i+1}$ and $\tilde{E}_{i-1}$. Recall that all intersections are smooth quadrics. If $i\neq s$ then $\tilde{E}_i$ is the blow up at the vertex of a cone $C_i\subset \P^n$ over a smooth quadric $Q_i\subset \P^{n-1}$; let $r_i:\tilde{E}_i \xr{} C_i$ denote the blow up. 

  For $i=1,\dots,s-2$, we have $\OO_{\tilde{E}_i}(\tilde{E}_i + \tilde{E}_{i+1})\cong r_i^*\OO_{C_i}(-1)$, hence 
\begin{equation}\label{equation-intersections}
\OO_{\tilde{E}_i\cap \tilde{E}_{i+1}}(E')\cong \OO_{\tilde{E}_i\cap \tilde{E}_{i+1}}.
\end{equation}
For $i=2,\dots,s-2$, we obtain $\OO_{\tilde{E}_i}(E')\cong \OO_{\tilde{E}_i}$. 

  If $p$ or $n$ is odd then $\OO_{\tilde{E}_{s-1}}(\tilde{E}_{s-1} + 2\cdot \tilde{E}_{s})\cong r_{s-1}^*\OO_{C_{s-1}}(-1)$ hence $\OO_{\tilde{E}_{s-1}}(E')\cong \OO_{\tilde{E}_{s-1}}$, and $\OO_{\tilde{E}_{s}}(\tilde{E}_{s-1}+ 2\cdot \tilde{E}_{s})\cong \OO_{\P^{n-1}}$; thus \eqref{equation-intersections} holds for $i=s-1$.  If $p$ and $n$ are even then $\OO_{\tilde{E}_{s-1}}(\tilde{E}_{s-1} + \tilde{E}_{s})\cong r_{s-1}^*\OO_{C_{s-1}}(-1)$ hence $\OO_{\tilde{E}_{s-1}}(E')\cong \OO_{\tilde{E}_{s-1}}$. Moreover, we have $\OO_{\tilde{E}_{s}}(E')\cong \OO_{\tilde{E}_{s}}$. This implies the assertion easily.  
\end{proof}
\end{lemma}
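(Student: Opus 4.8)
The plan is to compute $H^i(E',\sO(E'))$ by restricting the line bundle $L:=\sO_{\tilde{Y}}(E')$ to the irreducible components of $E'$ and to their pairwise intersections, all of which turn out to be rational varieties with essentially no higher cohomology. First I would reduce to the case where $Y$ has a single singular point, so that (by Proposition~\ref{proposition:resolution}) $E'$ is supported on a chain $\tilde{E}_1\cup\dots\cup\tilde{E}_s$ with $\tilde{E}_i\cap\tilde{E}_j\neq\emptyset$ precisely for $|i-j|\le1$, each such intersection a smooth quadric of dimension $n-2$; each $\tilde{E}_i$ with $i<s$ is the blow-up $r_i:\tilde{E}_i\to C_i$ of the projective cone $C_i\subset\P^n$ over a smooth quadric $Q_i\subset\P^{n-1}$ at its vertex (so $\tilde{E}_i$ is a $\P^1$-bundle over $Q_i$ with the $r_i$-exceptional section $\sigma_0^{(i)}=\tilde{E}_i\cap\tilde{E}_{i+1}$ and a disjoint section $\sigma_\infty^{(i)}\cong Q_i$ mapped isomorphically into $C_i$ by $r_i$), while $\tilde{E}_s$ is $\P^{n-1}$ (if $p$ or $n$ is odd) or a smooth quadric (if $p$ and $n$ are even); finally $E'=\sum_i\tilde{E}_i$ except that $\tilde{E}_s$ carries multiplicity $2$ in the first parity case, since $E'$ is the exceptional part of $\operatorname{div}(y)$. (If $s=1$ then $\tilde{E}_1=\tilde{E}_s$ and the lemma will follow at once from $L|_{\tilde{E}_1}\cong\sO_{\tilde{E}_1}$, so I may assume $s\ge 2$.)

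The first, and I expect the only substantial, step is to extract from the explicit local normal forms \eqref{align-1}--\eqref{align-3} and the blow-up bookkeeping of Proposition~\ref{proposition:resolution} the identities $\sO_{\tilde{E}_i}(\tilde{E}_i+\tilde{E}_{i+1})\cong r_i^*\sO_{C_i}(-1)$ for $i\le s-2$, the variant $\sO_{\tilde{E}_{s-1}}(\tilde{E}_{s-1}+2\tilde{E}_s)\cong r_{s-1}^*\sO_{C_{s-1}}(-1)$ (resp. with coefficient $1$) in the first (resp. second) parity case, and $\sO_{\tilde{E}_i}(\tilde{E}_{i-1})\cong r_i^*\sO_{C_i}(1)$ (the class of a hyperplane section of $C_i$ avoiding the vertex). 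Multiplying these out and using that only $\tilde{E}_{i\pm1}$ meet $\tilde{E}_i$, one gets $L|_{\tilde{E}_i}\cong\sO_{\tilde{E}_i}$ for $2\le i\le s$ (for $i=s$ one also uses $\sO_{\tilde{E}_s}(\tilde{E}_{s-1}+2\tilde{E}_s)\cong\sO_{\P^{n-1}}$, resp. $\sO_{\tilde{E}_s}(E')\cong\sO_{\tilde{E}_s}$), while $L|_{\tilde{E}_1}\cong r_1^*\sO_{C_1}(-1)\cong\sO_{\tilde{E}_1}(-\sigma_\infty^{(1)})$; and restricting $r_i^*\sO_{C_i}(-1)$ to the contracted quadric $\tilde{E}_i\cap\tilde{E}_{i+1}$ gives $L|_{\tilde{E}_i\cap\tilde{E}_{i+1}}\cong\sO$. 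The delicate points here are keeping track of the multiplicity of $\tilde{E}_s$ in $E'$ and carrying out the three separate parity analyses.

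Granting these restriction formulas, the cohomological input is elementary, and this is where $n\geq3$ enters: every quadric occurring (the $Q_i$, the intersections $\tilde{E}_i\cap\tilde{E}_{i+1}$, and $\tilde{E}_s$ in the second parity case) has dimension $\geq1$, so $H^{j}(\sO)=0$ on it for $j\geq1$; for $2\le i\le s-1$, $\tilde{E}_i$ is a $\P^1$-bundle over $Q_i$, whence $H^{j}(\tilde{E}_i,\sO_{\tilde{E}_i})=H^{j}(Q_i,\sO_{Q_i})=0$ for $j\geq1$, and the same vanishing holds on $\tilde{E}_s=\P^{n-1}$ or a quadric; finally $H^j(\tilde{E}_1,L|_{\tilde{E}_1})=0$ for \emph{all} $j$, from the sequence $0\to\sO_{\tilde{E}_1}(-\sigma_\infty^{(1)})\to\sO_{\tilde{E}_1}\to\sO_{\sigma_\infty^{(1)}}\to0$ together with the fact that $H^j(\tilde{E}_1,\sO_{\tilde{E}_1})\to H^j(\sigma_\infty^{(1)},\sO)$ is an isomorphism (both being $H^j(Q_1,\sO_{Q_1})$). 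In the first parity case I would also note that the ideal of $E'_{\mathrm{red}}$ in $E'$ is $\mathcal I\cong\sO_{\tilde{E}_s}(-\tilde{E}_{s-1}-\tilde{E}_s)\cong\sO_{\P^{n-1}}(-1)$, which has vanishing cohomology in every degree.

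Finally I would assemble the answer by Mayer--Vietoris along the chain: for $2\le k\le s-1$, tensor $0\to\sO_{\tilde{E}_1\cup\dots\cup\tilde{E}_k}\to\sO_{\tilde{E}_1\cup\dots\cup\tilde{E}_{k-1}}\oplus\sO_{\tilde{E}_k}\to\sO_{\tilde{E}_{k-1}\cap\tilde{E}_k}\to0$ with $L$ and feed in the vanishing above; an induction with base case $\tilde{E}_1$ (where $H^{j}(\tilde{E}_1,L)=0$ for all $j$) then yields $H^{i}(\tilde{E}_1\cup\dots\cup\tilde{E}_k,L)=0$ for $i\geq2$. The component $\tilde{E}_s$ is added in last: if it is reduced in $E'$, one further Mayer--Vietoris sequence closes the argument; if not, use $0\to\mathcal I\to\sO_{E'}\to\sO_{E'_{\mathrm{red}}}\to0$, observing that $L\otimes\mathcal I\cong\mathcal I$ (as $L|_{\tilde{E}_s}$ is trivial) has no cohomology and that $H^{i}(E'_{\mathrm{red}},L)=0$ for $i\geq2$ by the chain argument just given. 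This gives $H^{i}(E',\sO(E'))=0$ for all $i\geq2$, as required. The main obstacle, as indicated, is the first step: deriving the line-bundle identities by bookkeeping the iterated blow-ups through all three local models and correctly accounting for the multiplicity of $\tilde{E}_s$; once those are in hand, the vanishing statements and the Mayer--Vietoris assembly are routine.
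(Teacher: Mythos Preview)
Your proposal is correct and follows essentially the same approach as the paper: you establish the same line-bundle identities on the components $\tilde{E}_i$ and their intersections, and then spell out in detail the ``easy'' final step that the paper leaves implicit (the cohomology vanishing on each piece, the Mayer--Vietoris assembly along the chain, and the ideal-sheaf sequence for the non-reduced thickening at $\tilde{E}_s$). The only difference is expository: the paper records the restriction formulas and stops, whereas you carry out the assembly explicitly.
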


\newcommand{\are}{\ar@{>>}}

\subsection{}
Again, we assume that $Y$ has non-degenerate singularities.
We denote by $U\subset X$ the complement of the critical points, $Y_{sm}=\pi^{-1}(U)$; we have
\newcommand{\drw}[2]{W_{#1}\Omega_{#2/k}^1}
$$
W_l(\pi)^*\drw{l}{U} \xr{} \drw{l}{Y_{sm}},
$$
but there is no Verschiebung on $W_l(\pi)^*\drw{l}{U}$. Therefore we define \newcommand{\imV}[1]{{\rm Im}_V(\drw{#1}{U})}
$${\rm Im}_V(\drw{l}{U})\subset \drw{l}{Y_{sm}}$$ 
inductively on $l$ by
$$\imV{l} = {\rm image}(W_l(\pi)^*\drw{l}{U} \xr{}  \drw{l}{Y_{sm}})+V(\imV{l-1}).$$
We have a $R,V,F$ calculus for $\imV{l}$, that is, morphisms
\begin{align*}
&R:\imV{l}\xr{} \imV{l-1}, \\
&V:\imV{l-1}\xr{} \imV{l},  \\ 
&F:\imV{l}\xr{} \imV{l-1},
\end{align*}
satisfying the relations induced by $\drw{*}{Y_{sm}}$ (see \cite{Illusie-drw}). By abuse of notation, any composition of maps $R$ will be also denoted by $R$.

We are going to need several statements on $\imV{l}$ in Theorem~\ref{thm:cyclic-covers} which we provide in the following.
\begin{lemma}\label{lemma-R-surjective}
The evident map 
\begin{multline}
\ker\left( R: W_l(\pi)^*\drw{l}{U} \xr{} \pi^*\Omega^1_{U}  \right) \\ \xr{} \ker\left( R: \imV{l}\xr{} \imV{1} \right)/V(\imV{l-1}).
\end{multline}
is surjective if $l\leq m$.
\begin{proof}
The target is the image of $R^{-1}(\ker\left( \pi^*\Omega_U^1 \xr{} \Omega^1_{Y_{sm}} \right))\subset W_l(\pi)^*\drw{l}{U}$ via the evident map $W_l(\pi)^*\drw{l}{U} \xr{} \imV{l}/V(\imV{l-1})$. Locally, $Y_{sm}$ is defined by $y^{p^m}-f$, for $f\in \sO_{U}$, and 
$
\ker\left( \pi^*\Omega_U^1 \xr{} \Omega^1_{Y_{sm}} \right) 
$
is generated by $d(f)$. Since $d([f])\in W_l(\pi)^*\drw{l}{U}$ is a lifting of $d(f)$ whose image vanishes in $\imV{l}$ (here we use $l\leq m$), the claim follows.
\end{proof}
\end{lemma}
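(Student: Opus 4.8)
The plan is to reduce the statement to the local structure of the $p^m$-covering and then exploit a single computational fact: when $l\le m$ the Teichm\"uller differential $d[f]$ maps to zero in $\drw{l}{Y_{sm}}$.

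First I would make the target of the map explicit. Write $\phi_l\colon W_l(\pi)^*\drw{l}{U}\to\drw{l}{Y_{sm}}$ for the canonical map, $\phi_1\colon\pi^*\Omega^1_{U}\to\Omega^1_{Y_{sm}}$ for its level-one instance (so that $\mathrm{image}(\phi_1)=\imV{1}$ and $K:=\ker\phi_1$), and $R$ for the iterated restriction $W_l(\pi)^*\drw{l}{U}\to W_1(\pi)^*\Omega^1_{U}=\pi^*\Omega^1_{U}$. From the inductive definition, $\imV{l}=\mathrm{image}(\phi_l)+V(\imV{l-1})$, so every class in $\imV{l}/V(\imV{l-1})$ has the form $\phi_l(\tilde\omega)$ for some $\tilde\omega\in W_l(\pi)^*\drw{l}{U}$; in particular the map in the lemma is well-defined since $R$ commutes with $\phi_\bullet$. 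Moreover $R$ commutes with $V$ and $\drw{0}{Y_{sm}}=0$, so the iterated $R$ annihilates $V(\imV{l-1})$; hence the class of $\phi_l(\tilde\omega)$ lies in $\ker\!\bigl(R\colon\imV{l}\to\imV{1}\bigr)$ exactly when $R\tilde\omega\in K$. Thus the target is the image of $R^{-1}(K)$ under $W_l(\pi)^*\drw{l}{U}\twoheadrightarrow\imV{l}/V(\imV{l-1})$, and it suffices to prove
\[
R^{-1}(K)\ \subseteq\ \ker\!\bigl(R\colon W_l(\pi)^*\drw{l}{U}\to\pi^*\Omega^1_{U}\bigr)\ +\ \ker\phi_l .
\]

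Then I would carry out the local computation, which suffices because surjectivity of a morphism of sheaves is local. Over an affine $\Spec A\subseteq U$ on which $L$ is trivial, $s$ becomes a function $f\in A$ with $df$ nowhere vanishing on $U$ (this is exactly what removing the critical locus achieves), and $Y_{sm}$ restricts to $\Spec B$ with $B=A[y]/(y^{p^m}-f)$, itself a $W_l(\OO_{Y_{sm}})$-ringed space over $U$. Since $d(y^{p^m}-f)=-df$ in $\Omega^1_{Y_{sm}}$, the submodule $K$ is freely generated over $B$ by $1\otimes df$: it has rank one by a rank count against $\Omega^1_{Y_{sm}/U}\cong B\,dy$, it contains $1\otimes df$, and $1\otimes df$ spans a rank-one locally free direct summand because $df$ is nowhere zero. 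Given $\tilde\omega$ with $R\tilde\omega\in K$, write $R\tilde\omega=b\cdot(1\otimes df)$ with $b\in B$, lift $b$ to $b'\in W_l(\OO_{Y_{sm}})$ along the surjection $R\colon W_l(\OO_{Y_{sm}})\twoheadrightarrow\OO_{Y_{sm}}$, and set $\eta:=b'\cdot d[f]$, with $[f]\in W_l(\OO_U)$ the Teichm\"uller lift of $f$ and $d[f]$ its image in the $W_l(\OO_{Y_{sm}})$-module $W_l(\pi)^*\drw{l}{U}$. Then $R\eta=(Rb')\cdot R(d[f])=b\cdot(1\otimes df)=R\tilde\omega$, so $\tilde\omega-\eta\in\ker R$; on the other hand, because $f=y^{p^m}$ on $Y_{sm}$, we get $\phi_l(\eta)=b'\cdot d\bigl([y]^{p^m}\bigr)=b'\cdot p^m[y]^{p^m-1}d[y]$, which vanishes since the $W_l(\OO_{Y_{sm}})$-module $\drw{l}{Y_{sm}}$ is killed by $p^l$, hence by $p^m$ once $l\le m$. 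Therefore $\tilde\omega=(\tilde\omega-\eta)+\eta$ is the required decomposition, finishing the argument.

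I do not expect a genuine obstacle here. The real content is the observation that $d[f]=d([y]^{p^m})$ is divisible by $p^m$ and that $p^m$ acts by zero on the truncation-$l$ de Rham--Witt sheaf for $l\le m$; everything else is formal. The two points that need care are (i) the precise identification of the target with the image of $R^{-1}(K)$, which is pure bookkeeping with the $R,V,F$ calculus and the filtration $\imV{\bullet}$, and (ii) the claim that $K$ is locally the free rank-one module on $df$, which is where one uses that $Y_{sm}\to U$ is smooth precisely over the complement of the critical locus.
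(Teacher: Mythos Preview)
Your proposal is correct and follows essentially the same approach as the paper's proof: both identify the target with the image of $R^{-1}\bigl(\ker(\pi^*\Omega^1_U\to\Omega^1_{Y_{sm}})\bigr)$, observe that this kernel is locally generated by $df$, and use that the Teichm\"uller lift $d[f]$ maps to $d([y]^{p^m})=p^m[y]^{p^m-1}d[y]=0$ in $W_l\Omega^1_{Y_{sm}}$ once $l\le m$. You have simply spelled out in full the subtraction step $\tilde\omega\mapsto\tilde\omega-b'\cdot d[f]$ that the paper leaves implicit.
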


Recall the subsheaves $B_n\Omega^1_{U/k}$ of $\Omega^1_{U/k}$, $n=1, 2,\ldots$ (see for example \cite[\S I.2.2]{Illusie-drw}). 
We have a short exact sequence 
$$
\drw{l-1}{U} \xr{V} \ker\left( R: \drw{l}{U} \xr{} \Omega^1_{U}  \right) \xr{F^{l-1}} B_{l-1}\Omega_U^1 \xr{} 0.
$$ 
With the appropriate $W_l(\sO_U)$-module structures this becomes a short exact sequence of $W_l(\sO_U)$-modules. We obtain the following diagram
\begin{equation}\label{equation-strategy-understand-kernel-V}
\xymatrix{
  W_l(\pi)^*\ker\left( R: \drw{l}{U} \xr{} \Omega^1_{U}  \right)/W_l(\pi)^*(V)(W_l(\pi)^*\drw{l-1}{U}) 
 \ar[r]^-{\cong}  \are[d]^{\text{surjective by Lemma~\ref{lemma-R-surjective}}}
&
 \pi^*B_{l-1}\Omega_{U}^1
\\
 \ker\left( R: \imV{l}\xr{} \imV{1} \right)/V(\imV{l-1}) 
 \ar[d]^{(*)} 
&
\\
  \ker\left( R: \drw{l}{Y_{sm}} \xr{} \Omega^1_{Y_{sm}}  \right)/V(\drw{l-1}{Y_{sm}}) 
 \ar[r]^-{\cong} 
&
 B_{l-1}\Omega_{Y_{sm}}^1.
}
\end{equation}
The induced map 
\begin{equation}\label{equation-B-l-1}
\pi^*B_{l-1}\Omega_{U}^1\xr{} B_{l-1}\Omega_{Y_{sm}}^1
\end{equation}
is the natural one, that is, given by $a\otimes \pi^{-1}(\omega)\mapsto {\rm Frob}^{l-1}(a)\cdot \pi^{-1}(\omega)$. We would like to show that $(*)$ is injective, which we prove by computing the kernel of \eqref{equation-B-l-1} and showing that it is killed in $\imV{l}$.  

It is convenient to use the isomorphism \cite[(I.3.11.4)]{Illusie-drw}
\begin{equation}\label{equation-understanding-B}
F^{l-2}d:W_{l-1}(\sO_U)/F(W_{l-1}(\sO_U)) \xr{\cong} B_{l-1}\Omega_U^1.  
\end{equation}
The $W_{l}(\sO_U)$-module structure on the left is via the Frobenius $F:W_{l}(\sO_U)\xr{} W_{l-1}(\sO_U)$.
We give $W_{l-1}(\sO_{Y_{sm}})/F(W_{l-1}(\sO_{Y_{sm}}))$ the analogous $W_{l}(\sO_{Y_{sm}})$-module structure.

\begin{lemma}\label{lemma-compute-the-kernel}
Suppose $Y_{sm}$ is defined by $y^{p^m}-f$ for $f\in \sO_{U}$ (this is the local picture).
The kernel of 
$$
W_{l}(\pi)^* \left( W_{l-1}(\sO_U)/F(W_{l-1}(\sO_U)) \right) 
\xr{} W_{l-1}(\sO_{Y_{sm}})/F(W_{l-1}(\sO_{Y_{sm}}))
$$ 
is generated by $V(W_{l-1}(\sO_{Y_{sm}}))\otimes W_{l}(\pi)^{-1}(W_{l-1}(\sO_U))$ and elements of the form
\begin{equation}\label{equation-claim-elements-in-kernel}
[y^i]\otimes \pi^{-1}(V^j(b))- [y^{i \% p^{m-1-j}}] \otimes \pi^{-1}(V^j([f^{(i:p^{m-1-j})}]\cdot b)), 
\end{equation}
for all $0\leq j\leq l-2$, $i\geq p^{m-1-j}$, and $b\in W_{l-1-j}(\sO_U)$. Here, $i \% p^{m-1-j}$ means the remainder of $i$ in the division by $p^{m-1-j}$, and $i= (i:p^{m-1-j})\cdot p^{m-1-j}+i \% p^{m-1-j}$.
\begin{proof}
The kernel contains $V(W_{l-1}(\sO_{Y_{sm}}))\otimes W_{l}(\pi)^{-1}(W_{l-1}(\sO_U))$, because $V(a)\otimes \pi^{-1}(b)$ maps to $F(V(a))\cdot \pi^{-1}(b)=pa\cdot \pi^{-1}(b)=F(V(a\cdot \pi^{-1}(b)))$. Moreover,
\begin{multline*}
[y^i]\otimes \pi^{-1}(V^j(b))- [y^{i \% p^{m-1-j}}] \otimes \pi^{-1}(V^j([f^{(i:p^{m-1-j})}]\cdot b)) \\
\mapsto  [y^{pi}]\cdot  V^j(b)- [y^{(i \% p^{m-1-j})\cdot p}] \cdot V^j([f^{(i:p^{m-1-j})}]\cdot b)\\
= V^j( ([y^{p^{1+j}\cdot i}]-[y^{(i \% p^{m-1-j})\cdot p^{1+j}}\cdot f^{(i:p^{m-1-j})}])\cdot b)=0.
\end{multline*}

In order to show that these are all elements in the kernel, we proceed by induction on $l$. First, we assume $l=2$. Without loss of generality, we 
need only 
consider elements in the kernel that are of the form $\sum_{i} [y^i]\otimes \pi^{-1}(b_i)$. By \'etale base change, we may assume that $U=\Spec(k[x_1,\dots,x_n])$ and $x_1=f$,
 hence $Y_{sm}=\Spec(k[y,x_2,\dots,x_n])$. By using elements of the form \eqref{equation-claim-elements-in-kernel}, we may suppose that $b_i=b_i(x_2,\dots,x_n)$. Since 
$\sum_i y^{ip}b_i\in k[y^p,x_2^p,\dots,x_n^p]$ implies $b_i\in k[x_2^p,\dots,x_n^p]$, we are done. 

Suppose now that $l>2$. By induction, we 
need
only consider elements in the kernel that are of the form
$$
\sum_{i} [y^i]\otimes \pi^{-1}(V^{l-2}(b_i)),
$$ 
and we may use the same argument as for the $l=2$ case.
\end{proof}
\end{lemma}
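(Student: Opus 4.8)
The plan is to prove the two inclusions separately: that the listed elements lie in the kernel (immediate), and that they span it (the substance), the latter by induction on $l$.

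For the first inclusion one simply computes. The $V$-part is clear: under the map $V(a)\otimes\pi^{-1}(b)$ goes to $F(V(a))\cdot\pi^{-1}(b)=p\,a\,\pi^{-1}(b)=F\big(V(a\,\pi^{-1}(b))\big)$, which vanishes modulo the image of $F$. For the elements \eqref{equation-claim-elements-in-kernel} the only input is multiplicativity of the Teichm\"uller lift, $[y]^{p^m}=[y^{p^m}]=[f]$, together with the Euclidean division $i=(i{:}p^{m-1-j})\cdot p^{m-1-j}+i \% p^{m-1-j}$: applying $F$ (which raises Teichm\"uller lifts to the $p$-th power) and using $p^{1+j}i=(i{:}p^{m-1-j})\cdot p^m+(i \% p^{m-1-j})\cdot p^{1+j}$ shows that the two Teichm\"uller terms acquire the same image after $V^j$, so the difference is killed --- this is the computation indicated right after the statement.

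For the spanning statement I would induct on $l$, the case $l=1$ being vacuous. In the base case $l=2$, I would first use the $V$-part generators together with the decomposition of a general Witt vector of $\sO_{Y_{sm}}$ into Teichm\"uller and Verschiebung pieces to reduce an arbitrary kernel element to the shape $\sum_i[y^i]\otimes\pi^{-1}(b_i)$ with $b_i\in\sO_U$. Since $Y_{sm}$ is the smooth locus of $Y$, the section $s$ (locally $f$) has nowhere-vanishing differential on $U$, so after an \'etale base change we may assume $U=\Spec k[x_1,\dots,x_n]$ with $x_1=f$; then $\sO_{Y_{sm}}=k[y,x_2,\dots,x_n]$ with $x_1=y^{p^m}$. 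Using \eqref{equation-claim-elements-in-kernel} with $j=0$ to trade a factor $[f]^{k}=[x_1]^{k}$ against a shift of the $y$-exponent by $k\,p^{m-1}$, I may further assume each $b_i\in k[x_2,\dots,x_n]$. The image in $\sO_{Y_{sm}}/\sO_{Y_{sm}}^{p}$ is then $\sum_i y^{pi}b_i$, and since $\{y^{pi}\}_i$ is a $k[x_2^p,\dots,x_n^p]$-basis of $k[y^p,x_2^p,\dots,x_n^p]=\sO_{Y_{sm}}^{p}$, its vanishing forces each $b_i\in k[x_2^p,\dots,x_n^p]\subset \sO_U^{p}=F\big(W_2(\sO_U)\big)$, so $\overline{b_i}=0$ in $W_1(\sO_U)/F$ and the element was already zero. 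For the inductive step $l>2$, applying the restriction $R$ lands us in the level-$(l-1)$ kernel, which by the inductive hypothesis is a combination of the listed generators; subtracting it, what remains lies in the deepest Verschiebung layer, i.e.\ has the form $\sum_i[y^i]\otimes\pi^{-1}(V^{l-2}(b_i))$ with $b_i\in\sO_U$, and for these the image equals $V^{l-2}\big(\sum_i[y^{p^{l-1}i}]\,\pi^{-1}(b_i)\big)$, to which essentially the same \'etale-base-change-plus-basis argument as in the base case applies.

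The step I expect to be the real work is the reduction to normal form in the spanning part: manipulating a general element of $W_l(\pi)^*\big(W_{l-1}(\sO_U)/F\big)=W_l(\sO_{Y_{sm}})\otimes_{W_l(\sO_U)}\big(W_{l-1}(\sO_U)/F\big)$ using only the $W_l(\sO_U)$-module structure, the $V$-part relations and \eqref{equation-claim-elements-in-kernel}, while keeping the $F$-, $V$- and exponent-bookkeeping straight; once one is in the polynomial ring with $f$ a coordinate, the remainder is the elementary statement about $p$-th powers used above.
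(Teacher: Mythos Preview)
Your proposal is correct and follows essentially the same approach as the paper's proof: the same direct verification that the listed elements lie in the kernel, the same \'etale base change to $U=\Spec k[x_1,\dots,x_n]$ with $f=x_1$, the same normal-form reduction using the $V$-generators and the relations \eqref{equation-claim-elements-in-kernel}, and the same induction on $l$ reducing to the deepest $V^{l-2}$-layer. Your write-up is in fact slightly more explicit than the paper's about why the inductive step reduces to that layer (via the restriction $R$ and lifting of generators), but the argument is the same.
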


\begin{proposition} \label{proposition:injectivity}
Suppose $l\leq m$. The map 
\begin{multline*}
\ker\left( R: \imV{l}\xr{} \imV{1} \right)/V(\imV{l-1}) \xr{} \\ \ker\left( R: \drw{l}{Y_{sm}} \xr{} \Omega^1_{Y_{sm}}  \right)/V(\drw{l-1}{Y_{sm}}) 
\end{multline*}
is injective. 
\begin{proof}
In view of Diagram \eqref{equation-strategy-understand-kernel-V} and Lemma~\ref{lemma-compute-the-kernel}, we need to prove that the following elements vanish in 
$
\imV{l}/V(\imV{l-1}),
$
\begin{enumerate}
\item $V(a)\cdot dV(b)$ for $a\in W_l(\sO_{Y_{sm}})$ and  $b\in W_{l-1}(\sO_U)$,
\item $[y^i]\cdot dV^{j+1}(b) - [y^{i \% p^{m-1-j}}] \cdot dV^{j+1}([f^{(i:p^{m-1-j})}]\cdot b)$ for $b\in W_{l-1-j}(\sO_U)$.
\end{enumerate}
For (1), we have 
$$
V(a)\cdot dV(b)=V(a\cdot d(b))\in V(\imV{l-1}).
$$
For (2), we compute
\begin{align*}
[y^i]\cdot dV^{j+1}(b) &= d([y^i]\cdot V^{j+1}(b))- V^{j+1}(b)\cdot d([y^i])\\
 &=dV^{j+1}([y^{i\cdot p^{1+j}}]\cdot b) -  V^{j+1}(b)\cdot d([y^i]) \\
 &= dV^{j+1}([y^{(i \% p^{m-1-j})\cdot p^{1+j}}]\cdot [f^{(i:p^{m-1-j})}]\cdot b) -  V^{j+1}(b)\cdot d([y^i])\\
&=d \left([y^{i \% p^{m-1-j}}]\cdot V^{j+1}([f^{(i:p^{m-1-j})}]\cdot b) \right) - V^{j+1}(b)\cdot d([y^i])\\
&=V^{j+1}([f^{(i:p^{m-1-j})}]\cdot b) \cdot d([y^{i \% p^{m-1-j}}]) \\
&+ [y^{i \% p^{m-1-j}}]\cdot dV^{j+1}([f^{(i:p^{m-1-j})}]\cdot b) - V^{j+1}(b)\cdot d([y^i]),
\end{align*}
which together with
\begin{multline*}
V^{j+1}([f^{(i:p^{m-1-j})}]\cdot b) \cdot d([y^{i \% p^{m-1-j}}]) - V^{j+1}(b)\cdot d([y^i]) \\=
V^{j+1}\left(b\cdot \left([f^{(i:p^{m-1-j})}]\cdot F^{j+1}(d([y^{i \% p^{m-1-j}}])) - F^{j+1}(d([y^i]))\right) \right)\\=
V^{j+1}\left(b\cdot F^{j+1}(d([y^{(i:p^{m-1-j})\cdot p^{m-1-j}}] [y^{i \% p^{m-1-j}}] - [y^i]))\right)=0
\end{multline*}
(note that $F^{j+1}(d([y^{(i:p^{m-1-j})\cdot p^{m-1-j}}]))=0$) implies the claim. 
\end{proof}
\end{proposition}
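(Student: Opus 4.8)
The plan is to read the map $(*)$ off the commutative diagram \eqref{equation-strategy-understand-kernel-V} and reduce its injectivity to a containment of kernels. In that diagram the left-hand vertical arrow is surjective by Lemma~\ref{lemma-R-surjective}, and the composite ``inverse of the top isomorphism, then the left vertical surjection, then $(*)$, then the bottom isomorphism'' is exactly the natural map \eqref{equation-B-l-1}, $\pi^*B_{l-1}\Omega^1_U\xr{}B_{l-1}\Omega^1_{Y_{sm}}$. Writing $s$ for the surjection $\pi^*B_{l-1}\Omega^1_U\twoheadrightarrow\ker(R\colon\imV{l}\xr{}\imV{1})/V(\imV{l-1})$ obtained by composing the inverse of the top isomorphism with the left vertical arrow, injectivity of $(*)$ is equivalent to saying that the kernel of \eqref{equation-B-l-1} is contained in $\ker(s)$, i.e.\ that every generator of that kernel is carried to $0$ in $\imV{l}/V(\imV{l-1})$. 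Since $\pi$ is a homeomorphism this is a local question on $U$, so I would assume throughout that $Y_{sm}=V(y^{p^m}-f)$ with $f\in\sO_U$.

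Next I would feed in Lemma~\ref{lemma-compute-the-kernel}: after the identification \eqref{equation-understanding-B}, the kernel of \eqref{equation-B-l-1} is generated by the submodule $V(W_{l-1}(\sO_{Y_{sm}}))\otimes W_l(\pi)^{-1}(W_{l-1}(\sO_U))$ together with the elements \eqref{equation-claim-elements-in-kernel}. Under the top isomorphism of \eqref{equation-strategy-understand-kernel-V} the class of $c\in W_{l-1}(\sO_U)$ is represented by $dV(c)$ modulo $V(\drw{l-1}{U})$, and keeping track of the $W_l(\pi)^*$-module structure (scalars acting through Frobenius) turns the problem into showing that, in $\imV{l}/V(\imV{l-1})$, one has \emph{(1)} $V(a)\,dV(b)=0$ for $a\in W_{l-1}(\sO_{Y_{sm}})$, $b\in W_{l-1}(\sO_U)$; and \emph{(2)} $[y^i]\,dV^{j+1}(b)=[y^{s}]\,dV^{j+1}([f^{a}]b)$ for $b\in W_{l-1-j}(\sO_U)$, $0\le j\le l-2$ and $i\ge p^{m-1-j}$, where I abbreviate $t=p^{m-1-j}$, $a=(i:t)$ and $s=i\%t$, so that $i=at+s$ with $0\le s<t$. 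Item \emph{(1)} is immediate from the projection formula: $V(a)\,dV(b)=V(a\cdot F(dV(b)))=V(a\,db)\in V(\imV{l-1})$. For \emph{(2)} the plan is a de Rham--Witt computation: expand $[y^i]\,dV^{j+1}(b)$ by the Leibniz rule, move the Teichm\"uller power inside $V^{j+1}$ via $[y^i]\,V^{j+1}(b)=V^{j+1}([y^{ip^{j+1}}]b)$, factor $[y^{ip^{j+1}}]=[f^{a}]\,[y^{sp^{j+1}}]$ from the ring identity $y^{ip^{j+1}}=y^{atp^{j+1}}y^{sp^{j+1}}=f^{a}y^{sp^{j+1}}$ (using $tp^{j+1}=p^m$), and expand once more by Leibniz; the resulting term $[y^{s}]\,dV^{j+1}([f^{a}]b)$ cancels the right-hand side of \emph{(2)}, leaving $V^{j+1}([f^{a}]b)\,d[y^{s}]-V^{j+1}(b)\,d[y^i]$. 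Using the projection formula $V^{j+1}(w)\,\eta=V^{j+1}(w\,F^{j+1}\eta)$ together with $[f^{a}]=F^{j+1}([y^{at}])$, this equals $V^{j+1}\!\big(b\cdot F^{j+1}\big([y^{at}]\,d[y^{s}]-d[y^i]\big)\big)$, and once $d[y^{at}]=0$ is known the inner form is $d\big([y^{at}][y^{s}]\big)-d[y^i]=d[y^{at+s}]-d[y^i]=0$.

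The step I expect to be the main obstacle --- and the only place where the hypothesis $l\le m$ is really used in this proposition --- is establishing $d[y^{at}]=0$ in $W_{l-1-j}\Omega^1$: here $d([y]^{at})=at\,[y]^{at-1}d[y]$, whose integer coefficient is divisible by $t=p^{m-1-j}$, and the element $p^{m-1-j}$ annihilates the $W_{l-1-j}(\sO)$-module $W_{l-1-j}\Omega^1$ exactly when $m-1-j\ge l-1-j$, i.e.\ when $l\le m$ (this is the same phenomenon that makes $d[f]$ vanish in $\imV{l}$ in Lemma~\ref{lemma-R-surjective}). Getting the weight bookkeeping right --- keeping $ip^{j+1}=atp^{j+1}+sp^{j+1}$ with $tp^{j+1}=p^m$ and $0\le s<t$ consistent, and checking that the ``excess'' term $V^{j+1}(b)\,d[y^i]$ produced by the Leibniz rule is cancelled precisely by the analogous term coming from the other summand --- is the delicate part. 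Once this is in place, the remaining manipulations use only the standard de Rham--Witt relations ($FdV=d$, $F^{j+1}d[z]=[z]^{p^{j+1}-1}d[z]$, $[z]\,V^{j+1}(w)=V^{j+1}([z^{p^{j+1}}]w)$) and the projection formula, all of which are available for $\imV{\bullet}$ through its $R,V,F$-calculus.
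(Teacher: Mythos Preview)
Your proposal is correct and follows essentially the same route as the paper: reduce via the diagram and Lemma~\ref{lemma-compute-the-kernel} to the two families of elements, dispatch (1) by the projection formula, and handle (2) by the Leibniz expansion and the vanishing of $F^{j+1}(d[y^{at}])$. One small bookkeeping point: you phrase the last step as ``once $d[y^{at}]=0$ the inner form is $d([y^{at}][y^s])-d[y^i]=0$,'' which reads as a computation in $W_l\Omega^1$, whereas your justification of $d[y^{at}]=0$ is (correctly) in $W_{l-1-j}\Omega^1$; since $d[y^{at}]$ need not vanish in $W_l\Omega^1$ when $l=m$ and $j=0$, you should instead argue, as the paper implicitly does, that $F^{j+1}\big([y^{at}]d[y^s]-d[y^i]\big)=-F^{j+1}([y^s])\cdot F^{j+1}(d[y^{at}])=0$ using your $p$-torsion observation in $W_{l-1-j}\Omega^1$.
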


\subsection{} 
We denote by $\jmath: r^{-1}(Y_{sm}) \xr{} \tilde{Y}$ the open immersion. We will work with the logarithmic de Rham-Witt complex 
$$
\drw{l}{\tilde{Y}}(\log E) \subset \jmath_* \drw{l}{Y_{sm}}.
$$
Locally, when $E=\cup_{i=1}^r V(f_i)$ with $V(f_i)$ smooth, $\drw{l}{\tilde{Y}}(\log E)$ is 
generated
as a $W_l(\OO_{\tilde{Y}})$ submodule of $\jmath_* \drw{l}{Y_{sm}}$ 
by $\drw{l}{\tilde{Y}}$ and $\langle \frac{d[f_i]}{[f_i]} \mid i=1,\dots, r\rangle$. As for the de Rham complex there is an exact sequence 
\begin{equation}\label{equation-residue-exact-seq}
0\xr{} \drw{l}{\tilde{Y}} \xr{} \drw{l}{\tilde{Y}}(\log E) \xr{} \bigoplus_{i=0}^r W_l(\OO_{V(f_i)})\xr{} 0.
\end{equation}
We have the usual $F,V,R$ calculus for $\drw{*}{\tilde{Y}}(\log E)$.

We define 
$$
K_l:= \jmath_* \imV{l} \cap \drw{l}{\tilde{Y}}(\log E) \subset \jmath_* \drw{l}{Y_{sm}}.
$$
We have a $F,V,R$ calculus for $K_*$ induced by the one for $\imV{*}$ and $\drw{*}{\tilde{Y}}(\log E)$. We set $Q_*:=\drw{*}{\tilde{Y}}(\log E)/K_*$. 

\begin{lemma}\label{lemma-lifting} 
Suppose that $p\neq 2$ or $n$ is even. Then, for all $l\geq 1$, the following map is surjective:
$$
R:K_l\xr{} K_1.
$$
\begin{proof}
The first case is $p\neq 2$. We need to compute $K_1$. We may assume that $Y$ has only one singularity as in the proof of Proposition \ref{proposition:resolution}. Recall that $\tilde{Y}$ is constructed as a sequence of blow ups $\dots \xr{} Y_{i} \xr{} Y_{i-1} \xr{} \dots \xr{} Y$. We denote by $r_i:Y_{i}\xr{} Y$ the evident composition; we let $D_i$ be the exceptional divisor of $r_i$, and $E_i$ denotes the exceptional divisor of $Y_i\xr{} Y_{i-1}$. We would like to understand
\begin{equation}\label{equation-compute-K1}
\jmath_{Y_i\backslash D_i,*}\left( {\rm image}\left( r_i^*\pi^*\Omega_{X}^1\mid_{Y_i\backslash D_i} \xr{}\Omega^1_{Y_i\backslash D_i} \right) \right)\cap \Omega^1_{Y_{i,{\rm sm}}} (\log D_i\mid_{Y_{i,{\rm sm}}}),
\end{equation}
in a neighborhood of $E_i\cap Y_{i,{\rm sm}}$, where $Y_{i,{\rm sm}}$ is the smooth locus of $Y_{i}$, and $\jmath_{Y_i\backslash D_i}: Y_i\backslash D_i \xr{}  Y_{i,{\rm sm}}$ is the open immersion. 

As in the proof of Proposition \ref{proposition:resolution}, we have coordinates $y,x_1',\dots,x_n'$ around the singular point of $Y_{i-1}$, where $x_j'=\frac{x_j}{y^{i-1}}$. We can cover $E_i$ by $n+1$ open sets $V_0,V_1,\dots,V_{n}$, where $V_0$ is a hypersurface in the affine space with coordinates $y,\frac{x_1'}{y},\dots,\frac{x_n'}{y}$, and $V_j$ is a hypersurface in the affine space with coordinates $\frac{y}{x_j'},\frac{x_1'}{x_j'},\dots, x_j',\dots, \frac{x_n'}{x_j'}$, for $j=1,\dots,n$. On $V_0$ we have $E_i\cap V_0=D_i\cap V_0=V(y)$. Note that if $i=\frac{p^m-1}{2}+1$, which is the last blow up, then $E_i\cap V_0$ is empty.

On $V_j$ we have $E_i\cap V_j=V(x_j')$ and $D_i\cap V_j=V(y)$ if $j=1,\dots,n$ and $i\not\in\{1,\frac{p^m-1}{2}+1\}$, that is, except for the first and the last blow up. For the first blow up $(i=1)$, we have $E_i\cap V_j=D_i\cap V_j=V(x_j')$. For the last blow up $(i=\frac{p^m-1}{2}+1)$, we have $E_i\cap V_j=V(x_j')$ and $D_i\cap V_j=V(\frac{y}{x_j'})$. 

We claim that the restriction of (\ref{equation-compute-K1}) to $V_0$ is generated by $\frac{dx_1}{y^i},\dots , \frac{dx_n}{y^i}$, and the restriction of (\ref{equation-compute-K1}) to $V_j$ is generated by $\frac{dx_1}{x_j},\dots, \frac{dx_j}{x_j}, \dots , \frac{dx_n}{x_j}$. It is obvious that all differential forms are contained in the left hand side of (\ref{equation-compute-K1}), and we need to show that they are contained in $\Omega^1_{Y_{i,{\rm sm}}} (\log D_i\mid_{Y_{i,{\rm sm}}})$. Indeed, 
$
\frac{dx_j}{y^i}= \frac{d\left(\frac{x'_j}{y}\cdot y^i\right)}{y^i}=d\left(\frac{x_j'}{y}\right)+ i \cdot \frac{x_j'}{y}\cdot \frac{dy}{y}$, 
and 
\begin{multline*}
\frac{dx_k}{x_j}=d\left(\frac{x_k}{x_j}\right)+\frac{x_k}{x_j}\cdot \frac{dx_j}{x_j}=d\left(\frac{x_k'}{x_j'}\right)+\frac{x_k'}{x_j'}\cdot \frac{dx_j}{x_j}= \\ d\left(\frac{x_k'}{x_j'}\right)+\frac{x_k'}{x_j'}\cdot \left(\frac{dx_j'}{x_j'} + (i-1)\cdot \frac{dy}{y}\right).
\end{multline*}
In order to show that the given differential forms are generators, we note that the quotient of $\Omega^1_{Y_{i,{\rm sm}}} (\log D_i\mid_{Y_{i,{\rm sm}}})\cap V_j$ by the module generated by these forms is a quotient of a free rank $=1$ module. Since the quotient of $\Omega^1_{Y_{{\rm sm}}}$ by the image of $\pi^*(\Omega^1_{U})$ is free of rank $1$, the claim follows. 

The case $p=2$ and $n$ even can be proved in the same way. 

In order to prove that $K_l\xr{} K_1$ is surjective, we may argue by induction on $i$ and only consider a neighborhood of $E_i\cap Y_{i, {\rm sm}}$ in $Y_{i, {\rm sm}}$. We note that $\frac{dx_j}{y^i}$ can be lifted by $\frac{d[x_j]}{[y^i]}\in K_l(V_0)$, and $\frac{dx_k}{x_j}$ can be lifted by $\frac{d[x_k]}{[x_j]}\in K_l(V_j)$.    
\end{proof}
\end{lemma}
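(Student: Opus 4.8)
The plan is to reduce the assertion to a local computation around the exceptional locus of the resolution $r:\tilde Y\to Y$, to pin down $K_1$ explicitly there, and then to exhibit a Teichm\"uller lifting of each local generator of $K_1$ into $K_l$. Since $R$ commutes with restriction to open subschemes, I would first localize and assume, as in the proof of Proposition~\ref{proposition:resolution}, that $Y$ has a single non-degenerate singular point, and write the resolution as a tower of point blow-ups $\tilde Y=Y_s\to\cdots\to Y_1\to Y_0=Y$, with $D_i\subset Y_i$ the total exceptional divisor of $Y_i\to Y$ and $E_i\subset Y_i$ the exceptional divisor of $Y_i\to Y_{i-1}$. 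Because the $F,V,R$ calculus is induced from the various $Y_i$, an induction on $i$ reduces the problem to showing that the local analogue of $K_1$ near $E_i\cap Y_{i,\mathrm{sm}}$ --- the intersection of the image of $r_i^*\pi^*\Omega^1_X$ with $\Omega^1_{Y_{i,\mathrm{sm}}}(\log D_i)$ inside the pushforward from $Y_i\setminus D_i$ --- is generated by forms admitting Teichm\"uller lifts to $K_l$.

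For the local computation I would use the coordinates $y,x_1',\dots,x_n'$ about the singular point of $Y_{i-1}$, with $x_j'=x_j/y^{i-1}$, cover $E_i$ by the affine charts $V_0$ (coordinates $y,x_1'/y,\dots,x_n'/y$) and $V_j$, $j=1,\dots,n$ (coordinates $y/x_j',x_1'/x_j',\dots,x_j',\dots,x_n'/x_j'$), and claim that $K_1$ is generated on $V_0$ by $dx_1/y^i,\dots,dx_n/y^i$ and on $V_j$ by $dx_1/x_j,\dots,dx_n/x_j$. Each of these forms is visibly in the image of $\pi^*\Omega^1_X$; that it has at worst a logarithmic pole along $D_i$ follows from $dx_j/y^i=d(x_j'/y)+i\,(x_j'/y)\,dy/y$ and $dx_k/x_j=d(x_k'/x_j')+(x_k'/x_j')\bigl(dx_j'/x_j'+(i-1)\,dy/y\bigr)$, where one must keep track of the fact that $E_i$ and $D_i$ coincide for the first blow-up and differ for the last one (whose exceptional divisor is a projective space rather than a cone over a quadric). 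For the reverse inclusion I would use a rank count: the quotient of $\Omega^1_{Y_{i,\mathrm{sm}}}(\log D_i)$ by the submodule generated by the listed forms is a quotient of a free rank-one sheaf, and since $\Omega^1_{Y_{\mathrm{sm}}}/\imV{1}$ is free of rank one (spanned by the class of $dy$), those forms already generate $K_1$ locally.

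With $K_1$ so described, the liftings are immediate: $dx_j/y^i$ lifts under iterated $R$ to $d[x_j]/[y^i]\in K_l(V_0)$ and $dx_k/x_j$ lifts to $d[x_k]/[x_j]\in K_l(V_j)$, where $[\,\cdot\,]$ is the Teichm\"uller representative; each such element lies in $\drw{l}{\tilde Y}(\log E)$, being assembled from Teichm\"uller lifts and $d\log$'s of units, and in $\jmath_*\imV{l}$, since $d[x_j]$ lies in the image of $W_l(\pi)^*\drw{l}{U}$. The case $p=2$ with $n$ even is handled identically, replacing the singularity $y^{p^m}+\sum_i x_i^2$ of \eqref{align-1} by $y^{p^m}+\sum_i x_{2i-1}x_{2i}$ of \eqref{align-2} and using the corresponding resolution tower of Proposition~\ref{proposition:resolution}. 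I expect the main obstacle to be exactly this explicit local determination of $K_1$: it forces one to push the coordinate changes through every chart of every blow-up in the tower and to treat the first and last blow-ups as special cases, and it is precisely here that the hypothesis ``$p\neq2$ or $n$ even'' is used, since for $p=2$ and $n$ odd the local model \eqref{align-3} carries an extra cubic term, the tower is longer, and the uniform rank-one argument above breaks down.
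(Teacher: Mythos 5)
Your proposal follows essentially the same route as the paper's own proof: the same reduction to a single singular point, the same chart-by-chart computation of $K_1$ in coordinates $y,x_1',\dots,x_n'$, the same explicit verification that $dx_j/y^i$ and $dx_k/x_j$ are log forms, the same rank-one argument for generation, and the same Teichm\"uller lifts $d[x_j]/[y^i]$ and $d[x_k]/[x_j]$. The extra sentences you add (why the lifts land in both $\drw{l}{\tilde Y}(\log E)$ and $\jmath_*\imV{l}$, and why the argument breaks for $p=2$, $n$ odd) are consistent with the paper's remark following the lemma and do not change the argument.
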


\begin{remark}
We do not know whether Lemma \ref{lemma-lifting} holds if $p=2$ and $n$ is odd. We can still describe $K_1$, but the coordinate changes $x_1^{[1]},x_1^{[2]},\dots$ used in the resolution process are incompatible with the multiplicative Teichm\"uller map and evident liftings do not exist.
\end{remark}

\subsection{}
Let us assume that $p\neq 2$ or $n$ is even.  In view of the lemma, the map 
\begin{equation}\label{equation-surjectivity-to-Q-ker}
\ker(\drw{*}{\tilde{Y}}(\log E) \xr{R} \drw{1}{\tilde{Y}}(\log E)) \xr{} \ker(Q_*\xr{R} Q_1) 
\end{equation}
is surjective.

As consequence of Proposition~\ref{proposition:injectivity} we obtain the following corollary.
\begin{corollary}\label{corollary-control-kernel-V}
For all $l\leq m$, the 
composition
\begin{multline*}
\OO_{\tilde{Y}}/\OO_{\tilde{Y}}^{p^{l-1}} \xr{dV^{l-2},\cong} \ker(V:\drw{l-1}{\tilde{Y}} \xr{} \drw{l}{\tilde{Y}})\\ 
 \xr{} \ker(V:\drw{l-1}{\tilde{Y}}(\log E) \xr{} \drw{l}{\tilde{Y}}(\log E)) 
 \xr{} \ker(V:Q_{l-1}\xr{} Q_l)
\end{multline*}
is surjective on the open set $Y_{sm}$.
\begin{proof}
The first isomorphism follows from \cite[Proposition~I.3.11]{Illusie-drw}. The second arrow is an isomorphism on $Y_{sm}$. Set 
\begin{align*}
A_l&:=\ker\left( R: \imV{l}\xr{} \imV{1} \right),\\
B_l&:=\ker\left( R: \drw{l}{Y_{sm}} \xr{} \Omega^1_{Y_{sm}}  \right),\\
C_l&:= \ker\left( R: Q_{l|Y_{sm}} \xr{} Q_{1|Y_{sm}}  \right).
\end{align*}
In view of \eqref{equation-surjectivity-to-Q-ker} we have a morphism of exact sequences 
$$
\xymatrix{
0  \ar[r]
&
\imV{l-1} \ar[r] \ar[d]^{V}
&
\drw{l-1}{Y_{sm}} \ar[r] \ar[d]^{V}
&
Q_{l-1|Y_{sm}} \ar[r] \ar[d]^{V}
&
0
\\
0 \ar[r]
&
A_{l} \ar[r]
&
B_{l} \ar[r]
&
C_{l} \ar[r]
&
0,
}
$$
and the snake lemma and Proposition~\ref{proposition:injectivity}  imply the assertion.
\end{proof}
\end{corollary}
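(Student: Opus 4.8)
The plan is to reduce the assertion, after restriction to $Y_{sm}$, to the injectivity of Proposition~\ref{proposition:injectivity} by a snake-lemma argument. First I would dispose of the two maps claimed to be isomorphisms. The first, $dV^{l-2}\colon\OO_{\tilde{Y}}/\OO_{\tilde{Y}}^{p^{l-1}}\xr{\sim}\ker(V\colon\drw{l-1}{\tilde{Y}}\xr{}\drw{l}{\tilde{Y}})$, is \cite[Proposition~I.3.11]{Illusie-drw}. For the inclusion $\drw{l-1}{\tilde{Y}}\hookrightarrow\drw{l-1}{\tilde{Y}}(\log E)$ I would note that $r\colon\tilde{Y}\xr{}Y$ is an isomorphism over $Y_{sm}$ and that $E$ lies over the singular locus of $Y$, so $E$ is disjoint from $r^{-1}(Y_{sm})$; hence on $Y_{sm}$ the sheaves $\drw{l}{\tilde{Y}}(\log E)$, $K_l$, $Q_l$ restrict to $\drw{l}{Y_{sm}}$, $\imV{l}$, $\drw{l}{Y_{sm}}/\imV{l}$ respectively, and the second map is an isomorphism there. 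So it remains to show that
\[
\ker\bigl(V\colon\drw{l-1}{Y_{sm}}\xr{}\drw{l}{Y_{sm}}\bigr)\xr{}\ker\bigl(V\colon\drw{l-1}{Y_{sm}}/\imV{l-1}\xr{}\drw{l}{Y_{sm}}/\imV{l}\bigr)
\]
is surjective.

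Next I would form, on $Y_{sm}$, the morphism of short exact sequences with top row $0\to\imV{l-1}\to\drw{l-1}{Y_{sm}}\to\drw{l-1}{Y_{sm}}/\imV{l-1}\to0$, bottom row $0\to A_l\to B_l\to C_l\to0$ with $A_l=\ker(R\colon\imV{l}\xr{}\imV{1})$, $B_l=\ker(R\colon\drw{l}{Y_{sm}}\xr{}\Omega^1_{Y_{sm}})$, $C_l=\ker(R\colon\drw{l}{Y_{sm}}/\imV{l}\xr{}\Omega^1_{Y_{sm}}/\imV{1})$, and vertical maps induced by $V$; these land in the respective kernels of $R$ since $R^{l-1}V=VR^{l-1}$ and $R^{l-1}$ annihilates level $l-1$. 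The bottom row is short exact: injectivity and middle-exactness are formal from $A_l=\imV{l}\cap B_l$, and the surjectivity of $B_l\xr{}C_l$ is exactly \eqref{equation-surjectivity-to-Q-ker} restricted to $Y_{sm}$.

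Finally I would apply the snake lemma to this square: the surjectivity I want is equivalent to the vanishing of the connecting homomorphism, hence to the injectivity of $A_l/V(\imV{l-1})\xr{}B_l/V(\drw{l-1}{Y_{sm}})$ (one may identify the target with $B_{l-1}\Omega^1_{Y_{sm}}$ via $\drw{l-1}{Y_{sm}}\xr{V}B_l\xr{F^{l-1}}B_{l-1}\Omega^1_{Y_{sm}}\to0$, though this identification is not needed). That map is precisely the one shown injective, for $l\le m$, in Proposition~\ref{proposition:injectivity}; so the connecting map vanishes, the displayed surjectivity holds, and composing with the two isomorphisms finishes the proof. I expect the only real care to be bookkeeping — tracking the restrictions to $Y_{sm}$ and checking that the bottom row is genuinely short exact — since the substantive input, Proposition~\ref{proposition:injectivity}, is used as a black box.
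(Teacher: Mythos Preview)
Your proof is correct and follows essentially the same approach as the paper: reduce to $Y_{sm}$, set up the snake-lemma diagram with rows $0\to\imV{l-1}\to\drw{l-1}{Y_{sm}}\to Q_{l-1|Y_{sm}}\to0$ and $0\to A_l\to B_l\to C_l\to0$, and conclude from the injectivity of $A_l/V(\imV{l-1})\to B_l/V(\drw{l-1}{Y_{sm}})$ (Proposition~\ref{proposition:injectivity}). Your version simply spells out more of the bookkeeping (why the second arrow is an isomorphism on $Y_{sm}$, why the bottom row is short exact) than the paper does.
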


\begin{theorem}\label{thm:cyclic-covers}
Let $X$ be a smooth projective variety of dimension $n$ over an algebraically closed field of characteristic $p$. Suppose that $p$ is odd or $n$ is even. Let $L$ be a line bundle on $X$, and let $s\in H^0(X,L^{\otimes p^m})$ for $m\geq 1$. Suppose that the $p^m$ cyclic covering $\pi:Y\xr{} X$ corresponding to $s$ has only non-degenerate singularities; let $r:\tilde{Y}\xr{} Y$ be the resolution from Proposition~\ref{proposition:resolution}. Suppose that 
\begin{enumerate}
\item $n \geq 3$,
\item $H^0(X,L^{\otimes p^m}\otimes K_X)\neq 0$,
\item the Frobenius acts bijectively on $H^{n-1}(V(s),\sO)$,
\item $H^n(X,L^{\otimes -j})=0$ for all $j=0,\dots,p^{m}-1$,
\item $H^{n-1}(X,L^{\otimes -j})=0$ for all $j=0,\dots,p^{m}$.
\end{enumerate}
Then $W_m(k)\subset H^0(\tilde{Y},W_m\Omega^{n-1})$.
\end{theorem}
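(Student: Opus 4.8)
The goal is to produce a nonzero class in $H^0(\tilde Y, W_m\Omega^{n-1})$ whose stabilizer under multiplication is exactly $W_m(k)$; equivalently, to exhibit an injection $W_m(k)\hookrightarrow H^0(\tilde Y, W_m\Omega^{n-1})$. The strategy mirrors Koll\'ar's argument in the $m=1$ case (where one produces a nonzero section of $\Omega^{n-1}_{\tilde Y}$ by pulling back $\omega_W\otimes\sO_W(d_1)$ and checking it extends over the resolution), but now at the level of de Rham--Witt sheaves. First I would work on the smooth locus $Y_{sm}=\pi^{-1}(U)$, where $\pi$ is the cyclic cover. Hypothesis (2), $H^0(X, L^{\otimes p^m}\otimes K_X)\neq 0$, provides (after pullback and using the description of the dualizing sheaf of a cyclic cover) a nonzero section $\omega$ of $\Omega^{n-1}_{Y_{sm}}$, indeed of the form $\pi^*(\text{something})$ so that it lies in the image of $\imV{1}$ in the notation of the excerpt. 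The key point will be to lift $\omega$ to a section of $W_m\Omega^{n-1}$ on $Y_{sm}$, and then show this lift extends across the exceptional divisor to all of $\tilde Y$.

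\textbf{Lifting on the smooth locus.} To lift $\omega$ through the tower $\cdots\to W_{l}\Omega^{n-1}\to W_{l-1}\Omega^{n-1}\to\cdots$, at each stage the obstruction lives in a cohomology group built from $B_{l-1}\Omega^{\bullet}$-type sheaves and, via the short exact sequences relating $W_l\Omega$ to $W_{l-1}\Omega$ and the exact sequences of the excerpt (the $R$-kernel sequences, Proposition~\ref{proposition:injectivity}, Corollary~\ref{corollary-control-kernel-V}), ultimately from $H^{\ge 1}$ of structure-sheaf-type sheaves on $X$ twisted by negative powers of $L$. This is precisely what hypotheses (4) and (5) are designed to kill: $H^n(X, L^{\otimes -j})=0$ for $0\le j\le p^m-1$ and $H^{n-1}(X,L^{\otimes -j})=0$ for $0\le j\le p^m$. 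Using the Frobenius-bijectivity hypothesis (3) on $H^{n-1}(V(s),\sO)$ — which is what "ordinarity" of the complete intersection gives — one controls the remaining Verschiebung kernels, so that the obstructions vanish and $\omega$ lifts all the way to a section of $W_m\Omega^{n-1}$ on $Y_{sm}$; iterating, one gets a $W_m(k)$-worth of such sections (multiplying $\omega$ by Teichm\"uller lifts / Witt vectors of $k$). Concretely, the injection $W_m(k)\to H^0(Y_{sm},W_m\Omega^{n-1})$ comes from $a\mapsto a\cdot\tilde\omega$.

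\textbf{Extension across the exceptional divisor.} Now I would pass from $Y_{sm}$ to $\tilde Y$. Here the logarithmic de Rham--Witt complex $W_m\Omega^{\bullet}_{\tilde Y}(\log E)$ and the sheaves $K_*=\jmath_*\imV{*}\cap W_*\Omega^1_{\tilde Y}(\log E)$, $Q_*=W_*\Omega^1_{\tilde Y}(\log E)/K_*$ introduced in the excerpt are the right tools. The assumption $p\neq 2$ or $n$ even enters through Lemma~\ref{lemma-lifting} (surjectivity of $R:K_l\to K_1$), which makes the $R$-tower for $Q_*$ behave well; combined with \eqref{equation-surjectivity-to-Q-ker} and Corollary~\ref{corollary-control-kernel-V}, the cohomology of the quotient complex $Q_*$ is computed from cohomology of the exceptional divisor. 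By Lemma~\ref{lemma:cohomology-exc-divisor}, $H^i(E',\sO(E'))=0$ for $i\ge 2$, and the exceptional components are rational (projective spaces, quadrics, projective bundles over quadrics) so they contribute nothing in the relevant degrees; this forces the section $\tilde\omega$ on $Y_{sm}$ to extend (perhaps after absorbing it into the log complex and then checking the log poles along $E$ actually vanish) to a global section of $W_m\Omega^{n-1}_{\tilde Y}$. Chasing the resulting long exact sequences shows the extended sections still form a copy of $W_m(k)$, giving $W_m(k)\subset H^0(\tilde Y, W_m\Omega^{n-1})$.

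\textbf{Main obstacle.} The hardest part is the lifting/extension bookkeeping: organizing the obstruction-theoretic induction on the Witt length $l$ so that at every stage the obstruction is identified with a cohomology group that is killed by hypotheses (3)--(5), and simultaneously controlling the behavior along the exceptional divisor via the $K_*$/$Q_*$ formalism. In particular, reconciling the two inductions — one in $l$ (Witt length) and one in the number of blow-ups building $\tilde Y$ — and making sure the chosen lift of $\omega$ is compatible with the Teichm\"uller liftings used in Lemma~\ref{lemma-lifting} is delicate; this is exactly the place where the hypothesis $p\neq 2$ or $n$ even is genuinely needed (cf.\ the remark following Lemma~\ref{lemma-lifting}), since for $p=2$ and $n$ odd the coordinate changes in the resolution are incompatible with the multiplicative Teichm\"uller map.
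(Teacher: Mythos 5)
Your proposal takes a genuinely different route from the paper's, and it has a gap that I don't think can be waved away.

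\textbf{The paper's route.} The paper never attempts to lift an $(n-1)$-form from $Y_{sm}$ to $W_m\Omega^{n-1}_{\tilde Y}$. Instead it works throughout in \emph{degree $1$ and cohomological degree $n$}: it analyzes $H^n(\tilde Y, Q_l)$, where $Q_l = W_l\Omega^1_{\tilde Y}(\log E)/K_l$, by establishing the short exact sequences $0\to H^n(\tilde Y,Q_{l-1})\xrightarrow{V} H^n(\tilde Y,Q_l)\xrightarrow{R} H^n(\tilde Y,Q_1)\to 0$ via the vanishing $H^{n-1}(\tilde Y,T_l)=H^n(\tilde Y,T_l)=0$, then identifies the semilinear operator $\psi_1$ on $H^n(\tilde Y,Q_1)\cong H^n(X,L^{-p^m})$ with the Frobenius acting on $H^{n-1}(V(s),\OO)$ (this is where (3) enters), concluding $H^n(\tilde Y,Q_l)\cong\bigoplus W(k)/p^l$. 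This produces a surjection $H^n(\tilde Y, W_m\Omega^1_{\tilde Y})\surj W_m(k)$, and only at the very last step is Ekedahl duality invoked to convert this into the desired inclusion $W_m(k)\subset H^0(\tilde Y, W_m\Omega^{n-1})$. The paper's Remark~\ref{remark-m=1} explicitly notes that even for $m=1$ this is \emph{dual} to Koll\'ar's form-extension argument.

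\textbf{Where your proposal breaks.} You want to produce a section of $\Omega^{n-1}_{Y_{sm}}$ from hypothesis (2), lift it up the Witt tower to $W_m\Omega^{n-1}_{Y_{sm}}$, extend across the exceptional divisor, and then define an injection $W_m(k)\hookrightarrow H^0(\tilde Y, W_m\Omega^{n-1})$ by $a\mapsto a\cdot\tilde\omega$. Two things go wrong. First, the lemmas you cite for the lifting bookkeeping --- Lemma~\ref{lemma-R-surjective}, Proposition~\ref{proposition:injectivity}, Corollary~\ref{corollary-control-kernel-V}, Lemma~\ref{lemma-lifting} --- are all statements about the sheaves $\imV{l}$, $K_l$, $Q_l$, which live inside $W_l\Omega^{1}$, not $W_l\Omega^{n-1}$. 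Since $n\geq 3$, these are genuinely different sheaves, and there is no given analogue of the $K_*/Q_*$ formalism in degree $n-1$ that would make the obstruction groups you need to kill visible as $H^{\geq 1}(X, L^{-j})$. You cannot transport those lemmas across degrees; the paper bridges the degree gap only at the very end, by Ekedahl duality. Second, even granting that a lift $\tilde\omega\in H^0(\tilde Y, W_m\Omega^{n-1})$ of $\omega$ exists, the map $a\mapsto a\cdot\tilde\omega$ is injective only if $p^{m-1}\tilde\omega\neq 0$, i.e. $\tilde\omega$ has full Witt order $p^m$. Nothing in your argument controls this; indeed controlling the $W_m(k)$-module structure (not just nonvanishing) is exactly what the paper's computation $H^n(\tilde Y, Q_l)\cong\bigoplus W(k)/p^l$ accomplishes, and it is the heart of the proof. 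Your sketch acknowledges that the "lifting/extension bookkeeping" is the hard part, but it is not merely delicate --- without a degree-$(n-1)$ analogue of the $K_*/Q_*$ machinery or an alternative way to bound the Witt order of $\tilde\omega$, the proposal does not close.
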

 
\begin{proof}
We have 
$$
{\rm coker}(\pi^*(\Omega^1_{U})\xr{} \Omega^1_{Y_{sm}})=\pi^*(L^{-1}),
$$
and this identity extends to 
$$
Q_1=r^*\pi^*(L^{-1})(E')
$$
on $\tilde{Y}$, with $E'$ as defined in \eqref{definition-E-prime}.
 If the singularity of $Y$ is of the form \eqref{align-1}, \eqref{align-2}, or \eqref{align-3}, then $Q_1$ is generated by $\frac{dy}{y}$.

In view of Lemmas~\ref{lemma:rational-resolution} and  \ref{lemma:cohomology-exc-divisor}, and conditions (2), (4), and (5), we obtain
\begin{equation}\label{equation:Q1}
H^{n-1}(\tilde{Y},Q_1)=0, \qquad H^{n}(\tilde{Y},Q_1)\cong H^n(X,L^{\otimes -p^m})\neq 0.
\end{equation}

 We will work with the short exact sequences 
\begin{align}\label{align:short-exect-seq-q-1}
0 \xr{} \ker(R:Q_l\xr{} Q_{1}) \xr{} Q_l \xr{} Q_{1} \xr{} 0,\\
\label{align:short-exect-seq-q-2}  Q_{l-1} \xr{V} \ker(R:Q_l\xr{} Q_{1}) \xr{} T_l \xr{} 0, 
\end{align}
where $T_l$ is simply defined to be the cokernel. We claim 
\begin{equation}\label{equation-claim-thm}
H^{n-1}(\tilde{Y},T_l)=0=H^n(\tilde{Y},T_l)
\end{equation}
for all $l\leq m$. The surjectivity of \eqref{equation-surjectivity-to-Q-ker} yields the surjectivity of the following composition:
\begin{multline}
\ker\left(\drw{l}{\tilde{Y}}(\log E) \xr{R} \Omega^1_{\tilde{Y}}(\log E)\right)/V\drw{l-1}{\tilde{Y}}(\log E) \\ \underset{\cong}{\xr{F^{l-1}}} B_{l-1}\Omega^1_{\tilde{Y}} \xr{} T_l
\end{multline}
\cite[page~575]{Illusie-drw}. Note that 
\begin{multline*}
\ker\left(\drw{l}{\tilde{Y}} \xr{R} \Omega^1_{\tilde{Y}}\right)/V\drw{l-1}{\tilde{Y}} \\ \xr{\cong} \ker\left(\drw{l}{\tilde{Y}}(\log E) \xr{R} \Omega^1_{\tilde{Y}}(\log E)\right)/V\drw{l-1}{\tilde{Y}}(\log E) 
\end{multline*}
is an isomorphism.

Now we need to find a complex of $W_l(\OO_{\tilde{Y}})$-modules 
$$
R_1\xr{} R_0\xr{} \ker(B_{l-1}\Omega^1_{\tilde{Y}} \xr{} T_l),
$$
such that the following conditions hold:
\begin{itemize}
\item $R_{0\mid Y_{sm}}\xr{} \ker(B_{l-1}\Omega^1_{\tilde{Y}} \xr{} T_l)_{\mid Y_{sm}}$
is surjective,
\item $H^n(\tilde{Y},R_1)\xr{} H^n(\tilde{Y},R_0)$ is surjective.  
\end{itemize}
It will follow that $H^n(\tilde{Y},T_l)=0=H^{n-1}(\tilde{Y},T_l)$. Indeed, we have $$H^n(\tilde{Y},B_{l-1}\Omega^1_{\tilde{Y}})=0=H^{n-1}(\tilde{Y},B_{l-1}\Omega^1_{\tilde{Y}})$$ by induction on $l$, and using the exact sequence \eqref{equation-ex-seq-B}. The case $l=2$ follows from assumption (4) and (5), Lemma \ref{lemma:rational-resolution}, and the short exact sequence \eqref{equation-understand-B1}. 

We take 
$$
R_{0,l}:=r^*\pi^*B_{l-1}\Omega^1_X, \quad R_{1,l}=\ker(R_{0,l}\xr{} B_{l-1}\Omega^1_{\tilde{Y}}). 
$$
Clearly, the image of $r^*\pi^*B_{l-1}\Omega^1_X$ is contained in $\ker(B_{l-1}\Omega^1_{\tilde{Y}} \xr{} T_l)$. The surjectivity of $R_{0\mid Y_{sm}}\xr{} \ker(B_{l-1}\Omega^1_{\tilde{Y}} \xr{} T_l)_{\mid Y_{sm}}$ follows from Lemma~\ref{lemma-R-surjective} and Diagram \eqref{equation-strategy-understand-kernel-V}. 

We claim that $H^n(\tilde{Y},R_{1,l})\xr{} H^n(\tilde{Y},R_{0,l})$ is surjective. We will proceed by induction on $l$. We have an exact sequence of locally free $\OO_X$-modules 
\begin{equation}\label{equation-ex-seq-B}
0\xr{} {\rm Frob}^{l-2}_*B_1\Omega^1_X \xr{} B_{l-1}\Omega^1_X \xr{C} B_{l-2}\Omega^1_X\xr{} 0,
\end{equation}
where $C$ is the Cartier operator. Therefore 
$$
0\xr{} r^*\pi^*{\rm Frob}^{l-2}_*B_1\Omega^1_X \xr{} R_{0,l} \xr{C} R_{0,l-1} \xr{} 0
$$
is exact. Lemma~\ref{lemma-compute-the-kernel} shows that $R_{1,l \mid Y_{sm}}\xr{C} R_{1,l-1 \mid Y_{sm}}$ is surjective; note that under the the isomorphism $F^{l-2}d$ from \eqref{equation-understanding-B} the Cartier operator corresponds to the restriction. By induction we need to prove that the image of
$$
  H^n(\tilde{Y},r^*\pi^*{\rm Frob}^{l-2}_*B_1\Omega^1_X) \xr{} H^n(\tilde{Y},R_{0,l})
$$
is contained in the image of $H^n(\tilde{Y},R_{1,l})$. Rationality of the resolution $r$, implies 
\begin{align*}
H^n(\tilde{Y},r^*\pi^*{\rm Frob}^{l-2}_*B_1\Omega^1_X)&=H^n(Y,\pi^*{\rm Frob}^{l-2}_*B_1\Omega^1_X)\\
&=H^n(X,{\rm Frob}^{l-2}_*(B_1\Omega^1_X) \otimes_{\OO_X} \pi_*\OO_{Y}).
\end{align*}
In view of the exact sequence 
\begin{equation}\label{equation-understand-B1}
0\xr{} \OO_X \xr{{\rm Frob}} {\rm Frob}_*\OO_X \xr{} B_1\Omega^1_X \xr{} 0, 
\end{equation}
we obtain a surjective map
$$
H^n(X,\bigoplus_{i=p^{m-l+1}}^{p^m-1} L^{-i\cdot p^{l-1}}) \xr{} H^n(\tilde{Y},r^*\pi^*{\rm Frob}^{l-2}_*B_1\Omega^1_X),
$$
because 
$$
{\rm Frob}^{l-1}_*\OO_X \otimes_{\OO_X} \pi_*\OO_{Y} = \bigoplus_{i=0}^{p^m-1} {\rm Frob}^{l-1}_*({\rm Frob}^{l-1,*}L^{-i}).
$$
For every $p^{m}> i\geq p^{m-l+1}$, we have two morphisms 
${\rm Frob}^{l-1}_*({\rm Frob}^{l-1,*}(L^{-i}))\xr{} {\rm Frob}^{l-1}_*({\rm Frob}^{l-1,*}(\pi_*\OO_Y))$; the first one is induced by ${\rm Frob}^{l-1}_*{\rm Frob}^{l-1,*}$ applied to
$
L^{-i} \subset \pi_*\OO_Y.
$
The second one is induced by ${\rm Frob}^{l-1}_*$ applied to 
\begin{align*}
{\rm Frob}^{l-1,*}(L^{-i}) = L^{-i\cdot p^{l-1}} &\xr{s^{(i:p^{m+1-l})}} L^{-(i\%p^{m+1-l})\cdot p^{l-1}}\\ &=  {\rm Frob}^{l-1,*}(L^{-(i\% p^{m+1-l})}) \\ &\xr{} {\rm Frob}^{l-1,*}(\pi_*\OO_{Y}),
\end{align*}
where the last arrow comes from $L^{-(i\% p^{m+1-l})} \subset \pi_*\OO_{Y}$. Note that after application of $H^n(X,-)$ this map vanishes, because it factors over $$H^n(X,L^{-(i\%p^{m+1-l})\cdot p^{l-1}})=0.$$ 
Subtracting the two maps yields a morphism
$$
r^*\pi^* {\rm Frob}^{l-1}_*({\rm Frob}^{l-1,*}(L^{-i}))\xr{} (R_{1,l}\cap r^*\pi^*{\rm Frob}^{l-2}_*B_{1}\Omega^1_X) 
$$
which shows that the $H^n(X,L^{-i\cdot p^{l-1}})$ piece of $H^n(\tilde{Y},r^*\pi^*{\rm Frob}^{l-2}_*B_1\Omega^1_X)$ is contained in the image of $H^n(\tilde{Y},R_{1,l})$. This proves claim \eqref{equation-claim-thm}.

In view of the short exact sequences \eqref{align:short-exect-seq-q-1}, \eqref{align:short-exect-seq-q-2}, Corollary~\ref{corollary-control-kernel-V}, vanishing of $H^n(\tilde{Y},\OO_{\tilde{Y}}/\OO_{\tilde{Y}}^{p^{l-1}})$, and \eqref{equation-claim-thm}, we obtain, for all $l\leq m$, a short exact sequence
\begin{equation}\label{equation-V-R-exact-sequence}
0\xr{} H^n(\tilde{Y},Q_{l-1}) \xr{V} H^n(\tilde{Y},Q_{l}) \xr{R} H^n(\tilde{Y},Q_1) \xr{} 0. 
\end{equation}
This enables us to define
$$
\psi_{l-1}: H^n(\tilde{Y},Q_1) \xr{} H^n(\tilde{Y},Q_1), \quad a\mapsto F^{l-1}(R^{-1}(a)).
$$
It is evident that $\psi_{l-1}=\psi_1^{l-1}$. In view of \eqref{equation:Q1} we have
$$
H^n(\tilde{Y},Q_1)\cong H^n(X,L^{-p^m}).
$$
Via this identification, the map $\psi_{1}$ is given by 
$$
H^n(X,L^{-p^m}) \xr{}  H^n(X,L^{-p^{m+1}}) \xr{\cdot s^{p-1}} H^n(X,L^{-p^{m}}), 
$$
where the first arrow is induced by the $p$-th power map $L^{-p^m} \xr{} L^{-p^{m+1}}, a\mapsto a^p$. Indeed, denoting $\imath:L^{-p^m}\xr{} \pi_*r_* Q_1$ the evident map, we have a commutative diagram
$$
\xymatrix{
L^{-p^m} \ar[rr]^{()^p} \ar[d]^{\imath}
&
&
L^{-p^{m+1}}\ar[r]^{s^{p-1}}
&
L^{-p^m} \ar[d]^{\imath}
\\
\pi_*r_* Q_1\ar[rr]^-{\pi_*r_*(F\circ R^{-1})}
&
&
\pi_*r_*\left( \frac{Q_1}{{\rm image}\left( B_1\Omega^1_{\tilde{Y}}\right)} \right)
&
\frac{\pi_*r_*Q_1}{{\rm image}\left( \pi_*r_* B_1\Omega^1_{\tilde{Y}} \right)}. \ar[l] 
}
$$
Moreover, $\psi_1$ equals the composition 
\begin{align*}
 H^n(\tilde{Y},Q_1) &\xr{=} H^n(X,\pi_*r_* Q_1) \xr{\pi_*r_*(F\circ R^{-1})} H^n\left(X,\pi_*r_*\left( Q_1/{\rm image}\left( B_1\Omega^1_{\tilde{Y}}\right) \right)\right) \\
                   &\xr{}  H^n\left(\tilde{Y},Q_1/{\rm image}\left( B_1\Omega^1_{\tilde{Y}}\right) \right) \xr{\cong}  H^n(\tilde{Y},Q_1),
\end{align*}
where the last morphism is the inverse of the projection $H^n(\tilde{Y},Q_1) \xr{\eta} H^n(\tilde{Y},Q_1/{\rm image}( B_1\Omega^1_{\tilde{Y}}))$, which is injective, because $H^n(\tilde{Y},Q_1) \xr{V} H^n(\tilde{Y},Q_2)$ factors through $\eta$.

In the notation of \cite[Definition~1.3.1]{Ch-Frobenius}, we therefore get 
$$
H^n_c(X\backslash V(s),\OO)_s \cong \bigcap_{i\geq 1} {\rm image}(\psi_1^i).
$$ 
Since $H^{n-1}(X,\OO_X)=0=H^n(X,\OO_X)$, \cite[\textsection1.4]{Ch-Frobenius} implies
$$
H^n_c(X\backslash V(s),\OO)_s \cong H^{n-1}(V(s),\OO)_s=\bigcap_{i\geq 1} {\rm image}({\rm Frob}^i).
$$

By using assumption (3), we obtain 
\begin{equation}\label{equation-Hn-Ql}
H^n(\tilde{Y},Q_{l})\cong \bigoplus_{i=1}^h W(k)/p^l,
\end{equation} 
where $h=\dim_k H^n(X,L^{-p^m})$. Indeed, since the Frobenius acts bijectively on $H^{n-1}(V(s),\OO)\cong H^{n}(X,L^{-p^m})$, $\psi_1$ is bijective on $H^n(\tilde{Y},Q_1)$. In view of \eqref{equation-V-R-exact-sequence}, any lifting of a basis of $H^n(\tilde{Y},Q_1)$ via the map $R:H^n(\tilde{Y},Q_l)\xr{} H^n(\tilde{Y},Q_1)$ will be a $W(k)/p^l$-basis of $H^n(\tilde{Y},Q_l)$.

Finally, let us show that $W_l(k)\subset H^0(\tilde{Y}, W_l\Omega^{n-1}_{\tilde{Y}})$. In view of \eqref{equation-Hn-Ql}, there is a surjective morphism of $W(k)$-modules
$$
H^n(\tilde{Y},\drw{l}{\tilde{Y}}(\log E)) \xr{} W(k)/p^l=W_{l}(k).
$$
From the residue short exact sequence \eqref{equation-residue-exact-seq} we obtain a surjective map 
$$
H^n(\tilde{Y},\drw{l}{\tilde{Y}}) \xr{} W_{l}(k).
$$
Ekedahl duality \cite{E} implies
$$
R\Gamma(W_l\Omega^{n-1}_{\tilde{Y}}) \xr{\cong} R\Hom_{W_l(k)}(R\Gamma (W_l\Omega^{1}_{\tilde{Y}}),W_l(k)[-n]),
$$ 
hence the claim. 
\end{proof}
 
\begin{remark}\label{remark-m=1}
Even for the case $m=1$ the approach is dual to the one in \cite{Kollar}. With the notation in the proof of Theorem \ref{thm:cyclic-covers}, we show that the composition 
$$
H^n(\tilde{Y},\Omega^1_{\tilde{Y}}) \xr{} H^n(\tilde{Y},Q_1) \xr{\cong}  H^n(X,L^{\otimes -p^m})
$$
is surjective. For the last isomorphism we use $n\geq 3$, because we need to use Lemma \ref{lemma:cohomology-exc-divisor}, where vanishing holds for $i>1$ only. Since we don't use Lemma \ref{lemma-lifting} for this part, the argument also works for $p=2$ and $n$ odd. Taking duals we obtain an inclusion 
$$
H^0(X,\omega_X\otimes L^{\otimes p^m})\subset H^0(\tilde{Y}, \Omega^{n-1}_{\tilde{Y}}).
$$
This corresponds to a result about extending $n-1$-forms from $Y_{{\rm sm}}$ to $\tilde{Y}$ in \cite{Kollar} (and \cite{CTP2}, \cite{O}).
\end{remark}

\end{document}